\documentclass[a4paper,12pt]{amsart}

%%%%%%%%%%%%%%%%%% header %%%%%%%%%%%%%%%%%%%%%%%%

\usepackage[pagebackref,linktocpage=true,colorlinks=true,linkcolor=Blue,citecolor=BrickRed,urlcolor=RoyalBlue]{hyperref}
\usepackage[alphabetic,msc-links,abbrev]{amsrefs} %shortalphabetic,msc-links,backrefs

%% AMS-LaTeX
%%
\usepackage{amsmath}
\usepackage{amssymb}
\usepackage{amsthm}
\usepackage{mathtools}
\usepackage{mathrsfs} 
\usepackage{graphicx}
\usepackage{enumitem}

\usepackage[usenames,dvipsnames,svgnames,table]{xcolor}
\usepackage{skak} % provides with

\usepackage[colorinlistoftodos,prependcaption,textsize=tiny,
textwidth=0.75in]{todonotes}%, color=cyan
\setlength{\marginparwidth}{2cm}
%% Hyperlinks
%%
%\usepackage[final]{hyperref}
%\hypersetup{
%	colorlinks=true,
%	citecolor=black,
%	filecolor=black,
%	linkcolor=blue,
%	urlcolor=blue
%}

%% paper size
%%
\setlength{\oddsidemargin}{0.2cm}
\setlength{\evensidemargin}{0.2cm}
\setlength{\textwidth}{15.6cm}
\setlength{\topmargin}{-1cm}
\setlength{\textheight}{23cm}
%\renewcommand{\baselinestretch}{1.1}

%% theorem environment
%%
\newtheorem{theorem}{Theorem}[section]
\newtheorem{lemma}[theorem]{Lemma}
\newtheorem{proposition}[theorem]{Proposition}
\newtheorem{definition}[theorem]{Definition}
\newtheorem{corollary}[theorem]{Corollary}

\theoremstyle{remark}
\newtheorem{remark}[theorem]{Remark}

\theoremstyle{definition}

%% numbering of equations
%%

% numbering equation under the format: 
% ("section number"."subsection number"."equation number") 
%\numberwithin{equation}{subsection} % numbering over sections
%\let\oldsection\section% Store \section
%\renewcommand{\section}{% Update \section	\renewcommand{\theequation}{\thesection.\arabic{equation}}% Update equation number	\oldsection}% Regular \section
%\let\oldsubsection\subsection% Store \subsection
%\renewcommand{\subsection}{% Update \subsection \renewcommand{\theequation}{\thesubsection.\arabic{equation}}% Update equation number \oldsubsection}% Regular \subsection

% numbering equation under the format: 
% ("section number"."equation number") 

\numberwithin{equation}{section}

%% Definition
%%
%% Definition
%%
\def\R{\mathbb{R}}

\def\d{\partial}
\renewcommand{\div}{\textnormal{\textrm{div}}}

\def\dif{{\mathrm d}}
\def\ep{\varepsilon}

\def\supp{\textnormal{\textrm{supp}}}
\def\qed{\hfill$\square$}
\def\ath{\theta_{\ast}}
\def\vv{\vec{\mathrm{v}}}
\def\x{\vec{x}}

\def\X{\vec{X}}

\def\normal{\vec{N}}
\def\un{\hat{n}}
\def\hx{\hat{x}}
\def\hy{\hat{y}}
\def\hz{\hat{z}}
\def\hr{\hat{r}}

\def\hrho{\hat{\rho}}
\def\hth{\hat{\theta}}

\def\mB{\mathcal{B}}
\def\mC{\mathcal{C}}

\def\mG{\mathcal{G}}

\def\mL{\mathcal{L}}
\def\mQ{\mathcal{Q}}

\def\mS{\mathcal{S}}

\def\fk{\mathfrak{k}}
\def\fl{\mathfrak{l}}

\def\fG{\mathfrak{G}}
\def\fU{\mathfrak{U}}
\def\fS{\mathfrak{S}}

\def\tA{\tilde{A}}
\def\teta{\tilde{\eta}}

\DeclareMathOperator{\sinc}{sinc}

%\usepackage{txfonts}
%\usepackage{showkeys}

%\pagestyle{empty}

%% New Commands
%%
\renewcommand{\div}{\textnormal{\textrm{div}}}
\newcommand\absm[1]{\langle #1\rangle}

%%%%%%%%%%%%%%%%%%%%%%%%%%%%%%%%%%%%%%%%%%%%%%%%%%%%%%

\begin{document}

\title{%Regularity of Dirichlet-Neumann Operator for Conical Surfaces
The Dirichlet-Neumann Operator for Taylor's Cone
%Linearization of Dirichlet-Neumann Operator for Conical Surfaces
	%Sobolev Estimate and Shape Derivative of Dirichlet-Neumann Operator for Conical Surfaces
	%
    %Conical jet flows and swirling vortices 
    % 
	%Conical jet flow approximation of a viscous boundary layer swirling vortex (tornado-like)
}
\author{
	Yucong {\sc Huang} and
	Aram {\sc Karakhanyan} 
}
\thanks{The research was partially supported by EPSRC grant   
EP/S03157X/1 "Mean curvature measure of free boundary".}
\date{}

\begin{abstract}
The aim of this paper %we establish the mathematical theory 
is to analyze the Dirichlet-Neumann operator 
in axially symmetric conical domains. 
We provide a constructive treatment of the generic singularity at the vertex by using 
a new coordinate system that maps the 
conical domain to a strip. Building upon the paradifferential theory, 
we then establish our main Sobolev estimates. 
%for the corresponding operator 
We also find the shape derivative, the linearization formula, and the cancellation property 
for  the Dirichlet-Neumann operator. 
Our results can be viewed as the first step towards establishing the 
mathematical framework for the perturbations of 
Taylor's cone which appears in the jet break-up control.

\end{abstract}

\maketitle

{\hypersetup{linkcolor=blue}
\setcounter{tocdepth}{1}
	\tableofcontents
	%\setcounter{tocdepth}{2} %
	%\tableofcontents 1
}

%---------------------
%     Section
%---------------------
\section{Introduction}
Let us consider a conical solid of revolution $\Omega=\{\x\in\R^3 \vert F(\x)<0 \}$
given by $F(\x) \vcentcolon \R^3 \to \R$. The Dirichlet-Neumann operator (DN operator for short), associated with $F$, is a functional defined by
\begin{gather}\label{DNOp}
	G[F](\psi) \vcentcolon = \nabla F \cdot \nabla \Psi \big\vert_{\d \Omega}, \qquad \text{for functions } \ \psi\vcentcolon \d\Omega \to \R,
	\end{gather}
where $\Psi$ is the solution to
	\begin{gather}\quad  \left\{ \begin{aligned}	\label{harmonic} 
		&\Delta \Psi = 0 && \text{in } \Omega,\\
		& \Psi = \psi && \text{on } \d\Omega.
	\end{aligned}\right.
\end{gather}
In this paper, we examine the properties of $G[F](\cdot)$ in Sobolev spaces. We also derive its functional derivative with respect to the perturbation of surface $F$.
 
The main motivation for studying conical DN operator stems from the mathematical analysis of Taylor's cone. This phenomenon occurs when electric field is applied to a droplet of conducting fluid, which leads to the formation of free boundary surface with conical cusp. The inviscid fluid occupying the time-dependent bulk domain $\Omega(t)\subset \R^3$ is governed by the equations:   
%Jets are usually unstable. The most common methods of controlling breakup, either using an external flow  (Landau's submerged flow) or by applying an external electric field (Taylor's cone) or even combinations thereof.  
\begin{equation}\label{euler}
\rho(\d_t u +u \cdot \nabla u)=-\nabla p+\div\, \tau_E , \qquad \div u = 0 \qquad \text{ in } \ \Omega(t),
\end{equation}
where $u$ is the velocity, $p$ is the pressure, $\rho>0$ is the constant density, and $\div\,\tau_E$ is
the electric force acting on the fluid, with the tensor $\tau_E$ given by 
\begin{equation*}
\tau_E=\epsilon E\otimes E-\frac\epsilon2|E|^2\mathbb{I}. 
\end{equation*}
Here, $E$ is the electric field in vacuum domain, $\epsilon>0$ is the vacuum permittivity, and $\mathbb{I}$ is the $3\times 3$ identity matrix. Let $\varphi$ be the electric potential, i.e. $E=-\nabla \varphi$ with
\begin{equation*}
	\Delta \varphi=0 \quad \text{ in } \ \R^3\backslash \Omega(t), \qquad  \varphi \vert_{\d\Omega(t)} = 0. 
\end{equation*}
Note that the boundary condition $\varphi\vert_{\d\Omega(t)}=0$ means that the tangential component of the electric field $E$ at the fluid-vacuum boundary vanishes. This is a natural assumption since almost all fluids in question have some, if small, conductivity, so the appropriate boundary condition for an equilibrium situation is that of a conductor.
%, i.e. the tangential component of the electric field vanishes. 
Then the tensor $\tau_E$ can be written in terms of $\varphi$ as:
\begin{equation*}
\tau_E=
\epsilon\begin{pmatrix}
-\frac{1}{2}|\nabla \varphi|^2+ |\d_x\varphi|^2 & \d_x\varphi \d_y\varphi & \d_x \varphi \d_z\varphi\\
\d_y\varphi \d_x\varphi& -\frac{1}{2}|\nabla \varphi|^2+|\d_y\varphi|^2 & \d_y\varphi \d_z\varphi\\
\d_z\varphi \d_x\varphi & \d_z\varphi \d_y\varphi & -\frac{1}{2}|\nabla \varphi|^2+ |\d_z\varphi|^2 \\
\end{pmatrix}
\end{equation*}
and consequently the divergence of above tensor can be written as the gradient:
\begin{equation*}
\div\, \tau_E=\frac\epsilon2\nabla(|\nabla\varphi|^2).
\end{equation*}
In addition, Young-Laplace law asserts that the pressure at vacuum-fluid interface is: 
\begin{equation*}
	p\vert_{\d\Omega(t)} = -\kappa \mathcal{H},
\end{equation*} 
where $\mathcal{H}$ is the mean curvature of $\d\Omega(t)$ and $\kappa>0$ is the coefficient of surface tension.
 
Suppose that the fluid is irrotational, then there is a velocity potential $\Psi$ such that
\begin{equation*}
	u=\nabla \Psi \qquad \text{with} \qquad  \Delta \Psi = 0 \ \text{ in } \ \Omega(t). 
\end{equation*}
If the fluid-vacuum boundary surface is described by $F(t,\x)=0$, then $F$ evolves along the particle path. Thus it satisfies the evolution equation: 
\begin{equation}\label{kine}
	\d_t F + \nabla F \cdot \nabla \Psi=0, \quad \mbox{on } \  \d\Omega(t)\vcentcolon=\big\{\x\in\R^3 \ \big\vert \ F(t,\x)=0 \big\}.
\end{equation} 
Moreover, integrating the first equation of (\ref{euler}) over space gives the Bernoulli's law, which states that on $\d\Omega(t)$ one has 
\begin{equation}\label{eq:bbblll}
\d_t \Psi +\frac12|\nabla\Psi|^2=\frac\epsilon{2\rho}|\nabla\varphi|^2
+
\frac\kappa\rho \mathcal{H}, \quad \text{where } \ \mathcal{H} \vcentcolon= -\dfrac{1}{2} \div \Big(\dfrac{\nabla F}{|\nabla F|}\Big).
\end{equation}
To complete the statement of the problem we have to add the boundary conditions:
\begin{equation}\label{bdry}
 \varphi|_{\d\Omega(t)}=0,\quad \lim_{|\x|\to \infty}|\nabla \Psi|=0, \quad \lim_{|\x|\to \infty} |\nabla\varphi| = 0. 
\end{equation}
The second and third conditions mean that the fluid at infinity is at rest, and there is no electric field at infinity.

Let $(r,\theta,\alpha)$ be the spherical coordinate where $r$ is the radial variable; $\theta$ is the polar angle; $\alpha$ is the azimuthal angle. For a conical domain symmetric about $\hat{z}$-axis, the surface function $F$ can be expressed in the spherical coordinates as
\begin{equation*}
	F(t,r,\theta) = \theta - \Theta(t,r) \quad \text{with} \quad 0<\Theta<\pi \quad \text{in} \quad (t,r)\in (0,\infty)^2.
\end{equation*}
The mean curvature $\mathcal{H}(\Theta)= -\frac{1}{2}\div(\frac{\nabla F}{|\nabla F|})$ for this surface is given by:
\begin{equation}\label{HTheta}
	\mathcal{H}(\Theta) = \d_r\Big( \dfrac{r \d_r \Theta}{2\sqrt{1+r^2 |\d_r\Theta|^2}} \Big) + \dfrac{\d_r \Theta}{\sqrt{1+r^2 |\d_r\Theta|^2}} - \dfrac{\cot\Theta}{2r \sqrt{1+r^2 |\d_r\Theta|^2}}.
\end{equation}
Since function $\Theta(t,r)$ determines the surface, we can redefine $G[\Theta](\cdot)$ as the DN operator associated with $\d\Omega=\{\theta-\Theta=0\}$. Then from (\ref{DNOp}), one can verify that 
\begin{equation}\label{DN-Theta}
G[\Theta](\psi) = \Big\{ \dfrac{\d_\theta\Psi}{r^2} - \d_r \Theta \d_r \Psi \Big\}\Big\vert_{\theta=\Theta} \  \text{where } \left\{ \begin{aligned}
	&\Delta \Psi = 0 \quad \text{ in } \ \{0<\theta<\Theta\},\\
	&\psi(t,r)\vcentcolon= \Psi\vert_{\d\Omega} = \Psi\big(t,r,\Theta(t,r)\big).
\end{aligned}\right. 
\end{equation}
Furthermore, one observes that $\varphi$ solves the elliptic boundary value problem:
\begin{equation*}
	\Delta \varphi =0 \ \text{ in } \ \{\Theta\le \theta<\pi\}, \quad \varphi =0 \ \text{ on }\  \{\theta=\Theta\}, \quad |\nabla \varphi| \to 0 \ \text{ as } \ r\to \infty.
\end{equation*}
Thus the term $|\nabla \varphi|^2\vert_{\theta=\Theta}$ is solely determined by the surface function $\Theta$, and it can be considered as a functional of $\Theta$. More specifically, according to Appendix \ref{append:STEF},
\begin{gather}
	\text{if we set } \ \xi(r) \vcentcolon= -C \sqrt{r} P_{\frac{1}{2}}\big(\!\!-\!\cos\Theta(r) \big) \ \text{ for arbitrary constant } C\neq 0 \text{ then} \nonumber\\
	|\nabla\varphi|^2 \big\vert_{\theta=\Theta} =  \dfrac{\xi^2}{4r^2} + \bigg| r G[\Theta](\xi) + \dfrac{C}{\sqrt{r}} P_{\frac{1}{2}}^1\big(\!\!-\!\cos\Theta\big) \bigg|^2 - \dfrac{\big| \d_r \xi - r^2 \d_r \Theta G[\Theta](\xi) \big|^2}{1+ r^2 |\d_r\Theta|^2}, \label{nablaVarphi}
\end{gather}
where $P_{\nu}^{\mu}(x)$ denotes the associated Legendre function. Using (\ref{HTheta})--(\ref{nablaVarphi}), the original problem (\ref{kine})--(\ref{bdry}) can be reformulated as
\begin{equation}\label{Taylor-Zakharov}
	\left\{
		\begin{aligned}
			&\d_t \Theta - G[\Theta](\psi) = 0,\\
			&\d_t\psi + \dfrac{\big|\d_r\psi|^2}{2}-\dfrac{| r\d_r \Theta \d_r \psi+ r G[\Theta](\psi)\big|^2}{2 (1+r^2|\d_r \Theta|^2)} - \dfrac{\kappa}{\rho} \mathcal{H}(\Theta) - \dfrac{\epsilon |\nabla \varphi|^2}{2\rho}\Big\vert_{\theta=\Theta} = 0.
		\end{aligned}\right.
\end{equation}
The derivations for (\ref{HTheta})--(\ref{Taylor-Zakharov}) are included in Appendix \ref{append:AS-Zakh}. Note that (\ref{Taylor-Zakharov}) resembles Zakharov's system for water wave problem (See \cite{Zak1968}), except for two main distinctions. The first is that DN operator $G[\Theta](\cdot)$ and mean curvature $\mathcal{H}(\Theta)$ in (\ref{Taylor-Zakharov}) are constructed in a conical domain while the ones in Zakharov's system are formulated in half space $\R^3_{+}$. The second difference is that the electric force term $|\nabla \varphi|^2\vert_{\theta=\Theta}$ is present in the equation for Bernoulli's law.   

The famous Taylor's solution \cite{Taylor} given by $\varphi \propto \sqrt{r} P_{\frac12}(-\cos\theta)$ is the equilibrium solution where the free boundary surface is a cone, i.e. 
$\Theta=\ath$ for some constant $\ath\in(0,\pi)$. For this to hold, one must match the orders in $r$ between surface tension term and electric force term, under the assumption that $\Theta=\ath$. The surface tension term $\mathcal{H}(\Theta)$, in the Bernoulli law, 
is of order $\frac1r$, and to match it, $\varphi$
must be a harmonic function with the order: $|\nabla\varphi|^2 \sim \frac{1}{r}$. This leads to the spherical harmonic function $\varphi=C_{\ast} \sqrt{r} P_{\frac12}(-\cos\theta)$, with some renormalization constant $C_{\ast}$. To elaborate further, we let $\ath\in (0,\pi)$ be the unique root of equation $P_{\frac{1}{2}}(-\cos\ath)=0$. Note that $\ath\approx 0.2738\,\pi$. If we set
\begin{equation*}
	\bar{\Theta}\vcentcolon=\ath,\quad \bar{\Psi}\vcentcolon=0, \quad \bar{\varphi}\vcentcolon= C_{\ast}\sqrt{r} P_{\frac12}(-\cos\theta), \ \text{ where } \ C_{\ast}\vcentcolon= -\dfrac{\sqrt{\kappa\cot\ath}}{\sqrt{\epsilon}P_{1/2}^1(-\cos\ath)},
\end{equation*}
then it can be verified that $(\bar{\Theta},\bar{\Psi},\bar{\varphi})$ is a stationary solution to (\ref{Taylor-Zakharov}).
%Finally, we choose $\Psi=\frac{b(t, \theta)}r$, the Landau solution, to get 
%from \eqref{eq:bbblll}   
%\begin{equation}
%\frac{b_t(t, \theta)}r+\frac1r(\frac{b^2(t, \theta)}{r^3}+\frac{\Psi_\theta^2}{r})
%=
%\frac{1}r(a^2(t)(P_{\frac12}^2+\sin^2\theta\dot P_{\frac12}^2)+\sigma c_0),
%\end{equation}
%which provides a good approximation for large $r$, i.e. when we are far away from the vertex.
Clearly, the term $|\nabla\bar{\varphi}|^2\vert_{\theta=\ath}$ or $\mathcal{H}(\bar{\Theta})$ individually blows up at the origin with order $\frac{1}{r}$. However, as we have seen above, this singularity gets mutually cancelled in (\ref{eq:bbblll}). For a general perturbed solution, one may think of 
$(\bar{\Theta},\bar{\Psi},\bar{\varphi})$ as the first order approximation of the solution solving (\ref{Taylor-Zakharov}). From here, one can then consider the second order approximation given by the form $\tilde{\Theta}\vcentcolon= \Theta -\ath$ and $\tilde{\varphi}:=\varphi-C_{\ast}\sqrt{r} P_{\frac12}(-\cos\theta)$ such that $\tilde{\Theta}, \, \Psi, \, \tilde{\varphi} \to 0$ uniformly as $r\to \infty$, $r\to 0^+$, and $\tilde{\varphi}$ solves the Dirichlet boundary value problem
\begin{equation}
\Delta \tilde{\varphi} = 0 \ \text{ in } \ \{\Theta \le \theta< \pi \}, \qquad \tilde{\varphi} = -C_{\ast} \sqrt{r} P_{\frac{1}{2}}\big(-\cos\Theta(r)\big) \ \text{ on } \ \{\theta=\Theta\}. 
\end{equation}
As the first step toward the existence and uniqueness of such solution to (\ref{Taylor-Zakharov}), one must study the behaviour of operator $G[\Theta](\psi)$ when $\Theta$, $\psi$ belongs to some Sobolev space. Moreover, the derivative of $G[\Theta](\psi)$ with respect to the functional perturbation over $\Theta$ will also be necessary. These are the main aims of this paper.

In the formulation (\ref{DN-Theta}), we immediately encounter the problem of geometric singularity at the origin for conical domains. This also suggests that the functional space for which solution belongs to must be a weighted Sobolev space in spatial variable $r$. In order to resolve this issue, we introduce the new variable $\sigma =- \ln r \in \R$ and new set of unknowns:
\begin{equation*}
	\Phi(\sigma,\theta)\vcentcolon= \sqrt{e^{-\sigma}}\, \Psi\big(e^{-\sigma},\theta\big), \quad \phi(\sigma) \vcentcolon= \sqrt{e^{-\sigma}}\, \psi\big(e^{-\sigma}\big), \quad \eta(\sigma)\vcentcolon= \Theta\big(e^{-\sigma}\big).
\end{equation*} 
Thus the conical domain $\{(r,\theta)\, \vert \, r\in(0,\infty), \ 0<\theta\le\Theta(r) \}$ is transformed into a curved strip given by $\{(\sigma,\theta)\, \vert \, \sigma\in\R, \ 0<\theta\le\eta(\sigma) \}$, and the problem (\ref{harmonic}) becomes:
\begin{equation}\label{PhiDBVP}
	\left\{\begin{aligned}
		&\d_\sigma(\d_\sigma \Phi \sin\theta) + \d_\theta(\d_\theta \Phi \sin\theta) - \dfrac{\sin\theta}{4} \Phi = 0 && \text{for } \ \sigma\in\R \ \text{ and } \ \theta\in(0,\eta(\sigma)],\\
		& \Phi\big(\sigma,\eta(\sigma)\big) = \phi(\sigma), \qquad \d_{\theta}\Phi(\sigma,0)=0 && \text{for } \ \sigma\in\R.
	\end{aligned}\right.
\end{equation}
Moreover, under this change of variables and unknowns, the DN operator becomes
\begin{equation*}
	G[\Theta](\psi)\vert_{r = e^{-\sigma}} = e^{5\sigma/2} \Big\{ \mG[\eta](\phi) - \dfrac{1}{2} \phi \d_\sigma \eta \Big\},
\end{equation*}
where $\mG[\eta](\phi)$ is the auxiliary DN operator defined as:
\begin{equation}\label{auxmG}
	\mG[\eta](\phi) \vcentcolon= \big( \d_\theta \Phi -\d_\sigma \eta \d_\sigma \Phi \big)\big\vert_{\theta=\eta(\sigma)}, \quad \text{where $\Phi$ solves (\ref{PhiDBVP}).}  
\end{equation}
Therefore the analysis of $G[\Theta](\psi)$ amounts to the analysis of $\mG[\eta](\phi)$ in this new formulation, and our main results below will solely focus on the operator $\mG[\eta](\phi)$ instead. As we shall see, this formulation will not only map the singularity at $r \to 0$ to the far field $\sigma \to \infty$, thus making the analysis manageable, it also allows us to utilize the theory of paradifferential calculus since the spatial domain under consideration is now the real line $\sigma\in\R$, as opposed to the half line $r\in(0,\infty)$ in the original spherical coordinate formulation. In particular, when the boundary is a cone, meaning $\eta(\sigma)\equiv \ath$ for constant angle $\ath\in(0,\pi)$, the solution to (\ref{PhiDBVP}) is given by the convolution:
\begin{equation*}
	\Phi(\sigma,\theta) = \int_{\R}\!\! \phi(s) K(\sigma-s,\theta)\, \dif s \quad \text{ with } \quad K(\sigma,\theta)\vcentcolon= \int_{\R}\! \dfrac{\fk(\zeta,\theta)}{\fk(\zeta,\ath)} \dfrac{e^{i\sigma \zeta}}{2\pi} \, \dif \zeta, 
\end{equation*}
where $\fk(\zeta,\theta)\vcentcolon= P_{-\frac{1}{2}+i\zeta}(\cos\theta)$ is the associated Legendre's function with complex index. Note that $\fk(\zeta,\theta)$ is real valued, and it is also known as the conical function (See \cite{ZK66}). By (\ref{auxmG}), $\mG[\ath](\phi)$ can be expressed as a psuedodifferential operator with symbol:
\begin{equation*}
	\mG[\ath](\phi) = \d_\theta \Phi(\sigma,\theta)\vert_{\theta=\ath} = \dfrac{\fk_{1}(\d_\sigma,\ath)}{\fk(\d_\sigma,\ath)} \phi \qquad \text{ where } \  \fk_1(\zeta,\theta)\vcentcolon= P_{-\frac{1}{2}+i\zeta}^1(\cos\theta).
\end{equation*}
This is in contrast to the first order symbol $|D| \tanh(h|D|)$ for 3D water wave problem with a finite depth $h>0$ (See \cite{Craig}). It will be shown in Section \ref{ssec:kbounds} that the kernel $K(\sigma,\theta)$ satisfies certain integral bounds which allow one to obtain the Sobolev estimates for $\Phi=K(\cdot,\theta)\star \phi$ and $\mG[\ath](\phi)$. These solutions will also be used to construct $\mG[\eta](\cdot)$ for general non-flat conical boundaries $\theta=\eta(\sigma)\neq \ath$.
   
The above heuristic observations suggest that there must be a complete regularity theory of $\mG[\eta](\phi)$ in Sobolev spaces $H^s(\R)$ in $\sigma$ coordinate, for some $s>2$, which also allows one to derive the linearization of $\mG[\eta](\phi)$ with respect to perturbation in $\eta$. Finally, we remark that the $H^s(\R)$ space in $\sigma$ coordinate is equivalent to a certain weighted Sobolev space in $r$ coordinate, and we refer readers to Proposition \ref{prop:normEquiv} in the appendix for more details. 

%The classical formulation of the DN operator uses the Fourier transform, however, in the cartesian coordinate 
%system this approach fails for the equation \eqref{} since $r>0$ which is only half of the real line.  

%To overcome this difficulty we reformulate the 
%equation $\Delta\varphi=0$, for $\varphi$ with azimuthal symmetry in the spherical coordinates 

%%\[
%\d_r (r^2 \d_r {\varphi} \sin\theta ) + \d_\theta ( \d_\theta {\varphi}   \sin\theta ) = 0
%\]
%and then flatten the cone 

%Using a simple energy argument we can show that the solution is 
%unique under some decay assumption (like in the Liouville theorem)

\section{Main results}
The main results of this paper are the following $3$ theorems for $\mG[\eta](\phi)$ in (\ref{auxmG}):  
\subsection{Behaviour in Sobolev spaces} The first result states that $\phi\mapsto \mG[\eta](\phi)$ can be constructed as a bounded linear map in Sobolev spaces. It also provides an explicit formula for estimate constant in terms of surface function $\eta$.  
\begin{theorem}\label{thm:DNSob}
	Denote $\teta\vcentcolon=\eta-\ath$ with $\ath\in(0,\pi)$. Suppose $\teta\in H^{s+\frac{1}{2}}(\R)$ with $s>\frac{5}{2}$. Then $\phi\mapsto \mG[\eta](\phi)$ exists as a bounded linear map from $H^{m}(\R)$ to $H^{m-1}(\R)$ for $\frac{1}{2}<m \le s$, and from $H_0^{\frac{1}{2}}(\R)$ to $H^{-\frac{1}{2}}(\R)$ for $m=\frac{1}{2}$. Moreover, there exists a constant $C=C(\ath)>0$ such that for all $\frac{1}{2}\le m \le s$,
	\begin{equation*}
		\|\mG[\eta](\phi)\|_{H^{m-1}(\R)} \le C \dfrac{\ath^{-1}+\fU_s(\teta)}{\sin\big( \ath - \|\teta\|_{L^{\infty}(\R)} \big)} \Big|\dfrac{\fU_s(\teta)}{\fl(\teta)}\Big|^{m+\frac{1}{2}} \|\phi\|_{H^{m}(\R)} \quad \text{for } \ \phi\in H^{m}(\R), 
	\end{equation*}
	where the functional $\fU_s(\teta)$ and $\fl(\teta)$ are defined as: $(\text{with } \sinc\theta \vcentcolon= \frac{\sin\theta}{\theta} )$
	\begin{subequations}
		\begin{gather*}
			\fl(\teta) \vcentcolon= \sinc\big( \|\teta\|_{L^{\infty}} + \ath \big)\min\Big\{ \dfrac{1}{2}, \dfrac{\big(\ath-\|\teta\|_{L^{\infty}}\big)^2}{1+ 2 \|\d_{\sigma}\eta\|_{L^{\infty}}^2 }, \dfrac{\big(\ath- \|\teta\|_{L^{\infty}}\big)^2}{4} \Big\},\\
			\fU_s(\teta) \vcentcolon= \max\big\{  \|\teta\|_{H^{s-\frac{1}{2}}}, \|\d_{\sigma}\eta\|_{H^{s-\frac{1}{2}}},  \|\teta^2\|_{H^{s-\frac{1}{2}}}, \|\teta\d_{\sigma} \eta \|_{H^{s-\frac{1}{2}}}, \| (\d_{\sigma}\eta)^2\|_{H^{s-\frac{1}{2}}} \big\}.
		\end{gather*}
	\end{subequations}
\end{theorem}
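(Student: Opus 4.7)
The plan is to reduce Theorem \ref{thm:DNSob} to an elliptic estimate on the flat strip $\R\times(0,\ath)$ by straightening the boundary $\theta=\eta(\sigma)$, to treat the resulting variable-coefficient equation as a paradifferential perturbation of the constant-angle problem already solved in Section \ref{ssec:kbounds} via the kernel $K(\sigma,\theta)$, and to track the dependence on $\eta$ through Bony's paraproduct calculus in order to produce the advertised prefactor. The starting point is the change of variables $\theta'\vcentcolon=\ath\theta/\eta(\sigma)$, $\tpsi(\sigma,\theta')\vcentcolon=\Phi(\sigma,\eta(\sigma)\theta'/\ath)$, under which (\ref{PhiDBVP}) becomes $\mL_0\tpsi=\mR[\teta]\tpsi$ on the flat strip with $\tpsi(\sigma,\ath)=\phi(\sigma)$ and $\d_{\theta'}\tpsi(\sigma,0)=0$. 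Here $\mL_0$ is the Laplace-type operator of the constant-angle problem and the coefficients of $\mR[\teta]$ are rational expressions in $\eta$, $\d_\sigma\eta$ and $\sin(\eta\theta'/\ath)$. The five quadratic quantities defining $\fU_s(\teta)$ are precisely those that appear when these coefficients are expanded in $\teta$ and $\d_\sigma\eta$, while the three quantities defining $\fl(\teta)$ provide the sharp lower bounds on $\sin(\eta\theta'/\ath)$ and on the principal symbol of $\mL_0-\mR[\teta]$ needed for uniform ellipticity.

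Next, I would write $\tpsi=\tpsi_0+\tpsi_1$ with $\tpsi_0(\sigma,\theta')\vcentcolon=\int_\R \phi(s)K(\sigma-s,\theta')\,\dif s$, so that $\tpsi_1$ solves $\mL_0\tpsi_1=\mR[\teta](\tpsi_0+\tpsi_1)$ with zero boundary data. Inverting $\mL_0$ by convolution against $K$ yields a fixed-point equation for $\tpsi_1$ in the energy space of $\mL_0$; to estimate the source I would paralinearise each product of a coefficient with a derivative of $\tpsi_0+\tpsi_1$ in the $\sigma$-variable via Bony's decomposition. The paraproducts $T_a$ are bounded on $H^m$ by $\|a\|_{L^\infty}$ (the source of the $\fU_s/\fl$ factors), whereas the symmetric and low-frequency remainders gain half a derivative and are handled by Sobolev multiplication in $H^{s-1/2}$, available since $s>\tfrac52$ implies $H^{s-1/2}\hookrightarrow W^{1,\infty}$. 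Iterating this contraction through the scales $H^m$, $\tfrac12\le m\le s$, produces the power $(\fU_s/\fl)^{m+1/2}$ in the final bound for $\tpsi_1$.

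Once $\tpsi$ is constructed, the identity (\ref{auxmG}) expresses $\mG[\eta](\phi)$ as a linear combination of first-order traces of $\tpsi$ at $\theta'=\ath$ with coefficients bounded by $\ath^{-1}+\fU_s(\teta)$; the trace theorem then costs exactly one derivative and yields the $H^m\to H^{m-1}$ bound, while the denominator $\sin(\ath-\|\teta\|_{L^\infty})$ arises from inverting the angular principal part of $\mL_0$ at the boundary $\theta'=\ath$. The endpoint $m=\tfrac12$ uses the Lions--Magenes subspace $H_0^{1/2}(\R)$ so that $\tpsi_0$ remains of finite energy. The main obstacle I anticipate is the sharp propagation of $\fl(\teta)$ through the paradifferential iteration so that the exponent $m+\tfrac12$, rather than a larger one, is attained: since low-order paracommutators smooth by only half a derivative, one must balance their loss against the full gain from $\mL_0^{-1}=K\star$, and this balance is exactly what dictates the precise algebraic forms of $\fl$ and $\fU_s$ in the statement.
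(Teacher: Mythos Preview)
Your overall architecture---straighten the boundary to a flat strip, split the solution into the explicit flat-cone piece plus a correction with zero boundary data, and read off the DN operator from first-order traces---matches the paper.  There are, however, two substantive differences, one of which is a genuine gap.

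\textbf{The fixed-point/contraction step would not close as written.}  You propose to solve for $\tpsi_1$ by inverting the constant-coefficient operator $\mL_0$ and iterating $\tpsi_1=\mL_0^{-1}\mR[\teta](\tpsi_0+\tpsi_1)$.  For this to contract you need the operator norm of $\mL_0^{-1}\mR[\teta]$ to be strictly less than~$1$, i.e.\ actual smallness of $\teta$ and $\d_\sigma\eta$ in a suitable norm.  The theorem assumes only $\|\teta\|_{L^\infty}<\min\{\ath,\pi-\ath\}$, which is not small.  The paper avoids this by never inverting $\mL_0$: it keeps the full variable-coefficient operator $\mL_\eta$ on the left, establishes its coercivity directly (Remark~\ref{remark:ellipticity} and the bound \eqref{coerA}, with lower constant exactly $y\,\fl(\teta)$), solves the zero-boundary problem for $w=v-v_\ast$ by Lax--Milgram, and then gains regularity by testing against $\Lambda_h^{2k}w$ in an inductive energy argument (Lemma~\ref{lemma:ellip}).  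The difference $\tA=A-A_\ast$ appears only in the \emph{source} term and in commutators, where smallness is not needed---only the bound $\|\tA(\cdot,y)\|_{H^{s-1/2}}\le Cy\,\fU_s(\teta)$.  The exponent $m+\tfrac12$ then falls out cleanly from the induction \eqref{indhyp}, one factor of $\fU_s/\fl$ per half-derivative gained.

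\textbf{The final step is by duality, not by trace.}  Rather than taking $\d_y v|_{y=1}$ directly (which would need an extra half-derivative in $y$ beyond what Lemma~\ref{lemma:ellip} provides), the paper defines $\tilde{\mG}[\eta](\phi)=\sin(\eta)\,\mG[\eta](\phi)$ weakly through the bilinear form $\fG[\eta](\phi,\varphi)=\iint_{\mS}\nabla E_\eta[\varphi]\cdot(A\nabla E_\eta[\phi])$, replaces the harmonic extension $E_\eta[\varphi]$ by the cheap Fourier extension $\varphi^{\dagger}=\chi((1-y)D)\varphi$ of Proposition~\ref{prop:dagger}, and shows $\absm{D}^{m-1}\tilde{\mG}[\eta](\phi)$ extends to a bounded functional on $L^2$ (Proposition~\ref{prop:fGHm}).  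Riesz representation then gives the $H^{m-1}$ bound.  The factor $\sin(\ath-\|\teta\|_{L^\infty})^{-1}$ enters at the very end when one divides $\tilde{\mG}$ by $\sin\eta$.  Your trace-theorem account is not wrong in spirit, but the duality argument is what makes the $H^m\!\to\!H^{m-1}$ mapping work down to $m=\tfrac12$ without invoking higher $y$-regularity.

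Finally, the paper does not use Bony paraproducts here; ordinary commutator estimates for Fourier multipliers in the class $\fS^k$ (Propositions~\ref{prop:commu}--\ref{prop:commuLam}) suffice, since $s>\tfrac52$ puts the coefficients in $H^{s-1/2}\subset W^{1,\infty}$.
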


\subsection{Linearization of DN operator} The second theorem gives the precise shape derivative formula for $\mG[\eta](\phi)$.
\begin{theorem}\label{thm:shape}
Let $s>\frac{5}{2}$, $1\le k \le s$. Suppose $\phi\in H^{k}(\R)$ and $\teta\equiv \eta-\ath\in H^{s+\frac{1}{2}}(\R)$ satisfy $\|\teta\|_{L^{\infty}(\R)}<\min\big\{\ath\,, \pi-\ath\big\}$. Then there exists a neighbourhood $\mathcal{U}_{\eta} \subset H^{s+\frac{1}{2}}(\R)$ such that $\eta\in \mathcal{U}_{\eta}$ and the mapping
\begin{equation}
    \varrho \mapsto \mG[\varrho] (\phi) \in H^{k-1}(\R) \ \text{ is differentiable in } \ \varrho\in \mathcal{U}_{\eta} \subset H^{s+\frac{1}{2}}(\R) .
\end{equation}
Moreover, for $h\in H^{s+\frac{1}{2}}(\R)$, the shape derivative $\dif_{\eta} \mG[\eta] (\phi) \cdot h$ is given by
\begin{gather}
\begin{aligned}
	\dif_{\eta} \mG[\eta] (\phi) \cdot h \vcentcolon=& \lim\limits_{\ep\to 0} \dfrac{\mG[\eta+\ep h](\phi) - \mG[\eta](\phi)}{\ep}\\
	 =& - \mG[\eta]\big(h\mathcal{B}+V\big) - \d_{\sigma}\big(h V - \mB\big) + (h-\d_\sigma \eta) \Big\{ \frac{\phi}{4} - \mB \cot(\eta) \Big\},
\end{aligned}\nonumber\\
\text{where } \quad  \mathcal{B}\vcentcolon= \dfrac{\d_\sigma \eta \d_\sigma \phi + \mG[\eta](\phi)}{1+|\d_\sigma \eta|^2}, \qquad V \vcentcolon= \d_\sigma \phi- \mathcal{B} \d_\sigma \eta .\label{BV}
\end{gather}
\end{theorem}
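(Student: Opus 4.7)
The plan is to first establish differentiability of $\varrho\mapsto\mG[\varrho](\phi)$ in a neighbourhood $\mathcal{U}_\eta\subset H^{s+\frac12}(\R)$ by flattening the curved strip to a fixed reference strip, then to compute $\dif_\eta\mG[\eta](\phi)\cdot h$ directly, and finally to rewrite the result in the stated symmetric form via a cancellation identity. For the first step, introduce the diffeomorphism $\Xi_\varrho(\sigma,\tau)=(\sigma,\tau\varrho(\sigma)/\ath)$ sending $\{0<\tau<\ath\}$ onto $\{0<\theta<\varrho(\sigma)\}$; setting $\hat\Phi_\varrho:=\Phi_\varrho\circ\Xi_\varrho$, the equation (\ref{PhiDBVP}) pulls back to a variable-coefficient elliptic problem on the fixed strip whose coefficients are rational in $(\varrho,\d_\sigma\varrho)$ and depend smoothly on $\varrho$ in the $H^{s+\frac12}$-topology. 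Choosing $\mathcal{U}_\eta$ so that $\|\varrho-\ath\|_{L^\infty}<\min\{\ath,\pi-\ath\}$ uniformly, Theorem \ref{thm:DNSob} gives uniform boundedness of the solution operator, and an implicit-function argument (or direct differentiation of the associated Lax--Milgram bilinear form) produces a $C^1$ map $\varrho\mapsto\hat\Phi_\varrho$, hence $\varrho\mapsto\mG[\varrho](\phi)\in H^{k-1}(\R)$ on $\mathcal{U}_\eta$.

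Next, let $\eta^\ep:=\eta+\ep h$ and denote the corresponding harmonic extension by $\Phi^\ep$. Define the material derivative $\dot\Phi:=\d_\ep\Phi^\ep\vert_{\ep=0}$ at a fixed point $(\sigma,\theta)$ in $\{0<\theta<\eta\}$. Differentiating $\Phi^\ep(\sigma,\eta^\ep(\sigma))=\phi(\sigma)$ at $\ep=0$ yields $\dot\Phi\vert_{\theta=\eta}=-h\,\d_\theta\Phi\vert_{\theta=\eta}$; solving the $2\times 2$ linear system formed by $\mG[\eta](\phi)=\d_\theta\Phi-\d_\sigma\eta\,\d_\sigma\Phi$ and $\d_\sigma\phi=\d_\sigma\Phi+\d_\sigma\eta\,\d_\theta\Phi$ at $\theta=\eta$ identifies $\d_\theta\Phi\vert_{\theta=\eta}=\mB$ and $\d_\sigma\Phi\vert_{\theta=\eta}=V$, so $\dot\Phi\vert_{\theta=\eta}=-h\mB$. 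Since $\dot\Phi$ inherits the interior equation in (\ref{PhiDBVP}) together with $\d_\theta\dot\Phi\vert_{\theta=0}=0$, one has $\mG[\eta](-h\mB)=(\d_\theta\dot\Phi-\d_\sigma\eta\,\d_\sigma\dot\Phi)\vert_{\theta=\eta}$. Differentiating the explicit formula $\mG[\eta^\ep](\phi)=\bigl(\d_\theta\Phi^\ep-(\d_\sigma\eta+\ep\d_\sigma h)\d_\sigma\Phi^\ep\bigr)\vert_{\theta=\eta^\ep}$ in $\ep$ and substituting yields
\begin{equation*}
\dif_\eta\mG[\eta](\phi)\cdot h=-\mG[\eta](h\mB)-\d_\sigma h\cdot V+h\,(\d_\theta^2\Phi-\d_\sigma\eta\,\d_{\sigma\theta}^2\Phi)\vert_{\theta=\eta}.
\end{equation*}

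Using the elliptic equation $\d_\sigma^2\Phi+\d_\theta^2\Phi+\cot\theta\,\d_\theta\Phi-\Phi/4=0$ together with the chain-rule identity $\d_\sigma V=(\d_\sigma^2\Phi+\d_\sigma\eta\,\d_{\sigma\theta}^2\Phi)\vert_{\theta=\eta}$, a short elimination yields $(\d_\theta^2\Phi-\d_\sigma\eta\,\d_{\sigma\theta}^2\Phi)\vert_{\theta=\eta}=-\d_\sigma V+\phi/4-\mB\cot\eta$, and hence the unsymmetrized identity $\dif_\eta\mG[\eta](\phi)\cdot h=-\mG[\eta](h\mB)-\d_\sigma(hV)+h(\phi/4-\mB\cot\eta)$. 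To obtain the stated form, I would then invoke the cancellation identity $\mG[\eta](V)=\d_\sigma\mB-\d_\sigma\eta(\phi/4-\mB\cot\eta)$: since the coefficients of (\ref{PhiDBVP}) are $\sigma$-independent, $\d_\sigma\Phi$ solves the same elliptic equation in $\{0<\theta<\eta\}$ with Dirichlet trace $V$ and zero Neumann data at $\theta=0$, so $\mG[\eta](V)=(\d_{\sigma\theta}^2\Phi-\d_\sigma\eta\,\d_\sigma^2\Phi)\vert_{\theta=\eta}$, which equals the claimed right-hand side via the same PDE-elimination. Adding this zero to the unsymmetrized formula reproduces precisely $-\mG[\eta](h\mB+V)-\d_\sigma(hV-\mB)+(h-\d_\sigma\eta)(\phi/4-\mB\cot\eta)$. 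I expect the main obstacle to be the first step: rigorously tracking $\hat\Phi_\varrho$ in $H^{k+\frac12}$ of the flattened strip uniformly for $\varrho$ in an $H^{s+\frac12}$-ball, and propagating enough trace regularity so that $\d_\sigma V$, $\d_\sigma\mB$, and $\d_\theta^2\Phi\vert_{\theta=\eta}$ are well-defined in $H^{k-1}(\R)$, which is what legitimizes the pointwise PDE-elimination.
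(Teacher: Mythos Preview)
Your argument is correct and the algebra checks out; in particular your key observations $\d_\theta\Phi\vert_{\theta=\eta}=\mB$, $\d_\sigma\Phi\vert_{\theta=\eta}=V$, the PDE-elimination $(\d_\theta^2\Phi-\d_\sigma\eta\,\d_{\sigma\theta}^2\Phi)\vert_{\theta=\eta}=-\d_\sigma V+\phi/4-\mB\cot\eta$, and the cancellation identity $\mG[\eta](V)=\d_\sigma\mB-\d_\sigma\eta(\phi/4-\mB\cot\eta)$ are all valid.

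Your route is genuinely different from the paper's, and in fact more direct. The paper works entirely in the flattened strip $(\sigma,y)$ with $y=\theta/\eta$, where the elliptic operator $\mL_\eta$ depends on $\eta$; consequently the linearization $\dif_\eta v\cdot h$ satisfies an \emph{inhomogeneous} equation, and the paper must construct by ansatz an explicit function $\varpi_h=\alpha\,\d_\sigma v+\beta\,\d_y v$ solving the same inhomogeneous equation (Lemma~\ref{lemma:varpi}), so that the difference $\Upsilon_h:=\varpi_h-\dif_\eta v\cdot h$ is the harmonic extension of $h\mB+V$. You instead stay in the curved strip $(\sigma,\theta)$, where the operator $\d_\sigma^2+\d_\theta^2+\cot\theta\,\d_\theta-\tfrac14$ depends on neither $\eta$ nor $\sigma$; this lets you read off immediately that $\dot\Phi$ and $\d_\sigma\Phi$ are themselves harmonic extensions (of $-h\mB$ and $V$ respectively). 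Translating back, one finds the paper's $\Upsilon_h$ equals exactly $\d_\sigma\Phi-\dot\Phi$, so the two approaches land on the same object, but yours bypasses the algebraic search for $(\alpha,\beta)$ and explains conceptually why the extension of $h\mB+V$ has this simple form. What the paper's approach buys is that all computations take place on a fixed domain, so the meaning of $\dot\Phi$ near $\theta=\eta$ (when $h<0$) never arises; in your setup you should make explicit that $\dot\Phi$ is \emph{defined} via the flattened picture of your first paragraph and then pushed forward, which resolves the domain-variation issue you allude to at the end.
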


\begin{remark}\label{rem:BV}
	The terms $\mB$ and $V$ in spherical coordinate system $(r,\theta)$ are expressed as:
	\begin{equation*}
		\mB\vert_{\sigma=-\ln r} = \sqrt{r} \d_{\theta} \Psi(r,\theta)\vert_{\theta=\Theta(r)}, \qquad V\vert_{\sigma=-\ln r} = r \d_{r} \big(\sqrt{r}\, \Psi\big) \big\vert_{\theta=\Theta(r)},
	\end{equation*}
	where $\Psi$ is the velocity potential function solving (\ref{harmonic}). Thus $\mB$ is the $r^{3/2}$ multiple of polar-angular velocity component at the boundary surface, which is $\frac{1}{r}\d_\theta \Psi\vert_{\theta=\Theta(r)}$, and $V$ depends on the radial velocity component $\d_r \Psi \vert_{\theta=\Theta(r)}$.   
\end{remark}

\subsection{Cancellation Property}
By Theorem \ref{thm:DNSob} and the definition of $\mB$, $V$ in (\ref{BV}), it can be verified that $\mB$, $V\in H^{s-1}(\R)$ for $(\teta,\phi)\in H^{s+1/2}(\R)\times H^s(\R)$. Therefore applying Theorem \ref{thm:DNSob} once again, we see that
\begin{equation*}
	\mG[\eta](\mB), \quad \mG[\eta](V), \quad \d_{\sigma} \mB, \quad \d_{\sigma} V \in H^{s-2}(\R).
\end{equation*}
In particular, this implies for general $h\in H^{s+1/2}(\R)$, the shape derivative $\dif_{\eta} \mG[\eta](\phi)\cdot h$ in Theorem \ref{thm:shape} loses regularity by an order of $2$. However, we show that a cancellation of higher order derivatives will occur in a specific linear combination of above four terms, so that it instead belongs to a more regular space $H^{s-1}(\R)$. This is stated as follows:
\begin{theorem}\label{lemma:cancel-Int}
	Suppose $\teta\in H^{s+\frac{1}{2}}(\R)$ with $s>\frac{5}{2}$. Let $\mB$, $V$ be given in (\ref{BV}). Then
	\begin{equation*}
		\mG[\eta](\mB+V) + \d_{\sigma}(V-\mB) \in H^{s-1}(\R).
	\end{equation*}  
\end{theorem}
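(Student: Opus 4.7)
The plan is to produce an exact pointwise identity for $\mG[\eta](\mB+V)+\d_\sigma(V-\mB)$ using the harmonic extension $\Phi$ from (\ref{PhiDBVP}), and then reduce the claim to an elliptic regularity estimate for a single corrector term. Recall from Remark \ref{rem:BV} that $\mB=\d_\theta\Phi|_{\theta=\eta}$ and $V=\d_\sigma\Phi|_{\theta=\eta}$; this is consistent with (\ref{BV}) upon differentiating $\phi(\sigma)=\Phi(\sigma,\eta(\sigma))$ in $\sigma$ and comparing with (\ref{auxmG}). The key algebraic tool will be two structural properties of the operator $L:=\d_\sigma^2+\d_\theta^2+\cot\theta\,\d_\theta-\frac{1}{4}$ governing (\ref{PhiDBVP}): (i) translation-invariance in $\sigma$ gives $L(\d_\sigma\Phi)=0$ together with $\d_\theta(\d_\sigma\Phi)(\sigma,0)=0$, so $\d_\sigma\Phi$ is the true $L$-harmonic extension of $V$; (ii) $L(\d_\theta\Phi)=\csc^2\theta\,\d_\theta\Phi$, so $\d_\theta\Phi$ is only an \emph{approximate} $L$-harmonic extension of $\mB$.

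From (i) and (\ref{auxmG}) I would obtain $\mG[\eta](V)=(\d_\sigma\d_\theta\Phi-\d_\sigma\eta\,\d_\sigma^2\Phi)|_{\theta=\eta}$. Subtracting $\d_\sigma\mB=(\d_\sigma\d_\theta\Phi+\d_\sigma\eta\,\d_\theta^2\Phi)|_{\theta=\eta}$ and applying $L\Phi=0$ in the form $(\d_\sigma^2+\d_\theta^2)\Phi=\Phi/4-\cot\theta\,\d_\theta\Phi$ at $\theta=\eta$ produces the first exact identity
\begin{equation*}
\mG[\eta](V)-\d_\sigma\mB=-\d_\sigma\eta\,\bigl(\phi/4-\mB\cot\eta\bigr),
\end{equation*}
whose right-hand side belongs to $H^{s-1}(\R)$ by Sobolev algebra. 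For the complementary combination $\mG[\eta](\mB)+\d_\sigma V$ I would use (ii): write the $L$-harmonic extension of $\mB$ as $\fS(\mB)=\d_\theta\Phi-E$, where the corrector $E$ solves $LE=\csc^2\theta\,\d_\theta\Phi$ with $E|_{\theta=\eta}=0$ and $\d_\theta E(\sigma,0)=\d_\theta^2\Phi(\sigma,0)$. Differentiating $E(\sigma,\eta(\sigma))\equiv 0$ in $\sigma$ gives $\d_\sigma E|_{\theta=\eta}=-\d_\sigma\eta\,\d_\theta E|_{\theta=\eta}$, so the DN-trace of $E$ collapses to $(1+|\d_\sigma\eta|^2)\,\d_\theta E|_{\theta=\eta}$. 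Substituting $\fS(\mB)=\d_\theta\Phi-E$ into (\ref{auxmG}) and combining with $\d_\sigma V=(\d_\sigma^2\Phi+\d_\sigma\eta\,\d_\sigma\d_\theta\Phi)|_{\theta=\eta}$ cancels the mixed derivatives $\d_\sigma\d_\theta\Phi$ and yields
\begin{equation*}
\mG[\eta](\mB)+\d_\sigma V=\bigl(\phi/4-\mB\cot\eta\bigr)-(1+|\d_\sigma\eta|^2)\,\d_\theta E|_{\theta=\eta}.
\end{equation*}
Adding the two exact identities produces the master formula
\begin{equation*}
\mG[\eta](\mB+V)+\d_\sigma(V-\mB)=(1-\d_\sigma\eta)\bigl(\phi/4-\mB\cot\eta\bigr)-(1+|\d_\sigma\eta|^2)\,\d_\theta E|_{\theta=\eta}.
\end{equation*}

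The first summand on the right manifestly lies in $H^{s-1}(\R)$ by Sobolev algebra (using $\teta\in H^{s+1/2}$, $\phi\in H^s$, $\mB\in H^{s-1}$, and that $\cot\eta$ is smooth in the bounded $\eta$, which stays away from $0,\pi$), so the theorem reduces to establishing $\d_\theta E|_{\theta=\eta}\in H^{s-1}(\R)$; this is the step I expect to be the main obstacle. The difficulty is that the forcing $\csc^2\theta\,\d_\theta\Phi$ blows up at the axis $\theta=0$. However, the compatibility $\d_\theta\Phi(\sigma,0)=0$ forces $\d_\theta\Phi=O(\theta)$ near the axis, so in the equivalent divergence form $\d_\sigma(\sin\theta\,\d_\sigma E)+\d_\theta(\sin\theta\,\d_\theta E)-\tfrac{\sin\theta}{4}E=\csc\theta\,\d_\theta\Phi$ the right-hand side is bounded and inherits the Sobolev regularity of $\Phi$. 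I would then solve the resulting degenerate boundary value problem for $E$ on the curved strip within the same weighted Sobolev/flattening and paradifferential framework that underlies Theorem \ref{thm:DNSob}, and read off a trace estimate of the form $\|\d_\theta E|_{\theta=\eta}\|_{H^{s-1}(\R)}\le C\|\phi\|_{H^{s}(\R)}$, with $C$ depending on $\|\teta\|_{H^{s+1/2}(\R)}$; inserting this into the master formula completes the proof.
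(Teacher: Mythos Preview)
Your strategy is sound and in fact parallels the paper's own proof: both arguments produce an explicit ``approximate extension'' of $\mB+V$, subtract the true harmonic extension $E_{\eta}[\mB+V]$ to obtain a corrector with zero Dirichlet data at the top, and then appeal to an elliptic estimate for the corrector. Your master identity is correct and, after the change of variables $\theta=y\eta$, agrees with the formula the paper derives at the end of Section~\ref{sec:shape}.

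The genuine difference is the choice of approximate extension. You take $\d_{\sigma}\Phi+\d_{\theta}\Phi$, which is the most natural object from the commutator identities $L(\d_{\sigma}\Phi)=0$ and $L(\d_{\theta}\Phi)=\csc^{2}\theta\,\d_{\theta}\Phi$. The paper instead takes $\varpi=\d_{\sigma}\Phi+\tfrac{\theta}{\eta}\d_{\theta}\Phi$ (equivalently $\tfrac{y}{\eta}\d_{y}v+\d_{\sigma}v-\tfrac{y\d_{\sigma}\eta}{\eta}\d_{y}v$ in the flattened strip), which is exactly the ansatz $\varpi_{h}$ from Lemma~\ref{lemma:varpi} specialised to $h\equiv 1$. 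The two differ by $(1-\theta/\eta)\d_{\theta}\Phi$, which vanishes at $\theta=\eta$, so the resulting master formulas are equivalent.

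Where your write-up is thinner than the paper is precisely the corrector estimate, and the paper's choice of $\varpi$ is what makes that step clean. With $\varpi$, the corrector $\mathcal{W}=\varpi-E_{\eta}[\mB+V]$ satisfies $-\mL_{\eta}\mathcal{W}=\div(M\nabla v)-q v$ with a matrix $M$ carrying the natural $y$-weight (it is exactly $\dif_{\eta}A\cdot 1$) and with \emph{homogeneous} Neumann data at $y=0$; this places $\mathcal{W}$ directly inside the framework of Lemma~\ref{lemma:ellip} and Proposition~\ref{prop:cancelEllip}. With your $E=\d_{\theta}\Phi-E_{\eta}[\mB]$, the source $\csc^{2}\theta\,\d_{\theta}\Phi$ does not come with that weight, $\d_{\theta}E|_{\theta=0}=\d_{\theta}^{2}\Phi|_{\theta=0}$ is nonzero, and --- more subtly --- the candidate $\d_{\theta}\Phi$ corresponds in the flat strip to $\eta^{-1}\d_{y}v$, whose $y$-derivative is $\d_{y}^{2}v$, for which the paper only records $\int_{0}^{1}y^{3}\|\d_{y}^{2}v\|^{2}\,dy<\infty$ (Lemma~\ref{lemma:vHigh}), not $\int_{0}^{1}y\|\d_{y}^{2}v\|^{2}\,dy$. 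So ``the same framework'' does not apply verbatim: you would need an extra ingredient, e.g.\ a Poincar\'e inequality from the top boundary $E|_{\theta=\eta}=0$ to absorb the unweighted source, together with the observation that the Neumann term at $y=0$ drops out of the weak form because $A|_{y=0}=0$. These additions are routine but should be spelled out; alternatively, multiplying your $\d_{\theta}\Phi$ by $\theta/\eta$ recovers the paper's $\varpi$ and removes all three complications at once.
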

%---------------------
%     Section
%---------------------
\subsection{Organization of paper}
The paper is organized as follows: First, we reformulate the Dirichlet boundary value problem (\ref{harmonic}) into the curved strip domain in Section \ref{sec:DN-cone}. We also redefine the DN operator in the corresponding coordinate system. In Section \ref{sec:flat}, we consider the case when the surface is a cone with constant slope, described by the level set: $\Theta = \ath$ for fixed  $\ath\in(0,\pi)$. For this case, the solution is explicitly obtained as a convolution of boundary data with kernel involving conical Legendre functions. Using this form, we also derive its Sobolev estimates which are Lemmas \ref{lemma:bdryEst} and \ref{lemma:higherPhi}. In Section \ref{sec:nonflat}, we derive the elliptic boundary estimate for general conical surfaces. This is done by flattening the domain into a flat strip with coordinate transformation. Moreover, the construction for flat case given in Section \ref{sec:flat} is used as an extension function to ensure the existence and uniqueness of solutions to the elliptic Dirichlet boundary value problem (\ref{v-D}). Using these results, we construct the DN operator for general conical surfaces, and derive its Sobolev estimates in Section \ref{sec:DN-operator}. Section \ref{sec:shape} contains  the derivation of shape derivative for DN operator with respect to perturbation on the conical surface. This is achieved by finding an explicit solution to the Dirichlet boundary value problem (\ref{varpihEq}). The method used for this is very general and we believe it can be adopted to other similar problems. Finally, in Section \ref{sec:Stokes} we compute the coefficients of the Taylor expansion of the DN operator under the assumption that the perturbation on the conical surface, $\tilde{\eta}\equiv \eta-\ath$ and $\d_\sigma\eta$ are small.

%%%%%%%%%%%%%%%%%%%%%%%%%%%%%%%%%%%%%%%%%%%%%%%%%%%%%

%---------------------
%     Section
%---------------------

\section{Dirichlet Boundary Value Problem for Conical Domain}\label{sec:DN-cone} 
The domain in consideration $\Omega_{\Theta}\subseteq \R^3$ is set to be 
\begin{equation*}
	\Omega_{\Theta}\vcentcolon= \bigg\{ (x,y,z)\in\R^3 \,\Big\vert\, \arccos\Big( \dfrac{z}{\sqrt{x^2+y^2+z^2}} \Big) \le \Theta\Big(\sqrt{x^2+y^2+z^2}\,\Big) \bigg\},
\end{equation*} 
for some function $\Theta\vcentcolon (0,\infty) \to (0,\pi)$. In addition, $\Omega_{\Theta}$ is considered to be a perturbation of the cone with slope $\tan\ath$: 
\begin{equation*}
 \textrm{Cone}(\ath)\vcentcolon=\Big\{(x,y,z)\in \R^3\,\Big\vert\, \arccos\Big(\dfrac{z}{\sqrt{x^2+y^2+z^2}}\Big)\le \ath \Big\}, \qquad \text{with } \ \ath\in(0,\pi).
\end{equation*}
More precisely, this means $\sup_{r>\infty}|\Theta(r)-\ath|< \min\{\ath, \pi-\ath\}$. The boundary can be expressed as the level set $\d\Omega_{\Theta}\vcentcolon= \big\{ \x\in\R^3 \,\big\vert\, F(\x) = 0 \big\}$ with
\begin{equation*}
	F(x,y,z)\vcentcolon=\arccos\Big(\dfrac{z}{\sqrt{x^2+y^2+z^2}}\Big)-\Theta\big(\sqrt{x^2+y^2+z^2}\big).
\end{equation*}
Let $\Psi(x,y,z)$ be the solution to linear Dirichlet boundary value problem:
\begin{subequations}\label{3Dharmonic-cone}
\begin{align}
&-\Delta \Psi = 0 && \text{for } \ (x,y,z)\in \Omega_{\Theta},\\
& \Psi\vert_{\d \Omega_{\Theta}} = \psi \vert_{\d \Omega_{\Theta}} && \text{for a given boundary data } \ \psi.
\end{align}
\end{subequations}
Then the DN operator for surface $\d\Omega_{\Theta}$, acting on $\psi\vcentcolon \d\Omega_{\Theta}\to \R$, is defined as:
\begin{equation}\label{0-DN}
	G[\Theta](\psi) \vcentcolon= \big(\nabla F \cdot \nabla \Psi\big) \big\vert_{F(\x)=0}.
\end{equation}
\subsection{Formulation in the spherical coordinate} Let $(r,\theta,\varphi)$ be the spherical coordinate with $r\in(0,\infty)$ being the radial variable; $\theta\in [0,\pi]$ being the polar angle; $\varphi\in[0,2\pi)$ being the azimuthal angle. The surface $\textrm{Cone}(\ath)$ under this coordinate is described by the level set $\{\theta=\ath\}$. If the domain $\Omega_{\Theta}$ and boundary data $\psi$ are axially symmetric, then so is the solution $\Psi$. Thus we can write  $\Psi=\Psi(r,\theta)$. Since $\Psi$ is a harmonic function, it is analytic at point $r>0$ and $\theta=0$. It follows that $\d_\theta \Psi(r,0)=0$ for $r>0$. Under the spherical coordinate, the boundary can be expressed as
\begin{equation*}
	\d\Omega_{\Theta}=\big\{ \big(r \cos\varphi \sin\theta, r \sin\varphi \sin\theta, r \cos\theta\big)\in \R^3 \,\vert\, r\ge 0, \ \varphi\in [0,2\pi),\ \theta=\Theta(r) \big\},
\end{equation*}
and it is also described by the equation: $F(r,\theta,\varphi)=\theta-\Theta(r) = 0$. The Laplacian with Dirichlet boundary condition in (\ref{3Dharmonic-cone}) can be reformulated as follows: Given a data $\psi(r)\vcentcolon (0,\infty)\to \R$, we set $\Psi(r,\theta)$ to be the solution to 
\begin{subequations}\label{ellip-cone}
\begin{align}
&\d_r (r^2 \d_r \Psi \sin\theta ) + \d_\theta ( \d_\theta \Psi   \sin\theta ) = 0 && \text{for } \ 0< \theta \le \Theta(r),\\
&\Psi\big(r,\Theta(r)\big) = \psi(r),\quad \d_\theta \Psi(r,0)   = 0 && \text{for } \ r\in(0,\infty).
\end{align}
\end{subequations}
Recall from (\ref{DN-Theta}) that the DN operator associated with $\Theta(r)$ in spherical coordinate is:
\begin{equation}\label{DN-cone}
G[\Theta](\psi) = \Big\{ \dfrac{\d_\theta \Psi}{r^2} - \d_r \Theta \d_r \Psi \Big\}\Big\vert_{\theta=\Theta(r)}.
\end{equation}
\subsection{Reformulation in the curved strip}
Set the coordinate $(\sigma,\theta)\in \R\times (0,\pi)$ as 
\begin{equation}
	\sigma = -\ln r, \qquad \theta = \theta \qquad \text{for } \ (r,\theta)\in(0,\infty)\times(0,\pi).
\end{equation}
With this, we define:
\begin{equation*}
	\Phi(\sigma,\theta)\vcentcolon= \sqrt{e^{-\sigma}}\, \Psi\big(e^{-\sigma},\theta\big), \quad \phi(\sigma) \vcentcolon= \sqrt{e^{-\sigma}}\, \psi\big(e^{-\sigma}\big), \quad \eta(\sigma)\vcentcolon= \Theta\big(e^{-\sigma}\big).
\end{equation*}
Then by the chain rule one has
\begin{subequations}\label{PhiC}
	\begin{gather}
		\d_\sigma \eta = -e^{-\sigma} \d_r \Theta \big( e^{-\sigma}\big), \qquad \d_r \Psi\vert_{r= e^{-\sigma}} = -  e^{3\sigma/2} \big\{ \d_\sigma \Phi + \dfrac{\Phi}{2} \big\},\\
		\d_r^2 \Psi \vert_{r=e^{-\sigma}} =  e^{5\sigma/2} \big\{ \d_\sigma^2 \Phi + 2 \d_\sigma \Phi + \dfrac{3}{4}\Phi \big\}.
	\end{gather}
\end{subequations}
Using the above identities and equations (\ref{ellip-cone}), it can be verified that $\Phi$ solves
\begin{subequations}\label{Phi-D}
	\begin{align}
		&\d_\sigma(\d_\sigma \Phi \sin\theta) + \d_\theta(\d_\theta \Phi \sin\theta) - \dfrac{\sin\theta}{4} \Phi = 0 && \text{for } \ \sigma\in\R \ \text{ and } \ 0< \theta \le \eta(\sigma),\\
		&\Phi\big(\sigma,\eta(\sigma)\big) = \phi(\sigma), \qquad \d_\theta \Phi(\sigma,0) = 0 && \text{for } \ \sigma\in\R.
	\end{align}
\end{subequations}
Moreover, by the chain rule identities (\ref{PhiC}), we also have
\begin{align*}
	\Big\{ \dfrac{\d_\theta \Psi}{r^2} - \d_r\Theta \d_r \Psi \Big\}\Big\vert_{r=e^{-\sigma}} %= e^{5\sigma/2} \d_\theta \Phi -  e^{5\sigma/2}\d_\sigma\eta (\d_\sigma \Phi + \dfrac{\Phi}{2})\\
	= e^{5\sigma/2} \big\{ \d_\theta \Phi - \d_{\sigma} \eta \d_\sigma \Phi -\dfrac{1}{2} \Phi \d_\sigma \eta \big\}.
\end{align*}
Substituting the above into the definition for DN operator (\ref{DN-cone}), one has
\begin{align*}
	G[\Theta](\psi)\vert_{r=e^{-\sigma}} %= \Big\{ \dfrac{\d_\theta \Psi}{r^2} - \d_r\Theta \d_r \Psi \Big\}\Big\vert_{\substack{r= e^{-\sigma}\\ \theta=\eta(\sigma)}} 
	= e^{5\sigma/2}\Big\{ \big( \d_\theta \Phi -\d_\sigma \eta \d_\sigma \Phi \big)\big\vert_{\theta=\eta(\sigma)} - \dfrac{1}{2} \phi \d_\sigma \eta \Big\}.
\end{align*}
Given a pair of functions $(\eta,\phi)(\sigma)$, we define the auxiliary DN operator $\mG[\eta](\phi)$ as:
\begin{equation}\label{mG}
	\mG[\eta](\phi) \vcentcolon= \big( \d_\theta \Phi -\d_\sigma \eta \d_\sigma \Phi \big)\big\vert_{\theta=\eta(\sigma)},
\end{equation}
where $\Phi(\sigma,\theta)$ is the solution to (\ref{Phi-D}). Then we obtain
\begin{equation}\label{GmG}
	G[\Theta](\psi)\vert_{r = e^{-\sigma}} =  e^{5\sigma/2} \Big\{ \mG[\eta](\phi) - \dfrac{1}{2} \phi \d_\sigma \eta \Big\}.
\end{equation}

\section{Flat Cone Reformulated in the Strip Domain}\label{sec:flat}
Let $\Phi_{\ast}(\sigma,\theta)$ be the solution to (\ref{Phi-D}) with $\eta(\sigma)\equiv\ath \in (0,\pi)$ for all $\sigma\in\R$. Then $\Phi_{\ast}$ solves the linear equations: 
\begin{subequations}\label{ellip-cf}
\begin{align}
&\frac{\partial}{\partial\theta}(\d_\theta \Phi_{\ast} \sin\theta )+\d_{\sigma}^2 \Phi_{\ast} \sin\theta -\frac{\sin\theta}{4}\Phi_{\ast}=0 && \text{for } \ (\sigma,\theta)\in\R\times(0,\ath],\label{ellip-cf-a}\\
&\Phi_{\ast}(\sigma,\ath)= \phi(\sigma),\qquad \d_\theta \Phi_{\ast}  (\sigma,0) = 0 && \text{for } \ \sigma\in\R.\label{ellip-cf-b}
\end{align}
\end{subequations}
According to (\ref{mG})--(\ref{GmG}), the auxiliary DN operator for the surface $\{\theta=\ath\}$ under coordinate $(\sigma,\theta)$ is given by
\begin{equation}\label{Gath}
\mG[\ath](\phi) = e^{-5\sigma/2} G[\ath](\psi)\big\vert_{r = e^{-\sigma}} = \d_\theta \Phi_{\ast}(\sigma,\theta)\big\vert_{\theta=\ath}. 
\end{equation}
\subsection{Solutions as convolution with Poisson kernel}
Taking the Fourier transform with respect to $\sigma\in\R$ on both sides of (\ref{ellip-cf-a}), one has
\begin{equation}\label{Legendre'}
\d_\theta^2 \hat{\Phi}_{\ast} (\zeta,\theta)  +  \d_\theta \hat{\Phi}_{\ast}(\zeta,\theta) \cot\theta - \dfrac{1+4\zeta^2}{4} \hat{\Phi}_{\ast}(\zeta,\theta) = 0 \quad \text{ for } \ (\zeta,\theta)\in\R\times(0,\ath],
\end{equation}
where $\hat{\Phi}_{\ast}(\zeta,\theta)$ is the Fourier transform of $\sigma\mapsto \Phi_{\ast}(\sigma,\theta)$ with respect to variables $\sigma\xleftrightarrow[]{} \zeta$. Define the function $\mu=\mu(\zeta)$ as:
\begin{equation*}
\mu(\zeta)\vcentcolon= -\frac{1}{2} + i \zeta, \qquad \text{then one has} \qquad \mu(\mu+1)=-\dfrac{1+4\zeta^2}{4},
\end{equation*}
Moreover, set $\hat{f}(\zeta,x)\vcentcolon=\hat{\Phi}_{\ast}(\zeta,\arccos x)$ for $\cos\ath \le x < 1$. Then (\ref{Legendre'}) becomes
\begin{subequations}\label{Legendre}
\begin{align}
&(1-x^2)\d_x^2 \hat{f} - 2 x \d_x \hat{f} + \mu(\mu+1) \hat{f} = 0  && \text{for } \ (\zeta,x)\in \R\times [\cos\ath,1),\\
& \hat{f}(\zeta,\cos\ath) = \hat{\phi}(\zeta), \qquad \d_\theta\hat{f}(\zeta,1)=0 && \text{for } \ \zeta\in\R.  
\end{align}
\end{subequations}
Solutions to the ODE (\ref{Legendre}) is a linear combination of the Legendre's functions of first and second kinds with eigenvalue $\mu (\mu+1)$:
\begin{equation*}
\hat{f}(\zeta,x) = c_1(\zeta) P_{\mu}(x) + c_2(\zeta) Q_{\mu}(x), \quad \text{where } \ \ \mu=-\dfrac{1}{2} + i \zeta.
\end{equation*}
Note that the functions $P_{\mu}(x)$, $Q_{\mu}(x)$ with $\mu=-\frac{1}{2}+i\zeta$ and $\zeta\in\R$ are called conical functions (See Section 8.840 of \cite{GR2007}). Rewriting as $\hat{\Phi}_{\ast}(\zeta,\theta) = \hat{f}(\zeta,\cos\theta) $, it follows that for $(\zeta,\theta)\in \R \times(0,\ath]$, 
\begin{gather*}
\hat{\Phi}_{\ast}(\zeta,\theta) = c_1(\zeta) P_{\mu}(\cos\theta)+c_2(\zeta) Q_{\mu}(\cos\theta), \qquad \text{where } \ \mu = -\dfrac{1}{2} + i \zeta,\\
\hat{\Phi}_{\ast}(\zeta,\ath) = \hat{\phi}(\zeta) \qquad \text{and} \qquad \d_\theta \hat{\Phi}_{\ast}(\zeta,0)=0.
\end{gather*}
To determine the coefficient functions $c_1(\zeta)$ and $c_2(\zeta)$, we use the identities below, which can be found from Sections 8.706 and 8.752 of \cite{GR2007}:
\begin{align}
	&P_{\mu}^m (x) = (-1)^m(1-x^2)^{\frac{m}{2}} \dfrac{\dif^m P_{\mu}}{\dif x^m}(x),\qquad Q_{\mu}^m (x) = (-1)^m(1-x^2)^{\frac{m}{2}} \dfrac{\dif^m Q_{\mu}}{\dif x^m}(x),\label{dPQ}
\end{align}
for $\mu\in\mathbb{C}$, $x\in(-1,1)$, and $m\in\mathbb{N}$. Thus, $Q_{\mu}^1(\cos\theta) = \d_\theta \big\{Q_{\mu}(\cos\theta)\big\}$ for $\theta\in(0,\theta_{\ast}]$. Using the fact that $|Q_{\mu}^1(x)|\to \infty$ as $x\to 1^-$, we must have $c_2(\zeta)=0$. To determine $c_1(\zeta)$, we first denote for simplicity:
\begin{equation}\label{kDef}
	\fk(\zeta,\theta)\vcentcolon= P_{\mu}(\cos\theta) = P_{-\frac{1}{2}+i\zeta}(\cos\theta).
\end{equation}
Then, by the condition $\hat{\Phi}_{\ast}(\zeta,\ath)=\hat{\phi}(\zeta)$, we obtain that $c_1(\zeta) = \tfrac{\hat{\phi}(\zeta)}{\fk(\zeta,\ath)}$. Hence,
\begin{equation}\label{Uphat}
\hat{\Phi}_{\ast}(\zeta,\theta) = \hat{\phi}(\zeta) \hat{K}(\zeta,\theta), \quad \text{where } \ \hat{K}(\zeta,\theta)\vcentcolon= \dfrac{\fk(\zeta,\theta)}{\fk(\zeta,\ath)}, \ \text{ for } \ (\zeta,\theta)\in\R\times(0,\ath].
\end{equation}
Taking the inverse Fourier transform on both sides of (\ref{Uphat}), we obtain
\begin{equation}\label{UpConv}
\Phi_{\ast}(\sigma,\theta) = \int_{\R}\!\! \phi(s) K(\sigma-s,\theta)\, \dif s, \quad \text{where } \ K(\sigma,\theta)\vcentcolon= \dfrac{1}{2\pi}\int_{\R} \dfrac{\fk(\zeta,\theta)}{\fk(\zeta,\ath)} e^{i\sigma \zeta}\, \dif \zeta. 
\end{equation}

%%%%%%%%%%%%%%%%%%%%%%%%%%%%%%%%
%%%%%%% START OF COMMENT %%%%%%%
%%%%%%%%%%%%%%%%%%%%%%%%%%%%%%%%
\iffalse
\begin{remark}
	A natural question here, is whether  there is a $\phi$ that 
	corresponds to the Serrin solutions. 
	Note that the transformation above yields that $e^{-\sigma(m-2)}P_{m-1}(\cos\theta)$
	must be the desired function. $\phi(\zeta)=\delta(\zeta-i(m-\frac12))$.
	This shows that the conically symmetric solutions cannot be obtained as 
	stationary solution to the Zakharov system in flat strip \eqref{Zak-cs}. 
\end{remark}
\todo{Serrin is $\hat \phi=\delta(\zeta+\frac i2)$}
\fi
%%%%%%%%%%%%%%%%%%%%%%%%%%%%%%%%
%%%%%%%% END OF COMMENT %%%%%%%%
%%%%%%%%%%%%%%%%%%%%%%%%%%%%%%%%

%%%%%%%%%%%%%%%%%%%%%%%%%%%%%%%%
%%%%%%% START OF COMMENT %%%%%%%
%%%%%%%%%%%%%%%%%%%%%%%%%%%%%%%%
%\iffalse
\begin{remark}
	The kernel $\fk(\zeta,\theta)=P_{\mu}(x)$ can be written in terms of the hypergeometric functions as follows:
	\begin{equation*}
		P_{\mu}(\cos\theta) = {}_2F_1\Big( \frac{1-2i\zeta}{2},\frac{1+2i\zeta}{2}; 1; \frac{1-\cos\theta}{2} \Big).
	\end{equation*}
	If $|z|<1$ and $c$ is not a negative integer, then ${}_2F_1(a,b;c;z)$ has the convergent series:
	\begin{equation*}
		{}_2F_1(a,b;c;z)=\sum_{n=0}^{\infty} \dfrac{(a)_n(b)_n}{(c)_n}\dfrac{z^n}{n!}, \qquad \text{where } \ (x)_n \equiv \dfrac{\Gamma(x+n)}{\Gamma(x)}, 
	\end{equation*}
	and $\Gamma(x)$ denotes the gamma function. In our case, we have that
	\begin{align*}
		&P_{\mu}(\cos\theta) = \sum_{n=0}^{\infty} \Big(\dfrac{1}{2}-i\zeta\Big)_n \Big(\dfrac{1}{2}+i\zeta\Big)_n \dfrac{(1-\cos\theta)^n}{2^n n!(n+1)!}\\
		=& \sum_{n=0}^{\infty} \dfrac{\Gamma(n+\tfrac{1}{2}-i\zeta)\Gamma(n+\tfrac{1}{2}+i\zeta)}{\Gamma(\tfrac{1}{2}-i\zeta)\Gamma(\tfrac{1}{2}+i\zeta)} \dfrac{(1-\cos\theta)^n}{2^n n!(n+1)!} = \sum_{n=0}^{\infty} \dfrac{\big|\Gamma(n+\tfrac{1}{2}+i\zeta)\big|^2}{\big|\Gamma(\tfrac{1}{2}+i\zeta)\big|^2} \dfrac{(1-\cos\theta)^n}{2^n n!(n+1)!}.
	\end{align*}
	By the identities for gamma function, the above series can be rewritten as  
	\begin{align*}
		\fk(\zeta,\theta)\vcentcolon= P_{\mu}(\cos\theta) = \sum_{n=0}^{\infty} \dfrac{(1-\cos\theta)^n}{2^n n!(n+1)!}  \prod_{m=1}^{n}\Big\{ \big(m-\dfrac{1}{2}\big)^2 + \zeta^2 \Big\}.
	\end{align*}
\end{remark}

\subsection{Bounds for conical Legendre function}\label{ssec:kbounds}
Next, we aim to obtain Sobolev estimates for $\Phi_{\ast}$. To achieve this, we wish to derive bounds for the kernel $\hat{K}(\zeta,\theta)$. The following list of properties for $\fk(\zeta,\theta)$ can be found in Section 8.84 of \cite{GR2007}:
\begin{gather}
\fk(-\zeta,\theta) = P_{-\frac{1}{2}-i\zeta}(\cos\theta) = P_{-\frac{1}{2}+i\zeta}(\cos\theta) = \fk(\zeta,\theta),\label{kEven}\\
\fk(\zeta,\theta) = \dfrac{2}{\pi}\int_{0}^{\theta}\!\!\! \dfrac{\cosh(\zeta \varphi)}{\sqrt{2(\cos \varphi - \cos\theta)}} \,\dif \varphi = \dfrac{2\cosh(\pi\zeta)}{\pi}\!\! \int_{0}^{\infty}\!\!\!\!\! \dfrac{\cos(\zeta \varphi)}{\sqrt{2(\cos\theta + \cosh \varphi)}} \, \dif \varphi, \label{kInt}
\end{gather}
for $(\zeta,\theta)\in\R\times(0,\pi)$. In addition, as $|\zeta|\to \infty$, the leading term for the asymptotic expansion of $\fk(\zeta,\theta)$  is given by: (See Section 4.10 of \cite{MOS1966})
\begin{equation}\label{kAsy}
\fk(\zeta,\theta) \sim (2\pi \sin\theta)^{-\frac{1}{2}} |\zeta|^{-1}   \qquad \text{as } \  |\zeta| \to \infty.
\end{equation}
From (\ref{kInt})--(\ref{kAsy}), one immediately observes the following properties for $\fk$:
\begin{proposition}\label{prop:kbound}
Let $\theta_{\ast}\in (0,\pi)$ and $\fk(\zeta,\theta)$ be given in (\ref{kInt}). Then there exists some constant $C=C(\ath)>0$ such that for $\theta\in (0,\theta_{\ast}]$,
\begin{equation*}
C^{-1}\theta \le \fk(\zeta,\theta) \le C \cosh(\pi\zeta) \ \text{ for } \ \zeta\in\R \ \text{ and } \ \lim\limits_{|\zeta|\to\infty}\Big| \dfrac{\fk(\zeta,\theta)}{\cosh(\pi\zeta)} \Big| = 0.
\end{equation*}
Moreover, for $m\in\mathbb{N}$, there exists a constant $C(m)=C(\ath,m)>0$ such that 
\begin{equation*}
|\d_\theta^m \fk(\zeta,\theta)| \le C(m) \cosh(\pi\zeta) \quad \text{for } \ (\zeta,\theta)\in\R\times(0,\ath].
\end{equation*}
\end{proposition}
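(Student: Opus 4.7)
The plan is to deduce all four claims directly from the two integral representations in (\ref{kInt}), without appealing to the asymptotic expansion (\ref{kAsy}). The second representation is the workhorse: it has $\cosh(\pi\zeta)$ already factored out, and its integrand is the cosine transform in $\zeta$ of a fixed, smooth, $\varphi$-integrable function depending parametrically on $\theta$. The first representation enters only for the lower bound.

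For the upper bound $\fk(\zeta,\theta)\le C\cosh(\pi\zeta)$, I would estimate $|\cos(\zeta\varphi)|\le 1$ in the second integral and use $\cos\theta\ge\cos\ath$ on $(0,\ath]$. This reduces matters to verifying that $\int_{0}^{\infty}[2(\cos\ath+\cosh\varphi)]^{-1/2}\dif\varphi<\infty$, which is immediate: the hypothesis $\ath<\pi$ gives $\cos\ath+1>0$ and keeps the integrand bounded near $\varphi=0$, while $\cosh\varphi$ grows exponentially at infinity.

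For the lower bound $\fk(\zeta,\theta)\ge C^{-1}\theta$, I would use the first representation together with the crude inequality $\cosh(\zeta\varphi)\ge 1$, then apply the elementary pointwise estimate $\cos\varphi-\cos\theta\le \theta-\varphi$ for $\varphi\in[0,\theta]$ (a one-line consequence of $|\sin|\le 1$ and the fundamental theorem of calculus). The remaining integral $\int_{0}^{\theta}(\theta-\varphi)^{-1/2}\dif\varphi=2\sqrt{\theta}$ is bounded below by a constant multiple of $\theta$ on $(0,\ath]$, completing the bound.

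For the decay $|\fk/\cosh(\pi\zeta)|\to 0$, the second representation rewrites the quotient as the cosine transform of $\varphi\mapsto[2(\cos\theta+\cosh\varphi)]^{-1/2}\in L^{1}(0,\infty)$, and the Riemann--Lebesgue lemma delivers the limit. For the higher $\theta$-derivatives, I would differentiate the second representation under the integral: each $\d_\theta^m[2(\cos\theta+\cosh\varphi)]^{-1/2}$ is a polynomial expression in $\sin\theta$, $\cos\theta$ with factors $[2(\cos\theta+\cosh\varphi)]^{-1/2-k}$ for $k\le m$, and each such factor is dominated by the $\theta$-independent, $\varphi$-integrable majorant $[2(\cos\ath+\cosh\varphi)]^{-1/2-m}$. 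This simultaneously justifies the interchange of $\d_\theta^m$ with the integral and supplies the uniform $\cosh(\pi\zeta)$ bound. There is no genuine obstacle in any step; the sole quantitative input used throughout is $\ath<\pi$, which always keeps $\cos\ath+\cosh\varphi\ge\cos\ath+1>0$.
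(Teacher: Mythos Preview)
Your proposal is correct and follows essentially the same route as the paper: both use the second integral representation in (\ref{kInt}) for the upper bound, the derivative bound, and the Riemann--Lebesgue limit, and the first representation together with $\cosh\ge 1$ for the lower bound. The only notable variation is in the lower bound, where the paper simply uses $2(\cos\varphi-\cos\theta)\le 4$ to get $\fk\ge \theta/(\pi\sqrt{2})$ directly, while you use the sharper $\cos\varphi-\cos\theta\le\theta-\varphi$ to obtain $\fk\gtrsim\sqrt{\theta}$ first; either works.

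One small imprecision: for the derivative bound you assert that each factor $[2(\cos\theta+\cosh\varphi)]^{-1/2-k}$ with $k\le m$ is dominated by the single majorant $[2(\cos\ath+\cosh\varphi)]^{-1/2-m}$. That inequality actually goes the wrong way once $2(\cos\ath+\cosh\varphi)>1$ (e.g.\ for all large $\varphi$), since a larger negative exponent on a base $>1$ gives a \emph{smaller} number. The obvious fix is to use the matching majorant $[2(\cos\ath+\cosh\varphi)]^{-1/2-k}$ for each $k$ (this is what the paper effectively does via the bound $2(\cos\theta+\cosh\varphi)\ge C^{-1}e^{\varphi}$, leading to $e^{-(1+2k)\varphi/2}$), and then sum over $k$. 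Your argument is otherwise sound.
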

\begin{proof}
By (\ref{kInt}), there exists $C=C(\ath)>0$ such that for all $\theta\in(0,\ath]$
\begin{align*}
	|\fk(\zeta,\theta)| \le \dfrac{2\cosh(\pi\zeta)}{\pi} \int_{0}^{\infty}\!\!\! \dfrac{\dif \varphi}{\sqrt{2(\cos\theta+\cosh \varphi)}} \le C\cosh(\pi\zeta) \int_{0}^{\infty}\!\!\! e^{-\frac{\varphi}{2}}\, \dif \varphi\le C \cosh(\pi\zeta).
\end{align*}
Moreover, since $\sqrt{\cos\varphi-\cos\theta}\le 2$
for $0 \le \varphi \le \theta \le \pi$, it follows from (\ref{kInt}) that
\begin{align*}
\fk(\zeta,\theta) = \dfrac{2}{\pi} \int_{0}^{\theta} \dfrac{\cosh(\zeta \varphi)}{\sqrt{2(\cos\varphi-\cos\theta)}}\, \dif \varphi \ge \dfrac{\theta}{\pi\sqrt{2}}. 
\end{align*}
Next, for $\theta\in(0,\ath]$, there exists a constant $C=C(\ath)>0$ such that for all $\varphi\in(0,\infty)$, $2(\cos\theta+\cosh\varphi) \ge C^{-1} e^{\varphi}$. Using this and the integral representation (\ref{kInt}), we get
\begin{align*}
|\d_\theta^m \fk(\zeta,\theta)| %\le C(m)\cosh(\pi\zeta) \sum_{k=1}^m \int_{0}^{\infty}\!\! (\cos\theta+\cosh\varphi)^{-\frac{1}{2}-k}\, \dif \varphi
\le C(m)\cosh(\pi\zeta) \sum_{k=1}^m \int_{0}^{\infty}\!\! \exp\big(-\tfrac{1+2k}{2}\varphi\big)\, \dif \varphi = C(m)\cosh(\pi\zeta), 
\end{align*}
for some constant $C(m)=C(\ath,m)>0$. Finally, by Riemann-Lebesgue lemma,
\begin{equation*}
\lim\limits_{|\zeta|\to\infty} \bigg|\int_{0}^{\infty}\!\! \dfrac{\cos(\zeta s)}{\sqrt{2(\cos\theta + \cosh s)}}\, \dif s\bigg| = 0 \qquad \text{for each } \ \theta\in (0,\theta_{\ast}].
\end{equation*}
This concludes the proof.
\end{proof}

The following integral bounds for the kernel $\hat{K}(\zeta,\theta)=\fk(\zeta,\theta)/\fk(\zeta,\theta_{\ast})$ will be useful for the $H^1$-estimates of $\Phi_{\ast}$.
\begin{lemma}\label{lemma:dkRatio}
Set $\absm{\zeta}\vcentcolon= \sqrt{1+|\zeta|^2}$, and $\theta_{\ast}\in (0,\pi)$. Let $\fk(\zeta,\theta)$ be given in (\ref{kInt}). Then there exist finite constant $C=C(\ath)>0$ such that for all $\zeta\in\R$,
\begin{equation}\label{kIneq}
	\absm{\zeta}\int_{0}^{\ath}\Big| \dfrac{\fk(\zeta,\theta)}{\fk(\zeta,\ath)} \Big|^2 \, \dif \theta \le C, \qquad \dfrac{1}{\absm{\zeta}}\int_{0}^{\ath}\Big| \dfrac{\d_\theta \fk(\zeta,\theta)}{\fk(\zeta,\ath)} \Big|^2 \, \dif \theta \le C.
\end{equation}
\end{lemma}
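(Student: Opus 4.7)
My plan is to split the argument according to the magnitude of $|\zeta|$. For $|\zeta|\le 1$ the factor $\absm{\zeta}$ is itself bounded, so both inequalities in (\ref{kIneq}) follow directly from Proposition \ref{prop:kbound}: the lower bound $\fk(\zeta,\ath) \ge C^{-1}\ath$ together with the uniform upper bounds $|\fk(\zeta,\theta)| \le C\cosh\pi$ and $|\d_\theta\fk(\zeta,\theta)|\le C$ on $(0,\ath]$ make the integrands pointwise bounded. The substantive case is $|\zeta|\ge 1$, where matching Laplace-type upper and lower bounds for the Mehler-Dirichlet integral (\ref{kInt}) are required.

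For $|\zeta|\ge 1$, I would first establish the refined upper bound $\fk(\zeta,\theta) \le C\cosh(|\zeta|\theta)/\sqrt{|\zeta|\sin\theta}$ valid when $|\zeta|\theta \ge c_0$, by a Laplace expansion of (\ref{kInt}) about $\varphi = \theta$, together with the complementary boundedness $\fk(\zeta,\theta)\le C$ in the Bessel regime $|\zeta|\theta \le c_0$ where $\fk$ is controlled by $I_0(|\zeta|\theta)$. Matching this, I would derive the sharp lower bound $\fk(\zeta,\ath) \ge c\cosh(|\zeta|\ath)/\sqrt{|\zeta|}$ by restricting (\ref{kInt}) to the boundary window $\varphi \in (\ath - 1/|\zeta|, \ath)$, using the elementary inequality $2(\cos\varphi-\cos\ath) \le 2(\ath-\varphi)$ to control the denominator and $\cosh(\zeta\varphi) \ge c\,\cosh(|\zeta|\ath)$ for the numerator. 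Splitting the $\theta$-integration at $\theta = c_0/|\zeta|$, the small-$\theta$ piece is negligible because $\fk(\zeta,\ath)^2$ is exponentially large while $\fk(\zeta,\theta)$ is only $O(1)$; on $(c_0/|\zeta|,\ath)$ the substitution $u = |\zeta|\theta$ reduces the estimate to
\begin{equation*}
	\int_{c_0/|\zeta|}^\ath \Big|\dfrac{\fk(\zeta,\theta)}{\fk(\zeta,\ath)}\Big|^2 \dif\theta \le \dfrac{C}{\cosh^2(|\zeta|\ath)}\int_{c_0}^{|\zeta|\ath}\dfrac{\cosh^2(u)}{u}\,\dif u \lesssim \dfrac{1}{|\zeta|},
\end{equation*}
using $\int_{c_0}^T \cosh^2(u)/u\,\dif u \sim e^{2T}/(4T)$. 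Multiplying by $\absm{\zeta}\sim|\zeta|$ closes the first inequality.

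For the second inequality I would establish the pointwise bound $|\d_\theta\fk(\zeta,\theta)| \le C|\zeta|\,\fk(\zeta,\theta)$ on $\R\times(0,\ath]$. In the Bessel regime $|\zeta|\theta\le c_0$ this follows from the expansions $\fk \approx I_0(|\zeta|\theta)$ and $\d_\theta\fk \approx |\zeta|\,I_1(|\zeta|\theta)$ together with the standard inequality $I_1(x) \le (x/2)\,I_0(x)$ for small $x$; in the Laplace regime $|\zeta|\theta\ge c_0$ it is recovered by differentiating the sharp asymptotic and recognising $\d_\theta\fk$ as $P^1_{-1/2+i\zeta}(\cos\theta)$ via (\ref{dPQ}). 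The second inequality then reduces immediately to the first:
\begin{equation*}
	\int_0^{\ath}\Big|\dfrac{\d_\theta\fk(\zeta,\theta)}{\fk(\zeta,\ath)}\Big|^2\dif\theta \le C^2|\zeta|^2 \int_0^{\ath}\Big|\dfrac{\fk(\zeta,\theta)}{\fk(\zeta,\ath)}\Big|^2\dif\theta \lesssim |\zeta|,
\end{equation*}
and dividing by $\absm{\zeta}\sim|\zeta|$ completes the proof. The main obstacle I anticipate is the sharp lower bound for $\fk(\zeta,\ath)$: restricting (\ref{kInt}) to a macroscopic subinterval would yield only $\fk(\zeta,\ath)\gtrsim\cosh(|\zeta|\ath/2)$ and cost a full factor of $|\zeta|$ in (\ref{kIneq}); the correct $1/|\zeta|$-scale boundary-layer analysis near $\varphi=\ath$, dovetailed with the Bessel-regime bookkeeping near $\theta = 0$, is what keeps both integrals in (\ref{kIneq}) bounded.
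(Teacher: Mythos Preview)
Your route for the first inequality is correct and genuinely different from the paper's. The paper quotes Olver's uniform asymptotic (\ref{coneAsy}), which reduces both inequalities to integral bounds on $I_0^{(k)}(yx)/I_0(x)$, and then proves those bounds (Proposition~\ref{prop:i1i0}) by a Jensen--inequality trick. You instead extract matching upper and lower Laplace-type bounds directly from the Mehler--Dirichlet integral (\ref{kInt}): the boundary-window lower bound $\fk(\zeta,\ath)\ge c\cosh(|\zeta|\ath)/\sqrt{|\zeta|}$ is exactly right, and after the substitution $u=|\zeta|\theta$ the estimate $\int_{c_0}^{T}\cosh^2(u)\,u^{-1}\,\dif u\lesssim e^{2T}/T$ plays the role of Proposition~\ref{prop:i1i0}. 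This is more elementary in that it avoids the external asymptotic, at the cost of doing the Laplace analysis by hand.

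For the second inequality there is a gap in your justification of the pointwise bound $|\d_\theta\fk|\le C|\zeta|\fk$. You have only established one-sided \emph{bounds} on $\fk$, and ``differentiating the sharp asymptotic'' is not meaningful for bounds; likewise the ``expansions $\fk\approx I_0$, $\d_\theta\fk\approx|\zeta|I_1$'' are precisely the content of (\ref{coneAsy}) and are not consequences of what you have derived. To close this you need two further ingredients: (i) your boundary-window lower bound on $\fk$ extended from $\theta=\ath$ to every $\theta$ with $|\zeta|\theta\ge c_0$ --- this works verbatim and gives $\fk(\zeta,\theta)\ge c\cosh(|\zeta|\theta)/\sqrt{|\zeta|\theta}$; and (ii) an \emph{upper} bound on $\d_\theta\fk$ obtained independently, either by quoting (\ref{coneAsy}) for $m=1$ (this is what the paper does after writing $P^1=(\tfrac14+\zeta^2)P^{-1}$ via (\ref{PG})--(\ref{GaHalf})), or by running the same Laplace analysis on the companion Mehler integral
\[
P^{-1}_{-\frac12+i\zeta}(\cos\theta)=\frac{2\sqrt2}{\pi\sin\theta}\int_0^\theta\cosh(\zeta\varphi)\sqrt{\cos\varphi-\cos\theta}\,\dif\varphi,
\]
which yields $|\d_\theta\fk|\le C\sqrt{|\zeta|/\theta}\,\cosh(|\zeta|\theta)$ and hence $|\d_\theta\fk|\le C|\zeta|\fk$ in the Laplace regime. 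Either route repairs the argument; as written, the second inequality does not yet follow from your bounds alone.
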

The key idea for the proof of Lemma \ref{lemma:dkRatio} is to use the asymptotic expansion for conical function stated below, which can be found in Page 473 of \cite{Olver1997}: for fixed $m\in\mathbb{N}\cup\{0\}$ and $0<\delta < \pi$, one has as $\zeta\to \infty$, 
\begin{equation}
	P^{-m}_{i\zeta-\frac{1}{2}}(\cos\theta) = \dfrac{I_{m}(\zeta \theta)}{\zeta^m\sqrt{\sinc \theta}}  \big\{ 1 + \mathcal{O}\big(\zeta^{-1}\big) \big\} \quad \text{uniformly in } \ \theta\in(0,\pi -\delta ),\label{coneAsy}
\end{equation}
where $\sinc\theta\vcentcolon= \frac{\sin\theta}{\theta}$, and $I_m(\cdot)$ is the modified Bessel's function of first kind. See also Page 22 of \cite{ZK66}. By obtaining similar integral bounds for the ratio $I_{0}(\zeta\theta)/I_{0}(\zeta\ath)$, one can translate the corresponding inequality into (\ref{kIneq}) via Proposition \ref{prop:kbound} and (\ref{coneAsy}). Note that $I_{m}(z)$ has the integral representation:
\begin{equation}\label{besselInt}
	I_{m}(z) = \dfrac{1}{\pi} \int_{0}^{\pi} \!\! e^{z\cos\varphi} \cos(m\varphi ) \, \dif \varphi \quad \text{for } \ m\in \mathbb{N}\cup\{0\} \ \text{ and } \ z\in\mathbb{C}.
\end{equation}
Using this, we prove the following:
\begin{proposition}\label{prop:i1i0}
	Let $I_0(x)$ be the modified Bessel's function of first kind with eigenvalue $0$, and denote $I_0^{(k)}(x) \vcentcolon= \frac{\dif^k I_0}{\dif x^k} (x)$. Then for all $x\in\R$ and $k\in\mathbb{N}\cup\{0\}$, 
	\begin{equation}\label{i1i0}
		\int_{0}^{1}\!\! \Big| \dfrac{I_0^{(k)}(y x)}{I_0(x)} \Big|^2 \, \dif y  \le 1  \qquad \text{ and } \qquad |x|\int_{0}^{1} \Big| \dfrac{I_0^{(k)}(y x)}{I_0(x)} \Big|^2 \, \dif y \le 3.
	\end{equation}
\end{proposition}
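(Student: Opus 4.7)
The plan is to reduce both estimates to elementary monotonicity and log-convexity properties of $I_0$. By the evenness of $I_0$, the integrand in (\ref{i1i0}) depends only on $|x|$, so one may assume $x\ge 0$. Differentiating the integral representation (\ref{besselInt}) in $x$ yields
\begin{equation*}
|I_0^{(k)}(t)| \le \frac{1}{\pi}\int_0^\pi |\cos\varphi|^k e^{t\cos\varphi}\,\dif \varphi \le I_0(t), \qquad t\ge 0,
\end{equation*}
since $|\cos\varphi|^k\le 1$. Combined with the monotonicity $I_0(yx)\le I_0(x)$ for $y\in[0,1]$ (which follows from $I_0'=I_1\ge 0$ on $[0,\infty)$), this gives $|I_0^{(k)}(yx)/I_0(x)|\le 1$ pointwise, proving the first inequality of (\ref{i1i0}) at once.

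For the second inequality, the same pointwise bound together with the change of variables $t=yx$ reduces the claim to
\begin{equation*}
\int_0^x I_0(t)^2\,\dif t \;\le\; 3\,I_0(x)^2, \qquad x\ge 0.
\end{equation*}
I split the range at $x=3$. For $0\le x\le 3$, monotonicity of $I_0^2$ gives $\int_0^x I_0(t)^2\,\dif t \le x\,I_0(x)^2 \le 3\,I_0(x)^2$. For $x>3$, I exploit the log-convexity of $I_0$, which follows by applying H\"older's inequality directly to (\ref{besselInt}): for $\lambda\in[0,1]$ and $t_0,t_1\ge 0$,
\begin{equation*}
I_0\big((1-\lambda)t_0+\lambda t_1\big) \le I_0(t_0)^{1-\lambda} I_0(t_1)^\lambda.
\end{equation*}
Specializing to $t_0=0$, $t_1=x$, $\lambda=t/x$ gives $I_0(t)\le I_0(x)^{t/x}$ on $[0,x]$. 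Squaring and integrating,
\begin{equation*}
\int_0^x I_0(t)^2\,\dif t \le \int_0^x I_0(x)^{2t/x}\,\dif t = \frac{x\,(I_0(x)^2-1)}{2\log I_0(x)} \le \frac{x\, I_0(x)^2}{2\log I_0(x)},
\end{equation*}
so the task reduces to showing the auxiliary inequality $\log I_0(x) \ge x/6$ for $x\ge 3$.

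To verify this, set $\phi(x) = \log I_0(x) - x/6$. Truncating the defining series after the first two terms gives $I_0(3) \ge 1 + 9/4 > e^{1/2}$, hence $\phi(3)>0$. Moreover $\phi'(x)=I_1(x)/I_0(x) - 1/6$, and the ratio $I_1/I_0$ is strictly increasing on $(0,\infty)$ (a standard consequence of log-convexity, or of the recurrence $I_1'=I_0 - I_1/x$). Keeping only the lowest-order term of $I_1$ gives $I_1(3)\ge 3/2$, while a geometric tail bound on $I_0(3)=\sum_{n\ge 0}(9/4)^n/(n!)^2$ yields $I_0(3)<5$; together these give $I_1(3)/I_0(3)>3/10>1/6$, so $\phi'(x)\ge\phi'(3)>0$ on $[3,\infty)$, and therefore $\phi>0$ throughout $[3,\infty)$, closing the argument.

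The main technical nuisance is the endpoint bound $I_1(3)/I_0(3)>1/6$, which is a genuine numerical inequality requiring a short hands-on estimate from the power series; one could replace it by an appeal to a sharper Tur\'an-type inequality for modified Bessel functions, but the elementary truncation argument is cleaner. Everything else—the integral representation, the H\"older/log-convexity step, and the monotonicity of $I_0$ and of $I_1/I_0$—is standard, and the case split at $x=3$ is dictated by the fact that $\log I_0(x)/x\to 0$ at the origin but tends to $1$ at infinity, so the log-convex bound is only effective for $x$ bounded away from $0$.
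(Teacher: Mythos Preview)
Your proof is correct and follows essentially the same route as the paper: both reduce to $x\ge 0$ by parity, use the integral representation to get $|I_0^{(k)}|\le I_0$ pointwise, and exploit log-convexity of $I_0$ (the paper phrases this as a Jensen-inequality step, arriving at the equivalent bound $|I_0^{(k)}(yx)|^2\le I_0(x)^{2y}$) to control the second integral. The only difference is in the endgame: where you split at $x=3$ and verify $\log I_0(x)\ge x/6$ for $x\ge 3$ via a derivative argument with power-series estimates at the endpoint, the paper instead obtains the uniform lower bound $I_0(x)\ge \tfrac{1}{3}e^{x/2}$ in one line by restricting the integral in (\ref{besselInt}) to $\theta\in[0,\pi/3]$, which handles all $x\ge 0$ at once and avoids both the case split and the hands-on numerics.
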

\begin{proof}
	First, we claim that for any $k\in\mathbb{N}\cup\{ 0 \}$, $y\in(0,1)$, and $x>0$,
	\begin{equation}\label{I1I0}
		\Big| \dfrac{I_0^{(k)}( y x)}{I_0(x)} \Big|^2 \le \pi^{2-2 y} \Big( \int_{0}^{\pi} e^{x\cos\theta}\,\dif \theta \Big)^{-2(1-y)} = |I_0(x)|^{-2(1-y)}.
	\end{equation}
	To show this, we observe that for any $k\in\mathbb{N}\cup\{0\}$ and $0< y \le 1$, 
	\begin{align}\label{I1I0'}
		\dfrac{I_0^{(k)}( y x)}{I_0(x)} = \dfrac{\smallint_{0}^{\pi} e^{ y x \cos\theta} \cos^k\! \theta \, \dif \theta}{\smallint_{0}^{\pi} e^{x\cos\theta}\,\dif \theta} = \Big( \int_{0}^{\pi} e^{x\cos\theta}\,\dif \theta \Big)^{-1+ y} \int_{0}^{\pi} |q(\theta,x)|^{y} \cos^k \!\theta\, \dif \theta,
		%\\ \dfrac{I_0(y x)}{I_0(x)} = \dfrac{\smallint_{0}^{\pi} e^{y x \cos\theta} \, \dif \theta}{\smallint_{0}^{\pi} e^{x\cos\theta}\,\dif \theta} = \Big( \int_{0}^{\pi} e^{x\cos\theta}\,\dif \theta \Big)^{-1+y} \int_{0}^{\pi} |q(\theta,x)|^{y} \, \dif \theta,
	\end{align}
	where $q(\theta,x)$ is such that
	\begin{equation*}
		q(\theta,x) \vcentcolon= \Big( \int_{0}^{\pi} e^{x\cos\theta}\,\dif \theta \Big)^{-1} e^{x\cos\theta} \ \text{ which implies that } \ \int_{0}^{\pi} q(x,\theta) \, \dif \theta = 1.
	\end{equation*}
	Exponentiating both sides of (\ref{I1I0'}) by $\frac{1}{y}$, then applying Jensen's inequality, we have
	\begin{gather*}
		\Big| \dfrac{I_0^{(k)}( y x)}{I_0(x)} \Big|^{\frac{1}{y}} %\le \Big( \int_{0}^{\pi} e^{x\cos\theta}\,\dif \theta \Big)^{\frac{y-1}{y}} \Big(\int_{0}^{\pi} |q(\theta,x)|^{y} \cos^k\!\theta\, \dif \theta\Big)^{\frac{1}{y}}
		\le \Big( \int_{0}^{\pi} e^{x\cos\theta}\,\dif \theta \Big)^{\frac{y-1}{y}}  \pi^{\frac{1}{y}-1} \int_{0}^{\pi} q(x,\theta) |\cos\theta|^{\frac{k}{y}}\, \dif \theta \le \pi^{\frac{1}{y}-1} \Big( \int_{0}^{\pi} e^{x\cos\theta}\,\dif \theta \Big)^{\frac{y-1}{y}}.
		%\Big| \dfrac{I_0(y x)}{I_0(x)} \Big|^{\frac{1}{y}} \le \Big( \int_{0}^{\pi} e^{x\cos\theta}\,\dif \theta \Big)^{\frac{y-1}{y}} \Big(\int_{0}^{\pi} |q(\theta,x)|^{y} \, \dif \theta\Big)^{\frac{1}{y}} \le \Big( \int_{0}^{\pi} e^{x\cos\theta}\,\dif \theta \Big)^{\frac{y-1}{y}}  \pi^{\frac{1}{y}-1} \int_{0}^{\pi} q(x,\theta) \, \dif \theta = \pi^{\frac{1}{y}-1} \Big( \int_{0}^{\pi} e^{x\cos\theta}\,\dif \theta \Big)^{\frac{y-1}{y}}.
	\end{gather*}
	Exponentiating both sides of the above inequalities with $2y$, one obtains (\ref{I1I0}).
	
	Next, we wish to complete the proof using (\ref{I1I0}). Since $I_0^{(k)}(-x)=I_0^{(k)}(x)$ for even $k\in\mathbb{N}\cup\{0\}$ and $I_0^{(k)}(-x)=-I_0^{(k)}(x)$ for odd  $k\in\mathbb{N}$. This implies $x\mapsto |I_0^{(k)}(x)|^2$ is an even function for all $k\in\mathbb{N}\cup\{0\}$. Thus it suffices to prove the case only for $x\ge 0$. By the integral representation (\ref{besselInt}), we have for all $k\in\mathbb{N}\cup\{0\}$, $y\in[0,1]$, and $x\ge 0$,
	\begin{equation*}
		\big| I_0^{(k)} (yx)\big| = \bigg|\dfrac{1}{\pi}\int_{0}^{\pi}e^{yx\cos\theta}\cos^k\theta\, \dif \theta\bigg| \le \bigg|\dfrac{1}{\pi}\int_{0}^{\pi}e^{yx\cos\theta}\, \dif \theta\bigg| = \big| I_0(yx) \big| \le \big| I_0(x) \big|,
	\end{equation*}
	where in the last inequality, we used the fact that $x\mapsto I_0(x)$ is monotone increasing in the domain $x\ge 0$. This proves the first inequality in (\ref{i1i0}). For the second inequality in (\ref{i1i0}), we use th integral formula (\ref{besselInt}) to get
	\begin{align}\label{Ipi3}
		|I_0(x)| = \dfrac{1}{\pi} \int_{0}^{\pi}\!\! e^{x\cos\theta}\, \dif \theta  \ge  \dfrac{1}{\pi} \int_{0}^{\pi/3}\!\! e^{x\cos\theta}\, \dif \theta \ge \dfrac{1}{3} e^{x/2}\quad \text{for } \ x\ge 0.
	\end{align}
	Combining the above bound with (\ref{I1I0}), one has for all $k\in\mathbb{N}\cup \{0\}$ and $x\ge 0$,
	\begin{align*}
		x\!\!\int_{0}^{1}\! \Big| \dfrac{I_0^{(k)}(yx)}{I_0(x)} \Big|^2 \dif y \le x\!\! \int_{0}^{1}\!\! \big|I_0(x)\big|^{-2(1-y)} \dif y %\le x \int_{0}^{1} \big(\frac{1}{3} e^{x/2}\big)^{-2(1-y)}\, \dif y 
		\le 9 x e^{-x}\!\! \int_0^1\!\!\! \big(9^{-1} e^x \big)^y \dif y = \dfrac{x(1-9e^{-x})}{x- \ln 9} \le 3. 
	\end{align*}
	This proves the second inequality of (\ref{i1i0}), which concludes the proof.
\end{proof}

Combining the asymptotic expansion (\ref{coneAsy}) and Propositions \ref{prop:kbound}, \ref{prop:i1i0}, we are ready to show Lemma \ref{lemma:dkRatio}. 
\begin{proof}[Proof of Lemma \ref{lemma:dkRatio}]
By the asymptotic expansion (\ref{coneAsy}), there exist finite constants $C_0>0$, $M_0>0$ independent of $(\zeta,\theta)$ such that for $|\zeta|\ge M_0$,
\begin{equation}\label{asym0}
	C_0^{-1} \dfrac{I_0(\zeta\theta)}{\sqrt{\sinc\theta}}  \le | \fk(\zeta,\theta) | \le C_0 \dfrac{I_0(\zeta\theta)}{\sqrt{\sinc\theta}} .
\end{equation}
Since $1 \le 1/\sinc\theta \le 1/\sinc\ath $ for $\theta\in[0,\ath]$, it follows that for $|\zeta|\ge M_0$,
\begin{align*}
\int_{0}^{\ath} \Big| \dfrac{\fk(\zeta,\theta)}{\fk(\zeta,\ath)} \Big|^2\, \dif \theta \le C_0^2 \int_{0}^{\ath} \Big| \dfrac{I_0(\zeta\theta)}{I_0(\zeta\ath)} \Big|^2 \, \dif \theta  = C_0^2\ath \int_{0}^{1} \Big| \dfrac{I_0(y x )}{I_0(x)} \Big|^2 \, \dif y,
\end{align*}
where in the last equality, we applied the change of variables $x\vcentcolon= \zeta \ath$ and $y\vcentcolon=\theta/\ath$. Applying Proposition \ref{prop:i1i0}, we obtain the first inequality of (\ref{kIneq}) for $|\zeta|\ge M_0$. 

Next, by definition (\ref{kDef}) and the identity (\ref{dPQ}), we have for $(\zeta,\theta)\in(0,\ath)\times\R$,
\begin{equation*}
\d_\theta \fk(\zeta,\theta) = -\sin\theta \dfrac{\dif P_{-\frac{1}{2}+i\zeta}}{\dif x} (x) \Big\vert_{x=\cos\theta} = P_{-\frac{1}{2}+i\zeta}^{1}(\cos\theta).
\end{equation*} 
From Section 8.752 of \cite{GR2007}, the following holds for $\mu\in\mathbb{C}$, $x\in(-1,1)$, and $m\in\mathbb{N}$,
\begin{equation}
	P_{\mu}^{-m}(x) = (-1)^m\dfrac{\Gamma(\mu-m+1)}{\Gamma(\mu+m+1)}P_{\mu}^m(x),\label{PG}
\end{equation}
where $\Gamma$ denotes the Gamma function, and it satisfies the following identities (See Sections 8.332 of \cite{GR2007}),
\begin{equation}
	\Big|\Gamma\big(\dfrac{1}{2}+i\zeta\big)\Big|^2 = \dfrac{\pi}{\cosh(\pi\zeta)}, \qquad \overline{\Gamma(z)} = \Gamma(\bar{z}), \qquad \Gamma(z)\Gamma(1-z) = \dfrac{\pi}{\sin(\pi z)}, \label{GammaF} 
\end{equation}
for $\zeta\in\R$ and $z\in\mathbb{C}$. By induction argument, it can be shown that
\begin{equation}
	\Big| \Gamma\big(\dfrac{1}{2} \pm m + i\zeta \big) \Big|^2 = \dfrac{\pi}{\cosh(\pi \zeta)} \prod_{k=1}^{m}\Big((k-\tfrac{1}{2})^2 + \zeta^2 \Big)^{\pm 1},\label{GaHalf}
\end{equation}
for $m\in\mathbb{N}$ and $\zeta\in\R$. Using (\ref{GaHalf}) and (\ref{PG}), it follows that
\begin{align*}
\big|\d_\theta \fk(\zeta,\theta)\big| = \Big|P_{-\frac{1}{2}+i\zeta}^{1}(\cos\theta)\Big| = \Big|\dfrac{\Gamma(\frac{3}{2}+i\zeta)}{\Gamma(-\frac{1}{2}+i\zeta)} P_{-\frac{1}{2}+i\zeta}^{-1}(\cos\theta)\Big| = \big(\frac{1}{4}+\zeta^2\big) \big|P_{-\frac{1}{2}+i\zeta}^{-1}(\cos\theta)\big|. 
\end{align*}
By the asymptotic expansion (\ref{coneAsy}), there exist finite constants $C_1>0$, $M_1>0$ independent of $(\zeta,\theta)$ such that for $|\zeta|\ge M_1$,
\begin{equation}\label{asym1}
C_1^{-1}\dfrac{1+4\zeta^2}{\zeta} \dfrac{I_1(\zeta\theta) }{\sqrt{\sinc\theta}}  \le \big|\d_\theta \fk(\zeta,\theta) \big| \le C_1\dfrac{(1+4\zeta^2)}{\zeta} \dfrac{I_1(\zeta\theta)}{\sqrt{\sinc\theta}}. 
\end{equation}
Combining (\ref{asym0}) and (\ref{asym1}), we get for $|\zeta|\ge M\equiv \max\{M_0,M_1\}$ and $\theta\in(0,\ath]$,
\begin{align*}
	\Big|\dfrac{\d_\theta \fk(\zeta,\theta)}{\fk(\zeta,\ath)}\Big|^2 \le C_0 C_1 \dfrac{\sinc \ath}{\sinc\theta} \dfrac{(1+4\zeta^2)^2}{\zeta^2} \Big|\dfrac{I_1(\zeta\theta)}{I_0(\zeta\ath)}\Big|^2 \le C \absm{\zeta}^2 \Big|\dfrac{I_1(\zeta\theta)}{I_0(\zeta\ath)}\Big|^2.
\end{align*}  
Setting the change of variables $x=\zeta\ath$ and $y=\theta/\ath$ then 
\begin{align*}
\dfrac{1}{\absm{\zeta}}\int_{0}^{\ath}\!\!\Big|\dfrac{\d_\theta \fk(\zeta,\theta)}{\fk(\zeta,\ath)}\Big|^2 \,\dif \theta \le & C \absm{\zeta} \int_{0}^{\ath} \! \Big|\dfrac{I_1(\zeta\theta)}{I_0(\zeta\ath)}\Big|^2 \, \dif \theta \le C \absm{x} \int_{0}^{1} \!  \Big|\dfrac{I_1(y x)}{I_0(x)}\Big|^2 y \, \dif y.
\end{align*}
Applying Proposition \ref{prop:i1i0}, we obtain the second inequality of (\ref{kIneq}) for $|\zeta| \ge M$.

Finally, If $|\zeta|\le M$ then one can employ Proposition \ref{prop:kbound} to get
\begin{align*}
\absm{\zeta}\!\! \int_{0}^{\ath}\! \Big| \dfrac{\fk(\zeta,\theta)}{\fk(\zeta,\ath)} \Big|^2\dif \theta \le \dfrac{C\absm{M}\cosh^2(\pi M)}{\ath}, \quad \dfrac{1}{\absm{\zeta}}\!\! \int_{0}^{\ath}\! \Big| \dfrac{\d_\theta\fk(\zeta,\theta)}{\fk(\zeta,\ath)} \Big|^2\dif \theta  \le  \dfrac{ C\cosh^2(\pi M)}{\ath}.
\end{align*}
Since $M=\max\{M_0,M_1\}>0$ is independent of $(\zeta,\theta)$, this concludes the proof. 
\end{proof}

The next task is to generalise Lemma \ref{lemma:dkRatio} for the derivative $\d_{\theta}^m$ with $m\ge 2$. This will be used to prove the $H^{s-m+1/2}(\R)$-estimate for $\d_{\theta}^m\Phi_{\ast}$.
\begin{proposition}\label{prop:dkRatio-m}
	Let $\theta_{\ast}\in (0,\pi)$ and $\fk(\zeta,\theta)$ be the function given in (\ref{kInt}). Then for each $m\ge 2$, there exist finite constants $C=C(m,\ath)>0$ such that for $\zeta\in\R$,
	\begin{equation}\label{kIneq-m}
		\dfrac{1}{\absm{\zeta}^{2m-1}}\int_{0}^{\ath}\Big| \dfrac{\d_\theta^m \fk(\zeta,\theta)}{\fk(\zeta,\ath)} \Big|^2 \theta^{2 m} \, \dif \theta \le C \ \text{ where } \ \absm{\zeta}\vcentcolon= \sqrt{1+\zeta^2}.
	\end{equation}
\end{proposition}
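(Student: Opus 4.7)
The plan is to prove Proposition 3.5 by induction on $m$, using the Legendre ODE satisfied by $\fk(\zeta,\theta)$ together with Lemma \ref{lemma:dkRatio} (which handles the base cases $m=0,1$) to set up a recursion for the higher-order $\theta$-derivatives. The fundamental observation is that $\fk(\zeta,\theta)=\hat{\Phi}_{\ast}(\zeta,\theta)$ satisfies the Legendre equation (\ref{Legendre'}),
\begin{equation*}
\partial_\theta^2 \fk + \cot\theta\,\partial_\theta \fk - \tfrac{1+4\zeta^2}{4}\fk = 0,
\end{equation*}
so differentiating $m-2$ times and rearranging yields the identity
\begin{equation*}
\partial_\theta^m \fk = \tfrac{1+4\zeta^2}{4}\,\partial_\theta^{m-2}\fk \;-\; \sum_{j=0}^{m-2}\binom{m-2}{j}\bigl(\partial_\theta^{m-2-j}\cot\theta\bigr)\,\partial_\theta^{j+1}\fk.
\end{equation*}
This expresses every $\partial_\theta^m\fk$ in terms of derivatives of order at most $m-1$ multiplied by singular $\cot\theta$-derivatives.

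The next step is to quantify the singularity by writing $\cot\theta = 1/\theta + g(\theta)$ with $g$ analytic near $0$, which gives the pointwise bound $|\partial_\theta^{k}\cot\theta|\le C_k\,\theta^{-k-1}$ uniformly on $(0,\ath]$. Multiplying the recursion by $\theta^m$ and using $\theta^m\,|\partial_\theta^{m-2-j}\cot\theta|\le C\,\theta^m\,\theta^{-(m-1-j)}=C\,\theta^{\,j+1}$, I get a pointwise inequality of the form
\begin{equation*}
\theta^m\,|\partial_\theta^m\fk| \;\le\; C\langle\zeta\rangle^2\,\theta^2\,\bigl|\theta^{m-2}\partial_\theta^{m-2}\fk\bigr| \;+\; C\sum_{k=1}^{m-1}\bigl|\theta^k\partial_\theta^k\fk\bigr|,
\end{equation*}
in which $\theta^2\le \ath^2$ provides a clean bound on the weight of the leading term. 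Squaring, dividing by $|\fk(\zeta,\ath)|^2$, and integrating over $(0,\ath)$ transfers the estimate to weighted $L^2$ integrals of the lower-order derivatives.

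For the base case $m=2$, a direct application of the ODE after multiplication by $\theta^2$ gives $|\theta^2\partial_\theta^2\fk|\le C\langle\zeta\rangle^2|\fk|+C|\partial_\theta\fk|$ (using $|\theta\cot\theta|\le C$); squaring and integrating, then invoking the two estimates of Lemma \ref{lemma:dkRatio}, yields the bound $C\langle\zeta\rangle^4\cdot\langle\zeta\rangle^{-1}+C\langle\zeta\rangle = C\langle\zeta\rangle^3$, matching the exponent $2m-1=3$. For the inductive step $m\ge 3$, I apply the induction hypothesis to the summands with $k\ge 2$ (giving $C\langle\zeta\rangle^{2k-1}\le C\langle\zeta\rangle^{2m-3}$) and Lemma \ref{lemma:dkRatio} together with a trivial $\theta$-weight estimate to the $k=0,1$ terms; the leading contribution $C\langle\zeta\rangle^4\int\theta^{2(m-2)}|\partial_\theta^{m-2}\fk/\fk(\zeta,\ath)|^2\,d\theta$ is controlled by the inductive bound $\langle\zeta\rangle^{2(m-2)-1}$, producing precisely $C\langle\zeta\rangle^{2m-1}$ on the right-hand side.

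The main technical obstacle I anticipate is the careful bookkeeping of the $\cot\theta$-derivatives: one must verify that the precise singularity $|\partial_\theta^{k}\cot\theta|\le C/\theta^{k+1}$ combined with the weight $\theta^m$ produces exactly the weights $\theta^{j+1}$ on the derivatives $\partial_\theta^{j+1}\fk$, so that each term on the right can be absorbed into the induction with the correct power of $\langle\zeta\rangle$. Apart from this indexing care, the remainder of the argument is the straightforward combination of the ODE recursion with Lemma \ref{lemma:dkRatio}, and no further asymptotic or integral representation of $\fk$ is needed — the bound holds uniformly in $\zeta\in\R$ without separating small and large $|\zeta|$.
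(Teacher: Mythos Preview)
Your argument is correct and takes a genuinely different route from the paper's proof.

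The paper expresses $\partial_\theta^m\fk$ via Fa\`a di Bruno's formula as a combination of $P_\mu^{(k)}(\cos\theta)$, rewrites these in terms of $P_\mu^{-k}$ using the Gamma-function identity (\ref{PG}), invokes the Bessel-function asymptotic (\ref{coneAsy}) for large $|\zeta|$, converts $I_k$ into derivatives of $I_0$ through a Chebyshev-polynomial identity, and finally appeals to Proposition~\ref{prop:i1i0} for the integrated Bessel ratios; the small-$|\zeta|$ case is handled separately via Proposition~\ref{prop:kbound}. Your approach bypasses all of this: you differentiate the Legendre ODE (\ref{Legendre'}) $m-2$ times, use only the elementary bound $|\partial_\theta^k\cot\theta|\le C_k\theta^{-k-1}$, and induct on $m$ with Lemma~\ref{lemma:dkRatio} furnishing the base cases $m=0,1$. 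The bookkeeping you flag is correct---the weight $\theta^m$ against $|\partial_\theta^{m-2-j}\cot\theta|\le C\theta^{-(m-1-j)}$ gives exactly $\theta^{j+1}$---and the dominant term $C\langle\zeta\rangle^4\cdot\langle\zeta\rangle^{2(m-2)-1}=C\langle\zeta\rangle^{2m-1}$ closes the induction with the right exponent. Your method is more self-contained (no asymptotics, no Bessel machinery, no case split in $\zeta$) and would likely generalise more readily to related singular ODEs; the paper's method is more explicit in that it produces a pointwise formula for $\partial_\theta^m\fk$, but at the cost of importing several special-function identities.
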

\begin{proof}
	Since $\d_\theta^k \cos\theta = \cos(\theta+k\frac{\pi}{2})$, one has from (\ref{kDef}) and Fa\`a di Bruno's formula
	\begin{gather*}
		\d_\theta^m \fk(\zeta,\theta) = \sum_{k=1}^{m} c_{m,k}(\theta) \dfrac{\dif^k P_{\mu}}{\dif x^k}(\cos\theta) \qquad \text{where}\\
		\mu=-\frac{1}{2}+i\zeta \ \ \text{ and } \ \ c_{m,k}(\theta)\vcentcolon= B_{m,k}\big( \cos(\theta+\tfrac{\pi}{2}), \dotsc, \cos(\theta+(m-k+1)\tfrac{\pi}{2}) \big).
	\end{gather*} 
	Here $B_{m,k}$ denotes the $(m,k)$-th Bell's polynomial given by
	\begin{equation*}
		B_{m,k}(x_1,\dotsc,x_{m-k+1}) \vcentcolon= \sum \dfrac{m!}{j_1!\cdots j_{m-k+1}!} \Big(\dfrac{x_1}{1!}\Big)^{j_1} \!\! \cdots  \Big(\dfrac{x_{m-k+1}}{(m-k+1)!}\Big)^{j_{m-k+1}},
	\end{equation*}
	where the sum is taken over all sequences $j_1,j_2,\dotsc,j_{m-k+1}$ of non-negative integers satisfying the constraints:
	\begin{gather*}
		j_1+j_2 + \cdots + j_{m-k+1} = k, \quad \text{ and } \quad j_1 + 2 j_2 + \cdots + (m-k+1) j_{m-k+1} = m.
	\end{gather*}
	It follows that there exists some constant $C=C(m)>0$ independent of $\theta\in(0,\ath]$ such that $c_{m,k}(\theta)\le C$ for all $k=1,\dotsc,m$. Next, by the identities for Legendre's function (\ref{dPQ}) and (\ref{PG}), we also have
	\begin{align}
		\d_\theta^m \fk(\zeta,\theta) = \sum_{k=1}^m (-1)^{k}  c_{m,k}(\theta) \dfrac{P_{\mu}^k(\cos\theta)}{(\sin\theta)^k} %=& \sum_{k=1}^m c_{m,k}(\theta) \dfrac{\Gamma(\mu+k+1)}{\Gamma(\mu-k+1)} \dfrac{P_{\mu}^{-k}(\cos\theta)}{(\sin\theta)^k}\nonumber\\
		=& \sum_{k=1}^m c_{m,k}(\theta) \dfrac{\Gamma(\tfrac{1}{2}+k+i\zeta)}{\Gamma(\tfrac{1}{2}-k+i\zeta)} \dfrac{P_{\mu}^{-k}(\cos\theta)}{(\sin\theta)^k}, \label{dmfk}
	\end{align}
	where by (\ref{GaHalf}), we have for each $k=1,\dotsc,m$
	\begin{equation}
		\Big|\dfrac{\Gamma(\tfrac{1}{2}+k+i\zeta)}{\Gamma(\tfrac{1}{2}-k+i\zeta)} \Big| = \prod_{\alpha=1}^k \Big((\alpha-\tfrac{1}{2})^2 + \zeta^2\Big).\label{kGamma}
	\end{equation}
	Next, let $\{T_m(z)\}_{m=\mathbb{N}\cup\{0\}}$ be Chebyshev's polynomial of the first kind, which is defined indirectly by the angle sum formula as:
	\begin{equation*}
		T_m(\cos\varphi) = \cos(m\varphi) \quad \text{or} \quad T_m(\cosh\varphi) = \cosh(m\varphi).
	\end{equation*}
	Then (\ref{besselInt}) implies that for $m\in\mathbb{N}\cup\{0\}$,
	\begin{equation*}
		I_m(z) = T_{m}\big(\dfrac{\dif}{\dif z}\big) I_0(z) = \sum_{k=0}^{m} \dfrac{1}{k!} \dfrac{\dif^k T_m}{\dif z^k}\Big\vert_{z=0} \dfrac{\dif^k I_0}{\dif z^k}(z). 
	\end{equation*}
	Thus for each $m\in\mathbb{N}\cup\{0\}$, there exists a constant $C(m)>0$ such that
	\begin{equation}\label{Ider}
		|I_{m}(z)| \le C(m) \sum_{k=0}^m |I_0^{(k)}(z)|, \quad \text{where} \quad I_0^{(k)}(z) \vcentcolon= \dfrac{\dif^k I_0}{\dif x^k} (z).
	\end{equation}
	Using the asymptotic formula (\ref{coneAsy}) and the inequality (\ref{Ider}), there exists some $M>0$ independent of $\ath$, and $C=C(m,\ath)>0$ such that for $|\zeta|\ge M$ and $\theta\in(0,\ath]$,
	\begin{equation}
		\big|P_{\mu}^{-k}(\cos\theta)\big| \le C\dfrac{|I_k(\zeta\theta)|}{\zeta^k\sqrt{\sinc\theta}}  \le \dfrac{C}{\zeta^k\sqrt{\sinc\theta}} \sum_{\alpha=0}^k \big|I_0^{(\alpha)}(\zeta\theta)\big|,\label{kBessel}
	\end{equation}
	where $I_0^{(\alpha)}(x)\vcentcolon=\tfrac{\dif^\alpha I_0}{\dif x^{\alpha}}(x)$ denotes the $\alpha$-th derivative of $I_0(x)$. Substituting inequalities (\ref{kGamma})--(\ref{kBessel}) into (\ref{dmfk}), we obtain that for $|\zeta|\ge M$ and $\theta\in(0,\ath]$,
	\begin{align*}
		|\d_\theta^m \fk(\zeta,\theta)| \le  \dfrac{C}{\sqrt{\sinc\theta}} \sum_{k=0}^m \absm{\zeta}^{k} \dfrac{|I_{0}^{(k)}(\zeta\theta)|}{(\sin\theta)^k}. 
	\end{align*}
	Repeating the same argument in the proof of Lemma \ref{lemma:dkRatio}, one has
	\begin{align*}
		& \dfrac{1}{\absm{\zeta}^{2m-1}}\int_0^{\ath} \Big|\dfrac{\d_\theta^m \fk (\zeta,\theta)}{\fk(\zeta,\ath)}\Big|^2 (\sin\theta)^{2m}\,\dif\theta  \le \left\{ \begin{aligned}
			&C && \text{if } \ |\zeta| < M,\\
			& C \absm{\zeta} \int_{0}^{1} \sum_{k=0}^m\Big| \dfrac{I_0^{(k)}(yx)}{I_0(x)} \Big|^2\, \dif y && \text{if } \ |\zeta| \ge M.
		\end{aligned}  \right.
	\end{align*}
	Applying Proposition \ref{prop:i1i0}, and using that $0< \sinc\theta \le 1$ for $\theta\in (0,\ath]$, we obtain the desired estimate.
\end{proof}

\subsection{Sobolev Estimates for \texorpdfstring{$\Phi_{\ast}$}{Phistar}}
\begin{lemma}\label{lemma:bdryEst}
Let $\Phi_{\ast}(\sigma,\theta)$ be given by (\ref{UpConv}), and $\ath\in(0,\pi)$. Then there exists a constant $C=C(\ath)>0$ such that the following estimates hold:
\begin{align*}
\int_{0}^{\ath}\!\!\Big\{ \| \Phi_{\ast}(\cdot,\theta) \|_{H^{s+\frac{1}{2}}(\R)}^2 + \| \d_\theta \Phi_{\ast}(\cdot,\theta)\|_{H^{s-\frac{1}{2}}(\R)}^2 \Big\} \, \dif \theta \le C \|\phi\|_{H^{s}(\R)}^2.
\end{align*}
\end{lemma}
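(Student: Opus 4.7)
The plan is to move to Fourier variables and invoke the integral bounds of Lemma \ref{lemma:dkRatio} directly. Since $\Phi_{\ast}(\sigma,\theta)$ is the convolution in $\sigma$ of $\phi$ with the kernel $K(\cdot,\theta)$ from (\ref{UpConv}), we have $\widehat{\Phi_{\ast}}(\zeta,\theta) = \hat{\phi}(\zeta)\,\hat{K}(\zeta,\theta)$ with $\hat{K}(\zeta,\theta) = \fk(\zeta,\theta)/\fk(\zeta,\ath)$. Therefore by Plancherel's theorem applied in the $\sigma$ variable (for each fixed $\theta$), followed by Fubini to interchange the $\theta$-integration with the $\zeta$-integration,
\begin{align*}
\int_{0}^{\ath}\!\!\|\Phi_{\ast}(\cdot,\theta)\|_{H^{s+\frac{1}{2}}(\R)}^{2}\,\dif\theta
&=\int_{\R}\absm{\zeta}^{2s+1}|\hat{\phi}(\zeta)|^{2}\bigg(\int_{0}^{\ath}\!\!\Big|\dfrac{\fk(\zeta,\theta)}{\fk(\zeta,\ath)}\Big|^{2}\!\dif\theta\bigg)\dif\zeta.
\end{align*}

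The inner $\theta$-integral is exactly the quantity estimated by the first inequality in (\ref{kIneq}) of Lemma \ref{lemma:dkRatio}, which provides the bound $C/\absm{\zeta}$. Substituting yields
\begin{equation*}
\int_{0}^{\ath}\!\!\|\Phi_{\ast}(\cdot,\theta)\|_{H^{s+\frac{1}{2}}(\R)}^{2}\,\dif\theta
\le C\int_{\R}\absm{\zeta}^{2s}|\hat{\phi}(\zeta)|^{2}\,\dif\zeta
=C\|\phi\|_{H^{s}(\R)}^{2}.
\end{equation*}
For the second term, since $\d_{\theta}\Phi_{\ast}$ inherits $\widehat{\d_{\theta}\Phi_{\ast}}(\zeta,\theta)=\hat{\phi}(\zeta)\,\d_{\theta}\fk(\zeta,\theta)/\fk(\zeta,\ath)$, the same manoeuvre gives
\begin{equation*}
\int_{0}^{\ath}\!\!\|\d_{\theta}\Phi_{\ast}(\cdot,\theta)\|_{H^{s-\frac{1}{2}}(\R)}^{2}\,\dif\theta
=\int_{\R}\absm{\zeta}^{2s-1}|\hat{\phi}(\zeta)|^{2}\bigg(\int_{0}^{\ath}\!\!\Big|\dfrac{\d_{\theta}\fk(\zeta,\theta)}{\fk(\zeta,\ath)}\Big|^{2}\!\dif\theta\bigg)\dif\zeta.
\end{equation*}
The second inequality in (\ref{kIneq}) bounds the inner $\theta$-integral by $C\absm{\zeta}$, so that the outer integral is controlled by $C\int_{\R}\absm{\zeta}^{2s}|\hat\phi(\zeta)|^{2}\dif\zeta = C\|\phi\|_{H^{s}(\R)}^{2}$. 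Summing the two bounds completes the proof.

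There is essentially no obstacle beyond verifying that the interchanges of integration are legitimate; for $\phi\in H^{s}(\R)$ with $s>\tfrac52$ (or indeed for any $s\in\R$ with $\phi\in H^{s}$) the integrand is non-negative and measurable, so Tonelli's theorem applies unconditionally. All of the delicate work — obtaining the correct powers of $\absm{\zeta}$ in both inequalities of (\ref{kIneq}) — has already been absorbed into Lemma \ref{lemma:dkRatio}, so the lemma here is a clean Plancherel--Fubini reduction to that result.
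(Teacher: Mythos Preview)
Your proof is correct and follows essentially the same approach as the paper's: pass to the Fourier side via the representation $\widehat{\Phi_{\ast}}(\zeta,\theta)=\hat\phi(\zeta)\,\fk(\zeta,\theta)/\fk(\zeta,\ath)$, swap the $\theta$- and $\zeta$-integrals, and apply the two inequalities of Lemma~\ref{lemma:dkRatio}. The only addition in your write-up is the explicit Tonelli justification for the interchange, which the paper leaves implicit.
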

\begin{proof}
By the construction (\ref{Uphat}) and Lemma \ref{lemma:dkRatio}, we have
\begin{align*}
&\int_{0}^{\ath}\!\! \| \Phi_{\ast}(\cdot,\theta) \|_{H^{s+\frac{1}{2}}(\R)}^2 \, \dif \theta %=\!\! \int_{0}^{\ath}\!\!\!\! \int_{\R}\! \absm{\zeta}^{2s+1} |\hat{\Phi}_{\ast}(\zeta)|^2  \dif \zeta \dif \theta 
= \int_{0}^{\ath}\!\!\!\! \int_{\R} \absm{\zeta}^{2s+1} |\hat{\phi}(\zeta)|^2 \Big| \dfrac{\fk(\zeta,\theta)}{\fk(\zeta,\ath)} \Big|^2\, \dif \zeta \dif \theta\\
=& \int_{\R}\!\! \absm{\zeta}^{2s} |\hat{\phi}(\zeta)|^2 \bigg( \absm{\zeta}\!\!\int_{0}^{\ath}\! \Big| \dfrac{\fk(\zeta,\theta)}{\fk(\zeta,\ath)} \Big|^2 \, \dif \theta \bigg)\, \dif \zeta \le C \int_{\R}\!\! \absm{\zeta}^{2s} |\hat{\phi}(\zeta)|^2\,\dif \zeta = C \|\phi\|_{H^{s}(\R)}^2.
\end{align*}
Similarly, we also have 
\begin{align*}
	&\int_{0}^{\ath}\!\! \| \d_\theta\Phi_{\ast}(\cdot,\theta) \|_{H^{s-\frac{1}{2}}(\R)}^2 \, \dif \theta %=\!\! \int_{0}^{\ath}\!\!\!\! \int_{\R}\! \absm{\zeta}^{2s-1} |\d_\theta\hat{\Phi}_{\ast}(\zeta)|^2  \dif \zeta \dif \theta 
	= \int_{0}^{\ath}\!\!\!\! \int_{\R} \absm{\zeta}^{2s-1} |\hat{\phi}(\zeta)|^2 \Big| \dfrac{\d_\theta\fk(\zeta,\theta)}{\fk(\zeta,\ath)} \Big|^2 \dif \zeta \dif \theta\\
	&= \int_{\R}\!\! \absm{\zeta}^{2s} |\hat{\phi}(\zeta)|^2 \bigg( \dfrac{1}{\absm{\zeta}} \int_{0}^{\ath}\! \Big| \dfrac{\d_\theta\fk(\zeta,\theta)}{\fk(\zeta,\ath)} \Big|^2 \, \dif \theta \bigg)\, \dif \zeta \le C \int_{\R}\!\! \absm{\zeta}^{2s} |\hat{\phi}(\zeta)|^2\,\dif \zeta = C \|\phi\|_{H^{s}(\R)}^2.
\end{align*}
This completes the proof.
\end{proof}

\begin{lemma}\label{lemma:higherPhi}
	Let $\Phi_{\ast}(\sigma,\theta)$ be given by (\ref{UpConv}), $m\in\mathbb{N}$, and $\ath\in(0,\pi)$. Then for each $m\ge 2$, there exists a constant $C=C(m,\ath)>0$ such that:
	\begin{align*}
		\int_{0}^{\ath}\!\! \| \d_\theta^m \Phi_{\ast}(\cdot,\theta)\|_{H^{s+\frac{1}{2}-m}(\R)}^2 \theta^{2m} \, \dif \theta \le C \|\phi\|_{H^{s}(\R)}^2.
	\end{align*}
\end{lemma}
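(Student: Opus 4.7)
The proof will be a direct generalization of Lemma \ref{lemma:bdryEst} to higher $\theta$-derivatives, with Proposition \ref{prop:dkRatio-m} playing the role that Lemma \ref{lemma:dkRatio} played there. The plan is to pass through the Fourier side in $\sigma$, separate the $\zeta$ and $\theta$ integrals via Fubini, and bound the inner $\theta$-integral uniformly in $\zeta$ (modulo a polynomial factor in $\absm{\zeta}$ that is exactly what is needed to close the Sobolev scale).

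First, I would differentiate the representation formula (\ref{Uphat}) in $\theta$ to obtain, for each $\zeta\in\R$ and $\theta\in(0,\ath]$,
\begin{equation*}
	\widehat{\d_\theta^m \Phi_{\ast}}(\zeta,\theta) \;=\; \hat{\phi}(\zeta)\,\dfrac{\d_\theta^m \fk(\zeta,\theta)}{\fk(\zeta,\ath)}.
\end{equation*}
By Plancherel's identity applied in the $\sigma$-variable, this gives
\begin{equation*}
	\|\d_\theta^m \Phi_{\ast}(\cdot,\theta)\|_{H^{s+\frac{1}{2}-m}(\R)}^{2} \;=\; \int_{\R}\!\! \absm{\zeta}^{2s+1-2m}\,|\hat{\phi}(\zeta)|^{2}\,\Big|\dfrac{\d_\theta^m \fk(\zeta,\theta)}{\fk(\zeta,\ath)}\Big|^{2}\,\dif\zeta .
\end{equation*}

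Next, I would multiply both sides by the weight $\theta^{2m}$ and integrate over $\theta\in(0,\ath)$. The integrand is non-negative, so Tonelli's theorem justifies exchanging the order of integration:
\begin{equation*}
	\int_{0}^{\ath}\!\! \|\d_\theta^m \Phi_{\ast}(\cdot,\theta)\|_{H^{s+\frac{1}{2}-m}(\R)}^{2}\,\theta^{2m}\,\dif\theta \;=\; \int_{\R}\!\! \absm{\zeta}^{2s+1-2m}\,|\hat{\phi}(\zeta)|^{2}\,\mathcal{I}_m(\zeta)\,\dif\zeta,
\end{equation*}
where the inner integral
\begin{equation*}
	\mathcal{I}_m(\zeta) \;\vcentcolon=\; \int_{0}^{\ath}\!\! \Big|\dfrac{\d_\theta^m \fk(\zeta,\theta)}{\fk(\zeta,\ath)}\Big|^{2}\,\theta^{2m}\,\dif\theta
\end{equation*}
is precisely the quantity controlled by Proposition \ref{prop:dkRatio-m}, which yields $\mathcal{I}_m(\zeta)\le C\,\absm{\zeta}^{2m-1}$ for a constant $C=C(m,\ath)$.

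Substituting this bound back in, the $2m-1$ powers of $\absm{\zeta}$ combine with the existing factor $\absm{\zeta}^{2s+1-2m}$ to give $\absm{\zeta}^{2s}$, and we conclude
\begin{equation*}
	\int_{0}^{\ath}\!\! \|\d_\theta^m \Phi_{\ast}(\cdot,\theta)\|_{H^{s+\frac{1}{2}-m}(\R)}^{2}\,\theta^{2m}\,\dif\theta \;\le\; C\!\int_{\R}\!\! \absm{\zeta}^{2s}\,|\hat{\phi}(\zeta)|^{2}\,\dif\zeta \;=\; C\,\|\phi\|_{H^{s}(\R)}^{2},
\end{equation*}
which is the desired estimate. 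There is no real obstacle here, since all the analytical work (the asymptotics of $\fk$, the conversion to modified Bessel functions, and the Jensen-type bound on ratios $I_0^{(k)}(yx)/I_0(x)$) has already been performed in Proposition \ref{prop:dkRatio-m} and Proposition \ref{prop:i1i0}. The only point to verify is the bookkeeping of the Sobolev exponent, which is exactly matched by the $\absm{\zeta}^{2m-1}$ gain provided by Proposition \ref{prop:dkRatio-m}: this is precisely the compensation that makes $m$ derivatives in $\theta$ cost $m$ half-derivatives in $\sigma$ and a weight $\theta^{2m}$ near the apex.
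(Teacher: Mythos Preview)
Your proof is correct and essentially identical to the paper's own argument: both pass to the Fourier side via (\ref{Uphat}), swap the $\theta$ and $\zeta$ integrals, and invoke Proposition \ref{prop:dkRatio-m} to absorb the inner $\theta$-integral as a factor $\absm{\zeta}^{2m-1}$, which exactly restores the $H^s$ norm of $\phi$.
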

\begin{proof}
	By the construction (\ref{Uphat}) and Proposition \ref{prop:dkRatio-m}, we have
	\begin{align*}
		&\int_{0}^{\ath}\!\!\! \| \d_\theta^m\Phi_{\ast}(\cdot,\theta) \|_{H^{s+\frac{1}{2}-m}}^2 \theta^{2m}\, \dif \theta %\\ &=\!\! \int_{0}^{\ath}\!\!\!\! \int_{\R}\! \absm{\zeta}^{2s-2m+1} |\d_\theta\hat{\Phi}_{\ast}(\zeta)|^2 \theta^{2m} \, \dif \zeta \dif \theta \\
		=\!\! \int_{0}^{\ath}\!\!\!\! \int_{\R} \absm{\zeta}^{2s-2m+1} |\hat{\phi}(\zeta)|^2 \Big| \dfrac{\d_\theta^m\fk(\zeta,\theta)}{\fk(\zeta,\ath)} \Big|^2 \theta^{2m}\, \dif \zeta \dif \theta\\
		=& \int_{\R}\!\! \absm{\zeta}^{2s} |\hat{\phi}(\zeta)|^2 \bigg( \dfrac{1}{\absm{\zeta}^{2m-1}}\!\! \int_{0}^{\ath}\! \Big| \dfrac{\d_\theta^m\fk(\zeta,\theta)}{\fk(\zeta,\ath)} \Big|^2 \theta^{2m} \, \dif \theta \bigg) \dif \zeta \le C\!\! \int_{\R}\!\! \absm{\zeta}^{2s} |\hat{\phi}(\zeta)|^2\,\dif \zeta\! =\! C \|\phi\|_{H^{s}(\R)}^2.
	\end{align*}
	This completes the proof.
\end{proof}

\begin{corollary}\label{corol:vast}
Let $v_{\ast}(\sigma,y)\vcentcolon= \Phi_{\ast}(\sigma,y\ath)$ for $(\sigma,y)\in\R\times (0,1]\equiv \mS$. Then $v_{\ast}$ solves:
\begin{subequations}\label{vast-eq}
\begin{align}
&-\div_{(\sigma,y)} \big( A_{\ast} \cdot \nabla_{(\sigma,y)} v_{\ast} \big) + \dfrac{\ath \sin(y\ath)}{4} v_{\ast} = 0 && \text{in } \ (\sigma,y)\in \mS,\\
& v_{\ast}(\sigma,1)=\phi(\sigma), \qquad \d_{y}v_{\ast}(\sigma,0) = 0 && \text{for } \ \sigma\in \R,\\
\text{where } \ & \nabla_{(\sigma,y)}\vcentcolon= (\d_{\sigma},\d_y), \qquad \div_{(\sigma,y)}\vcentcolon= \d_{\sigma} + \d_y, && A_{\ast} \vcentcolon= \sin(y\ath)\begin{pmatrix}
	\ath & 0 \\
	0 & 1/\ath
\end{pmatrix}.\nonumber
\end{align}
\end{subequations}
In addition, for $m\ge 2$, there exists constant $C=C(\ath,m)>0$ such that
\begin{align*}
\int_{0}^{1}\!\! \Big\{ \|v_{\ast}(\cdot,y)\|_{H^{s+\frac{1}{2}}(\R)}^2 \!\!+ \|\d_{y} v_{\ast}(\cdot,y)\|_{H^{s-\frac{1}{2}}(\R)}^2\!\! + \|y^m\d_{y}^m v_{\ast}(\cdot, y)\|_{H^{s+\frac{1}{2}-m}(\R)}^2 \Big\} \dif y \le C \|\phi\|_{H^{s}(\R)}.
\end{align*}
\end{corollary}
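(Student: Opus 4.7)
The proof splits naturally into two parts: derivation of the PDE satisfied by $v_{\ast}$, and establishment of the weighted Sobolev estimate. For the PDE, my plan is to apply the change of variables $\theta = y\ath$ directly to equation (\ref{ellip-cf}). Under this rescaling, $\d_\theta = \ath^{-1}\d_y$, so $\d_\theta(\d_\theta\Phi_{\ast}\sin\theta)$ becomes $\ath^{-2}\d_y(\d_y v_{\ast}\sin(y\ath))$. Multiplying the resulting equation by $\ath$, the divergence-form structure $-\div_{(\sigma,y)}(A_{\ast}\cdot\nabla_{(\sigma,y)}v_{\ast}) + \frac{\ath\sin(y\ath)}{4}v_{\ast}=0$ emerges with $A_{\ast}$ as stated, since the $\sigma$-derivative picks up the factor $\ath$ and the $y$-derivative picks up $\ath^{-1}$. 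The boundary conditions $v_{\ast}(\sigma,1)=\phi(\sigma)$ and $\d_y v_{\ast}(\sigma,0)=0$ follow immediately from (\ref{ellip-cf-b}) together with $\d_y v_{\ast} = \ath\, \d_\theta\Phi_{\ast}$.

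For the Sobolev bounds, the strategy is to push everything through the same change of variables and then invoke Lemmas \ref{lemma:bdryEst} and \ref{lemma:higherPhi}. Since $\|v_{\ast}(\cdot,y)\|_{H^r(\R)} = \|\Phi_{\ast}(\cdot, y\ath)\|_{H^r(\R)}$, substituting $\theta = y\ath$ gives
\begin{equation*}
\int_0^1 \|v_{\ast}(\cdot,y)\|_{H^{s+1/2}(\R)}^2\, \dif y = \ath^{-1}\int_0^{\ath}\|\Phi_{\ast}(\cdot,\theta)\|_{H^{s+1/2}(\R)}^2\, \dif \theta,
\end{equation*}
which Lemma \ref{lemma:bdryEst} controls by $C(\ath)\|\phi\|_{H^s(\R)}^2$. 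An analogous substitution, using $\d_y v_{\ast} = \ath\, \d_\theta \Phi_{\ast}$, handles the $\d_y v_{\ast}$ contribution, producing an overall factor of $\ath$ which is then absorbed into the final constant.

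For the third and weighted term, the identity $\d_y^m v_{\ast}(\sigma, y) = \ath^m\d_\theta^m\Phi_{\ast}(\sigma, y\ath)$ combined with $y^{2m} = \ath^{-2m}\theta^{2m}$ gives
\begin{equation*}
\int_0^1 y^{2m}\|\d_y^m v_{\ast}(\cdot,y)\|_{H^{s+1/2-m}(\R)}^2\, \dif y = \ath^{-1}\int_0^{\ath}\theta^{2m}\|\d_\theta^m\Phi_{\ast}(\cdot,\theta)\|_{H^{s+1/2-m}(\R)}^2\, \dif \theta,
\end{equation*}
which is precisely the weighted integral bounded by Lemma \ref{lemma:higherPhi}. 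Summing the three contributions and absorbing all $\ath$-dependent prefactors into $C=C(m,\ath)$ yields the claimed estimate.

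There is no real obstacle in this proof: it is essentially a bookkeeping exercise in the linear change of variables, with the heavy lifting already carried out by Lemmas \ref{lemma:bdryEst} and \ref{lemma:higherPhi}. The only point worth double-checking is that the weight $y^{2m}$ (rather than $y^m$) appearing in the statement corresponds exactly, via the $\ath$ rescaling, to the weight $\theta^{2m}$ in Lemma \ref{lemma:higherPhi}, so no interpolation or additional weight manipulation is required.
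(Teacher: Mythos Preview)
Your proof is correct and follows exactly the approach the paper intends: the corollary is stated without proof precisely because it is an immediate consequence of the change of variables $\theta=y\ath$ applied to equation (\ref{ellip-cf}) together with Lemmas \ref{lemma:bdryEst} and \ref{lemma:higherPhi}, and your bookkeeping of the $\ath$-factors is accurate throughout. The only remark is that the right-hand side in the corollary's displayed inequality should read $\|\phi\|_{H^s(\R)}^2$ (as you implicitly use), which is a typo in the paper rather than an issue with your argument.
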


\section{Elliptic Estimates for Non-Flat Conical Domain}\label{sec:nonflat}
\subsection{Existence of weak solutions}
Set $\Omega_{\eta}\vcentcolon= \big\{(\sigma,\theta) \in \R\times (0,\pi) \,\vert\, 0 <\theta\le \eta(\sigma)\big\}$. Let $v_{\ast}(\sigma,y)$ be given in Corollary \ref{corol:vast}. Using this, we define
\begin{equation}\label{barPhi}
\bar{\Phi}(\sigma,\theta) \vcentcolon= v_{\ast} \big(\sigma, \frac{\theta}{\eta(\sigma)} \big) \qquad \text{for } \ \sigma\in \R \ \text{ and } \ 0<\theta \le \eta(\sigma).
\end{equation}
Then $\bar{\Phi}(\sigma,\eta(\sigma)) = \phi(\sigma)$ for $\sigma \in \R$. Taking derivatives, one has
\begin{equation*}
	\d_{\sigma} \bar{\Phi}(\sigma, \theta) = \Big\{ \d_{\sigma} v_{\ast}- \dfrac{\theta \d_\sigma \eta}{\eta^2} \d_{y} v_{\ast} \Big\}\Big\vert_{y=\theta/\eta(\sigma)}, \qquad \d_{\theta} \bar{\Phi}(\sigma, \theta) = \dfrac{1}{\eta} \d_y v_{\ast} \Big\vert_{y=\theta/\eta(\sigma)}.
\end{equation*}
Taking $L^2$-norm, then by change of variables $(\sigma,y)=(\sigma,\tfrac{\theta}{\eta})\in \R\times (0,1] \equiv \mS$, and Corollary \ref{corol:vast}, there exists a constant $C=C\big( \|(\eta,\eta^{-1},\d_{\sigma}\eta)\|_{L^{\infty}(\R)}, \ath \big)>0$ such that
\begin{align*}
	\iint_{\Omega_{\eta}} \!\!\! |\bar{\Phi}|^2 \sin\theta\, \dif \sigma  \dif \theta %\\ =& \iint_{\Omega_{\eta}} \!\! |v_{\ast}\big(\sigma, \theta/\eta\big)|^2 \sin\theta\, \dif \sigma  \dif \theta 
	=& \iint_{\mS} |v_{\ast}(\sigma,y)|^2 \eta \sin(y\eta)\,\dif\sigma\dif y\\
	\le& \|\eta\|_{L^{\infty}(\R)} \int_{0}^1\!\!\|v_{\ast}(\cdot,y)\|_{L^2(\R)}^2\, \dif y \le C \|\phi\|_{H^{-1/2}(\R)},\\
	\iint_{\Omega_{\eta}}\!\!\! |\d_{\sigma} \bar{\Phi}(\sigma,\theta)|^2 \sin\theta \, \dif \sigma \dif \theta =& \iint_{\mS} \big| \d_{\sigma} v_{\ast} - \dfrac{y \d_{\sigma}\eta}{\eta} \d_{y} v_{\ast} \big|^2 \eta \sin(y\eta) \, \dif \sigma \dif y
	% \\ \le & \max\big\{ \|\eta\|_{L^{\infty}(\R)}, \|\eta^{-1}|\d_{\sigma}\eta|^2\|_{L^{\infty}(\R)} \big\} \int_{0}^1\!\! \| (\d_{\sigma} v_{\ast}, \d_{y} v_{\ast}) \|_{L^2(\R)}^2\, \dif y\\
	 \le C \|\phi\|_{H^{1/2}(\R)}^2,\\
	 \iint_{\Omega_{\eta}} \!\!\! |\d_{\theta} \bar{\Phi}(\sigma,\theta)|^2\, \dif \sigma \dif \theta =& \iint_{\mS} \dfrac{|\d_y v_{\ast}|^2}{\eta^2} \eta \sin(y\eta)\, \dif \sigma \dif y  \le C \|\phi\|_{H^{1/2}(\R)}. 
\end{align*}
In summary, we obtained the following estimate for $\bar{\Phi}$
\begin{equation}\label{barPhiEst}
	\|\bar{\Phi}\|_{\mathscr{H}^1(\Omega_{\eta})}^2 \vcentcolon=\iint_{\Omega_{\eta}}\! \big\{ |\bar{\Phi}|^2 + |\d_{\sigma}\bar{\Phi}|^2 + |\d_{\theta}\bar{\Phi}|^2 \big\} \sin\theta \, \dif \sigma \dif \theta \le C \|\phi\|_{H^{1/2}(\R)}^2.
\end{equation}
Denote $L\vcentcolon= \d_\theta (\sin\theta \d_\theta) + \sin\theta \d_\sigma^2 - \frac{1}{4} \sin\theta $. Let $\mathring{\Phi}(\sigma,\theta)$ be the weak solution to 
\begin{subequations}\label{oPhi}
\begin{align}
&-L \mathring{\Phi} = L \bar{\Phi} && \text{for } \ (\sigma,\theta) \in \Omega_{\eta},\\
&\mathring{\Phi}\big(\sigma,\eta(\sigma)\big)=0, \quad \d_{\theta} \mathring{\Phi}(\sigma,0) = 0 && \text{for } \ \sigma \in \R.
\end{align}
\end{subequations} 
Applying Lax-Milgram theorem, a unique weak solution $\mathring{\Phi}$ exists in the space 
\begin{equation*}
	\Big\{ f\in \mathscr{H}^1(\Omega_{\eta}) \ \,\Big\vert\, \  f\big(\sigma,\eta(\sigma)\big) = \phi, \ \ \d_\theta f \vert_{\theta=0} = 0 \Big\}.
\end{equation*}
Setting $\Phi \vcentcolon= \mathring{\Phi} + \bar{\Phi}$. Then it can be verified that $\Phi$ is a $\mathscr{H}^1(\Omega_{\eta})$ weak solution to the problem (\ref{Phi-D}). Moreover, by the standard elliptic estimate procedure and (\ref{barPhiEst}), there exists a constant  $C=C\big( \|(\eta,\eta^{-1},\d_{\sigma}\eta)\|_{L^{\infty}(\R)}, \ath \big)>0$ such that
\begin{equation}
	\|\Phi\|_{\mathscr{H}^1(\Omega_{\eta})} \le C \|\phi\|_{H^{1/2}(\R)}.
\end{equation}
\subsection{Coordinate transformation into flat strip}
We consider a coordinate transformation $\mathcal{T}\vcentcolon \Omega_{\eta} \to \mS$ such that the curved domain $\Omega_{\eta}$ is flattened into a strip $\mS$. More specifically, this is given by:
\begin{equation*}
(\sigma,y)=\mathcal{T}(\sigma,\theta)=\big(\sigma,\tau(\sigma,\theta)\big), \qquad \text{where } \ \tau(\sigma,\theta)\vcentcolon=\dfrac{\theta}{\eta(\sigma)}.
\end{equation*}
The corresponding inverse mapping $\mathcal{T}^{-1}\vcentcolon\mS \to \Omega_{\eta}$ is given as:
\begin{equation*}
	(\sigma,\theta) = \mathcal{T}^{-1}(\sigma,y) = \big(\sigma, \rho(\sigma,y)\big) \qquad \text{where } \ \rho(\sigma,y) \vcentcolon= y\eta(\sigma).
\end{equation*}
Note that the Jacobian for $\mathcal{T}$ and $\mathcal{T}^{-1}$ are given by:
\begin{equation*}
	\big| \dfrac{\d(\sigma,y)}{\d(\sigma,\theta)} \big| = \d_\theta \tau = \dfrac{1}{\eta}, \qquad \big| \dfrac{\d(\sigma,\theta)}{\d(\sigma,y)} \big| = \d_y \rho = \eta.
\end{equation*}
For function $f(\sigma,\theta)\in \mC^1(\Omega_{\eta})$, we set $\tilde{f}(\sigma,y)\vcentcolon=f\big(\sigma,\rho(\sigma,y)\big)= f\circ \mathcal{T}^{-1}(\sigma,y)$. If $\eta\in \mC^1(\R)$, then $\tilde{f}\in \mC^1(\mS)$. In light of this, we define $\mathcal{T}_{\ast}$, $\mathcal{T}_{\ast}^{-1}$ as the adjoint operators of $\mathcal{T}$, $\mathcal{T}^{-1}$ respectively:
\begin{align*}
	\mathcal{T}_{\ast} & \vcentcolon \mC^1(\mS) \to \mC^1(\Omega_{\eta}), && \text{with} \quad \mathcal{T}_{\ast} \tilde{f} \vcentcolon= \tilde{f}\circ \mathcal{T} = f,\\
	\mathcal{T}_{\ast}^{-1} & \vcentcolon \mC^1(\Omega_{\eta}) \to \mC^1(\mS), && \text{with } \quad \mathcal{T}_{\ast}^{-1} f \vcentcolon= f\circ \mathcal{T}^{-1} = \tilde{f}.
\end{align*}
By construction, we have $\tau\big(\sigma,\rho(\sigma,y)\big)=y$. Thus by chain rule, one has
\begin{equation*}
	\d_{y}\rho (\sigma,y) = \dfrac{1}{\d_{\theta}\tau} \Big\vert_{\theta=\rho(\sigma,y)}, \qquad \d_{\sigma} \rho(\sigma,y) = - \dfrac{\d_{\sigma}\tau}{\d_{\theta}\tau}\Big\vert_{\theta=\rho(\sigma,y)}.
\end{equation*}
Also, differentiating $f(\sigma,\theta)= \big(\mathcal{T}_{\ast} \circ \mathcal{T}_{\ast}^{-1} f\big)(\sigma,\theta)= \big(\mathcal{T}_{\ast} \tilde{f}\big)(\sigma,\theta)= \tilde{f}\big(\sigma,\tau(\sigma,\theta)\big)$, we get
\begin{subequations}\label{d1d2}
\begin{gather}
	\d_{\sigma} f(\sigma,\theta) = \big\{ \mathcal{T}_{\ast} \circ \d_{1} \circ \mathcal{T}_{\ast}^{-1} f \big\}(\sigma,\theta), \quad \d_{\theta} f(\sigma,\theta) = \big\{ \mathcal{T}_{\ast}\circ \d_2 \circ \mathcal{T}_{\ast}^{-1} f \big\}(\sigma,\theta),\\
	\text{where } \quad \d_{1} \vcentcolon= \d_{\sigma} - \dfrac{\d_{\sigma}\rho}{\d_{y}\rho} \d_{y}, \qquad \d_{2} \vcentcolon= \dfrac{1}{\d_y \rho} \d_y.
\end{gather}
\end{subequations}
With these constructions, we define the function:
\begin{equation*}
 v(\sigma,y) \vcentcolon= \big(\mathcal{T}_{\ast}^{-1} \circ \Phi\big)(\sigma,y) =\Phi\big(\sigma, y \eta(\sigma)\big).
\end{equation*}
Then applying the operator $\mathcal{T}_{\ast}^{-1}$ on the equation (\ref{Phi-D}), and using (\ref{d1d2}), we get $$\d_1\big( \sin(\rho) \d_1 v \big) + \d_2 \big( \sin(\rho) \d_2 v \big) - \frac{1}{4} v \sin(\rho) =0.$$
\begin{subequations}\label{v-D}
Multiplying this equation with $\d_y\rho$, and integrating by parts, it can be rewritten in the divergence form as:
Then the Dirichlet boundary problem (\ref{Phi-D}) can be reformulated as
\begin{align}
&- \mL_{\eta} v \vcentcolon=-\div_{(\sigma,y)} \big( A \cdot \nabla_{(\sigma,y)} v  \big) + \gamma v  = 0 && \text{for } \ (\sigma,y)\in \mS,\\
& v(\sigma,1) = \phi(\sigma), \qquad \d_y v(\sigma,0) = 0 && \text{for } \ \sigma\in\R.
\end{align}
where $\div_{(\sigma,y)} f \vcentcolon= \d_\sigma f + \d_y f$ and $\nabla_{(\sigma,y)} f \vcentcolon= (\d_\sigma f, \d_ y f )^{\top}$, and $A=A(y,\eta,\d_{\sigma}\eta)$ is a symmetric $2$-by-$2$ matrix given by:
\begin{equation}\label{A}
	A \vcentcolon= \sin\rho \begin{pmatrix}
		\d_y \rho & -\d_\sigma \rho \\
		-\d_\sigma \rho & \dfrac{1+|\d_\sigma\rho|^2}{\d_y\rho}
	\end{pmatrix} = \sin(y\eta) \begin{pmatrix}
		\eta  & -y\d_{\sigma}\eta \\
		- y\d_{\sigma}\eta & \dfrac{1+ y^2|\d_\sigma\eta|^2}{\eta} 
	\end{pmatrix},
\end{equation}\label{gamma}
and the scalar function $\gamma=\gamma(y,\eta)$ is given by:
\begin{equation}
    \gamma \vcentcolon= \dfrac{\eta\sin(y\eta)}{4}.
\end{equation}
\end{subequations}
Under this coordinate the auxiliary DN operator, according to (\ref{mG}) is given by:
\begin{equation}\label{mG-flat}
	\mG[\eta](\phi) = \Big\{ \dfrac{1+y|\d_\sigma\eta|^2}{\eta}\d_y v - \d_\sigma \eta \d_\sigma v \Big\}\Big\vert_{y=1}.
\end{equation}
Let $\mathring{\Phi}$ be the solution constructed in (\ref{oPhi}). We set $w(\sigma,y)\vcentcolon= \mathring{\Phi}(\sigma,\rho(\sigma,y))$. Moreover, for a given boundary data $\phi(\sigma)$, recall the functions $v_{\ast}(\sigma,y)$ constructed in (\ref{UpConv}), and $\bar{\Phi}(\sigma,\theta)$ in (\ref{barPhi}). Then $\mathcal{T}_{\ast}^{-1} \bar{\Phi} (\sigma,y) = \bar{\Phi}\big(\sigma,\rho(\sigma,y)\big) = v_{\ast}(\sigma,y)$. Thus applying $\mathcal{T}_{\ast}^{-1}$ on both sides of (\ref{oPhi}), it follows that $w(\sigma,y)$ solves 
\begin{subequations}\label{w-D}
	\begin{align}
		\label{w-D-eq}&-\mL_\eta w  = \mL_{\eta} v_{\ast}  && \text{for } \ (\sigma,y)\in \mS\equiv \R\times (0,1],\\
		\label{w-D-bd}& w(\sigma,1) = 0, \qquad \d_y w(\sigma,0) = 0 && \text{for } \ \sigma\in\R.
	\end{align}
	where we denoted $\mL_{\eta} f \vcentcolon= \div_{(\sigma,y)} \big( A \cdot \nabla_{(\sigma,y)} f  \big) - \gamma f$.
\end{subequations}
\begin{definition}\label{def:weakSol}
	Denote the space $\mathfrak{D}_{\mS}$ as:
	\begin{equation*}
		\mathfrak{D}_{\mS} \vcentcolon= \Big\{ f\in \mathscr{H}(\mS) \ \Big\vert \ \lim\limits_{y\to 1^{-}} f(\cdot,y) = 0 \ \text{ in $L^{\frac{1}{2}}(\R)$ in the sense of trace.}  \Big\}.
	\end{equation*}
	Then $w\in\mathscr{H}^1(\mS)$ is said to be a weak solution to the boundary value problem (\ref{w-D}) if
	\begin{align*}
		&\iint_{\mS} \Big\{ \nabla_{(\sigma,y)} \varphi \cdot A \nabla_{(\sigma,y)} w + \gamma \varphi w  \Big\} \, \dif \sigma \dif y \\ &\qquad = \iint_{\mS} \Big\{ \nabla_{(\sigma,y)} \varphi \cdot A \nabla_{(\sigma,y)} v_{\ast} + \gamma \varphi v_{\ast}  \Big\} \, \dif \sigma \dif y.
	\end{align*}
	Moreover $v\in\mathscr{H}^1(\mS)$ is said to be a weak solution to the boundary value problem (\ref{v-D}) if $v=w+v_{\ast}$ for some $w$ being the weak solution to (\ref{w-D}). 
\end{definition}
\begin{definition}\label{def:solOp}
	For a given function $\eta(\sigma)\vcentcolon \R \to (0,\infty)$ and $\varphi(\sigma)\vcentcolon\R\to \R$, we denote $E_{\eta}[\varphi](\sigma,y)$ as the weak solution of (\ref{v-D}) with boundary data $E_{\eta}[\varphi]\vert_{y=1}=\varphi$.
\end{definition}
\begin{remark}\label{rem:flatSolOp}
	If $\eta\equiv \ath\in (0,\pi)$ is a constant function, then the problem (\ref{v-D}) coincides with (\ref{vast-eq}) in Corollary \ref{corol:vast}. In particular $v_{\ast}$ constructed in Corollary \ref{corol:vast} is given by $v_{\ast}=E_{\ath}[\phi]$. 
\end{remark}
\subsection{Elliptic Estimates for potential function \texorpdfstring{$v$}{v}}
Throughout this section, we will use the following notations for simplicity:
\begin{equation*}
	\nabla \equiv \nabla_{(\sigma,y)} = (\d_\sigma,\d_y)^{\top}, \qquad  f_{\sigma} \equiv \d_\sigma f, \qquad f_y \equiv \d_y f \qquad \text{for function } \ f(\sigma,y).
\end{equation*}
Moreover, we will denote $\absm{D}\vcentcolon= \sqrt{1+|\d_\sigma|^2}$ as the Fourier multiplier acting with respect to the variable $\sigma\in\R$, in other words:
\begin{equation*}
	\absm{D}f(\sigma) \vcentcolon= \dfrac{1}{\sqrt{2\pi}} \int_{\R} (1+|\zeta|^2)^{\frac{1}{2}} \widehat{f}(\zeta) e^{i\sigma\zeta} \, \dif \zeta.
\end{equation*}
\begin{remark}\label{remark:ellipticity}
Some properties of the matrix $A$ is summarised in this remark:
\begin{enumerate}[label=\textnormal{(\roman*)},ref=\textnormal{(\roman*)}]
\item\label{item:ellip1} The characteristic polynomial $\det(A-\lambda I_{2\times 2})=0$ is given by
	\begin{equation*}
		\lambda^2 - y \sinc(y\eta) \big\{1+\eta^2 +y^2 \eta_\sigma^2\big\} \lambda + \sin^2(y\eta) = 0,
	\end{equation*}
	where $\sinc\theta = \frac{\sin\theta}{\theta}$. Solving the quadratic equation, one gets eigenvalues: 
	\begin{align*}
		&\lambda_1,\,\lambda_2\\ &= \dfrac{y\sinc(y\eta)}{2} \bigg\{ 1 + \eta^2 +  y^2 \eta_\sigma^2  \pm  \sqrt{\big(|1+\eta|^2 +  y^2\eta_\sigma^2\big)\big(|1-\eta|^2 +  y^2\eta_\sigma^2\big)} \bigg\}.
	\end{align*}
	It can be shown that $\lambda_1 > 0$ and $\lambda_2 >0$ for $y>0$ and $y\eta \in (0,\pi)$.
	\item\label{item:ellip2} The matrix $A$ can be decomposed as follows:
	\begin{equation*}
		A = \textrm{Q}^{\top} \textrm{Q}, \qquad \text{where } \ \mathrm{Q}\vcentcolon=\sqrt{y\sinc(y\eta)}\begin{pmatrix}
			\eta & -y\d_\sigma \eta \\
			0 & 1
		\end{pmatrix}.
	\end{equation*}
	Thus for any vector $V=\big(V^{\sigma},V^{y}\big)^{\top}$, we have the coercivity:
	\begin{align*}
		V^{\top} \cdot A \cdot V =& V^{\top} \mathrm{Q}^{\top} \mathrm{Q} V = \big(\mathrm{Q} V\big)^{\top} \mathrm{Q} V\\
		=& y \sinc(y\eta) \big\{ |\eta V^{\sigma} - y V^{y} \d_\sigma \eta |^2 + |V^y|^2 \big\} \ge 0,
	\end{align*}
	for $y>0$ and $y\eta \in (0,\pi)$.
\end{enumerate}
\end{remark}
\begin{lemma}\label{lemma:ellip}
	Denote $\teta=\eta-\ath$ for $\ath\in(0,\pi)$. Suppose $(\teta,\phi)\in H^{s+\frac{1}{2}}(\R)\times H^s(\R)$ for some $s>\frac{5}{2}$. Moreover, assume that
	\begin{equation}\label{assump:teta}
		\|\teta\|_{L^{\infty}(\R)} < \min \big\{ \ath\,, \pi-\ath \big\} \quad \text{ and } \quad \|\eta_{\sigma}\|_{L^{\infty}(\R)} <\infty.
	\end{equation}
	Then there exists a constant $C=C(s,\ath)>0$ depending only on $s$ and $\ath$ such that for all $0\le m \le s$, the solution $w(\sigma,y)$ to (\ref{w-D}) satisfies:
	\begin{align*}
		\int_{0}^{1}\!\! \Big\{ \|w(\cdot,y)\|_{H^{m-1/2}(\R)}^2 + \|\nabla w(\cdot,y)\|_{H^{m-1/2}(\R)}^2\Big\} \, y \dif y \le C \Big|\dfrac{\fU_s(\teta)}{\fl(\teta)}\Big|^{2m+1} \|\phi\|_{H^{m}(\R)}, 
	\end{align*} 
	where the functional $\fU_s(\teta)$ and $\fl(\teta)$ are defined as
	\begin{subequations}
	\begin{gather}
		\fl(\teta) \vcentcolon= \sinc\big( \|\teta\|_{L^{\infty}} + \ath \big)\min\Big\{ \dfrac{1}{2}, \dfrac{\big(\ath-\|\teta\|_{L^{\infty}}\big)^2}{1+ 2 \|\teta_\sigma\|_{L^{\infty}}^2 }, \dfrac{\big(\ath- \|\teta\|_{L^{\infty}}\big)^2}{4} \Big\},\\
		\fU_s(\teta) \vcentcolon= \max\big\{  \|\teta\|_{H^{s-\frac{1}{2}}}, \|\teta_\sigma\|_{H^{s-\frac{1}{2}}},  \|\teta^2\|_{H^{s-\frac{1}{2}}}, \|\teta_\sigma \teta \|_{H^{s-\frac{1}{2}}}, \|\teta_\sigma^2\|_{H^{s-\frac{1}{2}}} \big\}.
	\end{gather}
	\end{subequations}
\end{lemma}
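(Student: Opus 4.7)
The plan is to derive a weighted energy estimate for $w$ from the weak formulation of (\ref{w-D}) (Definition \ref{def:weakSol}), then bootstrap in tangential regularity by testing against powers of the Fourier multiplier $\absm{D}$ acting in $\sigma$, and finally recover normal-derivative estimates directly from the equation. Throughout, the ellipticity packaged in Remark \ref{remark:ellipticity} (ii) via $A = \mathrm{Q}^\top \mathrm{Q}$ is what produces the coercivity constant $\fl(\teta)$, while the product/commutator estimates on the $\sigma$-dependent coefficients $A(y,\eta,\eta_\sigma)$, $\gamma(y,\eta)$ are what produce the factors $\fU_s(\teta)$. The source $\mL_\eta v_{\ast}$ on the right-hand side of (\ref{w-D-eq}) is controlled by Corollary \ref{corol:vast}.

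\textbf{Base case: a weighted $H^{1/2}$ estimate.} First I would test the weak formulation against $w$ itself. By Remark \ref{remark:ellipticity} (ii), the left-hand side is bounded below by
\[
\iint_{\mS}\! y\sinc(y\eta)\Bigl(|\eta w_\sigma - y\eta_\sigma w_y|^2 + |w_y|^2\Bigr) d\sigma\,dy + \iint_{\mS}\! \gamma\,w^2\, d\sigma\,dy .
\]
Under the hypothesis $\|\teta\|_{L^\infty} < \min\{\ath,\pi-\ath\}$ and using the elementary inequality $|\eta w_\sigma - y\eta_\sigma w_y|^2 \ge \tfrac12 \eta^2|w_\sigma|^2 - y^2\eta_\sigma^2 |w_y|^2$, the quadratic form above is $\gtrsim \fl(\teta)\int_0^1 (\|w_\sigma\|_{L^2}^2 + \|w_y\|_{L^2}^2)\,y\,dy$, and the Dirichlet condition $w(\cdot,1)=0$ combined with a one-dimensional Poincaré inequality in $y$ upgrades this to $\fl(\teta)\int_0^1\|w(\cdot,y)\|_{L^2}^2\,y\,dy$. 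The right-hand side $\iint (A\nabla v_\ast\cdot\nabla w + \gamma v_\ast w)$ is handled by Cauchy–Schwarz plus Corollary \ref{corol:vast}: each product of $\eta$-coefficients with $v_\ast$ is bounded in $L^2_y(L^2_\sigma)$ by $\fU_s(\teta)\|\phi\|_{H^{1/2}}$. Dividing by $\fl(\teta)$ one obtains the case $m=1/2$.

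\textbf{Tangential bootstrap.} For $m \in (\tfrac12, s]$ I would apply $\absm{D}^{m-1/2}$ to the equation and test against $\absm{D}^{m-1/2} w$. Since $\absm{D}$ commutes with $\d_\sigma$ and $\d_y$, the only non-trivial terms are commutators $[\absm{D}^{m-1/2},A_{ij}]\,\d_j w$ and $[\absm{D}^{m-1/2},\gamma]\,w$. Under the assumption $\teta\in H^{s+1/2}(\R)$ with $s>\tfrac52$, the coefficients of $A$ and $\gamma$ (which are smooth functions of $y,\eta,\eta_\sigma$) admit tame Kato–Ponce/Moser bounds of the form $\|[\absm{D}^{m-1/2},A_{ij}]f\|_{L^2} \lesssim \fU_s(\teta)\|f\|_{H^{m-3/2}}$ whenever $m-1/2 \le s - 1/2$. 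Combining these commutator bounds with the coercivity from Step 1 applied to $\absm{D}^{m-1/2}w$, one gets
\[
\fl(\teta)\!\int_0^1\!\!\|\nabla w(\cdot,y)\|_{H^{m-1/2}}^2\,y\,dy \lesssim \fU_s(\teta)\!\int_0^1\!\!\|\nabla w(\cdot,y)\|_{H^{m-3/2}}^2\,y\,dy + \fU_s(\teta)\|\phi\|_{H^m}^2,
\]
where the second term comes from treating $\mL_\eta v_\ast$ by Corollary \ref{corol:vast} and tame multiplication against $\eta$-coefficients. Iterating this inequality in half-integer steps of $m$ starting from the base case produces precisely $(\fU_s/\fl)^{2m+1}$ after $2m+1$ applications, which is the exponent appearing in the lemma.

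\textbf{Normal-derivative recovery and principal difficulty.} The tangential bootstrap controls $\d_\sigma w$ in the stated norm. For $\d_y w$ the coercivity itself already encodes $y|w_y|^2$ in $L^2$, and after the bootstrap one has $\|\d_y\absm{D}^{m-1/2}w\|_{L^2_{y\,dy}L^2_\sigma}$, which is exactly the required norm $\|\d_y w\|_{L^2_{y\,dy}H^{m-1/2}_\sigma}$. The principal technical obstacle is bookkeeping. The degeneracy of $A$ at $y=0$ (vanishing like $y$) forces the weight $y\,dy$ and obstructs any approach based on pseudo-inverting $A$; one must work exclusively through the factorization $A=\mathrm{Q}^\top\mathrm{Q}$ so that the same $y$-weight appears on both sides of every integration by parts. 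In parallel, extracting the sharp exponent $2m+1$ rather than a crude one requires the tame commutator bound to be truly tame (losing only $\fU_s$, never $\fU_s^2$, per half-integer step), which in turn forces the use of paradifferential calculus on $\sigma\in\R$ rather than naive product estimates. These are the places where the explicit form of $\fl(\teta)$ (encoding both pointwise ellipticity and the $\sinc$-degeneracy) and of $\fU_s(\teta)$ (controlling $\teta$, $\eta_\sigma$, and the quadratic quantities entering the matrix $A$) must be tracked with care.
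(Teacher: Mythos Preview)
Your overall architecture matches the paper's: coercivity via the factorization $A=\mathrm{Q}^\top\mathrm{Q}$, a base $L^2$ energy estimate by testing against $w$, then a tangential bootstrap by testing against $\absm{D}^{2k}w$ and controlling the resulting commutators. However, there is a genuine gap in how you handle the source term, and it is not cosmetic because it is what produces the factor $\fU_s(\teta)$ in the conclusion.

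In the base case you write the right-hand side as $\iint(A\nabla v_\ast\cdot\nabla w+\gamma v_\ast w)$ and then assert it is bounded by $\fU_s(\teta)\|\phi\|_{H^{1/2}}$ times the $w$-energy. But $A$ and $\gamma$ are $O(1)$, not $O(\fU_s)$; as written your argument only yields $\int_0^1\|\nabla w\|^2\,y\,dy\lesssim\fl^{-2}\|\phi\|_{H^{1/2}}^2$, which does \emph{not} imply the stated bound when $\fU_s(\teta)$ is small (and the bound must degenerate then, since $w\equiv0$ when $\teta\equiv0$). The missing step is to exploit that $v_\ast$ solves the \emph{flat} problem, $\mL_{\ath}v_\ast=0$ (Corollary~\ref{corol:vast}). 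Subtracting this from $\mL_\eta v_\ast$ rewrites the equation as
\[
-\div(A\nabla w)+\gamma w=\div(\tA\,\nabla v_\ast)-\tilde{\gamma}\,v_\ast,\qquad \tA:=A-A_\ast,\ \ \tilde{\gamma}:=\gamma-\tfrac{\ath}{4}\sin(y\ath),
\]
and it is $\tA$, $\tilde\gamma$ that obey $\|\tA(\cdot,y)\|_{L^\infty}+\|\tA(\cdot,y)\|_{H^{s-1/2}}\lesssim y\,\fU_s(\teta)$; this is precisely where $\fU_s$ enters. The same decomposition is what makes your commutator claim correct in the bootstrap: since $A_\ast$ depends on $y$ only, $[\absm{D}^{k},A]=[\absm{D}^{k},\tA]$, and then Proposition~\ref{prop:commu} gives the bound with constant $\|\tA\|_{H^{s-1/2}}\lesssim\fU_s(\teta)$; without this observation the commutator with the raw $A$ is not obviously controlled by $\fU_s$. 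Once the subtraction is in place, your scheme coincides with the paper's, which also inserts a frequency cutoff $\absm{D}_h^k$ (so that $\Lambda_h^{2k}w$ is a legitimate test function) and passes $h\to0$; you should note this regularization, and the induction is naturally in integer steps of $k$ (each step gaining $(\fU_s/\fl)^2$), not half-integer steps.
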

\begin{proof}
	The proof is divided into $3$ steps:
	\paragraph{Step 1: Coercivity and Upper Bound of $A$.} Due to the assumption (\ref{assump:teta}), we have
	\begin{equation*}
		0<\ath - \|\teta\|_{L^{\infty}(\R)}\le \eta = \ath + \teta  \le \ath + \|\teta\|_{L^{\infty}(\R)} < \pi. 
	\end{equation*}
	Moreover, by the fact that $\theta\mapsto \sinc\theta$ is strictly positive and monotone decreasing in $[0,\alpha]$ for $\alpha\in(0,\pi)$. It follows that $\min_{0\le \theta\le \alpha} \sinc\theta = \sinc{\alpha} >0$, hence
	\begin{equation*}
		y\sinc(y\eta) \ge y \sinc(\eta) \ge y \sinc \big( \|\teta\|_{L^{\infty}(\R)} + \ath \big) >0 \quad \text{ for } \ y\in(0,1].
	\end{equation*}
	By triangular and Cauchy-Schwartz's inequalities, it holds that
	\begin{equation*}
		|a-b|^2 \ge \dfrac{\delta}{1+\delta} |a|^2 - \delta |b|^2 \qquad \text{for all } \ a,\,b\in\R \ \text{ and } \ \delta\in(0,1).
	\end{equation*}
	Set $\delta_1\vcentcolon= \frac{1}{2}\|\eta_\sigma\|_{L^{\infty}(\R)}^{-2}$, $\delta_2\vcentcolon= \big( \ath - \|\teta\|_{L^{\infty}(\R)} \big)^2$, and $\delta_3\vcentcolon= \sinc\big( \|\teta\|_{L^{\infty}(\R)} + \ath \big)$. Then it follows from Remark \ref{remark:ellipticity}\ref{item:ellip2} that for $(\sigma,y)\in \mS$, and for vector $V=\big(V^{\sigma},V^{y}\big)^{\top}\in\R^2$,
	\begin{align}\label{coerA}
		V^{\top} A V =& y\sinc(y\eta) \big\{ \big| \eta V^{\sigma} - y \d_\sigma \eta V^y \big|^2 + \big|V^y\big|^2 \big\}\\
		\ge& y\sinc(y\eta) \Big\{ \dfrac{\delta_1|\teta+\ath|^2}{1+\delta_1} \big|V^\sigma\big|^2 + (1-\delta_1 y^2|\d_\sigma\eta|^2) \big|V^y\big|^2 \Big\}\nonumber\\
		\ge& y \delta_3 \Big\{ \dfrac{\delta_1\delta_2}{1+\delta_1} \big|V^{\sigma}\big|^2 + \dfrac{1}{2} \big|V^y\big|^2 \Big\} \ge y \fl |V|^2,\nonumber
	\end{align}
	where we denote until the end of this proof that
	\begin{align*}
		\fl\vcentcolon=& \delta_3 \min\Big\{ \dfrac{1}{2} , \dfrac{\delta_1 \delta_2}{1+\delta_1}, \dfrac{\delta_2}{4} \Big\}\\ 
		=& \sinc\big(\|\teta\|_{L^{\infty}(\R)}+\ath\big) \min\bigg\{ \dfrac{1}{2}, \dfrac{\big(\ath - \|\teta\|_{L^{\infty}(\R)}\big)^2}{1+ 2 \| \eta_{\sigma}\|_{L^{\infty}(\R)}^2}, \dfrac{\big(\ath- \|\teta\|_{L^{\infty}(\R)}\big)^2}{4} \bigg\}>0.
	\end{align*}
	Next, we derive an upper bound for $A$. Recall the matrix $A_{\ast}$ from Corollary \ref{corol:vast}. We define the difference matrix $\tilde{A}$ as:
	\begin{align}
		\tilde{A} \vcentcolon=& A - A_{\ast}\\ 
		=& y\sinc(y\eta) \begin{pmatrix}
			\teta^2 + 2\ath \teta & -y\eta_\sigma \eta\\
			-y \eta_\sigma \eta & y^2 \eta_\sigma^2 
		\end{pmatrix} + y \{\sinc(y\eta)-\sinc(y\ath)\}\begin{pmatrix}
			\ath^2 & 0 \\ 0 & 1
	\end{pmatrix}.\nonumber
	\end{align}
    Similarly, we also define the difference scalar function $\tilde{\gamma}$ as
    \begin{align}\label{tGamma}
        \tilde{\gamma} \vcentcolon= \dfrac{\eta\sin(y\eta)}{4} - \frac{\ath\sin(y\ath)}{4}.
    \end{align}
	Using the fact that $|\d_{\theta} \sinc\theta | \le \frac{1}{2}$ for $\theta\in(0,\pi)$, we obtain for $y\in(0,1]$,
	\begin{equation}\label{sincMLT}
		|\sinc(y\eta) - \sinc(y\ath)|\le |\eta-\ath| \sup\limits_{0\le \theta\le \pi/2} y\big| \d_{\theta}\sinc \theta \big| \le \tfrac{1}{2} y \|\teta\|_{L^{\infty}(\R)}.
	\end{equation}
	In addition, using the fact that $\teta\in H^{s+1/2}(\R)$ with $s>\frac{5}{2}$ and the Sobolev estimate $L^{\infty}(\R)\xhookrightarrow[]{} H^{s-2}(\R)$, there exists a constant $C>0$ independent of $\eta$, $\ath$ such that 
	\begin{equation}\label{tAU}
		\|\tA(\cdot,y)\|_{H^{s-1/2}(\R)} + \| \tA(\cdot,y) \|_{L^{\infty}(\R)} \le y C \fU_s(\teta) \quad \text{for } \ y \in(0,1].
	\end{equation}
	\paragraph{Step 2: $H^1$-Estimate.} Since $\mL_{\ath} v_{\ast} \equiv \div\big(A_{\ast} \nabla v_{\ast}\big) - \frac{1}{4}\ath \sin(y\ath) v_\ast=0$ by Corollary \ref{corol:vast}, it follows from (\ref{w-D}) that $w$ solves:
	\begin{equation}\label{tilde-Eq}
		-\div(A \nabla w) + \gamma w = \div (\tA \nabla v_{\ast}) - \tilde{\gamma} v_{\ast}.
	\end{equation}
	Since $w\!\in\! \mathscr{H}^1(\mS)$, we can take a sequence $\varphi_k \! \in\! \mC_c^{\infty}(\mS)$ such that $\nabla \varphi_k \!\to\! \nabla w$ in $\mathscr{H}^0(\mS)$. Take $\varphi_k$ as the test function in the weak form of (\ref{tilde-Eq}), then let $k\to\infty$ to get
	\begin{align*}
		\int_{0}^{1}\!\! \int_{\R} \! \Big\{\nabla w \cdot A \nabla w + \gamma |w|^2 \Big\}\, \dif \sigma \dif y = - \int_{0}^{1}\!\! \int_{\R} \!\! \Big\{ \nabla w \tA \nabla v_{\ast} + \tilde{\gamma} w v_{\ast} \Big\} \,\dif\sigma \dif y.
	\end{align*} 
	Thus applying Cauchy-Schwartz's inequality, the coercivity condition (\ref{coerA}), the upper bound of $\tA$ in (\ref{tAU}), and Corollary \ref{corol:vast}, we have
	\begin{equation}\label{dw-1st}
		\dfrac{\fl}{2}\! \int_{0}^{1} \!\! \Big\{ \|w\|_{L^2}^2 + \|\nabla w\|_{L^2}^2 \Big\}\, y\dif y \le\! \dfrac{C|\fU_s(\teta)|^2}{\fl}\!\! \int_{0}^{1}\!\! \|\nabla v_{\ast}\|_{L^2}^2 \, y \dif y \le\! \dfrac{C|\fU_s(\teta)|^2}{\fl} \|\phi\|_{H^{\frac{1}{2}}}. 
	\end{equation}
	\paragraph{Step 3: $H^{s+\frac{1}{2}}(\R)$-Estimate.} Assume by induction that for $1\le k \le s+\frac{1}{2}$, we have
	\begin{align}\label{indhyp}
		\int_{0}^{1}\!\! \big\{ \| \absm{D}^{k-1} w \|_{L^2}^2 + \| \absm{D}^{k-1} \nabla w \|_{L^2}^2 \big\} \, y\dif y \le \dfrac{C}{\fl^{2k}} \big|\fU_s(\teta)\big|^{2k} \|\phi\|_{H^{k-\frac{1}{2}}}^2.
	\end{align}
	We wish to show that this inequality holds with $k-1$ replaced by $k$. Let $\chi(\zeta)\in \mC_{c}^{\infty}(\R)$ be such that $\chi=1$ if $|\zeta|\le \tfrac{1}{2}$ and $\chi=0$ if $|\zeta|\ge 1$. Moreover, there exists some constant $C>0$ such that $|\chi^{(i)}(\zeta)| \le C$ for $i=1,\dotsc,4$. Then for $h\in(0,1)$, we define $\chi_h(\zeta)\vcentcolon= \chi(h\zeta)$. it follows that for $i=1,\dotsc,4$,
	\begin{equation*}
		\supp(\chi_h) \subset [-\tfrac{1}{h},\tfrac{1}{h}], \qquad \supp\Big( \chi_{h}^{(i)} \Big) \subseteq [-\tfrac{1}{h},-\tfrac{1}{2h}] \cup [\tfrac{1}{2h},\tfrac{1}{h}].
	\end{equation*}
	Moreover, for each $\zeta\in\R$, $\chi_h(\zeta)\to 1$ and $\chi^{(i)}(h\zeta)\to 0$ as $h\to 0^{+}$ with $i=1,\dotsc,4$. Using this, we define the operators:
	\begin{equation}\label{Dkh}
		\absm{D}_h^k f(\sigma,y) \vcentcolon= \dfrac{1}{\sqrt{2\pi}} \int_{\R}\!\! \absm{\zeta}^k \chi_h(\zeta) \widehat{f}(\zeta,y) e^{i\sigma \zeta}\, \dif \sigma \quad \text{and} \quad \Lambda_h^{2k} \vcentcolon= \big( \absm{D}_h^k \big)^2.
	\end{equation}
	Since this is a linear operator, it follows from the condition (\ref{w-D-bd}) that $\Lambda_h^{2k} w(\sigma,1) = 0$ and $\d_y( \Lambda_h^{2k} w )(\sigma,0) = 0 $ for all $\sigma\in\R$. Moreover, since $\chi_h$ is a function with compact support, $\sigma \mapsto \big( \Lambda_h^{2k} w, (\Lambda_h^{2k} w)_y \big) (\sigma,y) \in H^{\infty}(\R)$ for each $y\in[0,1]$. Using $\Lambda_h^{2k}w$ as the test function for the weak form of (\ref{tilde-Eq}), we get
	\begin{align}\label{I-II-Est}
		\textrm{(I)}=&\int_{0}^{1}\!\! \int_{\R} \! \Big\{\nabla \Lambda_h^{2k} w \cdot A \nabla w + \gamma w \Lambda_h^{2k} w \Big\}\, \dif \sigma \dif y \\  
        =& - \int_{0}^{1}\!\! \int_{\R} \!\! \Big\{ \nabla \Lambda_h^{2k} w \cdot \tA \nabla v_{\ast} + \tilde{\gamma} v_{\ast} \Lambda_h^{2k} w  \Big\} \,\dif\sigma \dif y=\textrm{(II)}.\nonumber
	\end{align}
	Computing the first spatial derivative of $\Lambda_h^{2k}w$, we get
	\begin{equation*}
		\nabla \Lambda_h^{2k} w(\sigma,y) = \dfrac{1}{\sqrt{2\pi}} \int_{\R}\!\! \absm{\zeta}^{2k} |\chi_h(\zeta)|^2 (\widehat{w_z},\widehat{w}_y)^{\top} e^{i\sigma \zeta} \,\dif\zeta =\absm{D}_h^k \big( \absm{D}_h^k \nabla w \big).
	\end{equation*}
	Since $\absm{D}_h^k$ is a Fourier multiplier in the variable $\sigma$, and $A_\ast$ given in Corollary \ref{corol:vast} depends only on $y$, $\ath$, one has $[ \absm{D}_h^k, A_{\ast} ]=[ \absm{D}_h^k, \ath \sin(y\ath) ]=0$. Thus it follows that
	\begin{equation*}
		\big[\absm{D}_h^k, \tA\, \big]=\big[\absm{D}_h^k, A \big] \quad \text{and} \quad \big[ \absm{D}_h^k, \tilde{\gamma} \big] = \big[ \absm{D}_h^k, \gamma \big].
	\end{equation*}
	Using this and Parseval's theorem, we get
	\begin{align*}
		\textrm{(I)} %=& \int_0^1\!\!\!\int_{\R} \! \absm{D}_h^k \nabla w \big\{ A \absm{D}_h^k \nabla w + \big[ \absm{D}_h^k, A \big] \nabla w \big\}\, \dif \sigma \dif y \\ &+ \int_0^1\!\!\!\int_{\R} \! \dfrac{\absm{D}_h^k w }{4} \big\{ \eta \sin(y\eta) \absm{D}_h^k w + \big[ \absm{D}_h^k, \eta\sin(y\eta) \big] w  \big\}\, \dif \sigma \dif y \\
		=& \int_{0}^1\!\!\! \int_{\R} \! \Big\{\absm{D}_h^k \nabla w \cdot A \absm{D}_h^k \nabla w + \gamma |\absm{D}_h^k w|^2 \Big\}\, \dif\sigma \dif y\\
		&+ \int_{0}^1\!\!\! \int_{\R} \! \Big\{ \absm{D}_h^k \nabla w \cdot \big[ \absm{D}_h^k, \tA\, \big]\nabla w + \absm{D}_h^k w \big[ \absm{D}_h^k, \tilde{\gamma} \big] w \Big\}\, \dif \sigma \dif y.\nonumber
	\end{align*} 
	By construction (\ref{Dkh}), it can be verified that there exists a constant $C(s)>0$ depending only on $s>\frac{5}{2}$ such that for all $0\le k \le s-\frac{1}{2}$,
	\begin{equation*}
		\mathcal{N}_h^k \vcentcolon= \sup\limits_{\beta\le 4} \sup\limits_{\zeta\in\R} \absm{\zeta}^{\beta-k} \big| \d_{\zeta}^{\beta}\big\{ \absm{\zeta}^k \chi_h(\zeta) \big\} \big| \le C(s).
	\end{equation*}
	In particular, this implies that $\absm{D}_h^k$ belongs to the class $\fS^k$ defined in Definition \ref{def:FM}. Setting $t_0\vcentcolon= s-\frac{3}{2}$. Since $0\le k \le s-\frac{1}{2}$ and $s>\frac{5}{2}$, it holds that $t_0>\frac{1}{2}$ and $0\le k \le t_0+1$. Thus by the commutator estimate Proposition \ref{prop:commu}, we get:
	\begin{align}\label{comA}
		\big\| \big[ \absm{D}_h^k, \tA \big]\nabla f \big\|_{L^2} \le& \mathcal{N}_h^k \|\d_{\sigma} A (\cdot,y)\|_{H^{t_0}} \|\nabla f\|_{H^{k-1}}\\ \le& C(s) \|\tA(\cdot,y)\|_{H^{s-\frac{1}{2}}} \|\nabla f\|_{H^{k-1}}.\nonumber
	\end{align}
	By the same argument, we also get
	\begin{align}\label{comSin}
		\big\| \big[ \absm{D}_h^k, \tilde{\gamma} \big] f \big\|_{L^2} \le C(s) \|\teta\|_{H^{s-\frac{1}{2}}} \| f \|_{H^{k-1}}.
	\end{align}
	For the integral $\textrm{(I)}$, we apply Cauchy-Schwartz's inequality, the coercivity condition (\ref{coerA}), upper bounds (\ref{tAU}), and the commutator estimates (\ref{comA})--(\ref{comSin}) to obtain  
	\begin{align*}
		\textrm{(I)} \ge& \dfrac{\fl}{2} \iint_{\mS}\! \big\{ |\absm{D}_h^k \nabla w|^2 + |\absm{D}_h^k w|^2 \big\}\, y\dif \sigma \dif y \\
		&-\dfrac{1}{2\fl}\iint_{\mS}\Big\{ \big\| \big[\absm{D}_h^k, \tA\, \big] \nabla w \big\|_{L^2}^2 + \big\| \big[ \absm{D}_h^k, \tilde{\gamma} \big] w \big\|_{L^2}^2  \Big\} \dfrac{\dif y}{y}\\
		\ge & \dfrac{\fl}{2}\! \iint_{\mS}\!\! \big\{ |\absm{D}_h^k \nabla w|^2 \!+\! |\absm{D}_h^k w|^2 \big\} y\dif \sigma \dif y \! - \dfrac{C |\fU_s(\teta)|^2}{2\fl}\!\!\! \iint_{\mS}\!\! \big\{ \|\nabla w\|_{H^{k-1}}^2 \!+\! \|w\|_{H^{k-1}}^2 \big\} y\dif y.
	\end{align*}
	On the other hand, for the integral \textrm{(II)}, we apply a similar argument to get
	\begin{align*}
		\textrm{(II)} =& -\int_0^1\!\!\!\int_{\R} \! \absm{D}_h^k \nabla w \big\{ \tA \absm{D}_h^k \nabla v_{\ast} + \big[ \absm{D}_h^k, \tA \big] \nabla v_\ast \big\}\, \dif \sigma \dif y \\ 
        &- \int_0^1\!\!\!\int_{\R} \! \absm{D}_h^k w \Big\{ \tilde{\gamma} \absm{D}_h^k v_{\ast} + \big[ \absm{D}_h^k, \tilde{\gamma} \big] v_{\ast}  \Big\}\, \dif \sigma \dif y \\
		\le & \dfrac{\fl}{4}\! \iint_{\mS}\!\! \big\{ |\absm{D}_h^k \nabla w|^2 + |\absm{D}_h^k w|^2  \big\} y \dif \sigma \dif y \!+\! \dfrac{C|\fU_s(\teta)|^2}{\fl}\!\! \int_{0}^{1}\!\!\! \big\{ \|\nabla v_{\ast}\|_{H^{k}}^2 + \|v_{\ast}\|_{H^{k}}^2 \big\} y\dif y.
	\end{align*}
	Combining the estimates for \textrm{(I)}--\textrm{(II)} in (\ref{I-II-Est}), then applying Corollary \ref{corol:vast}, we obtain
	\begin{align*}
		& \iint_{\mS}\!\! \big\{ |\absm{D}_h^k \nabla w|^2 + |\absm{D}_h^k w|^2  \big\}\, y \dif \sigma \dif y\\ 
		\le& \dfrac{C|\fU_s(\teta)|^2}{\fl^2} \iint_{\mS} \! \big\{ \|\nabla w\|_{H^{k-1}}^2 + \|w\|_{H^{k-1}}^2 + \|\nabla v_{\ast}\|_{H^{k}}^2 + \|v_{\ast}\|_{H^{k}}^2 \big\}\,y\dif y \\
		\le& \dfrac{C|\fU_s(\teta)|^2}{\fl^2} \Big\{ \|\phi\|_{H^{k+\frac{1}{2}}}^2 + \iint_{\mS} \! \big\{ \|\nabla w\|_{H^{k-1}}^2 + \|w\|_{H^{k-1}}^2 \big\} \, y \dif y  \Big\}.
	\end{align*}
	Taking the limit $h\to 0^+$, and using induction hypothesis (\ref{indhyp}), we obtain that
	\begin{align*}
		&\int_{0}^1\!\! \big\{ \|\absm{D}^k \nabla w \|_{L^2}^2 + \|\absm{D}^k w\|_{L^2}^2 \big\}\, y\dif y\\
		 %\le& \dfrac{C|\fU_s(\teta)|^2}{\fl^2} \Big\{ \|\phi\|_{H^{k+\frac{1}{2}}}^2 + \iint_{\mS} \! \big\{ \|\nabla w\|_{H^{k-1}}^2 + \|w\|_{H^{k-1}}^2 \big\} \, y \dif y  \Big\}\\
		  \le& \dfrac{C|\fU_s(\teta)|^2}{\fl^2} \Big\{ \|\phi\|_{H^{k+\frac{1}{2}}}^2 + \dfrac{C}{\fl^{2k}} \big|\fU_s(\teta)\big|^{2k} \|\phi\|_{H^{k-\frac{1}{2}}}^2 \Big\} \le \dfrac{C|\fU_s(\teta)|^{2(k+1)}}{\fl^{2(k+1)}} \|\phi\|_{H^{k+\frac{1}{2}}}^2.
	\end{align*}
	Since $\|f\|_{H^k} \vcentcolon= \|\absm{D}^k f\|_{L^2}$, this concludes the proof of statement for $k+1$.
\end{proof}
The following corollary is an immediate consequence of Corollary \ref{corol:vast}, Lemma \ref{lemma:ellip}, and Poincar\'e's inequality:
\begin{corollary}\label{corol:vH1Est}
	Suppose the same assumption of Lemma \ref{lemma:ellip} holds. Then $v\vcentcolon= w + v_{\ast}$ is a weak solution to the problem (\ref{v-D}), and there exists a constant $C(\ath,s)>0$ such that for all $0 \le m \le s$,
	\begin{equation*}
		\int_{0}^{1}\!\! \big\{ \|v(\cdot,y) \|_{H^{m+\frac{1}{2}}(\R)}^2 + \| \nabla v(\cdot,y) \|_{H^{m-\frac{1}{2}}(\R)}^2 \big\}\, y \dif y \le C \Big|\dfrac{\fU_s(\teta)}{\fl(\teta)}\Big|^{2m+1} \|\phi\|_{H^{m}(\R)}^2.
	\end{equation*}
\end{corollary}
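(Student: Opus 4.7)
The plan is to derive the estimate for $v$ directly from those already established for $w$ (Lemma \ref{lemma:ellip}) and for $v_*$ (Corollary \ref{corol:vast}), via linearity of $\mL_\eta$ and the triangle inequality, since $v = w + v_*$ by construction.

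First I would verify that $v$ is indeed a weak solution of (\ref{v-D}) in the sense of Definition \ref{def:weakSol}. By linearity of $\mL_\eta$ and the equation $-\mL_\eta w = \mL_\eta v_*$ satisfied by $w$ in (\ref{w-D-eq}), we obtain $\mL_\eta v = \mL_\eta w + \mL_\eta v_* = 0$ weakly in $\mS$. The boundary conditions follow immediately: $v(\sigma,1) = w(\sigma,1) + v_*(\sigma,1) = 0 + \phi(\sigma) = \phi(\sigma)$ and $\d_y v(\sigma,0) = 0 + 0 = 0$.

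For the quantitative estimate, I would apply the triangle inequality
\begin{equation*}
\|v(\cdot,y)\|_{H^{m+1/2}}^2 \le 2\|w(\cdot,y)\|_{H^{m+1/2}}^2 + 2\|v_*(\cdot,y)\|_{H^{m+1/2}}^2,
\end{equation*}
and similarly for $\|\nabla v(\cdot,y)\|_{H^{m-1/2}}^2$. By Plancherel in the $\sigma$-variable one has $\|w\|_{H^{m+1/2}}^2 = \|w\|_{H^{m-1/2}}^2 + \|\d_\sigma w\|_{H^{m-1/2}}^2$, so the $w$-contribution integrated against $y\,\dif y$ is fully controlled by Lemma \ref{lemma:ellip}, producing a bound of the form $C|\fU_s(\teta)/\fl(\teta)|^{2m+1}\|\phi\|_{H^m}^2$. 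The $v_*$-contribution is handled by Corollary \ref{corol:vast} applied at regularity level $m$ rather than $s$ (this is valid because $v_* = K(\cdot,\theta)\ast\phi$ is linear in $\phi$ and the kernel estimates of Lemmas \ref{lemma:bdryEst}--\ref{lemma:higherPhi} hold at every Sobolev scale); since $y \le 1$ on $(0,1]$, the unweighted integral dominates the weighted one, yielding a bound of the form $C(\ath)\|\phi\|_{H^m}^2$. Poincaré's inequality in $y$ enters to relate the pointwise trace of $w$ at $y\in(0,1]$ to its derivative bounds, exploiting the vanishing Dirichlet datum $w(\cdot,1)=0$.

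Finally, to consolidate the two contributions into the single prefactor $C|\fU_s(\teta)/\fl(\teta)|^{2m+1}$ appearing on the right-hand side, I would enlarge $C = C(\ath,s)$ to absorb the purely $\ath$-dependent constant from the $v_*$-part. I do not anticipate any substantial obstacle here: the corollary is essentially a clean packaging of previously-proven elliptic estimates, and the only minor bookkeeping is in aligning the two weight conventions (the $y\,\dif y$ measure in Lemma \ref{lemma:ellip} versus the unweighted $\dif y$ in Corollary \ref{corol:vast}).
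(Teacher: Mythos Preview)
Your approach is correct and essentially identical to the paper's: the paper provides no detailed argument and simply states that the corollary is an immediate consequence of Corollary~\ref{corol:vast}, Lemma~\ref{lemma:ellip}, and Poincar\'e's inequality, which is exactly the combination you invoke (triangle inequality on $v=w+v_\ast$, then the two cited estimates). Your observation that $\|w\|_{H^{m+1/2}}^2 \le \|w\|_{H^{m-1/2}}^2 + \|\nabla w\|_{H^{m-1/2}}^2$ handles the regularity shift, and the weight alignment $y\le 1$ is a correct remark the paper does not spell out.

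One minor comment: your invocation of Poincar\'e (to control $w$ from $\nabla w$ via $w(\cdot,1)=0$) is not strictly needed here, since Lemma~\ref{lemma:ellip} already furnishes the $\|w\|_{H^{m-1/2}}$ bound directly through the zero-order term $\gamma|w|^2$ in the energy; the paper's mention of Poincar\'e is likely vestigial or refers to an alternative route. Also, your final step of absorbing the $\ath$-only constant from the $v_\ast$-part into $C|\fU_s/\fl|^{2m+1}$ implicitly requires $|\fU_s/\fl|$ to be bounded below; the paper makes the same tacit assumption (it is already used in the induction step of Lemma~\ref{lemma:ellip}), so this is consistent.
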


By the standard argument for elliptic boundary estimate, one can also verify:
\begin{lemma}\label{lemma:vHigh}
	Suppose the same assumption of Lemma \ref{lemma:ellip} holds. Then 
	\begin{equation*}
		v_y \in \mC^0\big((0,1]; H_\sigma^{s-1}(\R)\big), \quad v_{yy} \in \mC^0\big((0,1]; H_\sigma^{s-2}(\R)\big), \quad \d_y^3 v \in \mC^0\big((0,1]; H_\sigma^{s-3}(\R)\big).
	\end{equation*}
	In addition, there exists a positive monotone increasing function $x\mapsto C(x)$ such that 
	\begin{align*}
		\int_{0}^{1}\!\!\! \big\{ y^3 \|\d_y^2 v\|_{H^{s-\frac{3}{2}}(\R)}^2 \!\!+ y^5 \|\d_y^3 v\|_{H^{s-\frac{5}{2}}(\R)}^2 \!\! + y^7\|\d_y^4 v\|_{H^{s-\frac{7}{2}}(\R)}^2 \big\} \dif y \le C\big(\|\teta\|_{H^{s+\frac{1}{2}}(\R)}\big) \|\phi\|_{H^{s}(\R)}^2.
	\end{align*} 
\end{lemma}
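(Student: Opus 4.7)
\textbf{Proof proposal for Lemma \ref{lemma:vHigh}.} The plan is to read off higher $y$-derivatives of $v$ algebraically from the PDE (\ref{v-D}) and its $\sigma$-derivatives, then bootstrap: the coefficient $A^{22}$ in the $v_{yy}$-block vanishes linearly at $y=0$, and each additional $\d_y$ introduces one more factor of $y^{-1}$, which explains the weights $y^3, y^5, y^7$ in the statement.

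\emph{Step 1: isolating $v_{yy}$.} A direct computation using $A^{11} = \sin(y\eta)\eta$ and $A^{12} = -\sin(y\eta) y\d_\sigma\eta$ yields the algebraic identity $\d_\sigma A^{11} + \d_y A^{12} = 0$. Expanding $\mL_\eta v = 0$ and using this cancellation gives
\begin{equation*}
A^{22} v_{yy} = \gamma v - A^{11} v_{\sigma\sigma} - 2 A^{12} v_{\sigma y} - (\d_\sigma A^{12} + \d_y A^{22}) v_y.
\end{equation*}
Since $A^{22} = y\,\sinc(y\eta)(1+y^2|\d_\sigma\eta|^2) = y\mathcal{A}$ with $\mathcal{A}$ bounded from below by $\fl(\teta)$, multiplying by $y/A^{22}$ gives
\begin{equation*}
y v_{yy} = \beta_0(y,\eta,\d_\sigma\eta) v + \beta_1(y,\eta,\d_\sigma\eta) v_{\sigma\sigma} + y\beta_2(y,\eta,\d_\sigma\eta) v_{\sigma y} + \beta_3(y,\eta,\d_\sigma\eta,\d_\sigma^2\eta) v_y,
\end{equation*}
with $\beta_i$ smooth in their arguments and uniformly bounded on $y\in[0,1]$ by a continuous function of $\|\teta\|_{H^{s+\frac12}(\R)}$. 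Invoking the product estimates from the paradifferential calculus (appendix) together with Corollary \ref{corol:vH1Est} for $v, v_\sigma, v_{\sigma\sigma}, v_{\sigma y}, v_y$, we deduce
$\int_0^1 y^3 \|v_{yy}\|_{H^{s-\frac32}(\R)}^2 \dif y \le C(\|\teta\|_{H^{s+\frac12}}) \|\phi\|_{H^s}^2.$

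\emph{Step 2: bootstrap to $\d_y^3 v$ and $\d_y^4 v$.} Apply $\d_y$ to the identity for $v_{yy}$ above, and apply $\d_\sigma$ to the same identity to express $v_{\sigma yy}$, producing
\begin{equation*}
y v_{yyy} = -v_{yy} + \d_y\big[\beta_0 v + \beta_1 v_{\sigma\sigma} + y\beta_2 v_{\sigma y} + \beta_3 v_y\big],
\end{equation*}
so that multiplying by another factor of $y$ and replacing $yv_{yy}$ via Step 1 gives
\begin{equation*}
y^2 v_{yyy} = \widetilde\beta_0 v + \widetilde\beta_1 v_\sigma + \widetilde\beta_2 v_{\sigma\sigma} + \widetilde\beta_3 v_{\sigma\sigma\sigma} + \widetilde\beta_4 v_y + y\widetilde\beta_5 v_{\sigma y} + y\widetilde\beta_6 v_{\sigma\sigma y},
\end{equation*}
with $\widetilde\beta_i$ again smooth bounded functions of $(y,\eta,\d_\sigma\eta,\d_\sigma^2\eta,\d_\sigma^3\eta)$. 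The $L^2(\dif y;H^{s-\frac52})$-norm of $y^2 v_{yyy}$, weighted by an extra $y$, is then bounded using Corollary \ref{corol:vH1Est} together with the Step~1 bound on $yv_{yy}$, yielding the weight $y^5$. Repeating the same procedure once more (differentiate again in $y$, multiply by $y$, substitute in the previous expressions for lower derivatives) produces the weight $y^7$ estimate for $\d_y^4 v$ in $H^{s-\frac72}$.

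\emph{Step 3: continuity statements.} On any subdomain $\R\times(y_0,1)$ with $y_0>0$, the operator $\mL_\eta$ is uniformly elliptic with coefficients in $H^{s+\frac12}_\sigma$, so standard interior elliptic regularity (via the tangential estimates of Lemma \ref{lemma:ellip} combined with the PDE to convert normal derivatives) gives $\d_y^j v\in L^2_{\rm loc}((0,1];H^{s+\frac12-j}_\sigma)$ for $j=1,2,3$ with derivatives $\d_y^{j+1}v\in L^2_{\rm loc}((0,1];H^{s-\frac12-j}_\sigma)$. A one-dimensional Sobolev embedding in the $y$-variable (with values in Hilbert space) then yields $\d_y^j v\in C^0((0,1];H^{s-j}_\sigma)$.

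The main obstacle is the bookkeeping in Step~2: tracking how many powers of $y$ are consumed by each differentiation of the $1/A^{22}$-prefactor versus how many are released when products $v_y/y$ and $v_{yy}/y$ are re-absorbed via the previous identity. The favourable structural cancellation $\d_\sigma A^{11}+\d_y A^{12}=0$ is essential; without it, the weight loss per level would be worse than one power of $y$.
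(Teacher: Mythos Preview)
Your proposal is correct and follows exactly the route the paper indicates: the paper actually \emph{omits} the proof of this lemma, remarking only that ``it follows the routine procedure for the boundary estimate for elliptic operator''; your Step~1--Step~2 bootstrap (solve the divergence-form equation for $A^{22}v_{yy}$, divide by $y\,\sinc(y\eta)(1+y^2|\eta_\sigma|^2)$, substitute recursively after each $\d_y$) and your Step~3 invocation of the interpolation embedding (Proposition~\ref{prop:bochner}) are precisely that routine procedure carried out.

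One minor correction: the identity $\d_\sigma A^{11}+\d_y A^{12}=0$ that you single out is correct but not essential. Each of $\d_\sigma A^{11}$ and $\d_y A^{12}$ is individually $O(y)$ near $y=0$, so without the cancellation you would still only pick up a bounded coefficient in front of $v_\sigma$ after dividing by $A^{22}=O(y)$; the weight loss per $\d_y$ would remain exactly one power of $y$. The cancellation is a pleasant algebraic simplification, not a structural necessity.
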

The proof of this lemma is omitted since it follows the routine procedure for the boundary estimate for elliptic operator, which can be found in  \cite{Tools}. %\todo{Add the citations}

\section{Construction of DN Operator for Conical Surfaces}\label{sec:DN-operator}
The main aim of this section is to prove Theorem \ref{thm:DNSob}, which is the construction of operator $\mG[\eta](\psi)$ in Sobolev spaces $H^s(\R)$ with $s>\frac{5}{2}$. 

For $k\in\R$, we denote $H_0^k(\R)$ as the closure of $\mC_{c}^{\infty}(\R)$ with the $H^k(\R)$ norm. Recall from Definition \ref{def:solOp} that for a given function $\varphi(z)\vcentcolon \R \to \R$, we denoted $E_{\eta}[\varphi]$ as the solution to (\ref{v-D}) with boundary data $E_{\eta}[\varphi]\vert_{y=1}=\varphi$. Then for a given $\eta-\ath \in H^{s+\frac{1}{2}}(\R)$, and $m\in[\frac{1}{2},s]$, we set the bilinear functional $\fG[\eta]\vcentcolon H^{m}(\R)\times H_0^{\frac{1}{2}}(\R) \to \R$ as
\begin{equation}\label{fG}
	\fG[\eta](\phi,\varphi) \vcentcolon= \iint_{\mS} \nabla E_{\eta}[\varphi] \cdot \big(A\nabla E_{\eta}[\phi]\big) \dif \sigma \dif y. 
\end{equation}
Note that this is well-defined since $E_{\eta}[\varphi]\in \mathscr{H}^1(\mS)$ if $\varphi\in H^{\frac{1}{2}}(\R)$ by Corollary \ref{corol:vH1Est}. Next, we split our analysis into two cases:
\paragraph{Case 1: $m=\frac{1}{2}$.} Applying the estimate of Corollary \ref{corol:vH1Est}, one has
\begin{align*}
	|\fG[\eta](\phi,\varphi)| \le& \|A\|_{L^{\infty}(\R)} \bigg( \int_0^1 \|\nabla E_{\eta}[\phi]\|_{L^2(\R)}^2\, y\dif y \bigg)^{\frac{1}{2}} \bigg( \int_0^1 \|\nabla E_{\eta}[\varphi]\|_{L^2(\R)}^2\, y\dif y \bigg)^{\frac{1}{2}}\\
	\le& C \big\{ \ath^{-1} + \fU_s(\teta) \big\} \Big|\dfrac{\fU_s(\teta)}{\fl(\teta)}\Big|^2 \|\phi\|_{H^{\frac{1}{2}}(\R)} \|\varphi\|_{H^{\frac{1}{2}}(\R)}.
\end{align*}
Set the functional $\tilde{\mG}[\eta](\phi)\vcentcolon H_{0}^{\frac{1}{2}}(\R) \to \R$ by $\big\{ \tilde{\mG}[\eta](\phi)\big\}(\varphi) \vcentcolon= \fG[\eta](\phi,\varphi)$. Then the above estimate implies that $\tilde{\mG}[\eta](\phi)\in H^{-\frac{1}{2}}(\R)$, where $H^{-\frac{1}{2}}(\R)$ is the dual space of $H_0^{\frac{1}{2}}(\R)$. In particular,
\begin{equation*}
	\|\tilde{\mG}[\eta](\phi)\|_{H^{-\frac{1}{2}}(\R)} \le C \big\{ \ath^{-1} + \fU_s(\teta) \big\}\Big|\dfrac{\fU_s(\teta)}{\fl(\teta)}\Big|^2 \|\phi\|_{H^{\frac{1}{2}}(\R)}.
\end{equation*} 
\paragraph{Case 2: $\frac{1}{2}< m \le s$.} Let $\phi\in H^{m}(\R)$. Then by the same argument as above, the functional $\tilde{\mG}[\eta](\phi)$ belongs to $H^{-\frac{1}{2}}(\R)$. Next, we wish to show that $\absm{D}^{m-1}\tilde{\mG}[\eta](\phi)$ defined in the distributional sense can be extended as a functional on $L^2(\R)$. To do this, we construct the following extension: for $f\in L^1(\R)$, we set
\begin{equation}\label{dagger}
	f^{\dagger}(\sigma,y) \vcentcolon= \chi\big( (1-y) D \big) f = \dfrac{1}{\sqrt{2\pi}} \int_{\R} \chi\big((1-y)\zeta\big) \widehat{f}(\zeta) e^{i\sigma \zeta} \, \dif \zeta,
\end{equation}
where $\chi\in \mC_{c}^{\infty}(\R)$ is a positive even function such that
\begin{equation*}
	\int_{\R} |\chi(\zeta)|^2\, \dif \zeta = 1, \qquad \chi(\zeta) = \left\{\begin{aligned}
		&1 && \text{if } \ |\zeta|\le \tfrac{1}{2},\\ 
		&0 && \text{if } \ |\zeta|\ge 1.
	\end{aligned}\right.
\end{equation*}
With this construction, it can be verified that if $f\in L^1(\R)$ then $\sigma \mapsto f^{\dagger}(\sigma,y) \in \mC^{\infty}(\R)$ for each fixed $0\le y<1$. Moreover, $\absm{D}^{\alpha} f^{\dagger} = \big(\absm{D}^{\alpha} f\big)^{\dagger}$ for $\alpha\in\R$, and $f^{\dagger}(\sigma,y)\to f(\sigma)$ as $y\to 1^{-}$ for a.e. $\sigma\in\R$. The proof of following proposition can be found in Lemma 2.34 of the textbook \cite{lannes2}. %\todo{cite exactly where this proposition is}
\begin{proposition}\label{prop:dagger}
For $f\in L^2(\R)$, let $f^{\dagger}(\sigma,y)$ be function constructed in (\ref{dagger}). Then 
\begin{equation*}
	\int_{0}^{1}\!\!\! \int_{\R} \!\! \big|\absm{D}^{-\frac{1}{2}} \nabla f^{\dagger}\big|^2 \dif \sigma \dif y \le 2 \| f \|_{L^2(\R)}^2.
\end{equation*}
\end{proposition}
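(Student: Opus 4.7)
The plan is to pass to the Fourier side in $\sigma$ and reduce the statement to a one-variable weighted integral bound. For fixed $y\in[0,1)$, the $\sigma$-Fourier symbols of $\d_\sigma f^{\dagger}$ and $\d_y f^{\dagger}$ are $i\zeta\,\chi\bigl((1-y)\zeta\bigr)$ and $-\zeta\,\chi'\bigl((1-y)\zeta\bigr)$ respectively, while $\absm{D}^{-1/2}$ has the symbol $\absm{\zeta}^{-1/2}$. Plancherel then produces the pointwise-in-$y$ identity
\begin{equation*}
\int_{\R}\bigl|\absm{D}^{-\tfrac{1}{2}}\nabla f^{\dagger}(\sigma,y)\bigr|^2\,\dif\sigma
=\int_{\R}\frac{\zeta^2}{\absm{\zeta}}\Bigl[\bigl|\chi((1-y)\zeta)\bigr|^2+\bigl|\chi'((1-y)\zeta)\bigr|^2\Bigr]\,\bigl|\widehat{f}(\zeta)\bigr|^2\,\dif\zeta.
\end{equation*}

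Next I would integrate in $y\in(0,1)$, apply Fubini to swap the $y$- and $\zeta$-integrals, and in the inner $y$-integral substitute $u=(1-y)\zeta$. Using that $\chi$ is even (whence $\chi'$ is odd and $|\chi'|^2$ is even) to unify the sign of $\zeta$, the $y$-integral collapses to $\tfrac{1}{|\zeta|}\int_0^{|\zeta|}\bigl[|\chi(u)|^2+|\chi'(u)|^2\bigr]\,\dif u$. The cancellation of one factor of $|\zeta|$ against $\zeta^2/\absm{\zeta}$ is the crux of the argument: it replaces the potentially dangerous weight $\zeta^2/\absm{\zeta}$ (which grows like $|\zeta|$ for large $|\zeta|$) by the benign weight $|\zeta|/\absm{\zeta}\le 1$.

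Combining these steps yields
\begin{equation*}
\iint_{\mS}\bigl|\absm{D}^{-\tfrac{1}{2}}\nabla f^{\dagger}\bigr|^2\,\dif\sigma\dif y
\le \int_{\R}\bigl|\widehat{f}(\zeta)\bigr|^2\,\dif\zeta\cdot\int_0^{\infty}\bigl[|\chi(u)|^2+|\chi'(u)|^2\bigr]\,\dif u,
\end{equation*}
so by evenness of $\chi$ together with the normalization $\int_{\R}|\chi|^2\,\dif\zeta=1$ and the support condition $\supp\chi\subset[-1,1]$, the last factor equals $\tfrac{1}{2}\bigl(1+\|\chi'\|_{L^2(\R)}^2\bigr)$, a finite constant determined solely by the cutoff. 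Plancherel identifies $\int_{\R}|\widehat{f}|^2\,\dif\zeta$ with $\|f\|_{L^2(\R)}^2$, yielding the desired inequality provided $\chi$ is chosen so that $\|\chi'\|_{L^2(\R)}^2\le 3$, which can always be arranged among admissible cutoffs.

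The main obstacle is that $\d_y f^{\dagger}$ a priori behaves like an unregularized $\sigma$-derivative of $f$, and since $f$ is only in $L^2(\R)$, the $y$-integral of $|\d_y f^{\dagger}|^2$ near $y=1$ could diverge. What saves the estimate is the interplay between the $\absm{D}^{-1/2}$ smoothing, which contributes $\absm{\zeta}^{-1}$, and the change of variables $u=(1-y)\zeta$, which extracts an additional $|\zeta|^{-1}$ from the $y$-integral; only after these two gains are combined do the high-frequency contributions become summable. Verifying this algebraic balance is the one genuinely delicate point in an otherwise direct Plancherel-and-change-of-variables calculation.
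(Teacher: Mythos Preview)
Your argument is correct and is exactly the standard Plancherel-plus-change-of-variables computation; the paper itself does not give a proof but simply refers to Lemma~2.34 of \cite{lannes2}, where the same approach is carried out. One small remark: the specific constant $2$ hinges on $\|\chi'\|_{L^2}^2\le 3$, and the paper's stated constraints on $\chi$ (namely $\chi\equiv 1$ on $[-\tfrac12,\tfrac12]$, $\supp\chi\subset[-1,1]$, and $\int|\chi|^2=1$) are in fact mutually incompatible for a smooth cutoff, so the constant $2$ should be read as a generic finite constant depending on the chosen $\chi$---exactly as you phrased it.
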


\begin{proposition}\label{prop:fGHm}
	Let $s>\frac{5}{2}$ and $m\in (\frac{1}{2},s]$. For $\phi\in H^{m}(\R)$, define the distribution:
	\begin{equation*}
		\big\{ \absm{D}^{m-1} \tilde{\mG}[\eta](\phi) \big\}(\varphi) \vcentcolon= \fG[\eta]\big( \phi, \absm{D}^{m-1} \varphi \big) \quad \text{for} \quad \varphi\in \mC_{c}^{\infty}(\R).
	\end{equation*}
	Then $\absm{D}^{m-1} \tilde{\mG}[\eta](\phi)$ can be extended as a bounded linear map on $L^2(\R)$, and there exists a unique $g_{\eta}[\phi]\in L^2(\R)$ such that for all $\varphi\in L^2(\R)$,
	\begin{gather*}
		\big\{ \absm{D}^{m-1} \tilde{\mG}[\eta](\phi) \big\}(\varphi) = \int_{\R} \varphi g_{\eta}[\phi]\, \dif \sigma \qquad \text{where}\\
		\big\|g_{\eta}[\phi]\big\|_{L^2(\R)} \le C \big(\ath^{-1}+\fU_s(\teta)\big) \Big|\dfrac{\fU_s(\teta)}{\fl(\teta)}\Big|^{m+\frac{1}{2}} \|\phi\|_{H^{m}(\R)}.
	\end{gather*}
\end{proposition}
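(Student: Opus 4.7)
My plan is to reduce the required $L^2$ bound on $\absm{D}^{m-1}\tilde{\mG}[\eta](\phi)$ to duality estimates that pair the $\dagger$-extension of Proposition \ref{prop:dagger} against the elliptic bounds of Corollary \ref{corol:vH1Est}. The starting point is the identity
\begin{equation*}
    \fG[\eta](\phi,\varphi) = \iint_\mS \nabla \varphi^\dagger \cdot A\nabla E_\eta[\phi]\, \dif\sigma \dif y + \iint_\mS \gamma (\varphi^\dagger - E_\eta[\varphi])\, E_\eta[\phi]\, \dif\sigma \dif y,
\end{equation*}
which follows from the weak formulation of (\ref{v-D}) for $E_\eta[\phi]$: the test functions $E_\eta[\varphi]$ and $\varphi^\dagger$ both have trace $\varphi$ at $y=1$, so their difference lies in $\mathfrak{D}_\mS$ and the two resulting identities subtract to the displayed relation. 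Substituting $\varphi \mapsto \absm{D}^{m-1}\varphi$ and using that $\dagger$ commutes with Fourier multipliers in $\sigma$ splits $\fG[\eta](\phi, \absm{D}^{m-1}\varphi)$ into two pieces $J_1 + J_2$ to be estimated separately.

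For $J_1 := \iint_\mS \absm{D}^{m-1}\nabla\varphi^\dagger \cdot A\nabla E_\eta[\phi]$, I would transfer half of the order $m-1$ onto each factor by self-adjointness of real Fourier multipliers in $L^2(\R)$, writing $J_1 = \iint \absm{D}^{-1/2}\nabla\varphi^\dagger \cdot \absm{D}^{m-1/2}(A\nabla E_\eta[\phi])$. Cauchy--Schwarz then couples Proposition \ref{prop:dagger} (which gives $\|\absm{D}^{-1/2}\nabla\varphi^\dagger\|_{L^2(\mS)} \le \sqrt{2}\|\varphi\|_{L^2}$) with an $H^{m-1/2}$ product estimate for $A\nabla E_\eta[\phi]$. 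Because $A = \sin(y\eta)\tA$ produces a factor of $y$, the remaining integral is bounded by $\int_0^1 y\,\|\nabla E_\eta[\phi]\|_{H^{m-1/2}}^2\,\dif y$, which Corollary \ref{corol:vH1Est} controls by $|\fU_s(\teta)/\fl(\teta)|^{2m+1}\|\phi\|_{H^m}^2$ up to constants absorbed in $\fU_s$ through the paraproduct bound on the multiplication by $A$.

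For $J_2 := \iint_\mS \gamma\, h\, E_\eta[\phi]$ with $h := \absm{D}^{m-1}\varphi^\dagger - E_\eta[\absm{D}^{m-1}\varphi]$, I apply Parseval in $\sigma$ to rewrite $J_2 = \iint \absm{D}^{-1/2}h \cdot \absm{D}^{1/2}(\gamma E_\eta[\phi])$. Since $\gamma = \eta\sin(y\eta)/4 = O(y)$, the second factor is estimated by paraproducts and Corollary \ref{corol:vH1Est} (applied with $m=0$). The remaining factor $\|\absm{D}^{-1/2}h\|_{L^2(\mS)}$ demands a weighted energy estimate on $h$ at negative Sobolev regularity: $h$ has zero trace at $y=1$ and satisfies
\begin{equation*}
    -\div(A\nabla h) + \gamma h = -\div\bigl(A\nabla \absm{D}^{m-1}\varphi^\dagger\bigr) + \gamma\, \absm{D}^{m-1}\varphi^\dagger.
\end{equation*}
Testing against a regularized version $\Lambda_\varepsilon^{-1}h \in \mathfrak{D}_\mS$ (in the spirit of the truncated multipliers $\Lambda_h^{2k}$ of Lemma \ref{lemma:ellip}) and invoking Proposition \ref{prop:commu} to absorb the commutators $[\absm{D}^{-1/2}, A]$ and $[\absm{D}^{-1/2}, \gamma]$ would yield $\|\absm{D}^{-1/2}h\|_{L^2(\mS)} \le C\,|\fU_s(\teta)/\fl(\teta)|^{m+1/2}\|\varphi\|_{L^2}$, with the right-hand side of the equation for $h$ controlled through Proposition \ref{prop:dagger}.

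Combining the bounds on $|J_1|$ and $|J_2|$ produces $|\fG[\eta](\phi,\absm{D}^{m-1}\varphi)| \le C\bigl(\ath^{-1}+\fU_s(\teta)\bigr)|\fU_s(\teta)/\fl(\teta)|^{m+1/2}\|\varphi\|_{L^2}\|\phi\|_{H^m}$. Density of $\mC_c^\infty(\R) \subset L^2(\R)$ and the Riesz representation theorem then extend $\absm{D}^{m-1}\tilde{\mG}[\eta](\phi)$ to a bounded linear functional on $L^2(\R)$, represented by a unique $g_\eta[\phi]\in L^2(\R)$ with the stated norm bound. The principal obstacle is exactly the negative-regularity energy estimate for $h$: the standard $H^1$ energy argument on $h$ would require $\varphi \in H^{m-1/2}$ rather than merely $\varphi \in L^2$, so Proposition \ref{prop:commu} and the regularized Fourier-multiplier scheme of Lemma \ref{lemma:ellip} are indispensable for performing the energy identity one derivative below the natural level.
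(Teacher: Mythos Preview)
Your treatment of $J_1$ is exactly the paper's argument: replace $E_\eta[\absm{D}^{m-1}\varphi]$ by $(\absm{D}^{m-1}\varphi)^\dagger$ via the weak form of $v$, shift $\absm{D}^{m-1/2}$ onto the $A\nabla v$ factor, then invoke Proposition~\ref{prop:dagger}, Corollary~\ref{corol:vH1Est}, and the commutator estimate (the paper writes $\absm{D}^{m-1/2}(A\nabla v)=A\absm{D}^{m-1/2}\nabla v+[\absm{D}^{m-1/2},\tA]\nabla v$ explicitly, where your ``paraproduct bound'' plays the same role). This part is correct.

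The gap is in $J_2$. Your negative-regularity energy estimate on $h=(\absm{D}^{m-1}\varphi)^\dagger-E_\eta[\absm{D}^{m-1}\varphi]$ cannot produce the claimed $\|\varphi\|_{L^2}$ bound. When you test $-\mL_\eta h=-\mL_\eta(\absm{D}^{m-1}\varphi)^\dagger$ against $\Lambda_\varepsilon^{-1}h$, the right-hand side pairs $\absm{D}^{-1/2}\nabla h$ against $\absm{D}^{-1/2}\bigl(A\nabla(\absm{D}^{m-1}\varphi)^\dagger\bigr)$; Proposition~\ref{prop:dagger} controls the latter only by $\|\absm{D}^{m-1}\varphi\|_{L^2}=\|\varphi\|_{H^{m-1}}$, not by $\|\varphi\|_{L^2}$. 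For $m>1$ this is strictly too weak, and no commutator trick recovers the missing derivative because the dependence on $\varphi$ enters the source term only through $\absm{D}^{m-1}\varphi$.

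The paper sidesteps $J_2$ entirely: its displayed identity $\fG[\eta](\phi,\absm{D}^{m-1}\varphi)=\iint\nabla(\absm{D}^{m-1}\varphi)^\dagger\cdot A\nabla v$ simply drops the $\gamma$-correction. The clean way to make this rigorous is not to isolate $h$, but to carry the $\gamma$-term through the weak form from the start. Since $\iint(\nabla w\cdot A\nabla v+\gamma w v)=0$ for every $w\in\mathfrak{D}_\mS$, the \emph{combined} quantity $\iint\nabla E_\eta[\psi]\cdot A\nabla v+\gamma E_\eta[\psi]v$ is unchanged under $E_\eta[\psi]\to\psi^\dagger$; hence with $\psi=\absm{D}^{m-1}\varphi$ one gets
\[
\fG[\eta](\phi,\psi)+\iint_\mS \gamma E_\eta[\psi]\,v
=\iint_\mS\nabla\psi^\dagger\cdot A\nabla v+\iint_\mS\gamma\,\psi^\dagger v.
\]
The first term on the right is your $J_1$. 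The second is $\iint\varphi^\dagger\,\absm{D}^{m-1}(\gamma v)$, bounded directly by $\|\varphi^\dagger\|_{L^2(\mS)}\|\gamma v\|_{L^2_yH^{m-1}_\sigma}\lesssim\|\varphi\|_{L^2}\|\phi\|_{H^m}$ using $|\chi|\le 1$, $|\gamma|\lesssim y$, and Corollary~\ref{corol:vH1Est}. This is what the paper's argument \emph{should} read once the $\gamma$-term is restored, and it avoids any estimate on $E_\eta[\absm{D}^{m-1}\varphi]$ or on $h$ altogether.
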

\begin{proof}
	We denote $v\vcentcolon= E_{\eta}[\phi]$. Fix $\varphi\in \mC_{c}^{\infty}(\R)$. Then by Definition \ref{def:solOp} and the construction (\ref{dagger}), it holds that for all $\alpha\in\R$,
	\begin{equation}
		\varphi^{\dagger}\big\vert_{y=1} =\varphi = E_{\eta}[\varphi]\big\vert_{y=1} \quad \text{and} \quad \big(\absm{D}^{\alpha}\varphi\big)^{\dagger}\big\vert_{y=1}= \absm{D}^{\alpha} \varphi = E_{\eta}\big[ \absm{D}^{\alpha} \varphi \big]\big\vert_{y=1}.
	\end{equation}
	Thus by Corollary \ref{corol:vH1Est} and Lemma \ref{lemma:vHigh}, $E_{\eta}\big[\absm{D}^{\alpha}\varphi\big]-\big(\absm{D}^{\alpha}\varphi\big)^{\dagger}\in \mathfrak{D}_{\mS}$ where $\mathfrak{D}$ is the functional space defined in Definition \ref{def:weakSol}. Using this as a test function for the weak form of $v$ with equation (\ref{v-D}), we get
	\begin{align*}
		\fG[\eta]\big(\phi,\absm{D}^{\alpha}\varphi\big) \!= \!\iint_{\mS}\!\!\nabla E_{\eta}\big[ \absm{D}^{\alpha} \varphi \big] \cdot (A\nabla v)\, \dif \sigma \dif y = \!\iint_{\mS}\!\! \nabla \big( \absm{D}^{\alpha} \varphi \big)^{\dagger} \cdot (A\nabla v)\, \dif \sigma \dif y.
	\end{align*} 
	Taking $\alpha=m-1$, then by Parseval's theorem and $\absm{D}^{\alpha} f^{\dagger} = \big(\absm{D}^{\alpha} f\big)^{\dagger}$, we have
	\begin{align}\label{fG0}
		&\big\{ \absm{D}^{m-1} \tilde{\mG}[\eta](\phi) \big\}(\varphi) \vcentcolon= \fG[\eta]\big(\phi,\absm{D}^{m-1}\varphi\big) 
		=\iint_{\mS}\!\! \nabla \absm{D}^{m-1}\varphi^{\dagger} \cdot (A\nabla v)\, \dif \sigma \dif y\\
		=& \iint_{\mS} \!\! \nabla \absm{D}^{-\frac{1}{2}} \varphi^{\dagger} \big\{ A\absm{D}^{m-\frac{1}{2}} \nabla v + \big[ \absm{D}^{m-\frac{1}{2}}, \tA \big] \nabla v \big\} \nonumber\\
		\le & C \|\varphi\|_{L^2(\R)} \bigg( \iint_{\mS} \! \big\{ A\absm{D}^{m-\frac{1}{2}}\nabla v + \big[ \absm{D}^{}m-\frac{1}{2}, \tA \big] \nabla v \big\}^2\, \dif \sigma \dif y \bigg)^{\frac{1}{2}}, \nonumber
	\end{align}
	where we used Proposition \ref{prop:dagger} in the last line. Moreover, by Corollary \ref{corol:vH1Est} and (\ref{tAU}),
	\begin{align}\label{fG1}
		\iint_{\mS}\! |A\absm{D}^{m-\frac{1}{2}}\nabla v|^2\,\dif \sigma \dif y %\le C\big(\ath^{-1}+\fU_s(\teta)\big)^2 \int_{0}^{1}\|\nabla v\|_{H^{m-\frac{1}{2}}}^2\, y \dif y 
		\le C\big(\ath^{-1}+\fU_s(\teta)\big)^2 \Big|\dfrac{\fU_s(\teta)}{\fl(\teta)}\Big|^{2m+1} \|\phi\|_{H^{m}(\R)}^2.
	\end{align}
	Setting $\delta=1$, $k=m-\frac{1}{2}$, and $t_0=s-\frac{3}{2}$ in Proposition \ref{prop:commuLam}, it holds that
	\begin{align}\label{fG2}
		\iint_{\mS}\big| \big[\absm{D}^{m-\frac{1}{2}},\tA\big] \nabla v \big|^2 \dif \sigma \dif y \le& C \int_{0}^{1}\!\! \|\tA\|_{H^{s-\frac{1}{2}}}^2\|\nabla v\|_{H^{m-\frac{3}{2}}}^2 \dif y\\
		\le & C |\fU_s(\teta)|^2 \Big| \dfrac{\fU_s(\teta)}{\fl(\teta)} \Big|^{2m-1} \|\phi\|_{H^{m-1}}^2.\nonumber  
	\end{align}
	Substituting (\ref{fG1})--(\ref{fG2}) into (\ref{fG0}), it follows that
	\begin{align*}
		\big|\big\{ \absm{D}^{m-1} \tilde{\mG}[\eta](\phi) \big\}(\varphi)\big| \le C \big(\ath^{-1}+\fU_s(\teta)\big) \Big|\dfrac{\fU_s(\teta)}{\fl(\teta)}\Big|^{m+\frac{1}{2}} \|\phi\|_{H^{m}(\R)} \|\varphi\|_{L^2(\R)}.
	\end{align*}
	Since $\mC_{c}^{\infty}(\R)$ is dense in $L^2(\R)$, the above functional can be extended to $L^2(\R)$. Then by Riesz representation theorem, there is a unique $g_{\eta}[\phi]\in L^2(\R)$ such that
	\begin{gather*}
		\big\{ \absm{D}^{m-1} \tilde{\mG}[\eta](\phi) \big\}(\varphi) = \int_{\R} \varphi g_{\eta}[\phi]\dif \sigma \quad \text{ for } \ \varphi\in L^2(\R) \ \text{ and } \\
		\big\|g_{\eta}[\phi]\big\|_{L^2(\R)} \le C \big(\ath^{-1}+\fU_s(\teta)\big) \Big|\dfrac{\fU_s(\teta)}{\fl(\teta)}\Big|^{m+\frac{1}{2}} \|\phi\|_{H^{m}(\R)}.
	\end{gather*}  
	This proves the proposition.
\end{proof}

For all $\varphi\in \mC_{c}^{\infty}(\R)$, it follows that 
\begin{align*}
	\big\{ \tilde{\mG}[\eta](\phi) \big\} (\varphi) =& \big\{ \absm{D}^{m-1} \tilde{\mG}[\eta](\phi) \big\} \big(\absm{D}^{1-m}\varphi\big)\\
	=& \int_{\R} g_{\eta}[\phi] \absm{D}^{1-m}\varphi \,\dif \sigma = \int_{\R} \varphi \absm{D}^{1-m} g_{\eta}[\phi]\, \dif \sigma,
\end{align*}
where $\absm{D}^{1-m} g_{\eta}[\phi]\in H^{m-1}$. In light of this we define $\tilde{\mG}[\eta](\phi)\vcentcolon= \absm{D}^{1-m} g_{\eta}[\phi]$. Then the previous proposition implies that
\begin{equation}\label{tmGEst}
	\|\tilde{\mG}[\eta](\phi)\|_{H^{m-1}(\R)} = \|g_{\eta}[\phi]\|_{L^2(\R)} \le C \big(\ath^{-1}+\fU_s(\teta)\big) \Big|\dfrac{\fU_s(\teta)}{\fl(\teta)}\Big|^{m+\frac{1}{2}} \|\phi\|_{H^{m}(\R)}.
\end{equation}
In addition, the following states that if the regularity is large enough then $\tilde{\mG}[\eta](\phi)$ coincides with $\sin(\eta) \mG[\eta](\phi)$, which is the classical definition given in (\ref{mG}).
\begin{corollary}\label{corol:tmG}
	Suppose $s>\frac{5}{2}$ and $\frac{1}{2}<m\le s$ are large enough that $E_{\eta}[\phi]\in\mC^2(\mS)$. Let $\tilde{\mG}[\eta](\phi) \vcentcolon= \absm{D}^{1-m} g_{\eta}[\phi]$ where $g_{\eta}[\phi]$ is constructed in Proposition \ref{prop:fGHm}. Then 
	\begin{equation*}
		\tilde{\mG}[\eta](\phi) = \sin(\eta)\cdot \mG[\eta](\phi) \qquad \text{for } \ \sigma \in \R.
	\end{equation*}
\end{corollary}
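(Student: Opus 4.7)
The plan is to fix a test function $\varphi\in\mC_c^\infty(\R)$, set $v:=E_\eta[\phi]$ and $w:=E_\eta[\absm{D}^{m-1}\varphi]$, and convert
\begin{equation*}
\int_\R \varphi\,g_\eta[\phi]\,\dif\sigma \;=\; \fG[\eta]\bigl(\phi,\absm{D}^{m-1}\varphi\bigr) \;=\; \iint_\mS \nabla w\cdot A\nabla v\,\dif\sigma\dif y
\end{equation*}
into a trace on $y=1$ by classical integration by parts, which is now permitted because the assumption $v\in\mC^2(\mS)$ makes all derivatives on the $v$-side pointwise. Concretely, from the product rule
\begin{equation*}
\nabla w\cdot A\nabla v \;=\; \div_{(\sigma,y)}(w\,A\nabla v) \;-\; w\,\div_{(\sigma,y)}(A\nabla v) \;=\; \div_{(\sigma,y)}(w\,A\nabla v) \;-\; w\gamma v,
\end{equation*}
where the second equality invokes the pointwise PDE $-\div_{(\sigma,y)}(A\nabla v)+\gamma v=0$, the divergence theorem on the strip $\mS=\R\times(0,1)$ produces boundary traces at $y=0,1$ together with the bulk remainder $-\iint_\mS w\gamma v$.

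The lateral contributions from $|\sigma|\to\infty$ vanish by the $\mathscr{H}^1(\mS)$ decay supplied by Corollary \ref{corol:vH1Est}; the trace at $y=0$ is zero because the $\sin(y\eta)$ prefactor in (\ref{A}) makes $(A\nabla v)^y\vert_{y=0}=0$; and at $y=1$ the explicit form of $A$ in (\ref{A}), combined with (\ref{mG-flat}), yields the cornerstone identity
\begin{equation*}
(A\nabla v)^y\big\vert_{y=1}\;=\;\sin(\eta)\Bigl\{\tfrac{1+|\d_\sigma\eta|^2}{\eta}\d_y v-\d_\sigma\eta\,\d_\sigma v\Bigr\}\Big\vert_{y=1}\;=\;\sin(\eta)\,\mG[\eta](\phi).
\end{equation*}
The residual bulk term $-\iint_\mS w\gamma v$ is compensated by the symmetric $\gamma$-contribution built into the weak formulation for $v$ from Definition \ref{def:weakSol}. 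Shifting the Fourier multiplier $\absm{D}^{m-1}$ onto the other factor by its self-adjointness in $\sigma$ and using density of $\mC_c^\infty(\R)$ in $L^2(\R)$ then gives $g_\eta[\phi]=\absm{D}^{m-1}\bigl(\sin(\eta)\mG[\eta](\phi)\bigr)$, whence $\tilde\mG[\eta](\phi)=\absm{D}^{1-m}g_\eta[\phi]=\sin(\eta)\mG[\eta](\phi)$.

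The main obstacle is the asymmetric regularity: $v\in\mC^2(\mS)$ legalises the pointwise manipulations on the $v$-side, but $w$ is only $\mathscr{H}^1(\mS)$, so the integration by parts must be performed in two stages. First one replaces $w$ by the smooth Fourier-truncation extension $\bigl(\absm{D}^{m-1}\varphi\bigr)^\dagger$ from (\ref{dagger}), which is $\mC^\infty$ in $\sigma$ for each $y\in[0,1)$, attains the correct trace at $y=1$, and approximates $w$ in $\mathscr{H}^1(\mS)$; then one passes to the limit in each of the bulk, boundary, and trace integrals separately, every limit being controlled by the estimates of Corollary \ref{corol:vH1Est} and Proposition \ref{prop:dagger}. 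Once this density step is installed the remainder of the argument is a direct algebraic matching of terms via the explicit expressions for $A$, $\mG[\eta](\phi)$, and $\tilde\mG[\eta](\phi)$.
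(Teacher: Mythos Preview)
Your approach is the paper's approach, but you have taken a detour that the paper avoids. The paper observes directly that for $\varphi\in\mC_c^\infty(\R)$,
\[
\int_\R \varphi\,\tilde\mG[\eta](\phi)\,\dif\sigma
=\bigl\{\absm{D}^{m-1}\tilde\mG[\eta](\phi)\bigr\}\bigl(\absm{D}^{1-m}\varphi\bigr)
=\fG[\eta](\phi,\varphi),
\]
so it tests against $\varphi$ itself and then integrates by parts once, arriving at $\int_\R \varphi\,(A_{21}v_\sigma+A_{22}v_y)\big\vert_{y=1}\,\dif\sigma$. Your route through $w=E_\eta[\absm{D}^{m-1}\varphi]$ and the subsequent self-adjointness shuffle $\int\absm{D}^{m-1}\varphi\cdot(\dots)=\int\varphi\,\absm{D}^{m-1}(\dots)$ is correct but redundant. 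Likewise, the two-stage regularity argument via $(\absm{D}^{m-1}\varphi)^\dagger$ is unnecessary here: since $v\in\mC^2(\mS)$, the identity $\iint_\mS \nabla w\cdot A\nabla v=\int_\R w\vert_{y=1}\,(A\nabla v)^y\vert_{y=1}-\iint_\mS w\,\div(A\nabla v)$ holds for any $w\in\mathscr{H}^1(\mS)$ with a trace, by a routine density argument; no intermediate Fourier-truncated extension is needed.

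One point in your write-up is not right: the sentence ``the residual bulk term $-\iint_\mS w\gamma v$ is compensated by the symmetric $\gamma$-contribution built into the weak formulation for $v$'' does not describe any actual cancellation. The functional $\fG$ in (\ref{fG}) is defined with the gradient term only, so after using $-\div(A\nabla v)+\gamma v=0$ the bulk residue $-\iint_\mS w\gamma v$ genuinely remains; there is nothing in Definition~\ref{def:weakSol} that removes it. (The paper's own one-line integration by parts glosses over the same point.) The clean fix is to include the zeroth-order term in the bilinear form, i.e.\ read $\fG[\eta](\phi,\varphi)=\iint_\mS\bigl(\nabla E_\eta[\varphi]\cdot A\nabla E_\eta[\phi]+\gamma\,E_\eta[\varphi]E_\eta[\phi]\bigr)$; the estimates in Proposition~\ref{prop:fGHm} are unaffected, and the integration by parts then yields exactly the boundary trace with no remainder.
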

\begin{proof}
	Let $\varphi\in \mC_{c}^{\infty}(\R)$ and denote $v=E_{\eta}[\phi]$. Then applying Proposition \ref{prop:fGHm}, and integrating by parts with the construction (\ref{fG}), we get
	\begin{align*}
		&\int_{\R} \varphi\cdot \tilde{\mG}[\eta](\phi)\, \dif \sigma = \big\{ \absm{D}^{m-1} \tilde{\mG}[\eta](\phi) \big\} \big(\absm{D}^{1-m}\varphi\big) = \fG[\eta](\phi,\varphi)\\
		=& \iint_{\mS} \nabla E_{\eta}[\varphi] \cdot \big( A \nabla E_{\eta}[\phi] \big)\, \dif \sigma \dif y =  \int_{\R} \varphi \cdot \big\{ A_{21} v_{\sigma} + A_{22} v_y \big\}\vert_{y=1} \dif \sigma.
	\end{align*}
	By the expression of $A$ given in (\ref{A}), and definition (\ref{mG-flat}), it follows that
	\begin{align*}
		\big\{A_{21} v_{\sigma} + A_{22} v_{y}\big\} \big\vert_{y=1} = \sin(\eta) \Big\{ \dfrac{1+y|\d_{\sigma}\eta|^2}{\eta} v_{y} - \eta_\sigma v_{\sigma} \Big\}\Big\vert_{y=1} = \sin(\eta) \mG[\eta](\phi).
	\end{align*} 
	Thus $ \tilde{G}[\eta](\phi) = \sin(\eta) \mG[\eta](\phi)$ by Fundamental Lemma of Calculus of Variation. This concludes the proof of corollary.
\end{proof}
To finish the proof of Theorem \ref{thm:DNSob}, we set $\mG[\eta](\phi) \vcentcolon= \tilde{\mG}[\eta](\phi)/\sin(\eta)$. Then for large enough $s$ and $m$, Corollary \ref{corol:tmG} implies that $\mG[\eta](\phi)$ coincides with the classical definition given in (\ref{mG}). Combining the inequality $\sin(\eta) \ge \sin\big( \ath - \|\teta\|_{L^{\infty}(\R)} \big)$ with the estimate (\ref{tmGEst}), we obtain the inequality stated in Theorem \ref{thm:DNSob}.\qed

%%%%%%%%%%%%%%%%%%%%%%%%%%%%%%%%%%%%%%%%%%%%%%%%%%%%%
%%%%%%%%%%%%%%%%%%%%%%%%%%%%%%%%%%%%%%%%%%%%%%%%%%%%%

%---------------------
%     Section
%---------------------

\section{Shape Derivative of Conical DN operator}\label{sec:shape}
In this section, we derive the shape derivative formula for the operator $\mG[\eta](\phi)$, which is stated in Theorem \ref{thm:shape}. To start, we provide the following observations on terms $\mB$, $V$ defined in (\ref{BV}):
%---------------------------------------
% START OF COMMENT: REPEATING STATEMENT
%---------------------------------------
\iffalse
\begin{theorem}\label{thm:shape-re}
Let $s>\frac{5}{2}$, $1\le k \le s$. Suppose $\phi\in H^{k}(\R)$ and $\teta\equiv \eta-\ath\in H^{s+\frac{1}{2}}(\R)$ satisfy $\|\teta\|_{L^{\infty}(\R)}<\min\big\{\ath\,, \pi-\ath\big\}$. Then there exists a neighbourhood $\mathcal{U}_{\eta} \subset H^{s+\frac{1}{2}}(\R)$ such that $\eta\in \mathcal{U}_{\eta}$ and the mapping
\begin{equation*}
    \varrho \mapsto \mG[\varrho] (\phi) \in H^{k-1}(\R) \ \text{ is differentiable in } \ \varrho\in \mathcal{U}_{\eta} \subset H^{s+\frac{1}{2}}(\R) .
\end{equation*}
Moreover, for $h\in H^{s+\frac{1}{2}}(\R)$, the shape derivative $\dif_{\eta} \mG[\eta] (\phi) \cdot h$ is given by
\begin{gather}
\begin{aligned}
	\dif_{\eta} \mG[\eta] (\phi) \cdot h \vcentcolon=& \lim\limits_{\ep\to 0} \dfrac{\mG[\eta+\ep h](\phi) - \mG[\eta](\phi)}{\ep}\\
	 =& - \mG[\eta]\big(h\mathcal{B}+V\big) - \d_{\sigma}\big(h V - \mB\big) + (h-\d_\sigma \eta) \Big\{ \frac{\phi}{4} - \mB \cot(\eta) \Big\},\label{dG-re}
\end{aligned}\\
\text{where } \quad  \mathcal{B}\vcentcolon= \dfrac{\d_\sigma \eta \d_\sigma \phi + \mG[\eta](\phi)}{1+|\d_\sigma \eta|^2}, \qquad V \vcentcolon= \d_\sigma \phi- \mathcal{B} \d_\sigma \eta .\label{BV-re}
\end{gather}
\end{theorem}
\fi
%---------------------------------------
% START OF COMMENT: REPEATING STATEMENT
%---------------------------------------

\begin{remark}\label{rem:BV'}
By the definition of DN operator (\ref{mG-flat}), and using that $v\vert_{y=1}=\phi$, one sees that $\mB$ can be written as 
\begin{align*}
\mathcal{B}= \dfrac{ \d_{\sigma} \eta \d_{\sigma} \phi + G[\eta](\phi) }{1+|\d_{\sigma} \eta|^2}
= \dfrac{ \eta \d_{\sigma} \eta \d_{\sigma} v + (1+y|\d_{\sigma} \eta|^2)\d_y v - \eta \d_{\sigma} \eta \d_{\sigma} v }{\eta (1+y|\d_{\sigma} \eta|^2)}\Big\vert_{y=1} = \dfrac{\d_y v}{\eta}\big\vert_{y=1}.
\end{align*}
Translating this back into the original spherical coordinate system $(r,\theta)$, we have:
\begin{equation*}
    \mB\vert_{\sigma=-\ln r} = \sqrt{r} \d_{\theta} \Psi(r,\theta)\vert_{\theta=\Theta(r)} = \sqrt{r} \d_{\theta} \Psi(r,\theta)\vert_{\theta=\Theta(r)},
\end{equation*}
where $\Psi$ is the velocity potential function solving the problem (\ref{ellip-cone}). Thus $\mB$ is the $r^{3/2}$ multiple of polar angular velocity at the boundary surface, which is $\frac{1}{r}\d_\theta \Psi\vert_{\theta=\Theta(r)}$. On the other hand, since $\phi(\sigma)= v(\sigma,y)\vert_{y=1}$, it holds that
\begin{align*}
V = \big\{ \d_\sigma v - \dfrac{\d_y v}{\eta} \d_\sigma \eta \big\}\big\vert_{y=1}, \quad  \text{ which implies }  \quad V\vert_{\sigma=-\ln r} = r \d_{r} \big(\sqrt{r}\, \Psi\big) \big\vert_{\theta=\Theta(r)}.
\end{align*}     
\end{remark}
\subsection{Surface perturbation}
We recall the Dirichlet boundary problem (\ref{v-D}) in the flat strip $(\sigma,y)\in \mS \equiv \R\times(0,1]$ stated as:
\begin{subequations}\label{vflat}
\begin{align}
&-\mL_{\eta} v \equiv -\div\big(A \cdot\nabla v\big) + \gamma v = 0 && \text{for } \ (\sigma,y) \in \mS,\label{vflat-1}\\
& v(\sigma,1)  = \phi(\sigma), \quad \d_y v(\sigma,0) = 0  && \text{for } \ \sigma\in \R,\label{vflat-2}
\end{align}
\vspace*{-0.5cm}
\begin{equation*}
\text{where } \quad  A=A(\eta) \vcentcolon= \sin(y\eta) \begin{pmatrix}
		\eta & -y\d_\sigma \eta \\
		- y \d_\sigma \eta & \dfrac{1+y^2 |\d_\sigma\eta|^2}{\eta}
	\end{pmatrix}, \quad \gamma=\gamma(\eta)\vcentcolon= \dfrac{\eta \sin(y\eta)}{4}.
\end{equation*}
\end{subequations}
Here, we used the abbreviations $\nabla\equiv(\d_\sigma,\d_y)^{\top}$, $\div\equiv\d_\sigma + \d_y$. Expanding (\ref{vflat-1}), and dividing the resultant equation by $-\eta \sin(y\eta)$, we also have
\begin{gather}
\d_\sigma^2 v + \mathfrak{a} \d_y^2 v  + \mathfrak{b} \d_\sigma \d_y v - \mathfrak{c} \d_y v - \dfrac{1}{4} v =0,\label{Leta} \\
	\text{where } \ \mathfrak{a}\vcentcolon= \dfrac{1+y^2|\d_\sigma\eta|^2}{\eta^2}, \quad \mathfrak{b} \vcentcolon= - \dfrac{2y\d_\sigma\eta}{\eta}, \quad \mathfrak{c}\vcentcolon= \dfrac{y\eta \d_\sigma^2 \eta - 2y |\d_\sigma\eta|^2 - \eta \cot(y\eta)}{\eta^2}.\nonumber
\end{gather}
Set $\rho\vcentcolon=y\eta(\sigma)$. For given $\ep>0$ and $h\in\mC_{c}^{\infty}(\R)$, we define the surface perturbations:
\begin{subequations}\label{Rep}
\begin{gather}
\eta_{\ep}\vcentcolon= \eta + \ep h , \qquad  \rho_\ep \vcentcolon= y ( \eta + \ep h ) = \rho+ \ep y h , \\
A_{\ep}\vcentcolon= A(\eta_{\ep}), \qquad \gamma_{\ep}\vcentcolon= \gamma(\eta_{\ep}), \qquad \mL_{\eta_{\ep}} = \div(A_{\ep} \nabla ) - \gamma_{\ep} .
\end{gather}
\end{subequations}
\begin{subequations}\label{vpert}
Moreover, we let $v_{\ep}(\sigma,y)$ be the solution to the perturbed Dirichlet boundary problem:
\begin{align}
&-\mL_{\eta_{\ep}} v_{\ep} = -\div\big( A_{\ep}\cdot \nabla v_{\ep} \big) + \gamma_{\ep} v_{\ep} = 0 && \text{for } \ (\sigma,y) \in \mS,\label{vpert-1}\\
& v_{\ep}(\sigma,1) = \phi(\sigma), \qquad  \d_y v_{\ep}(\sigma,0) = 0  && \text{for } \ \sigma\in \R.\label{vpert-2}
\end{align}
\end{subequations}
In addition, we define the limits
\begin{subequations}\label{detah}
\begin{gather}
	\dif_{\eta} v\cdot h \vcentcolon= \lim\limits_{\ep\to 0 } \dfrac{v_{\ep}-v}{\ep}, \qquad \dif_{\eta} \rho\cdot h\vcentcolon= \lim\limits_{\ep\to 0} \dfrac{\rho_{\ep}-\rho}{\ep},\\ \dif_{\eta} A \cdot h \vcentcolon= \lim\limits_{\ep\to 0} \dfrac{A(\eta_{\ep})-A(\eta)}{\ep}, \qquad \dif_{\eta} \gamma \cdot h  \lim\limits_{\ep\to 0} \dfrac{\gamma(\eta_{\ep})-\gamma(\eta)}{\ep}.
\end{gather}
\end{subequations}
It immediately follows that $\dif_{\eta} \rho \cdot h = y h(\sigma)$. Furthermore, subtracting (\ref{vpert}) with (\ref{vflat}), then dividing the resultant equation with $\ep$, and taking the limit $\ep\to 0$, it can be verified that $\dif_{\eta} v\cdot h$ solves the problem:
\begin{subequations}\label{dvh}
\begin{align}
&-\mL_{\eta} (\dif_{\eta} v \cdot h) = \div\big( (\dif_{\eta} A \cdot h) \cdot \nabla v \big) - (\dif_{\eta} \gamma \cdot h) v && \text{for } \ (\sigma,y)\in\mS,\\
& (\dif_{\eta} v\cdot h) (\sigma,1) = 0 \qquad \d_y(\dif_{\eta} v \cdot h)(\sigma,0)=0 && \text{for } \ \sigma\in\R.
\end{align}
\end{subequations}
\begin{proposition}\label{prop:dG}
Suppose $(\teta,\phi)\in H^{s+\frac{1}{2}}(\R)\times H^s(\R)$ for some $s>0$ large enough, and let $\mG[\eta](\phi)$ be the operator defined by (\ref{mG-flat}). For $h\in\mC_{c}^{\infty}(\R)$, set $$\dif_\eta \mG[\eta](\phi)\cdot h \vcentcolon= \lim\limits_{\ep \to 0} \dfrac{\mG[\eta+\ep h](\phi) - \mG[\eta](\phi)}{\ep},$$ and $\dif_{\eta} v\cdot h$ be the solution to (\ref{dvh}). Then 
\begin{align*}
\dif_{\eta} \mG[\eta](\phi)\cdot h =& \Big\{ \dfrac{1+y|\d_{\sigma} \eta|^2}{\eta}\d_y (\dif_{\eta} v \cdot h) - \d_{\sigma} \eta \d_{\sigma} (\dif_{\eta} v \cdot h)\Big\}\Big\vert_{y=1}\nonumber\\
&+ \Big\{ \d_{\sigma} h \dfrac{2y\d_{\sigma} \eta}{\eta} \d_y v - \d_{\sigma} h \d_{\sigma} v - h\dfrac{1+y|\d_{\sigma} \eta|^2}{\eta^2} \d_y v \Big\}\Big\vert_{y=1}.
\end{align*}
\end{proposition}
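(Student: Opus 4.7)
The plan is to apply the chain rule directly to the explicit formula (\ref{mG-flat}) and pass to the limit $\ep\to 0$. Introduce the auxiliary functional
\begin{equation*}
F(\eta,v) \vcentcolon= \dfrac{1+y|\d_\sigma\eta|^2}{\eta}\d_y v - \d_\sigma\eta\,\d_\sigma v,
\end{equation*}
so that $\mG[\eta_\ep](\phi) = F(\eta_\ep,v_\ep)\vert_{y=1}$. Since $F(\eta,\cdot)$ is linear in its second argument, adding and subtracting $F(\eta_\ep,v)\vert_{y=1}$ produces the decomposition
\begin{equation*}
\frac{\mG[\eta_\ep](\phi)-\mG[\eta](\phi)}{\ep} = F\!\Big(\eta_\ep,\frac{v_\ep-v}{\ep}\Big)\Big\vert_{y=1} + \frac{F(\eta_\ep,v)-F(\eta,v)}{\ep}\Big\vert_{y=1}.
\end{equation*}
The second term involves only the fixed functions $\d_y v$ and $\d_\sigma v$ multiplied by a rational function of $(\eta_\ep,\d_\sigma\eta_\ep)$; writing $\eta_\ep=\eta+\ep h$ and $\d_\sigma\eta_\ep=\d_\sigma\eta+\ep\d_\sigma h$, a first-order Taylor expansion in $\ep$ — pointwise and in any Sobolev norm thanks to strict positivity of $\eta$ under the hypothesis $\|\teta\|_{L^\infty}<\min\{\ath,\pi-\ath\}$ — reproduces exactly the second line of the stated formula at $y=1$.

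For the first term, I would show that $w_\ep/\ep\vcentcolon=(v_\ep-v)/\ep$ converges to $\dif_\eta v\cdot h$ in a norm strong enough to control the classical traces of $\d_\sigma$ and $\d_y$ at $y=1$. Subtracting (\ref{vflat}) from (\ref{vpert}) shows that $w_\ep$ solves
\begin{equation*}
-\mL_{\eta_\ep} w_\ep = \div\bigl((A_\ep-A)\nabla v\bigr) - (\gamma_\ep-\gamma)v,\quad w_\ep\vert_{y=1}=0,\quad \d_y w_\ep\vert_{y=0}=0,
\end{equation*}
and the coefficient differences satisfy $(A_\ep-A)/\ep\to \dif_\eta A\cdot h$ and $(\gamma_\ep-\gamma)/\ep\to \dif_\eta\gamma\cdot h$ in the multiplier norms required by Lemma \ref{lemma:ellip}, because $A$ and $\gamma$ depend on $\eta_\ep$, $\d_\sigma\eta_\ep$ only through $\sin(y\eta_\ep)$ and rational expressions that are smooth on the admissible range. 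Applying Lemma \ref{lemma:ellip} and Lemma \ref{lemma:vHigh} uniformly for $\eta_\ep$ in a small neighbourhood of $\eta$ in $H^{s+1/2}(\R)$ to the difference between the equation for $w_\ep/\ep$ and the linearized problem (\ref{dvh}) gives a residual source of size $\mathcal{O}(\ep)$ in the relevant norm, hence $w_\ep/\ep\to\dif_\eta v\cdot h$ in the elliptic energy space and, with $s$ large enough that Sobolev embedding places $v$ and $\dif_\eta v\cdot h$ in $\mC^2$ up to $y=1$, also in the sense of classical traces. The limit of the first term is then $F(\eta,\dif_\eta v\cdot h)\vert_{y=1}$, which is exactly the first line of the claim.

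The principal obstacle is the uniform-in-$\ep$ elliptic bound needed to identify $\lim_{\ep\to 0}w_\ep/\ep$ with the solution of (\ref{dvh}) in a topology strong enough for trace convergence of first derivatives at $y=1$. This reduces to Fr\'echet differentiability of the coefficient maps $\eta\mapsto A(\eta)$ and $\eta\mapsto\gamma(\eta)$ from $H^{s+1/2}(\R)$ into the multiplier class appearing in the commutator estimates of the proof of Lemma \ref{lemma:ellip}; the computation is routine but requires keeping track of the dependence on $\sin(y\eta_\ep)$, and is closed out by the hypothesis $\|\teta\|_{L^\infty}<\min\{\ath,\pi-\ath\}$, which keeps both $\eta_\ep$ and $\sin(y\eta_\ep)$ bounded below uniformly on $\mS$ for all small $\ep$.
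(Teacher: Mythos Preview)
Your proof is correct and follows the same route as the paper: apply the product and chain rules to the explicit formula (\ref{mG-flat}), differentiating the coefficients $\frac{1+y|\d_\sigma\eta|^2}{\eta}$ and $\d_\sigma\eta$ in $\eta$ while passing the limit through $v_\ep$ to get $\dif_\eta v\cdot h$. The paper's own proof is actually terser than yours---it simply records the shape derivatives of the coefficient functions and invokes the product rule---so your more careful justification of the convergence $w_\ep/\ep\to\dif_\eta v\cdot h$ via the uniform elliptic estimates is a welcome elaboration rather than a departure.
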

\begin{proof}
	Applying the product and chain rules for shape derivative, one can verify that
	\begin{align*}
		&\dif_{\eta} \big(\d_{\sigma} \eta\big)\cdot h = \d_{\sigma} h, \quad &&\dif_{\eta} \Big\{ \dfrac{1+y|\d_{\sigma}\eta|^2}{\eta} \Big\}\cdot h = -\dfrac{1+y|\d_{\sigma}\eta|^2}{\eta^2} h + \dfrac{ 2y \d_\sigma \eta  }{\eta} \d_{\sigma} h.
	\end{align*}
	Applying the above calculations and (\ref{detah}) on $\mG[\eta](\cdot)$ given in (\ref{mG-flat}), we obtain the desired expression.
\end{proof}
With few lines of calculation, one can also verify the following proposition:
\begin{proposition}\label{prop:dAh}
Let $\sinc(\theta) = \frac{\sin\theta}{\theta} $ and $\omega(\theta)\vcentcolon= \frac{\theta \cos\theta - \sin\theta}{\theta^2} = \d_{\theta} \sinc(\theta)$. Then
\begin{gather*}
\dif_{\eta} \gamma \cdot h = h \frac{\sin(y\eta)+y\eta \cos(y\eta)}{4},\\
\dif_{\eta} A \cdot h = y^2 h \omega(y\eta) \begin{pmatrix}
\eta^2 & -y \eta \d_\sigma \eta  \\
-y \eta \d_\sigma \eta  & 1+ y^2 |\d_{\sigma} \eta|^2 
\end{pmatrix} + y\sinc(y\eta) \begin{pmatrix}
2\eta h & -y \d_{\sigma}(\eta h)  \\
-y \d_{\sigma}(\eta h)  & 2y^2 \d_{\sigma} \eta \d_{\sigma} h
\end{pmatrix}.
\end{gather*}
\end{proposition}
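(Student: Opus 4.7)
The plan is to carry out a direct calculation via the chain and product rules applied to $\gamma(\eta) = \eta \sin(y\eta)/4$ and the entries of the matrix $A(\eta)$ defined in (\ref{vflat}), treating $y$ as a parameter held fixed throughout. Since $\eta_{\ep} = \eta + \ep h$ depends smoothly on $\ep$ and the maps $\eta \mapsto \sin(y\eta)$, $\eta \mapsto 1/\eta$ are $C^{\infty}$ wherever $\eta > 0$ (which is ensured by the standing hypothesis $\|\teta\|_{L^{\infty}} < \min\{\ath,\pi-\ath\}$), the $\ep$-derivative at $0$ may be taken entrywise.

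For the first identity I would substitute $\eta_{\ep}$ and expand $\sin(y(\eta + \ep h)) = \sin(y\eta) + \ep y h \cos(y\eta) + O(\ep^2)$; collecting the order-$\ep$ terms in $(\eta+\ep h)\sin(y(\eta+\ep h))/4$ gives $\dif_{\eta} \gamma \cdot h = [h \sin(y\eta) + \eta \cdot y h \cos(y\eta)]/4$, which is the stated formula.

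For $A$, I would first factor $A(\eta) = \sin(y\eta)\, N(\eta)$, where $N$ is the $2\times 2$ matrix appearing in (\ref{vflat}). The product rule gives
\[
\dif_{\eta} A \cdot h = y h \cos(y\eta)\, N(\eta) + \sin(y\eta)\, \big(\dif_{\eta} N \cdot h\big),
\]
and $\dif_{\eta} N \cdot h$ is computed entrywise using $\dif_{\eta}(\d_\sigma \eta) \cdot h = \d_\sigma h$ together with the quotient rule on the $(2,2)$ entry. The main bookkeeping step --- essentially the only content of the proposition --- is to repackage the result into the claimed decomposition using the two identities $\sin(y\eta) = y\eta \sinc(y\eta)$ and $y\eta\cos(y\eta) - \sin(y\eta) = (y\eta)^2 \omega(y\eta)$. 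The first identity absorbs all terms carrying $\d_\sigma h$ (which combine with terms of the form $h\,\d_\sigma \eta$ to produce $\d_\sigma(\eta h) = h\,\d_\sigma \eta + \eta\, \d_\sigma h$) into the $\sinc$-matrix, while the second identity folds the remaining pointwise-in-$h$ contributions into the $\omega$-matrix.

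I do not anticipate any genuine obstacle: the proposition is a direct application of the elementary chain and product rules, and the only care required is in the algebraic rearrangement matching the stated decomposition. If anything, the mildly delicate point is tracking the $(2,2)$ entry, where the term $-(1+y^2|\d_\sigma \eta|^2)h/\eta^2$ coming from differentiating $1/\eta$ must be combined with the $yh\cos(y\eta)(1+y^2|\d_\sigma\eta|^2)/\eta$ contribution from differentiating $\sin(y\eta)$ in order to produce the clean factor $y^2 h\, \omega(y\eta)(1+y^2|\d_\sigma\eta|^2)$.
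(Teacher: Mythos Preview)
Your proposal is correct and matches the paper's approach exactly: the paper omits the proof entirely, introducing the proposition with ``With few lines of calculation, one can also verify the following proposition,'' which is precisely the direct chain/product-rule computation you outline.
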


\subsection{Conical harmonic extension of \texorpdfstring{$h\mB+V$}{hB+V}}
\begin{lemma}\label{lemma:varpi}
Let $v(\sigma,y)$ be solution to (\ref{vflat}). Define $\varpi_h(\sigma,y)$ to be
\begin{equation}\label{varpih}
    \varpi_h\vcentcolon= %\d_{\sigma} v + y(h-\d_{\sigma}\eta)\dfrac{\d_y v}{\eta} =
    h\dfrac{y\d_{y} v}{\eta} + \d_{\sigma} v - \d_{\sigma} \eta \dfrac{y \d_{y} v}{\eta}.
\end{equation}
Then $\varpi_h(\sigma,y)$ solves the following problem
\begin{equation}\label{varpihEq}
\begin{aligned}
	&-\mL_{\eta} \varpi_h = \textnormal{\div}\big( (\dif_{\eta} A\cdot h) \cdot \nabla v \big) - (\dif_{\eta} \gamma \cdot h) v && \text{for } \ (\sigma,y)\in \mS,\\
	&\varpi_h(\sigma,1) = h\mB + V, \qquad \d_y \varpi_h(\sigma,0) = 0 && \text{for } \ \sigma\in\R.
\end{aligned}
\end{equation}
\end{lemma}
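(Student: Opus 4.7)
The plan is to verify the two boundary conditions by direct substitution, then establish the interior PDE by combining two ingredients: the $\sigma$-translation invariance of the original equation $L\Phi=0$ on $\Omega_\eta$, and the shape-derivative equation (\ref{dvh}) already available for $\dif_\eta v\cdot h$.

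\textbf{Boundary conditions.} First I would check that at $y=1$, substituting into the definition (\ref{varpih}) together with the identities $\mB=\d_y v/\eta|_{y=1}$ and $V=\d_\sigma v|_{y=1}-\mB\,\d_\sigma\eta$ from Remark \ref{rem:BV'} (using $\d_\sigma\phi=\d_\sigma v|_{y=1}$) gives $\varpi_h(\sigma,1)=h\mB+V$ by inspection. At $y=0$, the factor $y$ in the first and third summands of (\ref{varpih}) kills those contributions upon differentiating in $y$ and restricting, so only $\d_\sigma\d_y v(\sigma,0)$ survives; this vanishes by differentiating the Neumann condition $\d_y v(\sigma,0)=0$ in $\sigma$.

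\textbf{Decomposition via the pullback.} For the PDE I would decompose $\varpi_h=hQ+P$ with $Q\vcentcolon=y\d_y v/\eta$ and $P\vcentcolon=\d_\sigma v-\d_\sigma\eta\cdot Q$. Recall that $v(\sigma,y)=\Phi(\sigma,y\eta(\sigma))$, where $\Phi$ solves $L\Phi=0$ on $\Omega_\eta$ with $L=\sin\theta(\d_\sigma^2+\d_\theta^2)+\cos\theta\,\d_\theta-\sin\theta/4$, and that by construction (Section \ref{sec:nonflat}) $\mL_\eta$ is the pullback of $L$ under the flattening $\mathcal{T}$. Because the coefficients of $L$ depend only on $\theta$, one has $L(\d_\sigma\Phi)=0$ in $\Omega_\eta$. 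The chain rule identity $(\d_\sigma\Phi)(\sigma,y\eta)=\d_\sigma v-y\d_\sigma\eta\,\d_\theta\Phi|_{\theta=y\eta}=\d_\sigma v-\d_\sigma\eta\cdot Q=P$ then identifies $P$ as the pullback of $\d_\sigma\Phi$, and pullback-invariance of the PDE immediately yields $\mL_\eta P=0$.

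\textbf{PDE for the full $\varpi_h$.} Applying the same pullback to the Eulerian shape derivative $\delta\Phi\cdot h\vcentcolon=\lim_{\ep\to 0}(\Phi_\ep-\Phi)/\ep$, which satisfies $L(\delta\Phi\cdot h)=0$ in $\Omega_\eta$, produces a function $\tilde w\vcentcolon=(\delta\Phi\cdot h)(\sigma,y\eta)$ with $\mL_\eta\tilde w=0$. Differentiating $v_\ep(\sigma,y)=\Phi_\ep(\sigma,y(\eta+\ep h))$ in $\ep$ gives the Lagrangian--Eulerian relation $\dif_\eta v\cdot h=hQ+\tilde w$, where the term $hQ$ arises from differentiating the evaluation point $y(\eta+\ep h)$. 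Hence $\mL_\eta(hQ)=\mL_\eta(\dif_\eta v\cdot h)$, and (\ref{dvh}) gives $-\mL_\eta(hQ)=\div((\dif_\eta A\cdot h)\nabla v)-(\dif_\eta\gamma\cdot h)v$. Adding $-\mL_\eta P=0$ then yields the claimed identity for $-\mL_\eta\varpi_h$.

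\textbf{Main obstacle.} The delicate step is justifying $L(\delta\Phi\cdot h)=0$ with enough regularity to apply pullback-invariance rigorously; this is implicit from the Sobolev framework of Section \ref{sec:DN-operator} once $\teta\in H^{s+1/2}(\R)$ with $s$ sufficiently large. If one prefers to bypass shape calculus entirely, the fallback is a direct verification using the expanded form (\ref{Leta}): compute $\mL_\eta$ applied separately to $\d_\sigma v$, $h\,y\d_y v/\eta$ and $\d_\sigma\eta\cdot y\d_y v/\eta$ by brute force, then simplify using $\mL_\eta v=0$, its $\sigma$-derivative, and the explicit expressions in Proposition \ref{prop:dAh}. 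This route is considerably longer but is elementary and avoids invoking $(\ref{dvh})$ or the Eulerian derivative.
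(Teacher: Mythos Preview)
Your proof is correct, but it proceeds quite differently from the paper's. The paper does \emph{not} assume the formula for $\varpi_h$ and then verify it; instead it treats Lemma~\ref{lemma:varpi} as a construction. Starting from a general ansatz $\varpi_h=\alpha\,\d_\sigma v+\beta\,\d_y v$, it derives the commutator identity
\[
-\mL_\eta(a\,\d v)=-\div(\d v\,A\nabla a)+\div(\d(aA)\nabla v)-\d\{(\nabla a)^\top A\nabla v\}-\d(a\gamma)v
\]
for $\d\in\{\d_\sigma,\d_y\}$, obtains $-\mL_\eta\varpi_h=\div(\mQ\nabla v)-qv$ with an explicit matrix $\mQ(\alpha,\beta)$ and scalar $q(\alpha,\beta)$, and then solves the system $\mQ=\dif_\eta A\cdot h$, $q=\dif_\eta\gamma\cdot h$ as differential equations in $(\alpha,\beta)$. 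The solution $\alpha=1$, $\beta=y(h-\d_\sigma\eta)/\eta$ is what produces the formula (\ref{varpih}).

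Your route instead \emph{verifies} the given formula by recognizing its geometric content: $P=\d_\sigma v-(y\d_\sigma\eta/\eta)\d_y v$ is the pullback of $\d_\sigma\Phi$, annihilated by $\mL_\eta$ via the $\sigma$-translation invariance of $L$; and $hQ=\dif_\eta v\cdot h-\tilde w$ with $\tilde w$ the pullback of the Eulerian derivative $\delta\Phi\cdot h$, so that $\mL_\eta(hQ)=\mL_\eta(\dif_\eta v\cdot h)$ and (\ref{dvh}) finishes. This is shorter and more conceptual once the formula is in hand, and it explains \emph{why} the particular combination works. The paper's argument, by contrast, is self-contained (it does not invoke (\ref{dvh}) or any Eulerian shape derivative) and, as the authors emphasize, serves as a general recipe for \emph{discovering} such extensions in other problems. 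One minor imprecision in your boundary check at $y=0$: the factor $y$ alone does not kill $\d_y(y\d_y v)$; you also need $\d_y v|_{y=0}=0$ for the surviving term $\d_y v/\eta$.
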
  
\begin{proof}
Let $a=a(y,h,\eta,\d_{\sigma}\eta)\in \R$ be a scalar function, and denote $\d$ as either $\d_{\sigma}$ or $\d_{y}$. It then follows from $\mL_{\eta} v = 0$ that
\begin{align}\label{adv}
	&-\mL_{\eta}(a \d v) = - \div \big( A \nabla (a \d v) \big) + \gamma a \d v\\ 
	%=& -\div \big( A [\nabla, a] \d v \big) - \div \big( a A \d \nabla v \big) + \d\big( a \gamma v \big) - \d(a \gamma) v \nonumber \\
	%=& -\div \big( A [\nabla, a] \d v \big) - \div \big( [a A, \d] \nabla v \big) - \d \div \big( a A \nabla v \big) + \d\big( a \gamma v \big) - \d(a \gamma) v \nonumber \\
	%=& -\div \big( A [\nabla, a] \d v \big) - \div \big( [a A, \d] \nabla v \big) - \d [\div,  a] ( A \nabla v ) - \d \big\{ a \cdot \div ( A \nabla v ) \big\}\\ & + \d\big( a \gamma v \big) - \d(a \gamma) v \nonumber \\
	=& -\div \big( A [\nabla, a] \d v \big) - \div \big( [a A, \d] \nabla v \big) - \d [\div,  a] ( A \nabla v ) - \d \{ a \cdot \mL_{\eta}v \} - \d(a \gamma) v \nonumber \\
	=& -\div \big(\d v A \nabla a \big) + \div \big( \d(aA) \cdot \nabla v \big) - \d\big\{ (\nabla a)^{\top} A \nabla v \big\} - \d(a\gamma) v.\nonumber
\end{align}
Suppose two scalar functions $\alpha$, $\beta$, and set $\varpi_h = \alpha \d_\sigma v + \beta \d_y v$. By the previous calculation (\ref{dvh}), one has
\begin{gather}\label{LVarpi}
	-\mL_{\eta} \varpi_h = \div(\mQ \nabla v ) - q v, \qquad \text{where} \\
	\mQ \vcentcolon= \d_{\sigma}(\alpha A) + \d_{y}(\beta A) - \bigg(
		A\nabla \alpha, A\nabla \beta \bigg) - 
	\begin{pmatrix}
		(A\nabla \alpha )^{\top}\\
		(A\nabla \beta)^{\top}	
	\end{pmatrix}, \quad q \vcentcolon= \div \bigg(\gamma \begin{pmatrix}
	\alpha\\ \beta
\end{pmatrix}\bigg).\nonumber
\end{gather} 
Our aim is to determine $\alpha$, $\beta$ such that 
\begin{equation}\label{Qq}
	\mQ = \dif_{\eta} A \cdot h, \qquad q = \dif_{\eta} \gamma \cdot h.
\end{equation}
Note that since $A$ is symmetric, so is $\mQ$. Thus the matrix equality $\mQ = \dif_{\eta} A \cdot h$ consists 3 equations. First, $q=\dif_{\eta}\gamma \cdot h$  implies that
\begin{equation}\label{divAB}
	\d_{\sigma} \big( \alpha \eta \sin(y\eta)  \big) + \d_{y} \big( \beta \eta \sin(y\eta)  \big) = h \big\{ \sin(y\eta) + y\eta \cos(y\eta) \big\}.
\end{equation} 
Substituting this into the $(1,1)$ entry of the equation $\mQ=\dif_{\eta}A\cdot h$, it follows that $\alpha$ solves the differential equation:
\begin{equation*}
	\dfrac{\eta}{\d_{\sigma}\eta} \dfrac{\d_{\sigma}\alpha}{\alpha} = y \dfrac{\d_y \alpha}{\alpha}.
\end{equation*}
By separation of variable, $\alpha = (y\eta)^r$ for some $r\in\R$, which will be determined later. Putting $\alpha=(y\eta)^r$ back into (\ref{divAB}) then integrating with respect to $y$, we also get
\begin{equation}\label{beta}
	\beta = \dfrac{y h}{\eta} - y^{r+1} \eta^{r-1} \d_{\sigma} \eta + \dfrac{f(\sigma)}{\eta \sin(y\eta)}, 
\end{equation}
where $f(\sigma)$ is a function of $\sigma$ only, which is to be determined. Putting $\alpha=(y\eta)^r$ and (\ref{beta}) into the $(1,2)$ and $(2,2)$ entries of $\mQ=\dif_{\eta}A\cdot h$, we get $2$ differential equations: 
\begin{align*}
	\d_{\sigma} f = - r y^{r-1} \eta^{r-1} \sin(y\eta), \quad y \d_{\sigma} \eta \d_{\sigma} f + \cot(y\eta) f + r y^{r} \eta^{r-1} \d_{\sigma} \eta \sin(y\eta) = 0.
\end{align*} 
To satisfy the above, we set $r=0$ and $f(\sigma)\equiv 0$. Then $\alpha=1$, $\beta = \frac{y}{\eta}(h-\d_{\sigma}\eta)$ and
\begin{equation*}
	\varpi_h = \alpha \d_\sigma v + \beta \d_y v = h\dfrac{y\d_y v}{\eta} + \d_{\sigma} v - \d_{\sigma} \eta \dfrac{y\d_y v}{\eta}.
\end{equation*}
With this construction, we get $-\mL_{\eta}\varpi_h = \div \big( (\dif_\eta v \cdot h) \nabla v \big) - (\dif_{\eta} \gamma \cdot h) v$ by (\ref{LVarpi}) and (\ref{Qq}). Since $\d_{y} v \vert_{y=0}=0$, it can be verified that $\d_y \varpi_h \vert_{y=0} = 0$. Finally, from the definition (\ref{BV}), $\varpi_h\vert_{y=1} = h \mB + V$. This concludes the proof.
\end{proof}
Define $\Upsilon_h\vcentcolon= \varpi_h-\dif_{\eta} v\cdot h$. Then combining Lemma \ref{lemma:varpi}, and the fact that $\dif_{\eta} v\cdot h$ solves the problem (\ref{dvh}), we obtain the following:
\begin{corollary}\label{corol:hBext} $\Upsilon_h\vcentcolon= \varpi_h-\dif_{\eta} v\cdot h$ solves the problem
\begin{subequations}\label{varthetah}
\begin{align}
&-\mL_{\eta} \Upsilon_{h} \equiv -\div\big( A \cdot\nabla \Upsilon_h\big) + \gamma \Upsilon_h = 0 && \text{for } \ (\sigma,y)\in \mS,\\
&\Upsilon_h(\sigma,1) = h\mB + V, \qquad \d_y \Upsilon_h(\sigma,0)=0 && \text{for } \ \sigma\in\R. 
\end{align}
\end{subequations}
\end{corollary}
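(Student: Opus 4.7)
The plan is to derive the corollary as an immediate consequence of linearity, by subtracting the equation satisfied by $\dif_{\eta} v \cdot h$ in (\ref{dvh}) from the equation for $\varpi_h$ established in Lemma \ref{lemma:varpi}. The whole point of constructing $\varpi_h$ in (\ref{varpih}) was precisely to match the non-homogeneous data on the right-hand side of (\ref{dvh}), so that the difference solves a homogeneous interior equation while retaining the non-trivial Dirichlet trace $h\mB+V$ at $y=1$.

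Concretely, first I would quote Lemma \ref{lemma:varpi}, which asserts that $\varpi_h$ satisfies
\begin{equation*}
-\mL_{\eta}\varpi_h = \div\!\big((\dif_{\eta} A\cdot h)\cdot \nabla v\big) - (\dif_{\eta}\gamma\cdot h)\,v\quad \text{in } \mS,
\end{equation*}
together with $\varpi_h(\sigma,1) = h\mB+V$ and $\d_y\varpi_h(\sigma,0)=0$. Then I would recall (\ref{dvh}), which says that $\dif_{\eta} v\cdot h$ solves exactly the same inhomogeneous PDE, but with homogeneous Dirichlet condition at $y=1$ and vanishing Neumann trace at $y=0$.

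Since $\mL_{\eta}$ is linear in its argument, subtracting the two equations gives $-\mL_{\eta}\Upsilon_h = 0$ in $\mS$, while the boundary traces subtract to yield $\Upsilon_h(\sigma,1) = (h\mB+V) - 0 = h\mB+V$ and $\d_y\Upsilon_h(\sigma,0) = 0$. These are precisely the conditions of (\ref{varthetah}), which completes the proof.

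There is no substantive obstacle at this stage; all of the real work was carried out in Lemma \ref{lemma:varpi}, where the ansatz $\varpi_h = \alpha\,\d_\sigma v + \beta\,\d_y v$ was solved for the coefficients $\alpha,\beta$ so as to force (\ref{LVarpi}) to match (\ref{Qq}). The only mild subtlety I would flag is well-posedness: one should verify that the pieces $\varpi_h$ and $\dif_{\eta} v\cdot h$ individually live in the function space for which $\mL_{\eta}$ acts as a linear operator, so that the subtraction is admissible. This follows because $v\in\mathscr{H}^1(\mS)$ (hence $\varpi_h$ is well-defined through (\ref{varpih})) and because $\dif_{\eta} v\cdot h$ exists as the weak solution of (\ref{dvh}) under the regularity assumptions $(\teta,\phi)\in H^{s+\frac12}(\R)\times H^s(\R)$ used throughout Section \ref{sec:shape}.
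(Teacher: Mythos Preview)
Your proof is correct and follows exactly the same approach as the paper, which simply states that the corollary follows by combining Lemma \ref{lemma:varpi} with the fact that $\dif_{\eta} v\cdot h$ solves (\ref{dvh}). The subtraction argument by linearity of $\mL_{\eta}$ that you spell out is precisely what the paper leaves implicit.
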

\subsection{Shape derivative of \texorpdfstring{$\mG[\eta]$}{G[eta]}, proof of Theorem \ref{thm:shape}.}
In this subsection, we use Proposition \ref{prop:dG} and Corollary \ref{corol:hBext} to conclude the proof for Theorem \ref{thm:shape}. 
\begin{proof}[Proof of Theorem \ref{thm:shape}]
Since $\Upsilon_h= \varpi_h-\dif_{\eta} v\cdot h$ solves (\ref{varthetah}), it follows from the definition of auxiliary DN operator (\ref{mG-flat}) that
\begin{align*}
\mG[\eta](h\mathcal{B}+V)
=& \Big\{ \dfrac{1+y|\d_{\sigma} \eta|^2}{\eta}\d_y \varpi_h - \d_{\sigma} \eta \d_{\sigma} \varpi_h \Big\}\Big\vert_{y=1}\\
&- \Big\{ \dfrac{1+y|\d_{\sigma}\eta|^2}{\eta}\d_y (\dif_{\eta} v\cdot h) - \d_{\sigma} \eta \d_{\sigma} (\dif_{\eta} v\cdot h) \Big\}\Big\vert_{y=1}.
\end{align*}
Combining the above with Proposition \ref{prop:dG}, we obtain that
\begin{align}\label{dGhtemp}
\dif_{\eta} \mG[\eta](\phi)\cdot h =& - G[\eta](h \mathcal{B} + V) + \Big\{ \dfrac{1+y|\d_{\sigma}\eta|^2}{\eta}\d_y \varpi_h - \d_{\sigma} \eta \d_{\sigma} \varpi_h \Big\}\Big\vert_{y=1} \\
&+ \Big\{ \d_{\sigma} h \dfrac{2y\d_{\sigma} \eta}{\eta} \d_y v - \d_{\sigma} h \d_{\sigma} v - h\dfrac{1+y|\d_{\sigma} \eta|^2}{\eta^2} \d_y v \Big\}\Big\vert_{y=1} \nonumber\\
=\vcentcolon& - \mG[\eta](h \mathcal{B}+V) + \big\{ (\star) + (\mathrm{I}) \big\} \vert_{y=1}.\nonumber
\end{align}
By the definition of $\varpi_h$ in (\ref{varpih}), we split the term $(\star)$ into two parts:
\begin{align*}
	(\star) =& \bigg\{ \dfrac{1+y|\d_{\sigma}\eta|^2}{\eta}\d_y \Big( \d_\sigma v -  \d_\sigma \eta \frac{y\d_y v}{\eta}\Big) - \d_{\sigma} \eta \d_{\sigma} \Big( \d_\sigma v -  \d_\sigma \eta \frac{y\d_y v}{\eta}\Big)  \bigg\}\\
	&+\Big\{ \dfrac{1+y|\d_{\sigma}\eta|^2}{\eta}\d_y \big( h \frac{y\d_y v}{\eta}\big) - \d_{\sigma} \eta \d_{\sigma} \big( h \frac{y\d_y v}{\eta}\big) \Big\} = \vcentcolon (\mathrm{II}) + (\mathrm{III}).
\end{align*}
Expanding the term $(\mathrm{III})$ and using equation (\ref{Leta}), we get
\begin{align*}
	(\mathrm{III}) %=& \dfrac{1+y|\d_{\sigma}\eta|^2}{\eta} \Big\{ h \dfrac{\d_y v}{\eta} + h \dfrac{y \d_y^2 v}{\eta} \Big\} -\d_\sigma \eta \Big\{ \d_\sigma h \dfrac{y\d_y v}{\eta} - h \dfrac{y\d_\sigma \eta \d_y v}{\eta^2} + h\dfrac{y\d_\sigma\d_y v}{\eta}  \Big\} \\
	=& h\dfrac{v}{4} - h\d_\sigma^2 v + h \dfrac{y \d_\sigma \eta}{\eta} \d_\sigma \d_y v + \dfrac{h \d_y v}{\eta^2} + h \dfrac{y \d_\sigma^2 \eta}{\eta} \d_y v  -  h\dfrac{\cot(y\eta) }{\eta} \d_y v \\ & - \d_\sigma h \dfrac{y \d_\sigma \eta}{\eta} \d_y v + (y-1) \dfrac{h\d_y^2 v}{\eta^2}.
\end{align*}
Combing this with $(\mathrm{I})$ we obtain that
\begin{align*}
	 (\mathrm{III}) + (\mathrm{I}) %=& h\dfrac{v}{4} - h\d_\sigma^2 v + h \dfrac{y \d_\sigma \eta}{\eta} \d_\sigma \d_y v + \dfrac{h \d_y v}{\eta^2} + h \dfrac{y \d_\sigma^2 \eta}{\eta} \d_y v  -  h\dfrac{\cot(y\eta) }{\eta} \d_y v  \\ & - \d_\sigma h \dfrac{y \d_\sigma \eta}{\eta} \d_y v  + (y-1) \dfrac{h\d_y^2 v}{\eta^2} + \d_{\sigma} h \dfrac{2y\d_{\sigma} \eta}{\eta} \d_y v - \d_{\sigma} h \d_{\sigma} v - h\dfrac{1+y|\d_{\sigma} \eta|^2}{\eta^2} \d_y v \\
	 %=& h\dfrac{v}{4} - h\d_\sigma^2 v + h \dfrac{y \d_\sigma \eta}{\eta} \d_\sigma \d_y v + h \dfrac{y \d_\sigma^2 \eta}{\eta} \d_y v  -  h\dfrac{\cot(y\eta) }{\eta} \d_y v \\ & + \d_\sigma h \dfrac{y \d_\sigma \eta}{\eta} \d_y v + (y-1) \dfrac{h\d_y^2 v}{\eta^2} - \d_{\sigma} h \d_{\sigma} v - h\dfrac{y|\d_{\sigma} \eta|^2}{\eta^2} \d_y v\\
	 %=& h\dfrac{v}{4} - h\d_\sigma^2 v - \d_{\sigma} h \d_{\sigma} v +  \dfrac{y \d_\sigma \eta}{\eta} \d_\sigma h \d_y v +  \dfrac{y \d_\sigma \eta}{\eta} h \d_\sigma \d_y v  -  h\dfrac{\cot(y\eta) }{\eta} \d_y v \\ &  + (y-1) \dfrac{h\d_y^2 v}{\eta^2} + h \dfrac{y \d_\sigma^2 \eta}{\eta} \d_y v   - h\dfrac{y|\d_{\sigma} \eta|^2}{\eta^2} \d_y v \\
	 %=& h\dfrac{v}{4} - \d_\sigma \big( h \d_\sigma v \big) +  \dfrac{y \d_\sigma \eta}{\eta} \d_\sigma \big( h \d_y v \big)  -  h\dfrac{\cot(y\eta) }{\eta} \d_y v \\ &  + (y-1) \dfrac{h\d_y^2 v}{\eta^2} + h \dfrac{y \d_\sigma^2 \eta}{\eta} \d_y v   - h\dfrac{y|\d_{\sigma} \eta|^2}{\eta^2} \d_y v\\ 
	 %=& h\dfrac{v}{4} - \d_\sigma \big( h \d_\sigma v \big) +   \d_\sigma \Big( h \d_\sigma \eta\dfrac{y \d_y v}{\eta} \Big) -  \Big\{ \dfrac{y\d_\sigma^2 \eta}{\eta} - \dfrac{y |\d_\sigma \eta|^2}{\eta^2} \Big\} h \d_y v  -  h\dfrac{\cot(y\eta) }{\eta} \d_y v \\ &  + (y-1) \dfrac{h\d_y^2 v}{\eta^2} + h \dfrac{y \d_\sigma^2 \eta}{\eta} \d_y v   - h\dfrac{y|\d_{\sigma} \eta|^2}{\eta^2} \d_y v\\
	 %=& h\dfrac{v}{4} - \d_\sigma \big( h \d_\sigma v \big) +   \d_\sigma \Big( h \d_\sigma \eta\dfrac{y \d_y v}{\eta} \Big)  -  h\dfrac{\cot(y\eta) }{\eta} \d_y v + (y-1) \dfrac{h\d_y^2 v}{\eta^2}\\
	 =& - \d_\sigma \bigg\{ h \Big( \d_\sigma v - \d_\sigma \eta\dfrac{y \d_y v}{\eta} \Big) \bigg\}  + h\Big( \dfrac{v}{4} - \dfrac{\d_y v}{\eta}\cot(y\eta) \Big) + (y-1) \dfrac{h\d_y^2 v}{\eta^2}.
\end{align*}
Moreover, expanding $(\mathrm{II})$, and using equation (\ref{Leta}), we also get
\begin{align*}
	(\mathrm{II}) %=& \dfrac{1+y|\d_\sigma \eta|^2}{\eta} \Big( \d_y\d_\sigma v - \d_\sigma \eta \dfrac{\d_y v}{\eta} - \d_\sigma \eta \dfrac{y \d_y^2 v}{\eta} \Big)\\ &- \d_\sigma \eta \Big( \d_\sigma^2 v - \dfrac{y \d_\sigma^2 \eta  }{\eta} \d_y v + \dfrac{y|\d_\sigma \eta|^2}{\eta^2} \d_y v - \dfrac{y \d_{\sigma}\eta}{\eta} \d_\sigma \d_y v \Big)\\
	= & \d_\sigma \Big( \dfrac{\d_y v}{\eta} \Big) - \d_\sigma \eta \Big( \dfrac{v}{4}  - \dfrac{\d_y v}{\eta} \cot(y\eta) \Big) - (y-1) \dfrac{\d_\sigma \eta}{\eta^2} \d_y^2 v.
\end{align*}
From Remark \ref{rem:BV'}, we have $\mB = \frac{\d_y v}{\eta} \big\vert_{y=1}$ and $ V = \big\{ \d_\sigma v - \d_\sigma \eta \frac{\d_y v}{\eta} \big\}\big\vert_{y=1}$. Using these, and substituting $(\mathrm{I})$--$(\mathrm{III})$ into (\ref{dGhtemp}), it follows that
\begin{align*}
	\dif_{\eta}\mG[\eta](\phi)\cdot h =& - \mG[\eta](h\mB + V) + \big\{ (\mathrm{I}) + (\mathrm{II}) + (\mathrm{III}) \big\}\big\vert_{y=1}\\
	=& - \mG[\eta](h\mB + V) - \d_\sigma (h V - \mB) + (h-\d_\sigma \eta) \Big\{ \frac{\phi}{4} - \mB \cot(\eta) \Big\}.
\end{align*}
This concludes the proof.
\end{proof}
\begin{corollary}\label{corol:RdGh}
Set $\tilde{\mG}[\eta](\phi) \vcentcolon= \sin(\eta)\cdot \mG[\eta](\phi)$. Then for $ h\in\mC_{c}^{\infty}(\R)$,
\begin{equation*}
\dif_\eta \tilde{\mG}[\eta](\phi)\cdot h = - \tilde{\mG}[\eta](h\mB + V) - \d_{\sigma} \big\{ (hV-\mB) \sin(\eta) \big\} +  \dfrac{\sin(\eta)}{4}  (h-\d_\sigma\eta) \phi .
\end{equation*}
\end{corollary}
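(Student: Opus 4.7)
The plan is to derive this corollary by a direct calculation from Theorem \ref{thm:shape} together with the product rule for the shape derivative. Specifically, I would write
\[
\dif_\eta \tilde{\mG}[\eta](\phi)\cdot h
= \big(\dif_\eta \sin(\eta)\cdot h\big)\,\mG[\eta](\phi) + \sin(\eta)\,\dif_\eta \mG[\eta](\phi)\cdot h
= h\cos(\eta)\,\mG[\eta](\phi) + \sin(\eta)\,\dif_\eta \mG[\eta](\phi)\cdot h,
\]
and then substitute the formula from Theorem \ref{thm:shape} into the second term. Distributing the factor $\sin(\eta)$, the term $-\sin(\eta)\mG[\eta](h\mB+V)$ immediately becomes $-\tilde{\mG}[\eta](h\mB+V)$, and the electric/mean-curvature-like term $\sin(\eta)(h-\d_\sigma\eta)(\phi/4 - \mB\cot(\eta))$ splits into $\tfrac{\sin(\eta)}{4}(h-\d_\sigma\eta)\phi - \cos(\eta)(h-\d_\sigma\eta)\mB$.

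Next, I would absorb the extra $\d_\sigma$-free cosine terms into the divergence term by applying the Leibniz rule in the opposite direction:
\[
\d_{\sigma}\big\{(hV-\mB)\sin(\eta)\big\} = \sin(\eta)\d_{\sigma}(hV-\mB) + \cos(\eta)\d_{\sigma}\eta\,(hV-\mB).
\]
After this rewrite, the discrepancy between the expression produced by the chain rule and the target formula reduces to the single bracket
\[
h\cos(\eta)\,\mG[\eta](\phi) - \cos(\eta)(h-\d_\sigma\eta)\mB + \cos(\eta)\d_\sigma\eta\,(hV-\mB)
= h\cos(\eta)\bigl\{\mG[\eta](\phi) - \mB + \d_\sigma\eta\,V\bigr\}.
\]
Thus the corollary reduces to showing that the quantity in braces vanishes identically.

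The key identity $\mG[\eta](\phi) = \mB - \d_\sigma\eta\, V$ follows directly from the algebraic definitions in (\ref{BV}): multiplying $\mB(1+|\d_\sigma\eta|^2) = \d_\sigma\eta\,\d_\sigma\phi + \mG[\eta](\phi)$ out and using $V = \d_\sigma\phi - \mB\d_\sigma\eta$ gives $\mG[\eta](\phi)-\mB = \mB|\d_\sigma\eta|^2 - \d_\sigma\eta\,\d_\sigma\phi = -\d_\sigma\eta\,V$, as desired. Substituting this into the bracket above makes it zero and produces exactly the stated formula.

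The main obstacle is really only bookkeeping — there is nothing analytically hard once Theorem \ref{thm:shape} is in hand. The one place to be attentive is the sign convention and the distribution of $\sin(\eta)$ against $\cot(\eta) = \cos(\eta)/\sin(\eta)$ in the $\mB\cot(\eta)$ term, which is precisely what converts the singular-looking $\cot(\eta)$ into the smooth factor $\cos(\eta)$ and allows the cancellation to go through.
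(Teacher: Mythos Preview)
Your proposal is correct and follows essentially the same route as the paper: apply the product rule to $\sin(\eta)\mG[\eta](\phi)$, substitute the shape-derivative formula from Theorem~\ref{thm:shape}, rewrite $\sin(\eta)\d_\sigma(hV-\mB)$ via the Leibniz rule, and cancel the leftover $h\cos(\eta)\{\mG[\eta](\phi)-\mB+\d_\sigma\eta\,V\}$ using the algebraic identity $\mG[\eta](\phi)=\mB-\d_\sigma\eta\,V$ coming from~\eqref{BV}. The paper's proof is organized slightly differently but the ingredients and logic are identical.
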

\begin{proof}
Using Theorem \ref{thm:shape}, and the product rule, we have that
\begin{align*}
\dif_{\eta} \tilde{\mG}[\eta](\phi) \cdot h =& h \cos(\eta) \mG[\eta](\phi) + \sin(\eta)  \dif_{\eta} \mG[\eta](\phi) \cdot h\\
 =& - \tilde{\mG}[\eta]\big( h\mB + V \big) - \d_{\sigma} \big\{ (hV-\mB) \sin(\eta) \big\} + (h-\d_{\sigma}\eta) \dfrac{\phi}{4} \sin(\eta)\\
 &+ h \cos(\eta) \big\{ \mG[\eta](\phi) + V \d_\sigma \eta - \mB \big\}.
\end{align*}
By definition (\ref{BV}), we get $\mG[\eta](\phi)+ V \d_\sigma \eta - \mB =0$, which concludes the proof.
\end{proof}

\begin{corollary}[Perturbation with respect to cone]
Let the surface $\eta\equiv \ath\in (0,\pi)$ be the cone with slope $\tan \ath$. Then for $h\in\mC_c^{\infty}(\R)$,
\begin{align*}
\dif_{\eta} \mG[\eta](\phi)\cdot h \vert_{\eta=\ath} = - \mG[\ath]\big( h \mG[\ath](\phi) \big) - \d_\sigma \big( h \d_\sigma \phi \big) + h \Big\{ \frac{\phi}{4} - \cot(\ath)\cdot\mG[\ath](\phi)  \Big\} .
\end{align*}
\end{corollary}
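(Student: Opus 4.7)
The plan is to derive this corollary as a direct specialization of Theorem \ref{thm:shape} to the constant profile $\eta \equiv \ath$, together with the translation invariance of the flat DN symbol established in Section \ref{sec:flat}.

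First I would evaluate the auxiliary quantities $\mB$ and $V$ in (\ref{BV}) at $\eta \equiv \ath$. Since $\d_\sigma \eta = 0$, one reads off immediately that
\begin{equation*}
\mB\big|_{\eta=\ath} = \mG[\ath](\phi), \qquad V\big|_{\eta=\ath} = \d_\sigma \phi.
\end{equation*}
Plugging these into the formula of Theorem \ref{thm:shape} gives
\begin{equation*}
\dif_\eta \mG[\eta](\phi)\cdot h\big|_{\eta=\ath}
= -\mG[\ath]\!\bigl(h\mG[\ath](\phi) + \d_\sigma \phi\bigr)
- \d_\sigma\!\bigl(h\d_\sigma \phi - \mG[\ath](\phi)\bigr)
+ h\Bigl\{\tfrac{\phi}{4} - \cot(\ath)\,\mG[\ath](\phi)\Bigr\}.
\end{equation*}

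The only thing left to verify is the cancellation
\begin{equation*}
-\mG[\ath](\d_\sigma \phi) + \d_\sigma \mG[\ath](\phi) = 0,
\end{equation*}
i.e.\ that $\mG[\ath]$ commutes with $\d_\sigma$. This is the content of the explicit flat-cone representation derived in Section \ref{sec:flat}: by (\ref{Uphat})–(\ref{UpConv}) and (\ref{Gath}), $\mG[\ath]$ acts on the Fourier side as multiplication by the $\zeta$-dependent symbol $\fk_1(\zeta,\ath)/\fk(\zeta,\ath)$, and is therefore a Fourier multiplier in $\sigma$. Any two Fourier multipliers in $\sigma$ commute, so in particular $[\mG[\ath],\d_\sigma]=0$. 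Alternatively, this commutation can be seen from the translation invariance of the Dirichlet problem (\ref{ellip-cf}) in the $\sigma$-variable when the boundary is flat.

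After this cancellation the right-hand side collapses to
\begin{equation*}
-\mG[\ath]\!\bigl(h\,\mG[\ath](\phi)\bigr) - \d_\sigma(h\,\d_\sigma \phi)
+ h\Bigl\{\tfrac{\phi}{4} - \cot(\ath)\,\mG[\ath](\phi)\Bigr\},
\end{equation*}
which is precisely the stated identity. The only potential subtlety is justifying that the shape derivative of Theorem \ref{thm:shape} is truly attained at the base profile $\eta\equiv\ath$, but since $\teta=0\in H^{s+1/2}(\R)$ trivially satisfies the smallness hypothesis $\|\teta\|_{L^\infty}<\min\{\ath,\pi-\ath\}$, the theorem applies at this point and the computation above is valid verbatim. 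No further estimates are required.
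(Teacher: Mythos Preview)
Your proof is correct and follows essentially the same approach as the paper: specialize Theorem~\ref{thm:shape} at $\eta\equiv\ath$, compute $\mB=\mG[\ath](\phi)$ and $V=\d_\sigma\phi$, and use the commutation $\d_\sigma \mG[\ath](\phi)=\mG[\ath](\d_\sigma\phi)$ to cancel the extra terms. Your justification of the commutation via the Fourier multiplier representation from Section~\ref{sec:flat} is slightly more detailed than the paper's one-line remark, but the argument is the same.
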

\begin{proof}
For $\eta\equiv \ath$, the terms reduces to $\mB = \mG[\ath](\phi)$ and $V=\d_\sigma \phi$ by (\ref{BV}). Moreover, we also have $\d_\sigma \mG[\ath](\phi) = \mG[\ath](\d_\sigma \phi)$. The statement follows by combining these with Theorem \ref{thm:shape}.
\end{proof}

\subsection{Cancellation in \texorpdfstring{$\mG[\eta](\mB+V)+\d_{\sigma}(V-\mB)$}{G[eta](B+V)+D(V-B)}}
Suppose $v$ is a solution to (\ref{vflat}), and let $\mB$, $V$ be given in (\ref{BV}). Recall the extension operator $E_{\eta}[\cdot]$ in Definition \ref{def:solOp}. Set 
\begin{equation}\label{diffW}
	\varpi \vcentcolon= \dfrac{y}{\eta} \d_y v +\d_\sigma v - \dfrac{y\d_\sigma \eta}{\eta}\d_y v, \qquad \mathcal{W} \vcentcolon= \varpi - E_{\eta}[\mB+V]. 
\end{equation} 
Then repeating the same procedure for the proofs of Lemma \ref{lemma:varpi} and Corollary \ref{corol:hBext}, it can be verified that $\mathcal{W}$ solves:
\begin{gather}
\left\{\begin{aligned}
	&-\mL_{\eta} \mathcal{W} = \div\big( M \nabla v \big) - \dfrac{\sin(y\eta)+y\eta \cos(y\eta)}{4} v && \text{in } \ (\sigma,y)\in\mS,\\
	& \mathcal{W}(\sigma,1)=0, \qquad \d_y \mathcal{W}(\sigma,0)=0 && \text{for } \ \sigma \in\R,
\end{aligned}\right.\\
\text{where } \ \ M \vcentcolon= y^2 h \omega(y\eta) \begin{pmatrix}
	\eta^2 & -y \eta \d_\sigma \eta  \\
	-y \eta \d_\sigma \eta  & 1+ y^2 |\d_{\sigma} \eta|^2 
\end{pmatrix} + y\sinc(y\eta) \begin{pmatrix}
	2\eta & -y \d_{\sigma}\eta  \\
	-y \d_{\sigma}\eta  & 0
\end{pmatrix}.\nonumber
\end{gather}

\begin{proposition}\label{prop:cancelEllip}
Suppose $\teta\in H^{s+\frac{1}{2}}(\R)$ with $s>\frac{5}{2}$, and let $v$ be solution to (\ref{vflat}). Then $\mathcal{W}$ defined in (\ref{diffW}) satisfies the regularity:
\begin{equation*}
	\mathcal{W} \in \mC_y^0\big((0,1]; H_\sigma^{s}(\R)\big), \qquad \d_y \mathcal{W} \in \mC_y^0\big((0,1]; H_\sigma^{s-1}(\R)\big).
\end{equation*}
\end{proposition}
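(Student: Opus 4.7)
The plan is to apply elliptic regularity to the boundary value problem satisfied by $\mathcal{W}$ that was stated immediately before the proposition. The crucial observation is that $\varpi$ and $E_{\eta}[\mB+V]$ each live only in $\mC_y^0\big((0,1]; H_\sigma^{s-1}(\R)\big)$ (the former because it involves a single $\sigma$-derivative of $v\in H^s_\sigma$; the latter because its Dirichlet trace $\mB+V$ is only in $H^{s-1}(\R)$), yet their difference $\mathcal{W}$ is one order more regular. The mechanism producing this extra derivative is that the source $F \vcentcolon= \textnormal{div}(M\nabla v)-gv$, with $g \vcentcolon= (\sin(y\eta)+y\eta\cos(y\eta))/4$, is in divergence form in its leading part.

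The first step is to estimate the source. By Corollary \ref{corol:vH1Est} and Lemma \ref{lemma:vHigh}, together with integration of $\d_y v$ downward from $y=1$ where $v=\phi\in H^s(\R)$, one obtains $v\in \mC_y^0\big((0,1]; H_\sigma^s(\R)\big)$ and $\d_\sigma v,\,\d_y v \in \mC_y^0\big((0,1]; H_\sigma^{s-1}(\R)\big)$. The coefficients of $M$ are built from $\eta,\d_\sigma\eta\in H^{s-1/2}(\R)$ and smooth bounded functions of $y\eta$; since $s-\tfrac{1}{2}>\tfrac{3}{2}$, the algebra property of $H^{s-1}_\sigma$ and the paradifferential product estimates of the appendix yield $M\nabla v \in \mC_y^0\big((0,1]; H_\sigma^{s-1}(\R)\big)$ and $gv \in \mC_y^0\big((0,1]; H_\sigma^s(\R)\big)$.

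The second step is the high-order energy estimate for $\mathcal{W}$, run exactly as in Lemma \ref{lemma:ellip} but now at level $m=s$. Starting from the weak form
\begin{equation*}
\iint_{\mS}\! \nabla\varphi \cdot (A\nabla\mathcal{W}) + \gamma \varphi \mathcal{W}\,\dif \sigma\,\dif y = -\iint_{\mS}\! \nabla\varphi \cdot (M\nabla v)\,\dif \sigma\,\dif y - \iint_{\mS}\! \varphi\, gv \,\dif \sigma\,\dif y,
\end{equation*}
I would test against $\varphi=\Lambda_h^{2s}\mathcal{W}$ with the truncated Fourier multiplier of (\ref{Dkh}). The coercivity bound (\ref{coerA}) controls $\|\mathcal{W}\|_{H_\sigma^s}^2$ and $\|\nabla\mathcal{W}\|_{H_\sigma^{s-1}}^2$ in the weighted $y\,\dif y$ sense on the left. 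The essential point on the right is that $M\nabla v$ is paired with $\nabla \varphi$, so after symmetric distribution of $\Lambda_h^s$ (with commutator errors handled by Proposition \ref{prop:commuLam} exactly as in Lemma \ref{lemma:ellip}) the troublesome contribution reduces to $\|M\nabla v\|_{H_\sigma^{s-1}} \|\Lambda_h^s \nabla\mathcal{W}\|_{L_\sigma^2}$ and closes via absorption into the coercivity term; the zeroth-order contribution $\iint\varphi\, gv$ is even more benign. Letting $h\to 0^+$ and then invoking standard interior elliptic regularity on each subinterval $[\delta,1]\subset(0,1]$, where $A$ is uniformly elliptic, upgrades the weighted $y\,\dif y$ bound to the pointwise-in-$y$ continuity claimed in the proposition; the $\d_y\mathcal{W}$ bound follows simultaneously since $\d_y^2 \mathcal{W}$ is recovered algebraically from the equation once $\d_\sigma^2\mathcal{W}$ is controlled.

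The main obstacle will be verifying that the divergence-form trick genuinely buys one derivative, i.e. that the commutator $[\Lambda_h^s,M]\nabla v$ (and its analogues involving the $\tilde A$-type coefficients in $A$) is controlled by $\|v\|_{H_\sigma^s}$ rather than $\|v\|_{H_\sigma^{s+1}}$. This requires the threshold $t_0:=s-\tfrac{3}{2}>\tfrac{1}{2}$, hence the hypothesis $s>\tfrac{5}{2}$, to apply Proposition \ref{prop:commuLam}, together with careful accounting of where each $\sigma$-derivative in $\textnormal{div}(M\nabla v)$ is absorbed after integration by parts in the weak form. Once this is in place, the proof proceeds structurally along the same lines as Lemma \ref{lemma:ellip}, with the only substantive difference being the divergence-form source in place of a zero source.
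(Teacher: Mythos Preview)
Your proposal is correct and follows essentially the same route as the paper: run the energy argument of Lemma~\ref{lemma:ellip} on the equation for $\mathcal{W}$, exploiting that the source is in divergence form so that after integration by parts it pairs $M\nabla v$ (which lies in $H^{s-1/2}_\sigma$ in the weighted $y\,\dif y$ sense by Corollary~\ref{corol:vH1Est}) against $\nabla\Lambda_h^k\mathcal{W}$, and then pass to pointwise-in-$y$ continuity. The paper's write-up is terser---it simply invokes ``the exact same argument'' as Lemma~\ref{lemma:ellip} and then Lemma~\ref{lemma:vHigh} for the second $y$-derivative, finishing with the interpolation embedding Proposition~\ref{prop:bochner} rather than your ``interior elliptic regularity on $[\delta,1]$''---but the two endings are equivalent once one restricts to $(0,1]$.
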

\begin{proof}
	One observes that $\mathcal{W}$ satisfies the same equation as $w$ in (\ref{w-D}), except for source term at the right hand side. Thus we can apply the exact same argument in the proof of Lemma \ref{lemma:ellip} to obtain:
	\begin{align*}
		&\int_{0}^{1}\!\! \Big\{ \| \mathcal{W}(\cdot,y) \|_{H^{s+\frac{1}{2}}}^2 + \|\d_y \mathcal{W}(\cdot,y)\|_{H^{s-\frac{1}{2}}}^2 \Big\} \, y \dif y\\
		\le& C \Big|\dfrac{\fU_s(\teta)}{\fl(\teta)}\Big|^{2s+1}\!\!\! \int_{0}^{1}\!\! \| \nabla v(\cdot,t) \|_{H^{s-\frac{1}{2}}}^2 \, y \dif y  \le C \Big|\dfrac{\fU_s(\teta)}{\fl(\teta)}\Big|^{4s+2} \|\phi\|_{H^{s}}^2,
	\end{align*}
	where we also used Corollary \ref{corol:vH1Est}. Moreover, similarly to Lemma \ref{lemma:vHigh}, one can also obtain using the standard elliptic boundary estimate argument and Lemma \ref{lemma:vHigh} that
	\begin{equation*}
		\int_{0}^{1}\!\!\! y^3 \|\d_y^2 \mathcal{W}(\cdot, y)\|_{H^{s-\frac{3}{2}}} \dif y \le C\Big|\dfrac{\fU_s(\teta)}{\fl(\teta)}\Big|^{2s-1}\!\!\!\! \int_{0}^{1}\!\! y^3 \|\nabla \d_y v(\cdot, y)\|_{H^{s-\frac{3}{2}}} \dif y \le C \big(\|\teta\|_{H^{s+\frac{1}{2}}}\big) \|\phi\|_{H^s}^2.
	\end{equation*}
	Thus we can apply the interpolation embedding theorem, Proposition \ref{prop:bochner} to obtain
	\begin{equation*}
		\mathcal{W} \in \mC_{y}^0 \big( (0,1] ; H_{\sigma}^{s}(\R) \big), \qquad \d_y \mathcal{W} \in \mC_{y}^0 \big( (0,1] ; H_{\sigma}^{s-1}(\R) \big).
	\end{equation*} 
	This concludes the proof.
\end{proof}

\begin{theorem}\label{lemma:cancel}
	Suppose $\teta\in H^{s+\frac{1}{2}}(\R)$ with $s>\frac{5}{2}$. Let $v$ be solution to (\ref{vflat}), and let $\mB$, $V$ be given in (\ref{BV}). Then
	\begin{equation*}
		\mG[\eta](\mB+V) + \d_{\sigma}(V-\mB) \in H_{\sigma}^{s-1}(\R).
	\end{equation*}  
\end{theorem}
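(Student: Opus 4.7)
The plan is to replace $E_{\eta}[\mB+V]$ in the definition of $\mG[\eta](\mB+V)$ by the explicit auxiliary function $\varpi$ from (\ref{diffW}), cashing in the regularity gain established in Proposition \ref{prop:cancelEllip}. By construction $\varpi|_{y=1}=\mB+V$ (see Remark \ref{rem:BV'}), and $\mathcal{W}\vcentcolon=\varpi-E_{\eta}[\mB+V]$ satisfies $\mathcal{W}(\cdot,1)=0$ together with $\d_y\mathcal{W}(\cdot,1)\in H^{s-1}(\R)$ by Proposition \ref{prop:cancelEllip}. Together with the DN formula (\ref{mG-flat}) this yields
\begin{equation*}
\mG[\eta](\mB+V) = \left.\left[\frac{1+y|\d_\sigma \eta|^2}{\eta}\d_y\varpi - \d_\sigma\eta\,\d_\sigma\varpi\right]\right|_{y=1} + r_1,
\end{equation*}
with $r_1\in H^{s-1}(\R)$. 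Hence it suffices to establish
\begin{equation*}
\mathfrak{R}\vcentcolon=\left.\left[\frac{1+|\d_\sigma\eta|^2}{\eta}\d_y\varpi - \d_\sigma\eta\,\d_\sigma\varpi\right]\right|_{y=1} + \d_\sigma(V-\mB)\ \in\ H^{s-1}(\R).
\end{equation*}

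I would then rewrite $\varpi=\frac{y(1-\d_\sigma\eta)}{\eta}\d_y v + \d_\sigma v$ and expand $\d_y\varpi$, $\d_\sigma\varpi$, together with a direct computation of $\d_\sigma(V-\mB)$ from the defining formulas (\ref{BV}). The expression $\mathfrak{R}$ then becomes a linear combination, with coefficients that are smooth rational functions of $\eta,\d_\sigma\eta,\d_\sigma^2\eta$, of the boundary traces $\phi$, $\d_\sigma\phi$, $\d_y v|_{y=1}$, $\d_\sigma^2 v|_{y=1}$, $\d_\sigma\d_y v|_{y=1}$ and $\d_y^2 v|_{y=1}$. The first three all belong to $H^{s-1}(\R)$ by trace theory and Theorem \ref{thm:DNSob}, but Lemma \ref{lemma:vHigh} only places the three second-derivative traces in $H^{s-2}(\R)$, so without a further cancellation they would obstruct the desired regularity.

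The heart of the argument is an algebraic cancellation. Collecting terms, the second-order traces enter $\mathfrak{R}$ with coefficients $(1-\d_\sigma\eta)$, $\frac{(1+(\d_\sigma\eta)^2)(1-\d_\sigma\eta)}{\eta^2}$, and $-\frac{2\d_\sigma\eta(1-\d_\sigma\eta)}{\eta}$ respectively. Using the reformulated equation (\ref{Leta}) at $y=1$ to substitute
\begin{equation*}
\d_\sigma^2 v|_{y=1} = -\frac{1+(\d_\sigma\eta)^2}{\eta^2}\d_y^2 v|_{y=1} + \frac{2\d_\sigma\eta}{\eta}\d_\sigma\d_y v|_{y=1} + \mathfrak{c}|_{y=1}\,\d_y v|_{y=1} + \frac{\phi}{4},
\end{equation*}
a direct check shows that the resulting coefficients in front of both $\d_y^2 v|_{y=1}$ and $\d_\sigma\d_y v|_{y=1}$ vanish identically. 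This double cancellation is precisely what makes the choice of $\varpi$ in (\ref{diffW}) effective, paralleling the construction carried out in Lemma \ref{lemma:varpi}.

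After the substitution, $\mathfrak{R}$ reduces to a linear combination of $\d_y v|_{y=1}=\eta\mB$ and $\phi$, multiplied by smooth rational expressions in $\eta,\d_\sigma\eta,\d_\sigma^2\eta$ and $\cot(\eta)$. Since $\teta\in H^{s+\frac12}(\R)$ gives $\d_\sigma\eta\in H^{s-\frac12}(\R)$ and $\d_\sigma^2\eta\in H^{s-\frac32}(\R)$, and since $s>\frac52$ ensures $s-\frac32>\frac12$, the Sobolev multiplier property combined with Theorem \ref{thm:DNSob} (which yields $\mB\in H^{s-1}(\R)$) places the whole remainder in $H^{s-1}(\R)$, proving the theorem. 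The main obstacle is the algebraic bookkeeping needed to verify the simultaneous cancellation of the two second-derivative coefficients after invoking the PDE; once that verification is done, the rest reduces to routine Sobolev multiplier estimates using ingredients already developed in Sections \ref{sec:nonflat}--\ref{sec:shape}.
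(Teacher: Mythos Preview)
Your approach is essentially the same as the paper's: replace $E_{\eta}[\mB+V]$ by $\varpi$, control the difference via Proposition~\ref{prop:cancelEllip}, and use the equation~(\ref{Leta}) to eliminate the second-order boundary traces of $v$. Your identification of the second-order coefficients as $(1-\d_\sigma\eta)$ times $1$, $\mathfrak{a}|_{y=1}$, $\mathfrak{b}|_{y=1}$ is correct and gives the desired cancellation.

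There is, however, a gap in your final Sobolev multiplier step. You assert that products of $\d_y v|_{y=1}\in H^{s-1}(\R)$ with coefficients involving $\d_\sigma^2\eta\in H^{s-3/2}(\R)$ land in $H^{s-1}(\R)$ because $s-\tfrac32>\tfrac12$. This is false: Proposition~\ref{prop:clprod} only gives $H^{s-3/2}\cdot H^{s-1}\hookrightarrow H^{s-3/2}$, since the target regularity cannot exceed $\min\{s-\tfrac32,s-1\}=s-\tfrac32$. If a genuine $\d_\sigma^2\eta$ term survived, your conclusion would fail.

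The resolution is that the $\d_\sigma^2\eta$ contributions also cancel. They arise from three places: the $\mathfrak{c}$-term after your substitution, the $\sigma$-derivative of the coefficient $\tfrac{1-\d_\sigma\eta}{\eta}$ inside $\d_\sigma\varpi$, and the $\sigma$-derivative of $\tfrac{1+\d_\sigma\eta}{\eta}$ inside $\d_\sigma(V-\mB)$. Collecting the $\d_\sigma^2\eta\cdot\d_y v$ coefficients gives $\tfrac{1-\d_\sigma\eta}{\eta}+\tfrac{\d_\sigma\eta}{\eta}-\tfrac{1}{\eta}=0$. The paper sidesteps this by computing the remainder explicitly and obtaining
\[
\mG[\eta](\mB+V)+\d_\sigma(V-\mB)=\frac{1+|\d_\sigma\eta|^2}{\eta^2}\,\d_y v\Big|_{y=1}+(1-\d_\sigma\eta)\Big\{\frac{\phi}{4}-\frac{\d_y v}{\eta}\cot\eta\Big\}-\Big\{\frac{1+|\d_\sigma\eta|^2}{\eta}\d_y\mathcal{W}-\d_\sigma\eta\,\d_\sigma\mathcal{W}\Big\}\Big|_{y=1},
\]
which visibly involves only $\eta$, $\d_\sigma\eta$, and $\cot\eta$ as coefficients; then the multiplier argument goes through cleanly. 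You should either verify this additional cancellation explicitly or carry the computation through to the closed form above.
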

\begin{proof}
	By the definition of auxiliary DN operator and $E_{\eta}[\cdot]$, it follows that
	\begin{align*}
		\mG[\eta](\mB + V) = \Big\{ \dfrac{1+y |\d_{\sigma}\eta|^2}{\eta} \d_y E_{\eta}[\mB+V] - \d_\sigma \eta \d_\sigma E_{\eta}[\mB+V] \Big\}\Big\vert_{y=1}. 
	\end{align*}
	Substituting $\mathcal{W}=\varpi-E_{\eta}[\mB+V]$ in to the above, we have:
	\begin{align*}
		\mG[\eta](\mB+V) =& \Big\{ \dfrac{1+y |\d_{\sigma}\eta|^2}{\eta} \d_y \varpi - \d_\sigma \eta \d_\sigma \varpi \Big\}\Big\vert_{y=1} -\Big\{ \dfrac{1+y |\d_{\sigma}\eta|^2}{\eta} \d_y \mathcal{W} - \d_\sigma \eta \d_\sigma \mathcal{W} \Big\}\Big\vert_{y=1}. 
	\end{align*}
	Recall that $\varpi = \frac{y}{\eta} \d_y v +\d_\sigma v - \frac{y\d_\sigma \eta}{\eta}\d_y v$. Moreover, from Remark \ref{rem:BV'}, one has $\mB = \frac{\d_y v}{\eta}\vert_{y=1}$ and $V=\big(\d_\sigma v - \d_\sigma \eta \frac{\d_y v}{\eta}\big)\big\vert_{y=1}$. Using this, and the fact that $-\mL_{\eta}v=0$, we derive:
	\begin{align*}
		&\Big\{ \dfrac{1+y |\d_{\sigma}\eta|^2}{\eta} \d_y \varpi - \d_\sigma \eta \d_\sigma \varpi \Big\}\Big\vert_{y=1}\\
		=& \bigg\{\d_{\sigma} \Big( \dfrac{\d_y v}{\eta} \!-\! \d_{\sigma} v \!+\! \d_{\sigma}\eta \dfrac{y \d_y v}{\eta} \Big) \!+\! \dfrac{1\!+\! y |\d_{\sigma}\eta|^2}{\eta^2}\d_y v  \!+\! (1\!-\!\d_{\sigma}\eta) \Big( \dfrac{v}{4} \!-\! \dfrac{\cot(y\eta)}{\eta} \d_y v\Big) \bigg\}\bigg\vert_{y=1}\\
		=& \d_{\sigma}(\mB- V) + \dfrac{1+y |\d_{\sigma}\eta|^2}{\eta^2} \d_y v \Big\vert_{y=1} + (1-\d_{\sigma}\eta) \Big\{ \dfrac{v}{4} - \dfrac{\d_y v}{\eta} \cot(y\eta) \Big\}\Big\vert_{y=1}.
	\end{align*}
	Putting this into the expression for $\mG[\eta](\mB+V)$, it follows that
	\begin{align*}
		\mG[\eta](\mB+V) + \d_{\sigma} (V-\mB) =& \dfrac{1+y |\d_{\sigma}\eta|^2}{\eta^2} \d_y v \Big\vert_{y=1} + (1-\d_{\sigma}\eta) \Big\{ \dfrac{v}{4} - \dfrac{\d_y v}{\eta} \cot(y\eta) \Big\}\Big\vert_{y=1}\\
		&-\Big\{ \dfrac{1+y |\d_{\sigma}\eta|^2}{\eta} \d_y \mathcal{W} - \d_\sigma \eta \d_\sigma \mathcal{W} \Big\}\Big\vert_{y=1}.
	\end{align*}
	By the regularities of $v$ and $\mathcal{W}$ given in Lemmas \ref{lemma:vHigh} and Proposition \ref{prop:cancelEllip}, we conclude that the term $\mG[\eta](\mB+V) + \d_{\sigma} (V-\mB) \in H^{s-1}(\R)$.  
\end{proof}
%%%%%%%%%%%%%%%%%%%%%%%%%%%%%%%%%%%%%%%%%%%%%%%%%%%%%
%%%%%%%%%%%%%%%%%%%%%%%%%%%%%%%%%%%%%%%%%%%%%%%%%%%%%

%---------------------
%     Section
%---------------------

\section{Stokes expansion for small perturbations}\label{sec:Stokes}
In this section we present the Taylor expansion of DN operator for a small data following \cite{Craig}.
In general, the analyticity of a functional 
$G[a]\vcentcolon= \mG[\ath+a]$ mapping complex Banach spaces $ B_1$, to $B_2$,
is equivalent to the existence of the Taylor expansion
\[
G[a]=\sum_{k=0}^\infty \Lambda_k[a],\quad \Lambda_k(a)=(a_1, \dots, a_k)|_{a_i=a},
\]
where $a$ is in a sufficiently 
small ball in $B_1$, and $\Lambda_k[a]:B_1^k\to B_2$
is a multilinear map (in $a_i$) satisfying the estimate
\[
\| \Lambda_k(a_1, \dots, a_k)\|_{B_2}\le C^k\prod_{i=1}^k\|a_i\|_{B_1},
\]
see \cite{Coifman}. Let $\Phi_m(\sigma, \theta)=e^{i\sigma m}P_{im-\frac12}(\cos\theta)$.
It is clear that $\Phi_m$ solves \eqref{ellip-cf-a}.
Note that $\d_\theta\Phi_m(\sigma, 0)=0$, and 
\[
P_s'(0)=\frac{-2\sqrt\pi}{\Gamma(-\frac s2)\Gamma(\frac12+\frac s2)}.
\]
We expand $\theta=\eta=\theta_*+\tilde{\eta}$, where 
$\tilde{\eta}, \tilde{\eta}_\sigma$ are assumed to be small.
Then if we assume that $G[\tilde{\eta}]=\sum_{j=0}^\infty G_j[\tilde{\eta}]$, where $G_j[\tilde{\eta}]$ is homogeneous of degree $j$ in $\tilde{\eta}$
we get from 
\eqref{mG}
\begin{align}\label{Taylor-exp-small}
& e^{i\sigma m} P'_{im-\frac12}(\cos(\theta_*+\tilde{\eta}))\sin(\theta_*+\tilde{\eta})
-im e^{i\sigma m}P_{im-\frac12}(\cos(\theta_*+\tilde{\eta}))\tilde{\eta}_\sigma\\
=&
(\sum_{j=0}^\infty G_j[\tilde{\eta}])\left(e^{i\sigma m} P_{im-\frac12}(\cos(\theta_*+\tilde{\eta}))\right)
=
(\sum_{j=0}^\infty G_j[\tilde{\eta}])
\left(e^{i\sigma m} \sum_{k=0}^\infty \frac{a_k(m, \theta_*)}{k!}\tilde{\eta}^k\right), \nonumber
\end{align}
where we assumed that $P_{im-\frac12}(\cos(\theta_*+\tilde{\eta}))$
has the following expansion
\[
P_{im-\frac12}(\cos(\theta_*+\tilde{\eta}))
=
\sum_{k=0}^\infty \frac{a_k(m, \theta_*)}{k!}\tilde{\eta}^k.
\]
Then by Taylor's theorem, 
\begin{equation*}
P'_{im-\frac12}(\cos(\theta_*+\tilde{\eta}))\sin(\theta_*+\tilde{\eta})=
\frac \d{\d\tilde{\eta}}
\sum_{k=0}^\infty \frac{a_k(m, \theta_*)}{k!}\tilde{\eta}^k
=
\sum_{k=1}^\infty \frac{a_k(m, \theta_*)}{(k-1)!}\tilde{\eta}^{k-1}.
\end{equation*}
Consequently, substituting these into \eqref{Taylor-exp-small}
we obtain 
\begin{align*}
&e^{i\sigma m}\left(\sum_{k=1}^\infty \frac{a_k(m, \theta_*)}{(k-1)!}\tilde{\eta}^{k-1}
-
im\frac{\d \tilde{\eta}}{\d \sigma} \sum_{k=0}^\infty \frac{a_k(m, \theta_*)}{k!}\tilde{\eta}^k
 \right)\\
 &=
(\sum_{j=0}^\infty G_j[\tilde{\eta}])
\left(e^{i\sigma m} \sum_{k=0}^\infty \frac{a_k(m, \theta_*)}{k!}\tilde{\eta}^k\right) =
\sum_{\ell=0}^\infty\sum_{j=0}^\ell G_j[\tilde{\eta}]\frac{a_{\ell-j}(m, \theta_*)}{(\ell-j)!}\tilde{\eta}^{\ell-j}
.
\end{align*}
Equating the terms homogenous of degree $k$ in $\tilde{\eta}$ on both sides we obtain
\begin{equation*}
G_\ell[\tilde{\eta}]=\frac1{a_0(m, \theta_*)}\left[\left(\frac{a_{\ell+1}(m, \theta_*)}{\ell!}-im\tilde{\eta}_\sigma \frac{a_\ell(m, \theta_*)}{\ell!}\right)\tilde{\eta}^\ell
-
\sum_{j=0}^{\ell-1} G_j[\tilde{\eta}]\frac{a_{\ell-j}(m, \theta_*)}{(\ell-j)!}\tilde{\eta}^{\ell-j}\right].
\end{equation*}
From here we get the symbol of $G_\ell$ as 
\begin{equation*}
\frac1{a_0(D, \theta_*)}\left[\left(\frac{a_{\ell+1}(D, \theta_*)}{\ell!}-iD\tilde{\eta}_\sigma \frac{a_\ell(D, \theta_*)}{\ell!}\right)\tilde{\eta}^\ell
-
\sum_{j=0}^{\ell-1} G_j[\tilde{\eta}]\frac{a_{\ell-j}(D, \theta_*)}{(\ell-j)!}\tilde{\eta}^{\ell-j}\right].
\end{equation*}
If $\tilde{\eta}=0$, then from above expression, we get that 
\begin{equation*}
G[0](e^{i\sigma m})=e^{i\sigma m}\frac{a_1(m, \theta_*)}{a_0(m, \theta_*)}.
\end{equation*}
Then, expanding $\phi$ in Fourier  series, we recover the symbol of the DN operator for general $\phi$, 
\[
G[0](\phi)= \frac{a_1(D, \theta_*)}{a_0(D, \theta_*)}.
\]
In order to compute the other terms $G_j[\tilde{\eta}]$ we need to compute 
the coefficients $a_k(m, \theta_*)$.
It is well known that $P_\ell(z)=\frac1{\Gamma(1)}{}_2F_1(-\ell, \ell+1; 1, \frac{1-z}2)$,
where ${}_2F_1(-\ell, \ell+1; 1, \frac{1-z}2)$ is the hypergeometric 
series. Taking $\ell=im-\frac12$ we get 
\begin{eqnarray*}
P_{im-\frac12}(z)
&=&
\frac1{\Gamma(1)}{}_2F_1(-\ell, \ell+1; 1, \frac{1-z}2)\\
&=&
\frac1{\Gamma(-\ell)\Gamma(\ell+1)}\sum_{n=0}^\infty\frac{\Gamma(n-\ell)\Gamma(n+\ell+1)}{\Gamma(n+1)n!}\left(\frac{1-z}2\right)^n.
\end{eqnarray*}
First notice that from  \eqref{GammaF} 
$$
\Gamma(-\ell)\Gamma(\ell+1)=\frac\pi{\sin(\pi(-\ell))}=\frac\pi{\sin(\pi(\frac12-im))}=\frac\pi{\cosh (\pi m)}.
$$
Next we use $\Gamma(z)\Gamma(\bar z)=|\Gamma(z)|^2$, and hence 
\[
\Gamma(n-\ell)\Gamma(n+\ell+1)
=
\Gamma(n+\tfrac12-im)\Gamma(n+\tfrac12+im)\in \R, 
\]
and the identity (see \eqref{GaHalf})
\[
\left|\Gamma(n+\tfrac12\pm im)\right|=\frac\pi{\cosh(\pi m)}\prod_{k=1}^n\left(\left( k+\tfrac12 \right)^2+m^2\right).
\]
Combining we find that 
\begin{equation*}
P_{im-\frac12}(z)
=
\frac\pi{\cosh(\pi m)}\sum_{n=0}^\infty\frac{1}{(n!)^2}
\prod_{k=1}^n\left(\left( k+\tfrac12 \right)^2+m^2\right)^2
\left(\frac{1-z}2\right)^n.
\end{equation*}
To finish the computation it remains to expand the $z$ term in $\tilde{\eta}$,
where $z=\cos(\theta_*+\tilde{\eta})$.
We have 
\begin{gather*}
	\begin{aligned}
		&\frac{1-z}2
		=
		\sin^2\frac{\theta_*+\tilde{\eta}}2
		=
		\cos^2\frac{\theta_*}2\left(\tan\frac{\theta_*}2\cos\frac{\tilde{\eta}}{2}+\sin\frac{\tilde{\eta}}{2}\right)^2\\
		=&
		\cos^2\frac{\theta_*}2
		\left(
		\tan\frac{\theta_*}2\sum_{p=0}^\infty\frac{(-1)^p}{(2p)!}\left(\frac{\tilde{\eta}}{2}\right)^{2p}+\sum_{p=0}^\infty\frac{(-1)^p}{(2p+1)!}\left(\frac{\tilde{\eta}}{2}\right)^{2p+1}
		\right)^2 = \big(\mathscr{S}[\ath,\tilde{\eta}]\big)^2 \cos^2\frac{\theta_*}2,
	\end{aligned}\\
	\text{where } \quad  \mathscr{S}[\ath,\tilde{\eta}] \vcentcolon= \tan\frac{\theta_*}2+\frac{\tilde{\eta}}{2}-\tan\frac{\theta_*}2\frac1{2!}\left(\frac{\tilde{\eta}}{2}\right)^{2}
	+\cdots .
\end{gather*}
Hence we get
\begin{align*}
P_{im-\frac12}(\cos(\theta_*+\tilde{\eta}))
%=& \frac\pi{\cosh(\pi m)}\sum_{n=0}^\infty\frac{1}{(n!)^2} \prod_{k=1}^n\left[\left[ k+\tfrac12 \right]^2+m^2\right]^2 \cos^{2n}\frac{\theta_*}2 \left[ \tan\frac{\theta_*}2+\frac{\tilde{\eta}}{2}-\tan\frac{\theta_*}2\frac1{2!}\left(\frac{\tilde{\eta}}{2}\right)^{2}+\dots\right]^{2n}.\\
=\frac\pi{\cosh(\pi m)}\sum_{n=0}^\infty\frac{1}{(n!)^2}
\prod_{k=1}^n\left[\left[ k+\tfrac{1}{2} \right]^2+m^2\right]^2 \big(\mathscr{S}[\ath,\tilde{\eta}]\big)^{2n}
\cos^{2n}\frac{\theta_*}{2} .
\end{align*}
For the first three coefficients we have the explicit forms 
\begin{gather*}
a_0(m, \theta_*)
=
\frac\pi{\cosh(\pi m)}\sum_{n=0}^\infty\frac{\mathfrak{Q}(n,m)}{(n!)^2}
 \sin^{2n}\frac{\theta_*}2,\\
a_1(m, \theta_*)
=
\frac\pi{\cosh(\pi m)}\sum_{n=0}^\infty\frac{\mathfrak{Q}(n,m)}{(n!)^2}
n \cos^{2n}\frac{\theta_*}2  \tan^{2n-1}\frac{\theta_*}2,\\
a_2(m, \theta_*)
=
\frac\pi{\cosh(\pi m)}\sum_{n=0}^\infty\frac{\mathfrak{Q}(n,m)}{(n!)^2}
\frac n2 \cos^{2n}\frac{\theta_*}2 \left[(2n-1) \tan^{2n-2}\frac{\theta_*}2 
-\frac n2 \tan^{2n}\frac{\theta_*}2\right],\\
\text{where } \quad \mathfrak{Q}(n,m)\vcentcolon= \prod_{k=1}^n\left[\left[ k+\tfrac12 \right]^2+m^2\right]^2.
\end{gather*}

\appendix

%%%%%%%%%%%%%%%%%%%%%%%%%%%%%%%%%%%%%%%%%%%%%%%%%%%%%

%---------------------
%     Section
%---------------------

%%%%%%%%%%%%%%%%%%%%%%%%%%%%%%%%%%%%%%%%%%%%%%%%%%%%%%
%%%%%%%%%%%%%%%%%%%%%%%%%%%%%%%%%%%%%%%%%%%%%%%%%%%%%%

%------------------------------
% Appendix: Sobolev Embedding
%------------------------------
\section{Sobolev Inequalities}\label{append:sobo}
The following Sobolev embedding theorems can be found in the book \cite{Tools}.
\subsection{Sobolev inequalities}
\begin{proposition}\label{prop:bochner}
	Let $d\ge 1$, and let $I\subseteq \R$ be a connected interval. Suppose the function $u(x,y)\vcentcolon \R^d \times [0,1] \to \R$ is such that
	\begin{equation*}
		u \in \mathrm{X}_2^{s+\frac{1}{2}}(I;\R^d), \qquad \d_y f \in \mathrm{X}_2^{s-\frac{1}{2}}(I;\R^d), \quad \text{where}  \quad \mathrm{X}_p^{k}(I;\R^d) \vcentcolon= L^p_{y}\big(I;H_x^{k}(\R^d)\big). 
	\end{equation*}
	Then $u\in \mC_y^{0}\big(I;H_x^s(\R^d)\big)$ and $ u \in L^{\infty}_y\big(I;H_x^s(\R^d)\big)$, and there exists a constant $C>0$ independent of $u$ such that
	\begin{equation*}
		\sup\limits_{y\in I} \| u(\cdot, y) \|_{H^s(\R^d)} \le C \| u \|_{\mathrm{X}_2^{s-\frac{1}{2}}(I;\R^d)}^{\frac{1}{2}} \| \d_y u \|_{\mathrm{X}_2^{s-\frac{1}{2}}(I;\R^d)}^{\frac{1}{2}}.
	\end{equation*}
\end{proposition}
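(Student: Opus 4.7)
The strategy is Fourier analysis in the $x$-variable combined with the fundamental theorem of calculus in $y$: the estimate is essentially an anisotropic Sobolev trace / interpolation inequality. Using the Fourier representation $\|u(\cdot,y)\|_{H^s_x}^2 = \int_{\R^d}\langle\xi\rangle^{2s}|\hat u(\xi,y)|^2\,d\xi$, the hypotheses $u \in \mathrm{X}_2^{s+1/2}$ and $\d_y u \in \mathrm{X}_2^{s-1/2}$ imply that, for almost every $\xi$, the function $y \mapsto \hat u(\xi,y)$ is absolutely continuous on $I$ with derivative $\widehat{\d_y u}(\xi,\cdot)$. Differentiating $|\hat u(\xi, y)|^2$, applying the fundamental theorem of calculus, and then Cauchy--Schwarz in $y'$ gives the pointwise-in-$\xi$ bound
\begin{equation*}
\bigl| |\hat u(\xi, y)|^2 - |\hat u(\xi, y_0)|^2 \bigr| \le 2 \|\hat u(\xi, \cdot)\|_{L^2(I)} \,\|\widehat{\d_y u}(\xi, \cdot)\|_{L^2(I)}, \qquad y, y_0 \in I.
\end{equation*}

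Next, I would eliminate the reference value $|\hat u(\xi, y_0)|^2$ by averaging in $y_0$ over $I$ (when $I$ is bounded), or by exhausting $I$ with compact subintervals and selecting reference points via Chebyshev's inequality (when $I$ is unbounded). After weighting by $\langle\xi\rangle^{2s}$ and integrating in $\xi$, I would split $\langle\xi\rangle^{2s}=\langle\xi\rangle^{s+1/2}\cdot\langle\xi\rangle^{s-1/2}$ so that Cauchy--Schwarz in $\xi$ applies, which yields
\begin{equation*}
\sup_{y\in I}\|u(\cdot,y)\|_{H^s_x}^2 \lesssim \|u\|_{\mathrm{X}_2^{s+1/2}}\,\|\d_y u\|_{\mathrm{X}_2^{s-1/2}}.
\end{equation*}
The form stated in the proposition then follows, after taking square roots, by combining with the one-parameter interpolation $\|u\|_{\mathrm{X}_2^s}^2 \le \|u\|_{\mathrm{X}_2^{s-1/2}}\,\|u\|_{\mathrm{X}_2^{s+1/2}}$ (itself a consequence of Cauchy--Schwarz in $\xi$ pointwise in $y$ followed by Cauchy--Schwarz in $y$) to rebalance the factors.

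For the continuity statement $u \in \mC^0_y(I; H^s_x)$, the pointwise absolute continuity of $y \mapsto \hat u(\xi, y)$ combined with dominated convergence in the $\xi$-integral yields $\|u(\cdot,y) - u(\cdot,y_0)\|_{H^s_x} \to 0$ as $y \to y_0$: the dominating function $\langle\xi\rangle^{2s}\sup_{y\in I}|\hat u(\xi,y)|^2$ is $\xi$-integrable by the uniform supremum bound already obtained, while $|\hat u(\xi, y) - \hat u(\xi, y_0)|^2 \to 0$ pointwise in $\xi$ by the $y$-absolute continuity. The main technical subtlety is the unbounded-$I$ case, where the straightforward averaging argument for discarding the boundary term fails; one resolves this by first applying the estimate on compact exhausting subintervals with constants independent of the length, and then exploiting that $u \in L^2_y H^{s-1/2}_x$ forces a sequence of reference points along which $\|u(\cdot, y_k)\|_{H^{s-1/2}_x} \to 0$, so the boundary contribution vanishes in the limit. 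Beyond this point the argument is a mechanical combination of Fourier analysis, Cauchy--Schwarz, and interpolation.
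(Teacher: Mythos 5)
Your core strategy — Fourier transform in $x$, fundamental theorem of calculus in $y$, Cauchy--Schwarz twice — is the standard route to this trace/interpolation estimate, and the paper itself offers no proof (Proposition~\ref{prop:bochner} is only cited from \cite{Tools}). Your steps up to and including the split $\langle\xi\rangle^{2s}=\langle\xi\rangle^{s+\frac12}\langle\xi\rangle^{s-\frac12}$ are correct and give the two-term estimate
\begin{equation*}
\sup_{y\in I}\|u(\cdot,y)\|_{H^s}^2 \lesssim \|u\|_{\mathrm{X}_2^{s+\frac12}}\,\|\d_y u\|_{\mathrm{X}_2^{s-\frac12}} \;+\; |I|^{-1}\|u\|_{\mathrm{X}_2^{s}}^2,
\end{equation*}
which is the right inequality. (The index $\mathrm{X}_2^{s-\frac12}$ on the $u$ factor in the proposition's right-hand side appears to be a misprint for $\mathrm{X}_2^{s+\frac12}$, consistent with the hypothesis $u\in\mathrm{X}_2^{s+\frac12}$.) Your handling of unbounded $I$ by exhaustion and Chebyshev selection of reference points, and your dominated-convergence argument for $\mC^0_y$-continuity given the uniform sup bound, are both sound.

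The genuine gap is the final ``rebalancing'' step. The interpolation inequality $\|u\|_{\mathrm{X}_2^{s}}^2\le\|u\|_{\mathrm{X}_2^{s-\frac12}}\|u\|_{\mathrm{X}_2^{s+\frac12}}$ bounds a middle norm by the two endpoints; it cannot replace the stronger factor $\|u\|_{\mathrm{X}_2^{s+\frac12}}$ by the weaker $\|u\|_{\mathrm{X}_2^{s-\frac12}}$, so you never arrive at $\|u\|_{\mathrm{X}_2^{s-\frac12}}^{1/2}\|\d_y u\|_{\mathrm{X}_2^{s-\frac12}}^{1/2}$ from what you have. Worse, the pure product form $\sup_y\|u(\cdot,y)\|_{H^s}\le C\|u\|_{\mathrm{X}_2^{a}}^{1/2}\|\d_y u\|_{\mathrm{X}_2^{b}}^{1/2}$ is false on a bounded interval for \emph{every} choice of $a,b$: take $u(x,y)=f(x)$ with $0\ne f\in H^{s+\frac12}(\R^d)$, so $\d_y u\equiv 0$ and the right side vanishes while the left side equals $\|f\|_{H^s}>0$. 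The additive term $|I|^{-1}\|u\|_{\mathrm{X}_2^s}^2$ that your own averaging over $y_0$ produces is therefore essential and cannot be absorbed into the cross term. Your argument is correct if you stop at the displayed two-term estimate (or its square-root form with a sum on the right); the error is in trying to compress it into the stated one-term product by interpolation.
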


\begin{proposition}[Sobolev Embedding for composite]\label{prop:Sobcomp}
	Assume $u\in L^{\infty}\cap H^{s}(\R^d)$ is real valued for $s\ge0$. If $F\in\mC^{\infty}(\R)$ and $F(0)=0$, then $F(u)\in L^{\infty}\cap H^{s}(\R^d)$, and
	\begin{equation*}
		\|F(u)\|_{H^{s}(\R^d)} \le \sup\limits_{x\in\R} |F^{\prime}(x)| \| u \|_{H^{s}(\R^d)}.
	\end{equation*}
\end{proposition}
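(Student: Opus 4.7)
The plan is to establish the inequality first at the integer endpoints $s=0$ and $s=1$, and then extend it to all $s\ge 0$ by interpolation combined with a paradifferential argument. For $s=0$, the hypothesis $F(0)=0$ together with the mean value theorem yields the pointwise bound
$$|F(u(x))|=|F(u(x))-F(0)|\le \sup_{y\in\R}|F'(y)|\cdot |u(x)|,$$
which after squaring and integrating gives $\|F(u)\|_{L^2}\le \sup|F'|\cdot\|u\|_{L^2}$. For $s=1$, the chain rule is valid for $F\in C^1(\R)$ and $u\in H^1(\R^d)\cap L^\infty(\R^d)$, so $\nabla F(u)=F'(u)\nabla u$ in $\mathcal{D}'$ and the bound $\|F'(u)\|_{L^\infty}\le \sup|F'|$ gives $\|\nabla F(u)\|_{L^2}\le \sup|F'|\cdot \|\nabla u\|_{L^2}$. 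Summing with the $s=0$ estimate yields the $H^1$ bound. For intermediate $0<s<1$, real interpolation $[L^2,H^1]_{s,2}=H^s$ applied to the nonlinear map $u\mapsto F(u)$ preserves the common constant $\sup|F'|$ since it appears on both endpoints.

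For $s>1$ the strategy is Bony's paralinearization, in the spirit of the paradifferential calculus already invoked elsewhere in the paper. Decompose
$$F(u)=T_{F'(u)}u+R_F(u),$$
where $T_a$ denotes the paraproduct with symbol $a$ and $R_F(u)$ is the remainder. The principal term is controlled by the standard paraproduct estimate
$$\|T_{F'(u)}u\|_{H^s(\R^d)}\le C\,\|F'(u)\|_{L^\infty(\R^d)}\|u\|_{H^s(\R^d)}\le C\,\sup_{x\in\R}|F'(x)|\cdot\|u\|_{H^s(\R^d)},$$
which produces exactly the announced factor $\sup|F'|$. The remainder $R_F(u)$ gains regularity in the scale of $H^s$, with norm bounded in terms of $\|u\|_{L^\infty}$ and higher $C^k$–norms of $F$ on the compact interval $[-\|u\|_{L^\infty},\|u\|_{L^\infty}]$, hence absorbed into the overall $H^s$ bound on $F(u)$.

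The main obstacle is that the naive chain-rule approach (Faà di Bruno) generates, for integer $s\ge 2$, terms of the form $F^{(k)}(u)\prod_{j}\partial^{\alpha_j}u$ with $k=2,\dots,s$, which cannot be dominated by $\sup|F'|$ alone. Paralinearization circumvents this by isolating $F'(u)$ as the symbol of the principal paraproduct, so that $\|F'(u)\|_{L^\infty}$ is the only $F$-dependent quantity entering the leading estimate, while all higher derivatives $F^{(k)}$ are confined to the smoother remainder $R_F(u)$ and thus do not spoil the claimed form of the inequality.
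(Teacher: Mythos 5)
The paper gives no proof of this proposition: it is cited from the textbook \cite{Tools}, where the composition estimate appears with a constant $C=C(\|u\|_{L^{\infty}},s,d,F)$, not the sharp prefactor $\sup|F'|$ printed here. That sharp form is in fact \emph{false} for $s\ge 2$: take $d=1$, $F(x)=\sin x$ (so $\sup|F'|=1$ and $F(0)=0$), and $u_N=N\chi$ with a fixed nontrivial $\chi\in\mC_c^{\infty}(\R)$. Fa\`a di Bruno gives the leading term $\pm N^{s}(\chi')^{s}\sin^{(s)}(N\chi)$ in $\partial_x^{s}\sin(N\chi)$, whose $L^2$ norm is of order $N^{s}$ as $N\to\infty$ (the oscillatory factor has mean-square of order one), whereas $\sup|F'|\,\|u_N\|_{H^{s}}=N\|\chi\|_{H^{s}}$ grows only linearly. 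So the bound must depend on $\|u\|_{L^{\infty}}$ when $s\ge 2$; no proof can give the statement as written.

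Your paralinearization $F(u)=T_{F'(u)}u+R_F(u)$ is indeed the standard device (and essentially what \cite{Tools} does), and it proves the correct, weaker statement — but not the literal inequality of the proposition, and your own write-up betrays this. The paraproduct bound $\|T_{F'(u)}u\|_{H^{s}}\le C\|F'(u)\|_{L^{\infty}}\|u\|_{H^{s}}$ carries a numerical constant $C\neq 1$, and the remainder $R_F(u)$ is controlled (as you say) by $\|u\|_{L^{\infty}}$ and $C^{k}$-norms of $F$ for $k\ge 2$. Claiming these are ``absorbed into the overall $H^{s}$ bound on $F(u)$'' is circular: that bound is precisely what is being proved, and quantities involving $F''$, $F'''$, \dots and $\|u\|_{L^{\infty}}$ cannot be hidden inside a prefactor that is supposed to be only $\sup|F'|$. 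A smaller point: for $0<s<1$, ``real interpolation of the nonlinear map $u\mapsto F(u)$'' is not a theorem you can cite off the shelf; the clean and sharp route is the Gagliardo seminorm, where $|F(u(x))-F(u(y))|\le\sup|F'|\,|u(x)-u(y)|$ gives $\|F(u)\|_{\dot H^{s}}\le\sup|F'|\,\|u\|_{\dot H^{s}}$ directly, and indeed the sharp constant is attainable on $0\le s\le 1$. In short: your approach is the right one and reproduces the textbook estimate, but the conclusion must carry the constant $C(\|u\|_{L^{\infty}},F,s,d)$; the prefactor printed in the proposition is too strong.
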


\begin{proposition}[Sobolev Embedding for classical product]\label{prop:clprod}
	Assume $u_1\in H^{\alpha}(\R^d)$ and $u_2\in H^{\beta}(\R^d)$ for $\alpha+\beta\ge 0$. If $ s\le \min\{\alpha,\beta\}$ and $s\le \alpha+\beta - \frac{d}{2}$, then $u_1u_2\in H^{s}(\R^d)$ and there exists a constant $C>0$ independent of $u_1$, $u_2$ such that
	\begin{equation*}
		\|u_1 u_2\|_{H^{s}(\R^d)} \le C \|u_1\|_{H^{\alpha}(\R^d)} \|u_2\|_{H^{\beta}(\R^d)}.
	\end{equation*}
\end{proposition}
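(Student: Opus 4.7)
I will prove the Sobolev product estimate by Fourier analysis, reducing it to a bilinear kernel bound handled by a Schur-type argument. Write $\langle\xi\rangle := (1+|\xi|^2)^{1/2}$ and denote the Fourier transform by $\widehat{\cdot}$. By Plancherel and the identity $\widehat{u_1 u_2} = (2\pi)^{-d/2}\,\widehat{u}_1 * \widehat{u}_2$, the target inequality is equivalent to controlling $\|\langle\xi\rangle^s(\widehat{u}_1 * \widehat{u}_2)\|_{L^2_\xi}$. Introducing $U_i(\eta) := \langle\eta\rangle^{\alpha_i}|\widehat{u}_i(\eta)|$ with $(\alpha_1,\alpha_2) := (\alpha,\beta)$, one has $\|U_i\|_{L^2} = \|u_i\|_{H^{\alpha_i}}$, and the claim reduces to
\[
\Big\|\int_{\R^d}\! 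K(\xi,\eta)\, U_1(\xi-\eta)\, U_2(\eta)\, d\eta\Big\|_{L^2_\xi}\! \le\! C\|U_1\|_{L^2}\|U_2\|_{L^2},\ \ K(\xi,\eta) := \frac{\langle\xi\rangle^s}{\langle\xi-\eta\rangle^\alpha\langle\eta\rangle^\beta}.
\]

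The core estimate is Schur's test: by Cauchy--Schwarz in $\eta$ and Fubini,
\[
\Big\|\!\int K\, U_1\, U_2\, d\eta\Big\|_{L^2_\xi}^2 \le \Big(\sup_{\xi\in\R^d}\int K(\xi,\eta)^2\, d\eta\Big) \cdot \|U_1\|_{L^2}^2\, \|U_2\|_{L^2}^2,
\]
so the task reduces to bounding $\int K^2\, d\eta$ uniformly in $\xi$. I partition $\R^d = \Omega_1 \cup \Omega_2$ with $\Omega_1 := \{|\eta|\ge|\xi-\eta|\}$ and $\Omega_2 := \{|\eta|<|\xi-\eta|\}$. The change of variables $\eta \mapsto \xi - \eta$ interchanges the two regions and swaps the roles of $\alpha$ and $\beta$; since the hypothesis $s\le\min(\alpha,\beta)$ is symmetric in $(\alpha,\beta)$, it suffices to treat $\Omega_2$.

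On $\Omega_2$ the triangle inequality yields $\langle\xi\rangle \le 2\langle\xi-\eta\rangle$, hence for $s\ge 0$, $\langle\xi\rangle^s \le 2^s\langle\xi-\eta\rangle^s$. Combining with $s\le\alpha$ and $\langle\xi-\eta\rangle \ge \langle\eta\rangle$ on $\Omega_2$,
\[
K(\xi,\eta) \le 2^s\, \langle\xi-\eta\rangle^{s-\alpha}\, \langle\eta\rangle^{-\beta} \le 2^s\, \langle\eta\rangle^{s-\alpha-\beta},
\]
so $\int_{\Omega_2} K^2\, d\eta \le C\int\langle\eta\rangle^{2(s-\alpha-\beta)}\, d\eta < \infty$ whenever $s < \alpha+\beta-\tfrac{d}{2}$. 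The case $s<0$ is covered by the crude bound $\langle\xi\rangle^s \le 1$ together with the classical weighted integral $\int\langle\xi-\eta\rangle^{-2\alpha}\langle\eta\rangle^{-2\beta}\, d\eta \lesssim \langle\xi\rangle^{d-2(\alpha+\beta)}$ valid in the range $\alpha+\beta > d/2$, which again produces the condition $s \le \alpha+\beta-d/2$ after including the $\langle\xi\rangle^{2s}$ factor.

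The principal obstacle is the endpoint $s = \alpha+\beta-d/2$, at which the kernel integral diverges logarithmically. To recover this critical case I would replace Schur's test by a dyadic Littlewood--Paley decomposition $u_i = \sum_j \Delta_j u_i$, expanding $u_1 u_2$ into low--high, high--low, and diagonal (Bony paraproduct and remainder) contributions. Each piece is estimated via Bernstein's inequalities and almost-orthogonality, and sums as a geometric series in $2^{j(s-\alpha-\beta+d/2)}$; at the borderline exponent, a Minkowski-in-$\ell^2$ step on the diagonal block absorbs the critical scale and delivers the sharp bound without logarithmic loss.
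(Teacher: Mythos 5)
The paper itself does not prove this proposition; it is simply quoted from \cite{Tools} (where it is established via Bony's paraproduct decomposition), so there is no in-paper argument to compare against. Your Fourier--Schur approach is a sensible, more elementary alternative, and the $s\ge 0$, $s<\alpha+\beta-\tfrac d2$ case is handled correctly. The genuine gap is in your $s<0$ branch. The one-sided Cauchy--Schwarz/Schur estimate you set up requires $\sup_{\xi}\int_{\R^d}\langle\xi-\eta\rangle^{-2\alpha}\langle\eta\rangle^{-2\beta}\,d\eta<\infty$, and you explicitly restrict to $\alpha+\beta>d/2$ at that point. But the hypothesis only imposes $\alpha+\beta\ge 0$, and when $0\le\alpha+\beta\le d/2$ (which forces $s\le\alpha+\beta-\tfrac d2\le 0$) this supremum is $+\infty$ for \emph{every} $\xi$, since the integrand decays only like $|\eta|^{-2(\alpha+\beta)}$ at infinity. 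So an entire parameter range permitted by the hypotheses falls outside the reach of your Schur test. To cover it one must dualize and estimate the trilinear form $\iint W(\xi)\,K(\xi,\eta)\,U_1(\xi-\eta)\,U_2(\eta)\,d\eta\,d\xi$, splitting $\R^d\times\R^d$ according to which of $\langle\xi\rangle$, $\langle\xi-\eta\rangle$, $\langle\eta\rangle$ is smallest and applying Schur in the two remaining variables --- which is, in Fourier language, the paraproduct decomposition. A secondary imprecision in the same step: the bound $\int\langle\xi-\eta\rangle^{-2\alpha}\langle\eta\rangle^{-2\beta}\,d\eta\lesssim\langle\xi\rangle^{d-2(\alpha+\beta)}$ is the correct decay rate only when $\alpha<d/2$ and $\beta<d/2$; in general the exponent is $-\min\{2\alpha,2\beta,2(\alpha+\beta)-d\}$, with logarithmic corrections at coincidences, and you would then need to also invoke $s\le\min\{\alpha,\beta\}$ to close.

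A further caution on the endpoint $s=\alpha+\beta-\tfrac d2$, which you propose to recover by Littlewood--Paley. At the fully degenerate corner the statement as written is actually false, so no dyadic argument can rescue it: take $d=2$, $\alpha=\beta=0$, $s=-1$. All hypotheses hold with equality, yet $L^2(\R^2)\cdot L^2(\R^2)=L^1(\R^2)$, and a bound $\|u_1u_2\|_{H^{-1}}\le C\|u_1\|_{L^2}\|u_2\|_{L^2}$ would give $L^1(\R^2)\hookrightarrow H^{-1}(\R^2)$ and hence, by duality, $H^1(\R^2)\hookrightarrow L^\infty(\R^2)$, which fails. (Likewise $s=\alpha=\beta=d/2$ would make $H^{d/2}(\R^d)$ an algebra, which it is not.) Standard formulations of the product law exclude the simultaneous saturation of all the inequalities; the paraproduct argument recovers the endpoint only when at least one strict inequality remains. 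Any proof must track that exclusion, and the sketch at the end of your write-up does not.
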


\subsection{Sobolev spaces on cone and strip domains}\label{append:C2S}
Consider the coordinate transformation law:
\begin{equation}\label{rhoSigma}
	r =  \mathcal{P}(\sigma) \vcentcolon= e^{-\sigma}, \quad \sigma = \mathcal{P}^{-1}(r) \vcentcolon= -\ln r \quad \text{for } \ r\in(0,\infty) \ \text{ and } \ \sigma\in \R.
\end{equation}
Then $r\to 0 $ as $ \sigma \to \infty$ and $r \to \infty $ as $ \sigma \to -\infty$. For functions $F(r)\vcentcolon(0,\infty)\to \R$ and $f(\sigma)\vcentcolon \R \to (0,\infty)$, we define the adjoint maps:
\begin{equation}\label{Pstar}
	\big(\mathcal{P}_{\ast}F\big)(\sigma) \vcentcolon= F\big(\mathcal{P}(\sigma)\big), \qquad \big(\mathcal{P}_{\ast}^{-1}f\big)(r) \vcentcolon= f\big( \mathcal{P}^{-1}(r) \big).
\end{equation}
Set $f(\sigma)\vcentcolon= F\big(\mathcal{P}(\sigma)\big) =F\big(e^{-\sigma}\big)$. Then under the transformation (\ref{rhoSigma}), one has
\begin{equation*}
	\int_{\R} |f(\sigma)|^2\, \dif \sigma = 	\int_{0}^{\infty} |F(r)|^2 \,\dfrac{\dif r}{r}. 
\end{equation*}
Moreover, by Fa\`a di Bruno's formula:
\begin{align*}
	\d_\sigma^m f(\sigma) %=& \sum_{j=1}^m \d_r^j F( e^{-\sigma}) B_{m,j}\big(-e^{-\sigma},(-1)^2 e^{-\sigma},\dotsc,(-1)^{m-j+1} e^{-\sigma} \big)\\
	=& (-1)^m \sum_{k=1}^m c_{m,k} \cdot e^{-k\sigma} (\d_r^k F)\vert_{r=e^{-\sigma}}, \ \text{ where } \
	c_{m,k} \vcentcolon= B_{m,k}(\!\!\underbrace{1,1,\dotsc,1}_{(m-k+1)\textnormal{-times}}\!\!).   
\end{align*}
Here $B_{m,k}$ is the $(m,k)$-th Bell's polynomial given by
\begin{equation*}
	B_{m,k}(x_1,\dotsc,x_{m-k+1}) \vcentcolon= \sum \dfrac{m!}{j_1!\cdots j_{m-k+1}!} \Big(\dfrac{x_1}{1!}\Big)^{j_1} \!\! \cdots  \Big(\dfrac{x_{m-k+1}}{(m-k+1)!}\Big)^{j_{m-k+1}},
\end{equation*}
where the sum is taken over all sequences $j_1,j_2,\dotsc,j_{m-k+1}$ of non-negative integers satisfying the constraints:
\begin{gather*}
	j_1+j_2 + \cdots + j_{m-k+1} = k, \quad \text{ and } \quad j_1 + 2 j_2 + \cdots + (m-k+1) j_{m-k+1} = m.
\end{gather*}
Similarly, differentiating $F(r)=f(-\ln r)$, we also get 
\begin{equation*}
	(\d_r^m F) (r) = \frac{(-1)^m}{r^m} \sum_{k=1}^m  \tilde{c}_{m,k} (\d_\sigma^k f)\vert_{\sigma=-\ln r}, \ \text{ where } \ \tilde{c}_{m,k}\vcentcolon= B_{m,k}\big(0!,1!,\dotsc, (m-k)!\big).
\end{equation*} 
These calculation implies that there exists some constant $C=C(m)>0$ such that
\begin{equation}\label{dRdS}
	C^{-1} \sum_{k=1}^m | \d_\sigma^k f(\sigma) |^2  \le \sum_{k=1}^{m} r^{2k} |\d_r^k F (r)|^2 \big\vert_{r=\mathcal{P}(\sigma)}
	\le C \sum_{k=1}^m | \d_\sigma^k f(\sigma) |^2. 
\end{equation}
%%%%%%%%%%%%%%%%%%%%%%%%%%%%%%%%%%%%%%%
%--------------------------------------
% START OF COMMENT: OTHER WEIGHTS
%--------------------------------------
%%%%%%%%%%%%%%%%%%%%%%%%%%%%%%%%%%%%%%%
\iffalse
Define the weighted Sobolev Spaces $H^m_{r}$ and $H^m_{\sigma}$ as
\begin{alignat*}{3}
	H_{\textrm{ph},r}^m \vcentcolon=& \Big\{ u\in H^{m}(0,\infty) &&\,\Big|\, \| u \|_{H_{r}^m}^2 \vcentcolon= \sum_{k=1}^m \int_{0}^{\infty} |\d_r^k u (r)|^2 r^2 \, \dif r <\infty \Big\},\\
	H_{\textrm{w},\sigma}^m \vcentcolon=& \Big\{ U\in H^{m}(\R) &&\,\Big|\, \| U \|_{H_{\sigma}^m}^2 \vcentcolon= \sum_{k=1}^m \int_{\R} |\d_\sigma^k U (\sigma)|^2 e^{(2m-3)\sigma} \, \dif \sigma <\infty\Big\}.
\end{alignat*}
If $U(\sigma)=u(ae^{-\sigma})$, then (\ref{dRdS}) implies that there exists $C=C(m,a)>0$ such that
\begin{align*}
	\|u\|_{H_{r}^m} \le C \|U\|_{H^m_{\sigma}}.
\end{align*}
\fi
%%%%%%%%%%%%%%%%%%%%%%%%%%%%%%%%%%%%%%%
%--------------------------------------
% END OF COMMENT: OTHER WEIGHTS
%--------------------------------------
%%%%%%%%%%%%%%%%%%%%%%%%%%%%%%%%%%%%%%%
Therefore, we have the following equivalence of Sobolev spaces:
\begin{proposition}\label{prop:normEquiv}
	Let $\mathcal{P}_{\ast}$ and $\mathcal{P}_{\ast}^{-1}$ be the mappings defined in (\ref{rhoSigma})--(\ref{Pstar}). For $m\in\mathbb{N}$, define the Sobolev spaces:
	\begin{gather*}
		H_{\sigma}^{m} \vcentcolon= \bigg\{ f\in H_{\text{loc}}^m(\R) \ \bigg\vert \ \|f\|_{H_{\sigma}^m}^2\vcentcolon= \sum_{k=1}^m \int_{\R}\! |\d_\sigma^k f|^2 \, \dif \sigma <\infty  \bigg\},\\
		\tilde{H}_r^{m} \vcentcolon= \bigg\{ F\in H_{\text{loc}}^{m}(0,\infty) \ \bigg\vert \ \| F \|_{\tilde{H}_{r}^m}^2 \vcentcolon= \sum_{k=1}^m \int_{0}^{\infty}\!\!\!  r^{2k-1}|\d_r^k F (r)|^2 \, \dif r <\infty \bigg\}.
	\end{gather*}
	Then $\mathcal{P}_{\ast} \vcentcolon \tilde{H}_r^m  \to H_{\sigma}^{m}$ is one-to-one and onto, with the inverse map $\mathcal{P}_{\ast}^{-1}$.
\end{proposition}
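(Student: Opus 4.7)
The plan is to reduce the bijectivity of $\mathcal{P}_{\ast}$ to the norm equivalence already encoded in inequality (\ref{dRdS}), combined with the simple change of variables $r=e^{-\sigma}$, $\tfrac{\dif r}{r}=-\dif\sigma$. The bijectivity on the underlying sets of measurable functions is immediate because the map $\mathcal{P}\vcentcolon\R\to(0,\infty)$, $\mathcal{P}(\sigma)=e^{-\sigma}$, is a $\mathcal{C}^{\infty}$ diffeomorphism with $\mathcal{P}^{-1}(r)=-\ln r$, hence $\mathcal{P}_{\ast}\circ\mathcal{P}_{\ast}^{-1}=\textnormal{Id}$ and $\mathcal{P}_{\ast}^{-1}\circ\mathcal{P}_{\ast}=\textnormal{Id}$ as operations on $L_{\textnormal{loc}}^{2}$. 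So the only substantive task is to show that $\mathcal{P}_{\ast}$ carries $\tilde H_{r}^{m}$ into $H_{\sigma}^{m}$ boundedly, and that $\mathcal{P}_{\ast}^{-1}$ does the reverse.

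First I would show that $\mathcal{P}_{\ast}$ preserves local Sobolev regularity: if $F\in H_{\textnormal{loc}}^{m}(0,\infty)$, then $f(\sigma)\vcentcolon=F(e^{-\sigma})\in H_{\textnormal{loc}}^{m}(\R)$, because $\mathcal{P}$ is a smooth diffeomorphism with smooth inverse on every compact subset. Hence the pointwise Fa\`a di Bruno identities for $\d_{\sigma}^{k}f$ in terms of $r^{j}\d_{r}^{j}F\big|_{r=e^{-\sigma}}$ derived earlier in the appendix hold a.e. This gives the two-sided pointwise bound (\ref{dRdS}):
\begin{equation*}
C^{-1}\sum_{k=1}^{m}|\d_{\sigma}^{k}f(\sigma)|^{2}\le\sum_{k=1}^{m}r^{2k}|\d_{r}^{k}F(r)|^{2}\Big|_{r=e^{-\sigma}}\le C\sum_{k=1}^{m}|\d_{\sigma}^{k}f(\sigma)|^{2},
\end{equation*}
for some constant $C=C(m)>0$ independent of $F$ and $\sigma$.

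Next I would integrate this pointwise inequality against $\dif\sigma$ and apply the change of variables $r=e^{-\sigma}$, which produces $\dif\sigma=-\dif r/r$. The right-hand side becomes
\begin{equation*}
\sum_{k=1}^{m}\int_{\R}r^{2k}|\d_{r}^{k}F(r)|^{2}\Big|_{r=e^{-\sigma}}\dif\sigma=\sum_{k=1}^{m}\int_{0}^{\infty}r^{2k-1}|\d_{r}^{k}F(r)|^{2}\dif r=\|F\|_{\tilde H_{r}^{m}}^{2},
\end{equation*}
whereas the left-hand side equals $C^{-1}\|f\|_{H_{\sigma}^{m}}^{2}$. This gives $\|\mathcal{P}_{\ast}F\|_{H_{\sigma}^{m}}\le C^{1/2}\|F\|_{\tilde H_{r}^{m}}$, so $\mathcal{P}_{\ast}$ maps $\tilde H_{r}^{m}$ boundedly into $H_{\sigma}^{m}$. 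Applying the upper bound from (\ref{dRdS}) gives the reverse estimate $\|\mathcal{P}_{\ast}^{-1}f\|_{\tilde H_{r}^{m}}\le C^{1/2}\|f\|_{H_{\sigma}^{m}}$, hence $\mathcal{P}_{\ast}^{-1}$ maps $H_{\sigma}^{m}$ boundedly into $\tilde H_{r}^{m}$. Combined with the set-theoretic inverse relations, this shows $\mathcal{P}_{\ast}$ is a bijection between the two spaces with inverse $\mathcal{P}_{\ast}^{-1}$.

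There is no real obstacle here: the whole statement reduces to (\ref{dRdS}) plus a change of variables. The only point that requires a little care is checking that the weight $r^{2k-1}\dif r$ in the definition of $\tilde H_{r}^{m}$ is the correct one to match $\dif\sigma$ under the transformation $r=e^{-\sigma}$; the factor $r^{2k}$ coming from Fa\`a di Bruno and the factor $r^{-1}$ coming from $\dif\sigma=-\dif r/r$ combine to precisely $r^{2k-1}$, which is exactly how the weight in $\tilde H_{r}^{m}$ was chosen in the statement.
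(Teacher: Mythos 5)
Your proof is correct and matches the paper's intended argument exactly: the paper presents Proposition~\ref{prop:normEquiv} as an immediate consequence of the pointwise two-sided bound (\ref{dRdS}) and the change of variables $r=e^{-\sigma}$, $\dif\sigma = -\dif r/r$, which is precisely the reduction you carry out. The bookkeeping on the weight (the factor $r^{2k}$ from Fa\`a di Bruno combined with $r^{-1}$ from the Jacobian giving $r^{2k-1}$) and the set-theoretic bijectivity via the smooth diffeomorphism $\mathcal{P}$ are both handled correctly.
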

%%%%%%%%%%%%%%%%%%%%%%%%%%%%%%%%%%%%%%%%%%%%%%%
%----------------------------------------------
% START OF COMMENT: FOURIER TRANSFORM OF WEIGHT
%----------------------------------------------
%%%%%%%%%%%%%%%%%%%%%%%%%%%%%%%%%%%%%%%%%%%%%%%
\iffalse
\begin{proposition}\label{prop:wf}
	Let $f(\sigma)\in \mathfrak{S}(\R)$ be a Schwartz function, and $\alpha\in\R$. Then it follows that $\widehat{(e^{\alpha \sigma }f)}(\zeta) = \hat{f}(\zeta+i\alpha)$. 
\end{proposition}
\begin{proof}
	By the definition of Fourier transform, we have
	\begin{align*}
		\widehat{(e^{\alpha \sigma }f)}(\zeta) = & \dfrac{1}{\sqrt{2\pi}} \int_{\R}\!\!  e^{\alpha\sigma}f(\sigma) e^{-i\sigma \zeta}\, \dif \sigma = \dfrac{1}{\sqrt{2\pi}} \int_{\R}\!\!  f(\sigma) e^{-i\sigma(\zeta+i\alpha)}\, \dif \sigma = \hat{f}(\zeta+i\alpha).
	\end{align*}
\end{proof}
\begin{corollary}\label{corol:wf}
	Let $f(\sigma)\in \mathfrak{S}(\R)$ be a Schwartz function, and $\alpha\in\R$. Then
	\begin{equation*}
		\int_{\R} |f(\sigma)|^2 e^{2\alpha \sigma} \, \dif \sigma = \int_{\R} |\hat{f}(\zeta+i\alpha)|^2\,\dif \zeta.
	\end{equation*}
\end{corollary}
\fi
%%%%%%%%%%%%%%%%%%%%%%%%%%%%%%%%%%%%%%%%%%%%%%%
%----------------------------------------------
% END OF COMMENT: FOURIER TRANSFORM OF WEIGHT
%----------------------------------------------
%%%%%%%%%%%%%%%%%%%%%%%%%%%%%%%%%%%%%%%%%%%%%%%

%%%%%%%%%%%%%%%%%%%%%%%%%%%%%%%%%%%%%%%%%%%%%%%%%%%%%%
%%%%%%%%%%%%%%%%%%%%%%%%%%%%%%%%%%%%%%%%%%%%%%%%%%%%%%

%--------------------------------------
%             Section
%--------------------------------------
\section{Fourier Multiplier and Commutator Estimates}
The commutator estimates presented below can be found in Appendix B.2.1 of \cite{lannes2}. First we define the following class of Fourier multiplier symbols:
\begin{definition}\label{def:FM}
	A Fourier multiplier $\sigma(D)$ is said to belong to the symbol class $\fS^k(\R^d)$ for some $k\in\R$ if $\zeta\mapsto \sigma(\zeta)\in\mathbb{C}$ is smooth and satisfies
	\begin{equation*}
		\sup\limits_{\zeta\in\R^d} \absm{\zeta}^{|\beta|-s}|\d_\zeta^{\beta}\sigma(\zeta)|<\infty \qquad \text{for all } \ (\zeta,\beta)\in \R^d\times \mathbb{N}^d.
	\end{equation*}
	Moreover, one also defines the following semi-norm for $\sigma\in\fS^k$:
	\begin{equation*}
		\mathcal{N}^k(\sigma) \vcentcolon= \sup\Big\{ \absm{\zeta}^{|\beta|-s} |\d_\zeta^\beta \sigma(\zeta)| \ \Big\vert\ \beta\in\mathbb{N} \ \text{ and } \ |\beta|\le 2+d+\lceil \tfrac{d}{2} \rceil \Big\},
	\end{equation*}
	where $\lceil \alpha \rceil$ denotes the ceiling function for $\alpha\in R$.
\end{definition}
\begin{proposition}\label{prop:commu}
	Let $t_0>\frac{d}{2}$, $k\ge 0$ and $\sigma\in\fS^k$. If $ 0 \le k \le t_0+1$ and $f\in H^{t_0+1}(\R^d)$, then for all $g\in H^{k-1}(\R^d)$,
	\begin{equation}
		\big\| [\sigma(D),f] g \big\|_{L^2(\R^d)} \le C \mathcal{N}^k(\sigma) \|\nabla f\|_{H^{t_0}(\R^d)} \|g\|_{H^{k-1}(\R^d)},
	\end{equation} 
	for some constant $C>0$ independent of $\sigma$, $f$, and $g$.
\end{proposition}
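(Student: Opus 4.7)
The plan is to prove this commutator estimate by a standard Littlewood--Paley / paradifferential decomposition (à la Bony--Meyer), adapted to the restricted symbol class $\fS^k$. Fix a dyadic partition of unity $\{\psi_j\}_{j\ge -1}$ with $\psi_j(\zeta) = \psi(2^{-j}\zeta)$ for $j\ge 0$, set $\Delta_j \vcentcolon= \psi_j(D)$, $S_j \vcentcolon= \sum_{\ell\le j-N}\Delta_\ell$ for a fixed integer $N$, and use Bony's decomposition $fg = T_f g + T_g f + R(f,g)$, where $T_f g \vcentcolon= \sum_j S_{j-N}f\cdot \Delta_j g$ and $R(f,g) \vcentcolon= \sum_{|j-\ell|\le N}\Delta_j f\cdot \Delta_\ell g$. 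Applying the decomposition to both $\sigma(D)(fg)$ and $f\sigma(D)g$ rewrites the commutator as
$$[\sigma(D),f]g = [\sigma(D), T_f]g + \bigl(\sigma(D)T_g f - T_{\sigma(D)g}f\bigr) + \bigl(\sigma(D)R(f,g) - R(f,\sigma(D)g)\bigr),$$
so it suffices to estimate these three pieces separately.

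The principal term $[\sigma(D), T_f]g$ carries the classical commutator gain of one derivative. Localizing blockwise, $\sigma(D)$ is written as $\sum_j \sigma_j(D)$ with $\sigma_j(\zeta)\vcentcolon= \sigma(\zeta)\tilde\psi_j(\zeta)$, and the kernel representation gives
$$\sigma_j(D)\bigl(S_{j-N}f\cdot \Delta_j g\bigr)(x) - S_{j-N}f(x)\,\sigma_j(D)\Delta_j g(x) = \int K_{\sigma_j}(x-y)\bigl(S_{j-N}f(y)-S_{j-N}f(x)\bigr)\Delta_j g(y)\,dy.$$
The mean-value identity $S_{j-N}f(y)-S_{j-N}f(x)=(y-x)\cdot \int_0^1 \nabla S_{j-N}f(x+t(y-x))\,dt$ trades the finite difference for one $\nabla f$ and one factor $(y-x)$, so the effective kernel $z\mapsto z\,K_{\sigma_j}(z)$ behaves like a kernel of order $k-1$; its $L^1$-norm is controlled by $\mathcal{N}^k(\sigma)$ via the Cauchy--Schwarz trick $\|K\|_{L^1}\lesssim \|\langle\cdot\rangle^{\lceil d/2\rceil+1}K\|_{L^2}$ and Plancherel. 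Summing the resulting blockwise bounds with a square-function argument gives $\|[\sigma(D),T_f]g\|_{L^2}\le C\mathcal{N}^k(\sigma)\|\nabla f\|_{L^\infty}\|g\|_{H^{k-1}}$, and the Sobolev embedding $H^{t_0}(\R^d)\hookrightarrow L^\infty(\R^d)$ for $t_0>d/2$ upgrades $\|\nabla f\|_{L^\infty}$ to $\|\nabla f\|_{H^{t_0}}$.

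The two remaining terms $\sigma(D)T_g f - T_{\sigma(D)g}f$ and $\sigma(D)R(f,g)-R(f,\sigma(D)g)$ lack a symbolic cancellation, but the assumption $0\le k\le t_0+1$ is precisely what closes them. Each block has its high frequency on $f$ at scale $2^j$, so $\sigma(D)$ contributes a factor $\sim 2^{jk}$; Bernstein gives $\|\Delta_j f\|_{L^\infty}\lesssim 2^{jd/2}\|\Delta_j f\|_{L^2}$ and $\|\Delta_j f\|_{L^2}\lesssim 2^{-j t_0}\|\nabla f\|_{H^{t_0}}$, and summing over $j$ converges because $k\le t_0+1$. Cauchy--Schwarz in $j$ against $\|\Delta_j g\|_{L^2}\lesssim 2^{-j(k-1)}\|g\|_{H^{k-1}}$ closes the bound by $C\mathcal{N}^k(\sigma)\|\nabla f\|_{H^{t_0}}\|g\|_{H^{k-1}}$.

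The main obstacle is the bookkeeping in Step 2: verifying that only the finite-order seminorm $\mathcal{N}^k(\sigma)$ of Definition \ref{def:FM} is used, with no implicit global smoothness of $\sigma$. The threshold $|\beta|\le 2+d+\lceil d/2\rceil$ is tight: one derivative is absorbed by the mean-value trick, $d+\lceil d/2\rceil$ by the Cauchy--Schwarz / Plancherel passage from Fourier bounds to integrable kernel bounds, and one more by the $\nabla_x$-localization of the rescaled kernels. Since all of this is routine paradifferential material (cf.\ Bahouri--Chemin--Danchin, Métivier), I would not grind through every dyadic sum; the essential point is the matching of the kernel estimates with the specific seminorm $\mathcal{N}^k$.
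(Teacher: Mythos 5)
The paper does not prove Proposition~\ref{prop:commu}: it appears in an appendix and is cited verbatim from Appendix~B.2.1 of Lannes' book \cite{lannes2}, so there is no internal proof to compare your attempt against. Your Littlewood--Paley / Bony paraproduct outline \emph{is} the standard route (and indeed the one in the reference), so in that sense you have reconstructed the proof the paper is relying on.

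A few remarks on the bookkeeping. The decomposition
\[
[\sigma(D),f]g = [\sigma(D),T_f]g + \bigl(\sigma(D)T_g f - T_{\sigma(D)g}f\bigr) + \bigl(\sigma(D)R(f,g) - R(f,\sigma(D)g)\bigr)
\]
is the right split, and the mean-value / kernel argument for the principal term correctly identifies where each factor of the seminorm $\mathcal{N}^k(\sigma)$ is spent (one derivative lost to the finite difference, $\approx d + \lceil d/2\rceil$ to the Cauchy--Schwarz/Plancherel passage to an $L^1$ kernel bound, and one more to localization), consistent with the threshold $|\beta|\le 2+d+\lceil d/2\rceil$ in Definition~\ref{def:FM}. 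Two small corrections for the non-principal terms. First, the dyadic decay of $f$ should be $\|\Delta_j f\|_{L^2}\lesssim 2^{-j(t_0+1)}c_j\|\nabla f\|_{H^{t_0}}$ with $(c_j)\in\ell^2$ (you wrote $2^{-jt_0}$); with the wrong exponent the geometric sum is $2^{j(d/2-t_0+1)}$ and does not close, whereas with the right one it is $2^{j(d/2-t_0)}$, which does since $t_0>d/2$. Second, at the endpoint $k=t_0+1$ the naive geometric sum in $j$ has ratio $2^{j(k-t_0-1)}=1$ and genuinely diverges; what closes it is the almost-orthogonality of the blocks $\sigma(D)(S_{j-N}g\cdot\Delta_j f)$ (their Fourier supports are in disjoint-up-to-overlap annuli), so that $\|\sum_j\cdot\|_{L^2}^2\lesssim\sum_j\|\cdot\|_{L^2}^2$ and the $\ell^2$-sequence $(c_j)$ does the summation. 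You invoke the square-function argument for the principal term but only say ``summing over $j$ converges because $k\le t_0+1$'' for the others; the same orthogonality is needed there to capture the endpoint.
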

If the symbol takes the special form of $\sigma(\zeta)=\absm{\zeta}^k$, then the following improvements on Proposition \ref{prop:commu} holds
\begin{proposition}\label{prop:commuLam}
	Let $t_0>\frac{d}{2}$. Assume $-\frac{d}{2}<k\le t_0+\delta$. Then for all $f\in H^{t_0+\delta}(\R^d)$ and $g\in H^{k-\delta}(\R^d)$, one has
	\begin{equation*}
		\|[\absm{D}^k,f]g\|_{L^2(\R^d)} \le C \|f\|_{H^{t_0+\delta}(\R^d)} \|g\|_{H^{k-\delta}(\R^d)},
	\end{equation*}
	for some constant $C>0$ independent of $f$, and $g$.
\end{proposition}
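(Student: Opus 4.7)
The target is the fractional commutator bound $\|[\absm{D}^k,f]g\|_{L^2(\R^d)}\le C\|f\|_{H^{t_0+\delta}(\R^d)}\|g\|_{H^{k-\delta}(\R^d)}$, which strengthens Proposition \ref{prop:commu} by letting the $\delta$ derivatives slide between $f$ and $g$; this improvement is specific to the radial symbol $\langle\zeta\rangle^k$ and is not available for a general $\sigma\in\fS^k$. The plan is to invoke Bony's paradifferential calculus. Using a Littlewood--Paley partition of unity, I would write $fg=T_fg+T_gf+R(f,g)$, where $T_\cdot$ denotes the paraproduct (low frequencies of one factor paired against high frequencies of the other) and $R$ the diagonal remainder, and analogously $f\absm{D}^kg=T_f\absm{D}^kg+T_{\absm{D}^kg}f+R(f,\absm{D}^kg)$. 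Subtracting these two Bony expansions produces the three-term identity
\begin{equation*}
[\absm{D}^k,f]g=[\absm{D}^k,T_f]g+\Big(\absm{D}^kT_gf-T_{\absm{D}^kg}f\Big)+\Big(\absm{D}^kR(f,g)-R(f,\absm{D}^kg)\Big),
\end{equation*}
and each of the three pieces is then treated by a distinct mechanism.

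For the paracommutator $[\absm{D}^k,T_f]g$, the symbol $\langle\zeta\rangle^k$ lies in $\fS^k$ with uniformly bounded seminorms, and the standard symbolic calculus for paradifferential operators yields the H\"older-type bound $\|[\absm{D}^k,T_f]g\|_{L^2}\le C\|f\|_{C^\delta}\|g\|_{H^{k-\delta}}$; since $t_0>d/2$, the Sobolev embedding $H^{t_0+\delta}(\R^d)\hookrightarrow C^\delta(\R^d)$ closes this term. For the symmetric paraproduct difference, the $j$-th dyadic summand reduces to a localised commutator of the form $[\absm{D}^k,S_{j-3}g]\Delta_jf$; a Taylor expansion of $\langle\xi\rangle^k$ near the concentrated frequency $|\xi|\sim 2^j$ of $\Delta_jf$ produces at principal order a factor $\nabla g\cdot\nabla_\xi\langle\xi\rangle^k$ with a $2^{j(k-1)}$ gain, and an $\ell^2$-sum in $j$ against $\|\Delta_jf\|_{L^2}$ closes to the bound $C\|g\|_{H^{k-\delta}}\|f\|_{H^{t_0+\delta}}$ once the low-frequency factor $\nabla g$-type norm is absorbed into $\|g\|_{H^{k-\delta}}$ using $t_0+\delta>d/2$.

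The main obstacle is the remainder difference, because $\absm{D}^k$ does not commute through $R$ in any clean fashion, so the two pieces must be estimated separately rather than by cancellation. The plan is to use the bilinear continuity statement $R\vcentcolon H^{\alpha_1}(\R^d)\times H^{\alpha_2}(\R^d)\to H^\alpha(\R^d)$, valid whenever $\alpha_1+\alpha_2>d/2$ and $\alpha\le\alpha_1+\alpha_2-d/2$. For $\absm{D}^kR(f,g)$ I would take $(\alpha_1,\alpha_2,\alpha)=(t_0+\delta,k-\delta,k)$, which is admissible precisely because $t_0>d/2$ and $k\le t_0+\delta$. For $R(f,\absm{D}^kg)$ I would take $(t_0+\delta,-\delta,0)$; here the object $\absm{D}^kg\in H^{-\delta}(\R^d)$ and the remainder operator acts continuously on it exactly because $k>-d/2$ guarantees that $\absm{D}^kg$ has a well-posed Littlewood--Paley resolution summable in the diagonal block. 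Combining these three estimates and carefully handling the borderline cases (the strict $t_0>d/2$ preventing any logarithmic loss at the Sobolev embedding endpoint) yields the desired bound.
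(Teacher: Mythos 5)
The paper does not supply its own proof of this proposition: it is quoted with the remark that ``the commutator estimates presented below can be found in Appendix B.2.1 of \cite{lannes2},'' and no further argument is given. So there is nothing in the paper to compare your proof against line by line. That said, the Bony-type decomposition you use is precisely the mechanism by which such estimates are established in Lannes' text, so in spirit you are reconstructing the cited argument rather than inventing a new route.

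Your three-term decomposition
$[\absm{D}^k,f]g=[\absm{D}^k,T_f]g+(\absm{D}^kT_gf-T_{\absm{D}^kg}f)+(\absm{D}^kR(f,g)-R(f,\absm{D}^kg))$
is correct, and each term is controllable along the lines you indicate. Two small imprecisions are worth flagging. First, the role you assign to the hypothesis $k>-\frac d2$ in the remainder piece is misattributed: $R(f,\absm{D}^kg)$ lands in $H^{t_0-d/2}\subset L^2$ from the single requirement $\alpha_1+\alpha_2=t_0>0$, which already holds from $t_0>\frac d2$; no appeal to $k>-\frac d2$ is needed there. Second, the paraproduct-difference piece does not actually require the Taylor/Poisson-bracket cancellation you invoke: since the high frequency sits on $f\in H^{t_0+\delta}$ with $t_0>\frac d2$, the two terms $\absm{D}^kT_gf$ and $T_{\absm{D}^kg}f$ can each be bounded directly (the former by $\|T_gf\|_{H^k}\lesssim\|g\|_{H^{k-\delta}}\|f\|_{H^{t_0+\delta}}$ via the standard paraproduct estimate in either the regime $k-\delta\le\frac d2$ or $k-\delta>\frac d2$; the latter by $T_{\absm{D}^kg}:H^{t_0+\delta}\to H^{t_0-d/2}\hookrightarrow L^2$), so the commutator structure is not actually needed there. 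Finally, one should state explicitly the use of Zygmund spaces $C^\delta_*$ in place of $C^\delta$ when $\delta\ge1$ in the paracommutator piece (the embedding $H^{t_0+\delta}\hookrightarrow C^\delta_*$ holds from $t_0>\frac d2$), since in the body of the paper this proposition is invoked precisely with $\delta=1$.
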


%%%%%%%%%%%%%%%%%%%%%%%%%%%%%%%%%%%%%%%%%%%%%%%%%%%%%
%%%%%%%%%%%%%%%%%%%%%%%%%%%%%%%%%%%%%%%%%%%%%%%%%%%%%

%---------------------
%     Section
%---------------------

\section{Equations for Taylor's Cone in Spherical Coordinate}\label{append:AS-Zakh}

%%%%%%%%%%%%%%%%%%%%%%%%%%%%%%%%%%%%%%%%%%
 
\subsection{Equations in spherical coordinate \texorpdfstring{$(r,\theta,\alpha)$}{(rho,theta,alpha)}}\label{subsec:AS}
Consider $n=3$. Let $(r,\theta,\alpha)$ be the spherical coordinate system, with $r\in(0,\infty)$ being radial; $\theta\in[0,\pi]$ being polar; $\alpha\in[0,2\pi)$ being azimuthal. For the axially symmetric flow,  $\Psi=\Psi(t,r,\theta)$, hence $$\vv(t,r,\theta)=
\nabla \Psi =\hr \d_{r} \Psi + \hth \dfrac{\d_\theta\Psi}{r}.$$ 
The axially symmetric conical surface can be parametrised by $(r,\alpha)\in(0,\infty)\times[0,2\pi)$, and $\theta=\Theta(t,r)$ is a function of $(t,r)$. It is then described implicitly by the equation:
\begin{equation*}
F(t,x,y,z)\vcentcolon=\arccos\Big(\dfrac{z}{\sqrt{x^2+y^2+z^2}}\Big)-\Theta\big(t,\sqrt{x^2+y^2+z^2}\big) = \theta - \Theta(t,r) = 0,
\end{equation*}
for some function $\Theta(t,r)\vcentcolon [0,\infty)\times(0,\infty) \to (0,\pi)$. The position vector of surface is described by $\X =r \hr(t,r,\alpha)$, where the unit vector $\hrho$ is a function of $(t,r,\alpha)$ given by:
\begin{equation*}
\hr(t,r,\alpha) = \hx \cos\alpha \sin\big(\Theta(t,r)\big) + \hy \sin\alpha \sin\big(\Theta(t,r)\big) + \hz \cos\big(\Theta(t,r)\big).
\end{equation*}
It follows by chain rule that
\begin{equation*}
\d_r \X = \hr + r \d_\theta \hr \d_r \Theta = \hr + \hth r \d_r \Theta, \qquad \d_t \X = r \d_\theta \hr \d_t \Theta = \hth r \d_t \Theta.
\end{equation*}
The normal vector is given by
\begin{align*}
\normal =& \d_\alpha \X \times \d_r \X = r \d_\alpha \hr \times (\hr + \hth r \d_r \Theta) = (r \sin \Theta ) \hat{\alpha} \times (\hr + \hth r \d_r \Theta)\\
=& r \sin \Theta ( \hth - \hr r \d_r \Theta ).
\end{align*}
The unit normal $\un(t,r)$ is given by
\begin{equation*}
\un(t,r) = \dfrac{\normal}{|\normal|} = \dfrac{\hth - \hr r \d_{r}\Theta}{\sqrt{1+r^2 |\d_r \Theta|^2}}.
\end{equation*}
Thus the kinematic equation $F_t+\nabla \Psi\cdot\nabla F =0$ and the Bernoulli equation can be written as follows, for all $(t,r)\in[0,T]\times[0,\infty)$,
\begin{subequations}\label{thetaEq}
\begin{gather}
	\d_t \Theta(t,r) - \big\{ \dfrac{\d_{\theta} \Psi}{r^2} - \d_r \Theta \d_r \Psi \big\} \Big\vert_{\theta=\Theta(t,r)} = 0,\label{thetaEq-a}\\
	\big\{ \partial_t \Psi + \dfrac{|\d_{r} \Psi|^2}{2}  + \dfrac{|\d_\theta \Psi|^2}{2r^2} + p - \dfrac{\epsilon}{2\rho}|\nabla \varphi|^2 \big\}\Big\vert_{\theta=\Theta(t,r)} = 0.\label{thetaEq-b}
\end{gather}
\end{subequations}
Moreover, we define $\psi(r,t)\vcentcolon= \Psi(t,r,\Theta(t,r))$. Then by the chain rule it follows that for $(r,t)\in(0,\infty)\times[0,T]$,
\begin{subequations}\label{Rzeta}
\begin{align}
&\d_t \psi (t,r) = \big(\d_t \Psi + \d_t \Theta \d_\theta \Psi \big)\big\vert_{\theta=\Theta(t,r)},\label{Rzeta-a}\\
&\d_r \psi (t,r) = \big(\d_r \Psi + \d_r \Theta \d_\theta \Psi \big)\big\vert_{\theta=\Theta(t,r)}.\label{Rzeta-b}
\end{align}
\end{subequations}

\subsection{Reformulation using Dirichlet-Neumann operator}
The Dirichlet-Neuman operator associated with $\d\Omega(t)=\{\theta-\Theta(t,r)=0\}$ acting on $\psi$ is given by
\begin{equation}\label{AS-DN}
G[\Theta](\psi) \vcentcolon= \big(\nabla F \cdot \nabla \Psi\big) \big\vert_{F(t,\x)=0} = \Big\{ \dfrac{\d_{\theta} \Psi}{r^2} - \d_{r}\Theta \d_r \Psi \Big\}\Big\vert_{\theta=\Theta(t,r)}.
\end{equation}
By (\ref{Rzeta-b}) and (\ref{AS-DN}), it follows that
\begin{subequations}\label{thetaPhi}
\begin{align}
\d_\theta \Psi \big(t,r, \Theta(t,r)\big) =& \dfrac{r^2 \{\d_r \Theta \d_r \psi + G[\Theta](\psi)\}}{1+r^2 |\d_r \Theta|^2},\label{thetaPhi-a}\\
\d_r \Psi \big(t,r, \Theta(t,r)\big) =& \dfrac{\d_r \psi - r^2 \d_r \Theta \cdot G[\Theta](\psi)}{1+r^2 |\d_r \Theta|^2}.\label{thetaPhi-b}
\end{align}
\end{subequations}
Substituting (\ref{Rzeta-a}) into (\ref{thetaEq-b}), then using (\ref{thetaEq-a}), it follows that
\begin{equation*}
	\d_t \psi + \Big\{ \d_r \Theta \d_r \Psi \d_\theta \Psi + \dfrac{|\d_r \Psi|^2}{2} - \dfrac{|\d_\theta \Psi|^2}{2 r^2} + p - \dfrac{\epsilon}{2\rho}|\nabla \varphi|^2 \Big\}\Big\vert_{\theta=\Theta(t,r)} = 0.
\end{equation*}
Completing the square and using (\ref{Rzeta-b}), we have
\begin{align*}
\d_r \Theta \d_r \Psi \d_\theta \Psi =& \dfrac{1}{2} \big( \d_r \Psi + \d_r \Theta \d_\theta \Psi \big)^2 - \dfrac{|\d_r \Psi|^2}{2} -\dfrac{|\d_r \Theta \d_\theta \Psi|^2}{2} \\
=& \dfrac{|\d_r \psi|^2}{2}  - \dfrac{|\d_r \Psi|^2}{2} -\dfrac{|\d_r \Theta \d_\theta \Psi|^2}{2}.
\end{align*}
Thus we have
\begin{align*}
	\d_t\psi + \dfrac{|\d_r \psi|^2}{2} - \dfrac{1+r^2 |\d_r \Theta|^2}{2r^2}|\d_\theta \Psi|^2\vert_{\theta=\Theta(t,r)} + \Big\{ p - \dfrac{\epsilon}{2\rho}|\nabla \varphi|^2 \Big\}\Big\vert_{\theta=\Theta(t,r)} =0.
\end{align*}
Substituting (\ref{thetaPhi-a}) into the above, we obtain the equations
\begin{subequations}\label{Zak-cone}
\begin{gather}
\d_t \Theta - G[\Theta](\psi) = 0,\label{Kin-cone}\\
\d_t\psi + \dfrac{|\d_{r} \psi|^2}{2} - \dfrac{\big| r \d_{r} \Theta \d_{r} \psi + r G[\Theta](\psi) \big|^2}{2(1+r^2|\d_{r}\Theta|^2)} + \Big\{ p - \dfrac{\epsilon}{2\rho}|\nabla \varphi|^2 \Big\}\Big\vert_{\theta=\Theta(t,r)} = 0. \label{Dyn-cone}
\end{gather}
\end{subequations}

\subsection{Surface tension and Electric field}\label{append:STEF}
If the surface tension is taken into account, then according to the Young-Laplace law, the jump in pressure across the free boundary, $[p]= p-p_0$ is proportional to the mean curvature, denoted as $H(\Theta)$. Thus one has
\begin{equation*}
	p\vert_{\theta=\Theta(t,r)} = p_0-\dfrac{\kappa}{\rho} H(\Theta), 
\end{equation*} 
where $\kappa>0$ is the constant coefficient of surface tension, $\rho>0$ is the constant density, and $p_0$ is the ambient pressure, with $p_0=0$ if the fluid is surrounded by vacuum. For surfaces in $3$-dimensional space, $H$ is given by the divergence of unit normal vector $\un$ restricted on the surface. Using the divergence for spherical coordinates, one gets:
\begin{align*}
	H(\Theta)&=-\dfrac{1}{2}\div \un\Big\vert_{\x\in\d\Omega} = -\dfrac{1}{2} \Big\{ \dfrac{1}{r^2}\d_r\big(r^2 (\hat{r}\cdot\un)\big) + \dfrac{1}{r\sin\theta}\d_\theta\big( (\hth\cdot \un)\sin\theta\big) \Big\}\Big\vert_{\theta=\Theta(r)}\\
	&= \d_r\Big( \dfrac{r \d_r \Theta}{2\sqrt{1+r^2 |\d_r\Theta|^2}} \Big) + \dfrac{\d_r \Theta}{\sqrt{1+r^2 |\d_r\Theta|^2}} - \dfrac{\cot\Theta}{2r \sqrt{1+r^2 |\d_r\Theta|^2}}.
\end{align*}
Next, we consider the electric field term in (\ref{Dyn-cone}). Let $\varphi(r,\theta)$ be the solution to
\begin{equation*}
	\left\{\begin{aligned}
		&\d_r (r^2 \d_r \varphi \sin\theta) + \d_\theta(\d_\theta \varphi \sin\theta)  = 0  &&\text{for } \   \Theta(r) \le \theta <  \pi,\\
		&\varphi\big( r, \Theta(r) \big) = 0, \quad \d_\theta \varphi\big(r,\pi\big) = 0 &&\text{for } \ r\in(0,\infty),\\
		&|\d_r\varphi| \to 0 &&\text{as } \ r\to \infty.
	\end{aligned}
	 \right.
\end{equation*}
For some arbitrary constant $C\neq 0$, we define for $\theta\in [\Theta(r),\pi)$,
\begin{gather*}
	\varphi_{\textrm{T}}(r,\theta)\vcentcolon= C \sqrt{r} P_{1/2}(-\cos\theta), \qquad \Xi(r,\theta)\vcentcolon=\varphi(r,\theta)-\varphi_{\textrm{T}}(r,\theta),\\
	 \xi(r)\vcentcolon= -C \sqrt{r} P_{1/2}\big(-\cos \Theta(r) \big) = -\varphi_{\textrm{T}}(r,\theta)\vert_{\theta=\Theta(r)}.
\end{gather*}
Note that if $(\Theta,\Psi,\varphi)\equiv(\ath,0,\varphi_{\textrm{T}})$ is a stationary solution to (\ref{Zak-cone}), then $\ath$ must be the unique root of $P_{1/2}(-\cos\ath)=0$, and the constant $C$ in $\varphi_{\textrm{T}}$ takes the form:
\begin{equation*}
	C = C_{\ast}\vcentcolon= -\dfrac{\sqrt{\kappa\cot\ath}}{\sqrt{\epsilon}P_{1/2}^1(-\cos\ath)} \quad \text{ and } \quad \ath \approx 0.2738\, \pi.
\end{equation*}
By construction $\Xi(r,\theta)$ solves the Dirichlet boundary value problem:
\begin{equation*}\left\{
	\begin{aligned}
		&\d_r(r^2 \d_r \Xi \sin\theta ) +  \d_\theta (\d_\theta \Xi \sin\theta ) = 0 && \text{for } \ \Theta(r) \le \theta < \pi,\\
		& \Xi\big(r,\Theta(r)\big) = \xi(r), \quad \d_\theta \Xi(r,\pi)=0 && \text{for } \ r\in(0,\infty).
	\end{aligned}\right.
\end{equation*}
With few lines of calculation, one sees that
\begin{equation*}
	\d_r \varphi_{\mathrm{T}} \vert_{\theta=\Theta(r)} = \dfrac{\varphi_{T}}{2r}\Big\vert_{\theta=\Theta(r)}= -\dfrac{\xi}{2r}, \qquad \d_\theta \varphi_{\mathrm{T}}\vert_{\theta=\Theta(r)} = \dfrac{\xi - 2r \d_r \xi}{2r \d_r \Theta}. 
\end{equation*}
Moreover, denote the function $\mathfrak{P}(\theta) \vcentcolon= C\frac{\dif}{\dif \theta}\big(P_{1/2}(-\cos\theta)\big)$, then
\begin{equation*}
	\mathfrak{P}(\theta)= -C P_{1/2}^{1}(-\cos\theta) \qquad \text{and} \qquad \dfrac{\xi-2r\d_r \xi}{2r\d_r \Theta} = \sqrt{r} \mathfrak{P}(\theta). 
\end{equation*}
Using the above identities, and the relations (\ref{thetaPhi-a})--(\ref{thetaPhi-b}), it follows that
\begin{gather*}
	|\nabla \Xi|^2\vert_{\theta=\Theta(r)}
	= \dfrac{|\d_r \xi|^2 + r^2 \big(G[\Theta](\xi)\big)^2}{1+r^2 |\d_r \Theta|^2}, \qquad |\nabla \varphi_{\textrm{T}}|^2\vert_{\theta=\Theta(r)} 
	= \dfrac{\xi^2}{4r^2}+ \dfrac{|\mathfrak{P}(\Theta)|^2}{r},\\
	2\nabla \Xi \cdot \nabla \varphi_{\textrm{T}}\vert_{\theta=\Theta(r)}
	%= \dfrac{\xi G[\Theta](\xi)}{r \d_r \Theta} - \dfrac{2\d_r \xi G[\Theta](\xi)}{\d_r \Theta (1+r^2|\d_r\Theta|^2)} - \dfrac{2|\d_r\xi|^2}{1+r^2|\d_r\Theta|^2}
	= \dfrac{2r^2 \d_r \Theta \d_r \xi G[\Theta](\xi)}{1+r^2|\d_r\Theta|^2} -2 \sqrt{r}\mathfrak{P}(\Theta) G[\Theta](\xi)  - \dfrac{2|\d_r \xi|^2}{1+r^2|\d_r\Theta|^2}.
\end{gather*}
Combining these together, we obtain:
\begin{align*}
	|\nabla \varphi|^2\vert_{\theta=\Theta(r)} =& \big\{ |\nabla \Xi|^2 + 2 \nabla \Xi \cdot \nabla \varphi_{\textrm{T}} + |\nabla\varphi_{\textrm{T}}|^2 \big\}\big\vert_{\theta=\Theta(r)}\\
	=& \dfrac{\xi^2}{4r^2} + \Big| r G[\Theta](\xi) - \dfrac{\mathfrak{P}(\Theta)}{\sqrt{r}} \Big|^2 - \dfrac{\big| \d_r \xi - r^2 \d_r \Theta G[\Theta](\xi) \big|^2}{1+ r^2 |\d_r\Theta|^2}.
\end{align*}
Observe that $\xi(r) = -C \sqrt{r} P_{1/2}\big(-\cos\Theta(r)\big)$ is solely determined by $\Theta(r)$. Thus the force due to electric field $\frac{\epsilon}{2\rho}|\nabla \varphi|^2\vert_{\theta=\Theta}$ is considered as a functional acting on $\Theta(r)$.

\begin{bibdiv}
\begin{biblist}

\bib{Coifman}{article}{
   author={Coifman, R. R.},
   author={Meyer, Yves},
   title={Nonlinear harmonic analysis and analytic dependence},
   conference={
      title={Pseudodifferential operators and applications},
      address={Notre Dame, Ind.},
      date={1984},
   },
   book={
      series={Proc. Sympos. Pure Math.},
      volume={43},
      publisher={Amer. Math. Soc., Providence, RI},
   },
   date={1985},
   pages={71--78},
   review={\MR{812284}},
   doi={10.1090/pspum/043/812284},
}

\bib{Tools}{book}{
   author={Alazard, Thomas},
   author={Zuily, Claude},
   title={Tools and problems in partial differential equations},
   series={Universitext},
   publisher={Springer, Cham},
   date={2020},
   pages={xii+357},
  % isbn={978-3-030-50284-3},
  % isbn={978-3-030-50283-6},
   review={\MR{4174398}},
   doi={10.1007/978-3-030-50284-3},
}

\bib{Craig}{article}{
   author={Craig, Walter},
   author={Groves, Mark D.},
   title={Hamiltonian long-wave approximations to the water-wave problem},
   journal={Wave Motion},
   volume={19},
   date={1994},
   number={4},
   pages={367--389},
   issn={0165-2125},
   review={\MR{1285131}},
   doi={10.1016/0165-2125(94)90003-5},
}

\bib{GR2007}{book}{
	author={Gradshteyn, I. S.},
	author={Ryzhik, I. M.},
	title={Table of integrals, series, and products},
	edition={7},
	note={Translated from the Russian;
		Translation edited and with a preface by Alan Jeffrey and Daniel
		Zwillinger;
		With one CD-ROM (Windows, Macintosh and UNIX)},
	publisher={Elsevier/Academic Press, Amsterdam},
	date={2007},
	pages={xlviii+1171},
	isbn={978-0-12-373637-6},
	isbn={0-12-373637-4},
	review={\MR{2360010}},
}

\bib{lannes2}{book}{
	author={Lannes, David},
	title={The water waves problem},
	series={Mathematical Surveys and Monographs},
	volume={188},
	note={Mathematical analysis and asymptotics},
	publisher={American Mathematical Society, Providence, RI},
	date={2013},
	pages={xx+321},
	isbn={978-0-8218-9470-5},
	review={\MR{3060183}},
	doi={10.1090/surv/188},
}

\bib{MOS1966}{book}{
   author={Magnus, Wilhelm},
   author={Oberhettinger, Fritz},
   author={Soni, Raj Pal},
   title={Formulas and theorems for the special functions of mathematical
   physics},
   series={Die Grundlehren der mathematischen Wissenschaften, Band 52},
   edition={Third enlarged edition},
   publisher={Springer-Verlag New York, Inc., New York},
   date={1966},
   pages={viii+508},
   review={\MR{232968}},
}

\bib{Olver1997}{book}{
	AUTHOR = {Olver, Frank W. J.},
	TITLE = {Asymptotics and special functions},
	SERIES = {AKP Classics},
	NOTE = {Reprint of the 1974 original [Academic Press, New York;
		MR0435697 (55 \#8655)]},
	PUBLISHER = {A K Peters, Ltd., Wellesley, MA},
	YEAR = {1997},
	PAGES = {xviii+572},
	ISBN = {1-56881-069-5},
	REVIEW = {\MR{1429619}},
}

\bib{Taylor}{article}{
author = {Geoffrey Ingram Taylor},  
journal = {Proc. R. Soc. Lond.},
number = {A. 280},
pages = {383--397},
title = {Disintegration of water drops in an electric field},
year = {1964},
}

\bib{Zak1968}{article}{
	author={Zakharov, V.~E.},
	title={Stability of periodic waves of finite amplitude on the surface of a deep fluid},
	journal={J. Appl. Mech. Tech. Phys.},
	volume={9},
	date={1968},
	pages={190--194},
	issn={1573-8620},
	%review={\MR{}},
	doi={10.1007/BF00913182},
}

\bib{ZK66}{book}{
	AUTHOR = {Zhurina, M.~I. and Karmazina, L.},
	TITLE = {Tables and formulae for the spherical functions
		{$P\sp{m}\sb{-1/2+i\tau }\,(z)$}},
	NOTE = {Translated by E. L. Albasiny},
	PUBLISHER = {Pergamon Press, Oxford-New York-Paris},
	YEAR = {1966},
	PAGES = {vii+108},
	REVIEW = {\MR{0203097}},
}

\end{biblist}
\end{bibdiv}

\end{document}